\documentclass[11pt]{amsart}

\pdfoutput=1

\usepackage[text={380pt,640pt},centering]{geometry}

\usepackage{cancel}
\usepackage{esint,amssymb} 
\usepackage{graphicx}
\usepackage{MnSymbol}
\usepackage{mathtools} 
\usepackage[colorlinks=true, pdfstartview=FitV, linkcolor=blue, citecolor=blue, urlcolor=blue,pagebackref=false]{hyperref}
\usepackage{microtype}

\usepackage{bm}
\usepackage{dsfont}
\usepackage{mathrsfs}
\usepackage{xcolor}

\newtheorem{proposition}{Proposition}
\newtheorem{theorem}[proposition]{Theorem}
\newtheorem{lemma}[proposition]{Lemma}
\newtheorem{corollary}[proposition]{Corollary}

\theoremstyle{remark}
\newtheorem{remark}[proposition]{Remark}

\theoremstyle{definition}
\newtheorem{definition}[proposition]{Definition}

\numberwithin{equation}{section}
\numberwithin{proposition}{section}
\numberwithin{figure}{section}
\numberwithin{table}{section}

\newcommand{\N}{\mathbb{N}}
\newcommand{\Q}{\mathbb{Q}}
\newcommand{\R}{\mathbb{R}}

\newcommand{\E}{\mathbb{E}}
\renewcommand{\P}{\mathbb{P}}
\newcommand{\EE}{\mathbf{E}}
\newcommand{\PP}{\mathbf{P}}

\newcommand{\eps}{\varepsilon}

\renewcommand{\le}{\leqslant}
\renewcommand{\ge}{\geqslant}
\renewcommand{\leq}{\leqslant}
\renewcommand{\geq}{\geqslant}

\renewcommand{\subset}{\subseteq}
\renewcommand{\bar}{\overline}
\renewcommand{\tilde}{\widetilde}

\renewcommand{\hat}{\widehat}

\newcommand{\Ll}{\left}
\newcommand{\Rr}{\right}
\renewcommand{\d}{\mathrm{d}}
\newcommand{\dr}{\partial}

\newcommand{\1}{\mathbf{1}}

\newcommand{\mcl}{\mathcal}
\newcommand{\msf}{\mathsf}

\newcommand{\msc}{\mathscr}
\newcommand{\al}{\alpha}
\newcommand{\be}{\beta}
\newcommand{\ga}{\gamma}

\newcommand{\si}{\sigma}

\DeclareMathOperator{\supp}{supp}
\DeclareMathOperator{\sign}{sign}

\newcommand{\la}{\left\langle}
\newcommand{\ra}{\right\rangle}

\newcommand{\prog}{\mathsf{Prog}}

\newcommand{\bprog}{\mathbf{Prog}}
\newcommand{\bmart}{\mathbf{Mart}}

\newcommand{\cM}{\mathcal{M}}

\newcommand{\pnp}{{(n)}}
\newcommand{\gs}{\msf {gs}}

\begin{document}

\author[H.-B. Chen]{Hong-Bin Chen}
\address[Hong-Bin Chen]{NYU-ECNU Institute of Mathematical Sciences, NYU Shanghai, Shanghai, China}

\author[A. Guionnet]{Alice Guionnet}
\address[Alice Guionnet]{Department of Mathematics, ENS Lyon and CNRS, Lyon, France}

\author[J. Ko]{Justin Ko}
\address[Justin Ko]{Department of Mathematics, Syracuse University, New York, United States}

\author[B. Lacroix-A-Chez-Toine]{Bertrand Lacroix-A-Chez-Toine}
\address[Bertrand Lacroix-A-Chez-Toine]{Department of Mathematics, King's College, London, United Kingdom}

\author[J.-C. Mourrat]{Jean-Christophe Mourrat}
\address[Jean-Christophe Mourrat]{Department of Mathematics, ENS Lyon and CNRS, Lyon, France}

\keywords{}
\subjclass[2010]{}
\date{\today}

\title[Large deviations for the ground-state energy]{One-sided large deviations for the ground-state energy of spin glasses}

\begin{abstract}
We describe the large deviations above its typical value of the maximal energy of a spin glass with $\pm 1$ spins. Thanks to the relatively explicit description of the rate function we identify, we then show that the latter is asymptotically quadratic near its minimum if and only if an external magnetic field is present. The proof starts from a Parisi-type formula for the fractional moments of the partition function, which we then leverage to obtain the limit of the Laplace transform of the maximum energy. Using convex-duality arguments, we then rewrite this Laplace transform as a supremum over martingales, and thereby deduce the large-deviation principle with explicit rate function. 
\end{abstract}

\maketitle

%
%
%
%
%
%

\tableofcontents

\section{Introduction and main results}\label{s.intro}

Let $(\beta_p)_{p \ge 2}$ be a sequence of nonnegative real numbers such that the function $\xi(r) := \sum_{p \ge 2} \be_p^2 r^p$ is finite for every $r \in \R$. For every integer $N \ge 1$, we let $(H'_N(\sigma))_{\sigma \in \R^N}$ be the centered Gaussian field with covariance such that, for every $\si, \tau \in \R^N$,
\begin{align}\label{e.H'_N(sigma)=}
\E [H'_N(\sigma) H'_N(\tau)] = N \xi \Ll( \frac{\sigma \cdot \tau}{N} \Rr) .
\end{align}
For a fixed $h\in \R$, we consider 
\begin{equation}
\label{e.def.HN}
H_N(\sigma) := H_N'(\sigma) + h \sum_{i = 1}^N \sigma_i.
\end{equation}
In order to minimize the number of minus signs, we define the ``ground state'' energy~$L_N$ as the maximum of $H_N$ over $\{\pm 1\}^N$, normalized by $N^{-1}$, that is,
\begin{equation}\label{e.L_N=}
L_N := \max_{\sigma \in \{-1,1\}^N} \frac{H_N(\sigma)}{N}. 
\end{equation}
Our main result is an upper large-deviation principle for $L_N$, with a relatively explicit rate function. We also show that this rate function is quadratic near its minimum if and only if $h \neq 0$. 

As a preliminary step, we obtain a representation of the large-$N$ limit of $L_N$ that takes the form of a supremum, in the spirit of the ``un-inverted'' formulas introduced in \cite{mourrat2025uninverting}. We denote this limit by
\begin{equation*}  
\msf{gs} := \lim_{ N \to + \infty} \E[L_N] = \lim_{ N \to + \infty} L_N \qquad (\text{a.s.}).
\end{equation*}
We let $\mathscr P = (\Omega,(\mcl F_t)_{t \in [0,1]}, \PP)$ be a filtered probability space with complete $\sigma$-algebras (that is, for every $t \in [0,1]$, the $\sigma$-algebra $\mcl F_t$ contains every subset of any null-measure set). We assume that the probability space $\mathscr P$ is sufficiently rich that one can define a Brownian motion $(W_t)_{t \in [0,1]}$ over $\mathscr P$. We denote by $\EE$ the expectation associated with $\PP$, and by $\bmart$ the space of bounded martingales over $\msc P$. 

\begin{theorem}[Un-inverted formula for the ground-state energy]
\label{t.gs}
We have
\begin{multline}  
\label{e.gs}
\msf{gs} = \sup \bigg\{ \EE \Ll[ h\al_0+ \al_1  \int_0^1 \sqrt{\xi''(t)} \, \d W_t \Rr] 
 \ \Big| \ \al \in \bmart, \ |\alpha_1| \le 1,  \text{ and }   
\\
\forall t \in [0,1], \ \int_t^1 \xi''(s)(\EE[\al_s^2] - s) \, \d s \ge 0
\bigg\}.
\end{multline}
Moreover, the supremum in \eqref{e.gs} is achieved at a unique $\alpha \in \bmart$.
\end{theorem}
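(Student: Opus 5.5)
The plan is to start from the zero-temperature Parisi formula of Auffinger and Chen, and to read the Parisi measure as a Lagrange multiplier for the constraint in \eqref{e.gs}.

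We use that formula in the following form. Consider nondecreasing, right-continuous $\gamma:[0,1)\to(0,\infty)$ with $\int_0^1\gamma<\infty$. Write $\gamma(s)=\mu([0,s])$ for a nonnegative measure $\mu$; the atom at $0$ plays the role of the constant $L$. Then
\begin{equation*}
\msf{gs}=\inf_\gamma \Big\{ \Psi_\gamma(0,h)-\tfrac12\int_0^1 t\,\xi''(t)\gamma(t)\,\d t\Big\},
\end{equation*}
where $\Psi_\gamma$ solves
\begin{equation*}
\dr_t\Psi+\tfrac12\xi''\big(\dr_x^2\Psi+\gamma(\dr_x\Psi)^2\big)=0, \qquad \Psi(1,x)=|x|.
\end{equation*}
This infimum is achieved at a unique $\gamma^*$. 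The key tool is the stochastic-control (Boué--Dupuis type) representation of Auffinger--Chen:
\begin{equation*}
\Psi_\gamma(0,h)=\sup_{u}\EE\Big[\Big|h+\int_0^1\xi''\gamma u_s\,\d s+\int_0^1\sqrt{\xi''}\,\d W_s\Big|-\tfrac12\int_0^1\xi''\gamma u_s^2\,\d s\Big],
\end{equation*}
with the supremum over bounded progressive $u$. The optimizer is $u_s=\dr_x\Psi_\gamma(s,X_s)$ along the optimally controlled diffusion $X$.

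\emph{Upper bound ($\sup\le\msf{gs}$).} Fix $\al\in\bmart$ satisfying the constraints in \eqref{e.gs}, and any $\gamma$. Plug $u=\al$ into the control representation and use $|x|\ge \al_1x$, valid since $|\al_1|\le1$. The martingale property gives
\begin{equation*}
\EE\Big[\al_1\int_0^1\xi''\gamma\al_s\,\d s\Big]=\int_0^1\xi''\gamma\,\EE[\al_s^2]\,\d s .
\end{equation*}
Hence
\begin{equation*}
\Psi_\gamma(0,h)-\tfrac12\int_0^1 t\xi''\gamma\,\d t \ \ge\ \EE\Big[h\al_0+\al_1\int_0^1\sqrt{\xi''}\,\d W\Big]+\tfrac12\int_0^1\xi''(s)\gamma(s)\big(\EE[\al_s^2]-s\big)\,\d s.
\end{equation*}
By Fubini the last term equals $\frac12\int \mu(\d t)\int_t^1\xi''(s)(\EE[\al_s^2]-s)\,\d s$, which is $\ge0$ by the constraint since $\mu\ge0$. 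Taking the infimum over $\gamma$ gives the bound. Boundedness of $\gamma$ near $1$ may fail, so one approximates $\gamma$ by truncations, or uses $\gamma$ bounded in the Parisi infimum.

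\emph{Lower bound and attainment.} Take the Parisi minimizer $\gamma^*$, and let $X^*$ be the optimally controlled process. Set $\al_t:=\dr_x\Psi_{\gamma^*}(t,X^*_t)$. Itô's formula with the PDE shows that $\al$ is a martingale bounded by $1$, with $\al_1=\sign(X^*_1)$, so every inequality in the chain above becomes an equality. It then remains to show two things:
\begin{itemize}
\item[(i)] $\al$ is feasible, i.e.\ $\int_t^1\xi''(s)(\EE[\al_s^2]-s)\,\d s\ge0$ for all $t$;
\item[(ii)] complementary slackness holds, i.e.\ $\int\mu^*(\d t)\int_t^1\xi''(\EE[\al_s^2]-s)\,\d s=0$.
\end{itemize}
Both come from the first-order optimality of $\gamma^*$. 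Perturb $\gamma^*\mapsto\gamma^*+\eps\,\mathbf 1_{[t,1)}$ with $\eps>0$, which is always admissible, and $\gamma^*\mapsto\gamma^*-\eps\,\mathbf 1_{[t,1)}$ for $t$ in the support of $\mu^*$. By the envelope theorem, the derivative of $\Psi_\gamma(0,h)$ in direction $\delta\gamma$ is $\frac12\int\xi''\delta\gamma\,\EE[(\dr_x\Psi)^2]$. So the directional derivative of the Parisi functional is exactly $\frac12\int_t^1\xi''(\EE[\al_s^2]-s)\,\d s$, which yields (i) and (ii). This differentiability step is the one I expect to be the main technical obstacle. It requires regularity of $\Psi_\gamma$ uniformly as $\gamma$ blows up near $t=1$, together with continuity of the optimal control in $\gamma$. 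I would first establish it for $\gamma$ that are step functions bounded away from $1$, and then pass to the limit using the Lipschitz continuity of the Parisi functional in the appropriate $L^1$-type metric.

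\emph{Uniqueness.} Let $\al$ be any maximizer. Equality must hold in the upper-bound chain with $\gamma=\gamma^*$. In particular, $u=\al$ must be optimal in the control problem. Since $\gamma^*>0$, the functional is strictly concave in $u$ through the term $-\frac12\int\xi''\gamma^* u^2$, so the optimal control is unique $\d s\otimes\PP$-a.e. This forces $\al_t=\dr_x\Psi_{\gamma^*}(t,X^*_t)$, and hence $\al$ is unique as a bounded martingale.
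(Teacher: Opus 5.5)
Your route is genuinely different from the paper's, and it works once a few points are repaired.

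\textbf{How the two routes compare.} The paper gets Theorem~\ref{t.gs} as the case $s=0$ of Theorem~\ref{t.laplace_uninvert}. Its steps are:
\begin{enumerate}
\item write $\Psi_\ga(0,h)$ as a supremum over $\bmart_1$ of a functional affine in $\ga$ (Lemma~\ref{l.prog.to.mart});
\item exchange $\inf_\ga$ and $\sup_\al$ by a minimax theorem on the compact convex set $\msf K$ (Lemma~\ref{l.inter_inf_sup});
\item produce a maximizer on an enlarged probability space by compactness and Skorokhod (Lemma~\ref{l.lim=sup_mart_achieve});
\item use the saddle-point inequalities to identify that maximizer with \eqref{e.identity.alpha} for the Parisi minimizer, which gives uniqueness and transfer to any admissible space.
\end{enumerate}
In that route your conditions (i) and (ii) are never proved directly. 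They are equivalent to the saddle inequality $\Gamma(\bar\ga,\bar\al)\le\Gamma(\ga,\bar\al)$ for all $\ga$ (notation of the proof of Theorem~\ref{t.laplace_uninvert}), which the minimax theorem supplies for free. So the paper needs no differentiability of the Parisi functional, and the same argument covers every $s\ge 0$, as the LDP requires.

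Your weak-duality-plus-KKT verification instead constructs the maximizer explicitly on any admissible space. It avoids minimax, compactness and the change of probability space. The price is the first-order condition at $\ga^*$. That condition is exactly \eqref{e.Parisi_functional}, which the paper itself imports from the literature in Section~\ref{s.quadratic}, so you can cite it rather than rebuild it from step functions. It is genuinely the nontrivial input: plugging the fixed optimal control into the representation only gives $\mcl P(\ga^*+\eps d)\ge \mcl P(\ga^*)+\eps D$ (with $\mcl P$ the Parisi functional and $D$ your directional derivative), which is the useless direction.

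\textbf{Points to repair.}
\begin{enumerate}
\item The perturbation $\ga^*-\eps\1_{[t,1)}$ is not admissible. It breaks monotonicity unless $\mu^*$ has an atom of mass at least $\eps$ at $t$, so it cannot give (ii). Use $\ga^*\mapsto(1\pm\eps)\ga^*$ instead. Its vanishing two-sided derivative is $\int_0^1\xi''(s)\ga^*(s)(\EE[\al_s^2]-s)\,\d s=0$, which is (ii) after Fubini.
\item The assumption $\ga^*>0$ is false in general. If $0\in\supp\d\ga^*$, the identity $\EE[(\dr_x\Psi_{\ga^*}(q,X_q))^2]=q$ recalled in Section~\ref{s.quadratic}, taken at $q=0$, forces $\dr_x\Psi_{\ga^*}(0,h)=0$, i.e.\ $h=0$. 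So for $h\neq 0$, $\ga^*$ vanishes on an initial interval. Consequently:
\begin{itemize}
\item your class must be $\mcl M_0$, since over strictly positive $\ga$ the infimum is then not attained;
\item the strict-concavity uniqueness argument only pins down $\al_t$ for $t$ near $1$, where $\ga^*(t)>0$;
\item you must then conclude through $\al_t=\EE[\al_1\,|\,\mcl F_t]$, and treat $\ga^*\equiv 0$ separately via $\al_1=\sign(X_1)$, as the paper does.
\end{itemize}
\item Itô's formula gives the martingale property only on $[0,1)$. You also need $\al_1=\sign(X^*_1)$ on $\{X^*_1\neq 0\}$ to get equality in $|x|\ge \al_1 x$ at $x=X^*_1$. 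This requires an extra argument, e.g.\ convexity of $\Psi_{\ga^*}(t,\cdot)$ together with its pointwise convergence to $|\cdot|$; the paper handles it through Lemma~\ref{l.Phi^*_cvg}.
\item Plugging a martingale on a general admissible filtration into the control formula uses the verification inequality on that filtration, not just on Wiener space. This is Lemma~\ref{l.variat.psiga}.
\end{enumerate}
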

We now state the large-deviation principle for deviations of $L_N$ above its typical value; we also refer to Theorem~\ref{t.LDP} for a more general statement. For every $r > \msf{gs}$, we set
\begin{multline}
\label{e.firstdef.lambda*}
\Lambda^*(r) = \inf \Bigg\{\frac{\Ll(r-\EE\Ll[h\alpha_0+\alpha_1\int_0^1\sqrt{\xi''(t)}\d W_t\Rr]\Rr)^2}{2\int_0^1\xi''(t)\Ll(\EE\Ll[\alpha^2_t\Rr]-t\Rr)\d t}
\ \bigg|\  \ 
\\
\al \in \bmart, \ |\alpha_1| \le 1,  \text{ and }   \forall t \in [0,1], \ \int_t^1 \xi''(s)(\EE[\al_s^2] - s) \, \d s \ge 0
\bigg\}.
\end{multline}
We also set $\Lambda^*(\msf{gs}) = 0$. Later in the paper, we will define $\Lambda$ to be the limit Laplace transform of $L_N$, and show that the convex dual of $\Lambda$ is given by the expression in \eqref{e.firstdef.lambda*}, hence the notation $\Lambda^*$ for this quantity. 
\begin{theorem}[Large deviations for the ground-state energy]
\label{t.ldp}
For every $r \ge \msf{gs}$, we have
\begin{equation*}  
\lim_{N \to \infty} - \frac 1 N \log \P \Ll[ L_N \ge r \Rr] 
= \Lambda^*(r).
\end{equation*}
\end{theorem}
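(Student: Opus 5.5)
The plan is to follow the route announced in the abstract: Laplace transform, Gärtner--Ellis-type argument, and convex duality.

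\emph{Step 1: limit Laplace transform.} For $\lambda \ge 0$ I set $\Lambda(\lambda) := \lim_N \frac1N \log \E[\exp(\lambda N L_N)]$ and show the limit exists. Take the Parisi-type formula for the fractional moments $\frac1N\log\E[Z_N(\beta)^{m}]$, with $m\in(0,1]$, $Z_N(\beta)=\sum_\sigma e^{\beta H_N(\sigma)}$. Set $m=\lambda/\beta$ and let $\beta\to\infty$. Since
\[
e^{\lambda N L_N}\le Z_N(\beta)^{\lambda/\beta}\le 2^{N\lambda/\beta}e^{\lambda N L_N},
\]
the error is $O(\lambda/\beta)$ uniformly in $N$. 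So $\Lambda(\lambda)$ is the $\beta\to\infty$ limit of the rescaled Parisi-type functional, which should be a zero-temperature variational formula. Gaussian concentration gives $|\frac1N\log\E e^{\lambda N L_N}-\lambda\E L_N|\le \lambda^2\xi(1)/2$ at worst, which bounds $\Lambda$; $\Lambda$ is convex as a limit of convex functions, and $\Lambda(\lambda)=\lambda\,\gs+o(\lambda)$ as $\lambda\downarrow0$.

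\emph{Step 2: un-invert $\Lambda$.} Using the convex-duality technique behind Theorem~\ref{t.gs}, I rewrite $\Lambda(\lambda)$ as a supremum over the same admissible martingales $\alpha$:
\[
\Lambda(\lambda)=\sup_\alpha\Big\{\lambda\,\EE\Big[h\alpha_0+\alpha_1\!\int_0^1\!\sqrt{\xi''}\,\d W\Big]+\frac{\lambda^2}{2}\int_0^1\xi''(t)\big(\EE[\alpha_t^2]-t\big)\,\d t\Big\}.
\]
Heuristically, the quadratic term is the Gaussian fluctuation cost along the path, and it is nonnegative by the constraint at $t=0$. I expect this identity to be the main obstacle. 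It requires exchanging sup and inf in a non-compact martingale space, handling the Parisi measure at $m=\lambda/\beta$ after the $\beta\to\infty$ rescaling, and showing that the constraint set is exactly right. I would try first to reproduce the proof of Theorem~\ref{t.gs}: write the zero-temperature Parisi functional through a stochastic control (Auffinger--Chen) representation, then apply a minimax theorem on the convex set of laws of $\alpha$.

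\emph{Step 3: identify the Legendre transform.} Write $A(\alpha)$ for the linear term and $B(\alpha)\ge0$ for the quadratic coefficient. For $r>\gs$,
\[
\sup_{\lambda\ge0}\{\lambda r-\Lambda(\lambda)\}=\sup_\lambda\inf_\alpha\{\lambda(r-A)-\lambda^2B/2\}.
\]
Swap $\sup_\lambda$ and $\inf_\alpha$; this needs another minimax argument, using concavity in $\lambda$ and convexity of the constraint set, possibly after parametrizing by laws. Since $A\le\gs<r$, the inner supremum equals $(r-A)^2/(2B)$, which gives exactly \eqref{e.firstdef.lambda*}.

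\emph{Step 4: LDP bounds.} The upper bound is Chernoff's inequality:
\[
\P[L_N\ge r]\le e^{-N(\lambda r-\Lambda_N(\lambda))},
\]
then optimize over $\lambda$. The lower bound is the standard Gärtner--Ellis tilting argument for one-sided deviations. Because $\Lambda^*$ is convex and increasing on $[\gs,\infty)$, it suffices to show that $\Lambda$ is differentiable on $(0,\infty)$, so that the tilted law concentrates at $\Lambda'(\lambda)$. Alternatively I would use the exposed-point version and prove that every $r>\gs$ is exposed. I would derive differentiability from the uniqueness of the optimizer in Step 2, via an envelope theorem, mirroring the uniqueness statement in Theorem~\ref{t.gs}. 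Finally, continuity of $\Lambda^*$ handles the passage from $\ge r$ to open sets, and the case $r=\gs$ is trivial.
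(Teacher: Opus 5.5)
Your architecture matches the paper's. It has the same steps: the sandwich between $Z_N(\beta)^{\lambda/\beta}$ and $e^{\lambda NL_N}$ combined with the fractional-moment formula; an un-inverted martingale formula for $\Lambda$ from the Auffinger--Chen control representation plus a minimax theorem; $C^1$ regularity of $\Lambda$ by an envelope argument based on uniqueness of the optimizer; a second minimax for $\Lambda^*$; and Chernoff plus tilting. On the minimax, the paper avoids the non-compactness you worry about by working in the image set $\msf K\subset\R\times L^1([0,1])$. This set is compact because $t\mapsto\EE[\alpha_t^2]$ is monotone and bounded, and points of its convex hull are realized by randomizing at time $0$, which is your ``laws of $\alpha$''.

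The gap is in Step~1, where you assert that the $\beta\to\infty$ limit of the finite-$\beta$ infima ``should be'' the zero-temperature formula $\inf_{\gamma\in\mcl M_\lambda}\bigl(\Psi_\gamma(0,h)-\frac12\int_0^1t\xi''(t)\gamma(t)\,\d t\bigr)$. Only ``$\le$'' is soft: fix $\gamma\in\mcl M_\lambda$ and set it equal to $\beta$ at $t=1$, as in Lemma~\ref{l.laplace_<}. For ``$\ge$'' you must pass to the limit along the finite-$\beta$ minimizers $\gamma_{P,\beta}$, whose value at $t=1$ is $\beta\to\infty$. This needs tightness: a bound on $\int_0^1\xi'(t)\gamma_{P,\beta}(t)\,\d t$ uniform in $\beta$, hence $\gamma_{P,\beta}(t)\le C/(\xi(1)-\xi(t))$. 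Only then does Helly's theorem give a limit $\gamma_0\in\mcl M_\lambda$ with a finite limiting atom at $t=1$, as in \eqref{e.nu_0=}. The paper stresses that it cannot obtain this control from the variational formula itself. In your ordering the step cannot be bypassed, because your envelope argument needs the zero-temperature infimum formula and the uniqueness of its minimizer: that is what makes the maximizing martingale unique via \eqref{e.identity.alpha}.

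The missing idea is Lemma~\ref{l.tail_gamma_P,beta}, proved by returning to finite $N$.
\begin{itemize}
\item Couple with a Poisson--Dirichlet process of parameter $\lambda/\beta$. Gaussian integration by parts and Panchenko's differentiability argument give $\int_0^1\xi'\gamma_{P,\beta}=\lim_N\E\la H'_N(\sigma,n)/N\ra_\beta$.
\item The Poisson--Dirichlet invariance bounds this Gibbs average by the size-biased quantity $\E[M_NZ_N(\beta)^{\lambda/\beta}]/\E[Z_N(\beta)^{\lambda/\beta}]$, where $M_N=\max_\sigma H'_N(\sigma)/N$.
\item This quantity is at most $\beta^{-1}\partial_a\chi_N(\beta,\lambda/\beta)+|h|$, where $\chi_N(\beta,a)=\frac1N\log\E[(2^NZ_N(\beta))^a]$.
\item Convexity in $a$ gives $\partial_a\chi_N(\beta,\frac\lambda\beta)\le\beta\bigl(\chi_N(\beta,\frac{\lambda+1}\beta)-\chi_N(\beta,\frac\lambda\beta)\bigr)$.
\item The subtracted term is bounded below by Jensen. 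The first term is bounded uniformly in $\beta\ge1$: the inequality $\|\cdot\|_{\ell^\beta}\le\|\cdot\|_{\ell^1}$ reduces it to the $\beta$-free moment $\frac1N\log\E[(\sum_\sigma e^{H_N(\sigma)})^{\lambda+1}]$, which an explicit Gaussian computation bounds.
\end{itemize}
With this estimate in hand, the remaining steps of your plan go through as in Sections~\ref{s.laplace}--\ref{s.derivation}.
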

Thanks to the relatively explicit description of the large-deviation rate function, we can obtain some information on its asymptotic behavior near its minimum. At a heuristic level, this information should be related to the fluctuations of the ground-state energy, as discussed in more details below. 
\begin{theorem}[quadratic/non-quadratic behavior near the minimum] 
\label{t.quadratic}
If $h \neq 0$, then there exists a constant $C \in(0, +\infty)$ such that for every $r \ge \msf{gs}$,
\begin{equation*}  
C^{-1} (r-\msf{gs})^2 \le \Lambda^*(r) \le C (r-\msf{gs})^2.
\end{equation*}
Conversely, if $h = 0$, then
\begin{equation*}  
\lim_{r \to \msf{gs}^+} \frac{\Lambda^*(r)}{(r-\msf{gs})^{2}} = +\infty.
\end{equation*}
\end{theorem}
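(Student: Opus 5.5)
The plan is to work directly with the variational formula \eqref{e.firstdef.lambda*}. For $\alpha$ in the admissible set $\mathcal A$ of Theorem~\ref{t.gs}, write
\begin{equation*}
F(\alpha) := \EE\Ll[h\alpha_0+\alpha_1\int_0^1\sqrt{\xi''(t)}\,\d W_t\Rr], \qquad D(\alpha) := \int_0^1 \xi''(t)\Ll(\EE[\alpha_t^2]-t\Rr)\d t .
\end{equation*}
With this notation $\Lambda^*(r)=\inf_{\alpha\in\mathcal A}(r-F(\alpha))^2/(2D(\alpha))$, and $D(\alpha)\ge 0$ is the $t=0$ case of the constraint. Two facts are immediate. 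First, $F(\alpha)\le \msf{gs}$ for every $\alpha\in\mathcal A$, by Theorem~\ref{t.gs}. Second, since $|\alpha_t|\le 1$, we have $D(\alpha)\le \int_0^1\xi''(t)(1-t)\,\d t =: C_0<\infty$. Hence, for $r\ge \msf{gs}$, every admissible $\alpha$ satisfies $(r-F(\alpha))^2/(2D(\alpha))\ge (r-\msf{gs})^2/(2C_0)$. This gives the lower bound $C^{-1}(r-\msf{gs})^2\le\Lambda^*(r)$ for every $h$.

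For the quadratic upper bound when $h\ne0$, I will plug the unique optimizer $\alpha^*$ of Theorem~\ref{t.gs} into \eqref{e.firstdef.lambda*}. This gives $\Lambda^*(r)\le (r-\msf{gs})^2/(2D(\alpha^*))$, so it suffices to show $D(\alpha^*)>0$.

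Set $g(t):=\int_t^1\xi''(s)(\EE[(\alpha^*_s)^2]-s)\,\d s\ge0$. Then $g'(t)=-\xi''(t)(\EE[(\alpha^*_t)^2]-t)$. By Jensen, $\EE[(\alpha^*_t)^2]\ge (\alpha^*_0)^2$. So if $\alpha^*_0\neq 0$, then $g$ is strictly decreasing on a neighbourhood of $0$ (possibly degenerate where $\xi''$ vanishes; in that case I would use the first $p$ with $\beta_p\ne0$). It follows that $D(\alpha^*)=g(0)>g(\eps)\ge 0$.

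The remaining point is that $\alpha^*_0=\EE[\alpha^*_1]\ne 0$ when $h\ne0$. I would argue this by first-order optimality: if $\alpha^*_0=0$, perturb $\alpha^*$ in the direction of the constant martingale $\sign(h)$, keeping the constraint by a compensating small dilation of the time-dependent part, and show $F$ increases at rate $|h|$. Alternatively, I would invoke the identification (made in the paper) of $\alpha^*$ with the Parisi optimizer of the zero-temperature formula, for which $q_{\min}=(\alpha^*_0)^2>0$ if and only if $h\ne0$. I expect this step to be the main technical obstacle.

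For $h=0$, the goal is to show that $D(\alpha)\to0$ uniformly as $F(\alpha)\to\msf{gs}$. More precisely, I want $\omega(\eta):=\sup\{D(\alpha): \alpha\in\mathcal A,\ F(\alpha)\ge\msf{gs}-\eta\}\to0$ as $\eta\to0$. Given this, set $\delta=r-\msf{gs}$ and split the infimum according to whether $\msf{gs}-F(\alpha)\le\eta$ or not. This gives
\begin{equation*}
\frac{\Lambda^*(r)}{\delta^2}\ge\min\Ll(\frac{1}{2\omega(\eta)},\ \frac{(\delta+\eta)^2}{2C_0\delta^2}\Rr).
\end{equation*}
Choosing $\eta=\sqrt\delta$ makes both terms diverge as $\delta\to0$. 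To obtain $\omega(\eta)\to0$, I need two ingredients. The first is $D(\alpha^*)=0$ at $h=0$: here $\alpha^*_0=0$ (by the symmetry $\alpha\mapsto-\alpha$ together with uniqueness), and I would argue, again via the Parisi correspondence ($0$ in the support of the optimal measure, so the constraint is binding at $t=0$), that $g(0)=0$. The second is a stability statement: maximizing sequences converge to $\alpha^*$ strongly enough that $\EE[\alpha_t^2]$ converges in $L^1(\xi''(t)\,\d t)$. Weak compactness in $L^2$ together with uniqueness gives weak convergence, but $D$ is only lower semicontinuous under this convergence. I would upgrade to strong convergence by exploiting the quadratic structure used in the proof of uniqueness in Theorem~\ref{t.gs}. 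This is the second delicate point of the argument.
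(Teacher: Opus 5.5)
Your lower bound (valid for every $h$) and your upper bound for $h\neq 0$ are the paper's argument. The $h=0$ case, however, rests on a stability claim that you have not established, and the mechanism you propose for it does not work.

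\textbf{The gap for $h=0$.} The reduction to $\omega(\eta)\to0$ with $\eta=\sqrt\delta$ is correct. So is $D(\alpha^*)=0$ by complementary slackness. With $\gamma$ the minimizer in \eqref{e.def.gs}, the saddle point in the proof of Theorem~\ref{t.laplace_uninvert} at $s=0$ gives $\int_0^1\xi''(t)\gamma(t)(\EE[(\alpha^*_t)^2]-t)\,\d t=0$, i.e.\ $\int_{[0,1)}g\,\d\gamma=0$ for your continuous $g\ge0$. Hence $g$ vanishes on $\supp\d\gamma$, which contains $0$ by Proposition~\ref{p.h=0,0in_supp}.

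Weak $L^2$ compactness cannot deliver the stability, however. The admissible set is not weakly closed: the constraint is a \emph{lower} bound on $\EE[\alpha_t^2]$, while $\alpha\mapsto\EE[\alpha_t^2]$ is only weakly lower semicontinuous. So nothing guarantees that a weak limit of near-maximizers is admissible. The uniqueness in Theorem~\ref{t.gs} therefore cannot be invoked to identify it, and your subsequent ``upgrade'' is left unspecified.

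The tool that works is compactness \emph{in law}, as in the proofs of Lemmas~\ref{l.lim=sup_mart_achieve} and~\ref{l.Lambda'(s)=}:
\begin{itemize}
\item $(W,(\alpha^{(n)}_t)_{t\in\Q\cap[0,1]})$ is tight.
\item A subsequential limit, made almost sure by Skorokhod, is a $[-1,1]$-valued martingale $\bar\alpha$ on an enlarged admissible space.
\item Both $F$ and $\EE[\alpha_t^2]$ pass to the limit: at rational $t$ directly, hence at a.e.\ $t$ by monotonicity.
\end{itemize}
Thus $\bar\alpha$ is admissible with $F(\bar\alpha)=\gs$ and $D(\alpha^{(n)})\to D(\bar\alpha)$. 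The maximizer is unique on every admissible space and is the explicit functional \eqref{e.identity.alpha} of $W$, so $D(\bar\alpha)=D(\alpha^*)=0$.

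Your quadratic idea can also be made rigorous, but directly rather than through weak limits. The verification argument of Lemma~\ref{l.variat.psiga} gives $\gs-F(\alpha)\ge\frac12\EE\int_0^1\xi''(t)\gamma(t)(\alpha_t-\dr_x\Psi_\gamma(t,Y_t))^2\,\d t$ for admissible $\alpha$, with $Y$ as in \eqref{e.Y=}. A Gr\"onwall argument on $[\eps,1-\eps]$, where $\gamma>0$ when $h=0$, then gives $L^2$ convergence to $\alpha^*$ there.

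\textbf{Comparison with the paper.} With this fix, your route is a genuine alternative. The paper never proves stability for arbitrary near-maximizers. For $r_n\downarrow\gs$ it takes the exact minimizers $\alpha^{(n)}$ of \eqref{e.firstdef.lambda*}, which by Theorem~\ref{t.LDP} are the maximizers of $\Lambda(s_n)/s_n$ with $r_n=\Lambda'(s_n)$. It shows $s_n\to\underline s$ and identifies the limit in law as the maximizer at $\underline s$, where $D$ vanishes by Lemma~\ref{l.cond_strict_convex} or Lemma~\ref{l.h=0_int=0}. Your version needs only Theorem~\ref{t.gs}, the saddle point and Proposition~\ref{p.h=0,0in_supp}. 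It bypasses the $C^1$ and strict-convexity analysis of $\Lambda$, and yields the uniform statement $\omega(\eta)\to0$. The paper's version instead reuses machinery it had already built for the large-deviation principle.

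\textbf{A smaller point for $h\neq0$.} The step you flag as the main obstacle is immediate. By \eqref{e.identity.alpha}, $\alpha^*_0=\dr_x\Psi_\gamma(0,h)$, and $\dr_x\Psi_\gamma(0,\cdot)$ is odd and strictly increasing (Lemma~\ref{l.d_xPsi_odd_increase}); this is what the paper uses.

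Neither of your justifications is right as stated.
\begin{itemize}
\item The identity $q_{\min}=(\alpha^*_0)^2$ is false. One has $\EE[(\alpha^*_q)^2]=q$ on $\supp\d\gamma$, and by \eqref{e.identity.alpha}, $\EE[(\alpha^*_{q_{\min}})^2]$ exceeds $(\alpha^*_0)^2$ by $\int_0^{q_{\min}}\xi''(t)\EE[(\dr_x^2\Psi_\gamma(t,X_t))^2]\,\d t$, which is positive when $q_{\min}>0$.
\item The perturbation compatible with $|\alpha_1|\le1$, namely $(1-\eps)\alpha^*+\eps\,\sign(h)$, changes $F$ at rate $|h|-\gs\le0$, not at rate $|h|$. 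To see $\gs\ge|h|$, test $L_N$ with $\sigma_i=\sign(h)$.
\end{itemize}
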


In order to prove these results, we first derive a Parisi-type formula for the fractional moments of the partition function of the model. In other words, for every $s \in (0,1)$, we give a variational formula in the form of an infimum for the large-$N$ limit of 
\begin{equation}  
\label{e.disp.fractional}
\frac 1 {sN} \log \E \Ll[\Ll(\frac 1 {2^N} \sum_{\sigma \in \{-1,1\}^N} e^{H_N(\sigma)}\Rr)^s\Rr].
\end{equation}
This is Theorem~\ref{t.fractional}, whose proof occupies Section~\ref{s.fractional}. In Section~\ref{s.laplace}, we obtain the asymptotic behavior of the Laplace transform of $N L_N$. The starting point is the observation that, for every $s > 0$,
\begin{equation}  
\label{e.ln.laplace}
\Lambda(s) := \lim_{N\to\infty}\frac{1}{N}\log \E\Ll[e^{sNL_N}\Rr] = \lim_{\beta\to \infty}\lim_{N\to\infty}\frac{1}{N}\log \E \Ll[\Ll(\frac 1 {2^N} \sum_{\sigma \in \{-1,1\}^N} e^{\beta H_N(\sigma)}\Rr)^{\frac s \beta}\Rr].
\end{equation}
We use this together with the result of the previous section to obtain a variational formula for the limit of this Laplace transform.

Once this is done, we would only need to assert the $C^1$ regularity of $\Lambda$ in order to be able to state an abstract upper large-deviation principle for $L_N$, with a rate function that is given by the convex dual of $\Lambda$. However, this representation of the large-deviation rate function would be rather inexplicit. Instead, in Section~\ref{s.uninverting} we use convex-duality arguments to obtain a more direct ``un-inverted'' representation of $\Lambda$. We use this representation in Section~\ref{s.derivation} to show that $\Lambda$ is a $C^1$ function and compute an explicit representation of its dual, and we thereby deduce a general upper large-deviation principle in Theorem~\ref{t.LDP}, which in particular implies Theorem~\ref{t.ldp}. 
The proof of Theorem~\ref{t.gs} is along similar lines as those of Theorem~\ref{t.ldp}, as it formally amounts to taking $s \to 0$ throughout; we obtain it as a consequence of Theorem~\ref{t.laplace_uninvert}. In Section~\ref{s.quadratic}, we prove Theorem~\ref{t.quadratic} by showing that, for $\alpha$ the optimal martingale from Theorem~\ref{t.gs}, we have the equivalence
\begin{equation}
\label{e.equiv.conditions}
h = 0 \qquad\text{if and only if}\qquad \int_0^1 \xi''(t) \Ll( \EE \Ll[ \alpha_t^2 \Rr] - t \Rr) \, \d t = 0.
\end{equation}

We stress that our results concern the \emph{upper} large deviations of the ground-state energy (which we defined as being the maximum of $H_N/N$). The phenomenology for the \emph{lower} large deviations can differ substantially. Indeed, for models with no external field, the speed of the lower large deviations is expected to be $N^2$ rather than $N$. For spherical models, this was shown in \cite{huang2023constructive}. There, the authors also obtain a description of the upper large-deviation rate function for $1$-RSB models, using arguments based on the Kac-Rice formula. These results and more were anticipated in \cite{lacroix2024replica} using the non-rigorous replica method. We also point out that, for the spherical SK model, the ground-state energy is nothing but the largest eigenvalue of a GOE random matrix, whose large deviations have been extensively studied and are also of speed $N^2$ and $N$ for the lower and upper large deviations respectively \cite{benarous2001aging, benarous1997large}. 

We now give a brief overview of related works. Results similar to our Theorem~\ref{t.fractional} deriving a Parisi-type formula for the limit of \eqref{e.disp.fractional} were already obtained in \cite{talagrand2007large} in the case when the function $\xi$ in \eqref{e.H'_N(sigma)=} is even, i.e.\ when $\beta_p = 0$ for every odd $p \ge 3$, using an approach based on \cite{guerra2001sum, Tpaper}. In \cite{conmin}, the authors consider the SK model (so $\beta_p = 0$ for all $p \ge 3$) with no external field, which they couple with general Poisson--Dirichlet cascades. One can transform the fractional moment in~\eqref{e.disp.fractional} into a free-energy calculation for a system that is coupled with a Poisson--Dirichlet process (as in e.g.\ \cite[Proposition~5.20]{dominguez2024book}), and our proof of Theorem~\ref{t.fractional} then follows an approach similar to that in \cite{conmin}, leveraging the more recent proof of the Parisi formula from \cite{pan.aom, pan}. In Theorem~\ref{t.fractional}, we in fact cover a broader class of energy functions than those of the form in \eqref{e.def.HN}, as we also allow for the presence of a random external field. In the absence of a random external field, the formula for the limit of \eqref{e.disp.fractional} is almost the same as the classical Parisi formula for the limit free energy, the only difference being that the Parisi measure is forced to have an atom of mass at least $s$ at $0$. The presence of a random external field introduces a further modification to the classical Parisi formula, where in place of averaging the Parisi PDE $\Phi_{\zeta}(0,h_1)$ over a coordinate $h_1$ of the random field, we take the Laplace transform of this quantity with exponent $s$.

While the justification of \eqref{e.ln.laplace} is not difficult, taking the $\beta \to \infty$ limit in the variational formula obtained in Theorem~\ref{t.fractional} is more delicate. In the $s \to 0$ limit, this corresponds to the derivation of a Parisi-type formula for the ground-state energy, as was already achieved in \cite{auffinger2017parisi} and for generalized mixed $p$-spin models in \cite{JagSenGround, grantGround}. In order to perform this step, one needs to derive some estimates on the optimizer in the variational formula from Theorem~\ref{t.fractional}. Remarkably, we do not know how to derive those directly from the variational formula. Instead, and as was done in \cite{auffinger2017parisi}, we go back to the finite-$N$ spin-glass model, and obtain the required estimates in Lemma~\ref{l.tail_gamma_P,beta}. Compared with \cite{auffinger2017parisi}, the proof of this lemma requires a new argument, as the fractional moment is more delicate to manipulate.

One motivation of our study is to better understand the un-inverted formulas for the free energy appearing in \cite{chen2025convex, issa2024hopf, mourrat2025uninverting}; see also \cite{mourrat2025spin} for an informal discussion where Theorem~\ref{t.gs} was anticipated for the SK model without external field. For classical systems of statistical mechanics such as the Curie--Weiss model, the limit free energy naturally takes the form of a supremum of a functional involving an energy and an entropy term, and the entropy term can be interpreted  as a large-deviation rate function. We hence wonder if there could be a simple expression for the large-deviation rate function of the random variable $H_N(\sigma)$ (with $\sigma$ sampled uniformly over $\{-1,1\}^N$, and for each fixed realization of the randomness). While we do not know how to derive a simple expression for the large-deviation rate function in this case, Theorem~\ref{t.ldp} demonstrates that this is indeed possible for a related large-deviation principle. As discussed in \cite{mourrat2025spin}, a better understanding of the un-inverted formulas from \cite{chen2025convex, issa2024hopf, mourrat2025uninverting} could potentially help to make progress on the identification of the limit free energy of multi-type spin glasses with non-convex covariance structure. At the moment, only partial results have been obtained on this problem, see \cite{chen2025free, chen2025hamilton, mourrat2020nonconvex, mourrat2023free}, as well as \cite{sparse_PDE, sparse_prob, mutual_information} for related problems in community detection.

Heuristically, the asymptotic behavior of the large-deviation rate function of~$L_N$ near its minimum should be related to the fluctuations of $L_N$ around its typical value. For instance, if 
\begin{equation*}  
- \frac 1 N \log \P[L_N \ge r] \simeq \frac{\sigma^2}{2} (r-\msf{gs})^2 \ \qquad ( N \to +\infty, \mbox{ then }  r \to \msf{gs}^+), 
\end{equation*}
then formally replacing $r$ with $\msf{gs} + x/\sqrt{N}$ suggests a CLT-type scaling with a Gaussian tail probability. Theorem~\ref{t.quadratic} is thus heuristically consistent with the results of \cite{chen2018energy}, where for even $\xi$, it is shown that the ground-state energy has Gaussian fluctuations of order $N^{-1/2}$ if $h \neq 0$, and that these fluctuations are of order $o(N^{-1/2})$  when $h = 0$. Similar results to those in \cite{chen2018energy} were also proved for spherical models in \cite{chen2017parisi}. Remarkably, \cite[Theorem~8.4]{chatterjee} also identifies a class of covariance functions $\xi$ for which the fluctuations of the ground-state energy are shown to be $O(N^{-5/8 + o(1)})$. 

For the spherical SK model with $h = 0$, the ground-state energy is the top eigenvalue of a GOE matrix, and its fluctuations are known to be of order $N^{-2/3}$ \cite{tracy1996orthogonal}. In the case $\xi(r) = r^p$ with $p \ge 3$ and $h = 0$, the fluctuations of the ground-state energy were shown to be of order $N^{-1}$ in \cite{subag2017extremal}.

A closely related question concerns the scale of the fluctuations of the free energy.  The free energy of the SK model with $h = 0$ was shown to have fluctuations of order $o(N^{-{1/2}})$ at all temperatures in \cite{chatterjee}. For even $\xi$ and $h \neq 0$, the free energy was shown to have Gaussian fluctuations of order $N^{-1/2}$ at all temperatures in~\cite{chen2017fluctuations}. 
For the spherical SK model with $h = 0$, the fluctuations of the free energy at low temperature were found to be of order $N^{-2/3}$ \cite{baik2016fluctuations}. We also refer to \cite{collins2025free} for a recent survey concerning the fluctuations of the free energy of the SK model.

When $h = 0$, the question of figuring out the exact order of magnitude of the fluctuations of $L_N$ is debated in the physics literature. Indeed, for the SK model, some works suggest that these fluctuations are typically of order $N^{-3/4}$ \cite{boettcher2005extremal, bouchaud2003energy, kim2007ground, palassini2003ground}, while others suggest that they are of order $N^{-5/6}$ instead \cite{aspelmeier2008finite, crisanti1992replica, palassini2008ground, parisi2008large, parisi2009phase, parisi2010universality}. As can be seen from \cite{boettcher2010simulations, palassini2003ground, palassini2008ground}, this question is difficult to settle using only the results of numerical simulations. Most relevant to the present work are the papers \cite{parisi2008large, parisi2009phase, parisi2010universality}, which argue that the large-deviation rate function should be such that 
\begin{equation}  
\label{e.fractional.power}
\Lambda^*(r) \simeq (r - \mathsf{gs})^{\frac 6 5} \qquad (r \to \mathsf{gs}^+).
\end{equation}
It would be very interesting to investigate whether a more refined analysis of our explicit formula for $\Lambda^*$ would allow us to bridge the gap between Theorem~\ref{t.quadratic} and the prediction in \eqref{e.fractional.power}. 
The rate of convergence to zero of $\E[L_N] - \mathsf{gs}$ is also a matter of debate in the physics literature \cite{aspelmeier2008finite, boettcher2005extremal, boettcher2010simulations, franz2026overlap, kim2007ground, palassini2008ground, parisi2019study}. 

Our work is inspired by \cite{lacroix2024replica}, which focuses on spherical models. In particular, the authors employ non-rigorous arguments to predict the limit of the fractional moments of the partition function, they use \eqref{e.ln.laplace} to deduce the limit of the Laplace transform of $L_N$, and they then derive and study the corresponding large-deviation principle.

%
%
%
%
%
%

\section{Fractional moments of the partition function}
\label{s.fractional}

Throughout this section, we consider the following generalization of the Hamiltonian \eqref{e.def.HN} with a random external field:
\begin{equation}\label{e.randomHamil}
    H_N(\sigma) = H'_N(\sigma) + \sum_{i=1}^N h_i \sigma_i,
\end{equation}
where $H'_N(\sigma)$ is given as in~\eqref{e.H'_N(sigma)=} and $(h_i)$ is a collection of i.i.d.\ random variables satisfying $ \E e^{\lambda |h_i|}<\infty$ for all $\lambda\geq0$. Notice that if $h_1$ is a fixed constant, then \eqref{e.randomHamil} is equivalent to \eqref{e.def.HN}.

We denote the uniform measure on $\{-1,1\}$ by $\mu$, and set $\mu_N := \mu^{\otimes N}$. 
We define the partition function $Z_N$ associated with the Hamiltonian in \eqref{e.randomHamil}  by 
\begin{align}\label{e.Z_N=}
Z_N := \int e^{H_N(\sigma) } \, \d\mu_N(\sigma). 
\end{align}

\begin{theorem}[Fractional moments of $Z_N$]\label{t.fractional}
For every $s \in (0,1)$, we have
\begin{equation}
\label{e.fractional}
\lim_{N\to \infty} \frac{1}{sN} \log \E \Ll[ Z_N^s \Rr] =  \inf_{\zeta \in \Pr_s[0,1]}\bigg(  \frac 1 s \log \E_{h_1} \Ll[ e^{s \Phi_\zeta(0,h_1)} \Rr] - \frac{1}{2} \int_0^1 t \xi''(t) \zeta([0,t]) \, \d t \bigg),
\end{equation}
where $\Phi_\zeta$ solves
\begin{equation}\label{eq:parisipdefinite}
	\begin{cases}
		-\partial_t \Phi_{\zeta} =  \frac{\xi''(t)}{2} ( \partial_{x}^2 \Phi_\zeta + \zeta([0,t]) ( \partial_x \Phi_\zeta )^2), & \quad \text{for } (t,x) \in (0,1) \times \R \\
		\Phi_{\zeta}(1,x) = \log \int e^{x \sigma} \, \d\mu(\sigma), & \quad \text{for } x \in \R,
	\end{cases}
	\, .
\end{equation}
and where $\Pr_s[0,1]$ is the space of probability measures $\zeta$ on $[0,1]$ such that $\zeta(\{0\}) \ge s$. Moreover, the infimum in \eqref{e.fractional} is achieved at a unique $\zeta \in \Pr_s[0,1]$. 
\end{theorem}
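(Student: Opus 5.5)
Following the route indicated in the introduction, I would recast the fractional moment as a free energy with a Poisson--Dirichlet cascade attached at level $0$, and then run the Guerra interpolation for the upper bound and the Aizenman--Sims--Starr scheme plus Panchenko's synchronization for the lower bound.

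\textbf{Reduction.} Let $(v_\alpha)$ be the points of a Poisson--Dirichlet process with parameter $s$, and let $(Z_N^{\alpha})$ be i.i.d.\ copies of $Z_N$, i.e.\ copies of the disorder (Gaussian field and random fields $h_i$) indexed by $\alpha$. By the standard invariance property (as in \cite[Proposition~5.20]{dominguez2024book}),
\begin{equation*}
\E \log \sum_\alpha v_\alpha Z_N^\alpha = \frac 1 s \log \E[Z_N^s] + c_s ,
\end{equation*}
with $c_s$ an explicit constant independent of $N$. The left side is the free energy of a system on $\N\times\{\pm1\}^N$, where the Gaussian field and the fields $h_i$ are independent across $\alpha$. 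Configurations in different $\alpha$-blocks therefore have Hamiltonians with covariance $0$: the overlap at level $0$ is forced to carry mass at least $s$. This is the source of the constraint $\zeta(\{0\})\ge s$. Because the random fields are also resampled across $\alpha$, after recursively averaging the cascade the outermost step becomes $\frac1s\log\E_{h_1}e^{s\,\cdot}$ instead of $\E_{h_1}$.

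\textbf{Upper bound.} For $\zeta\in\Pr_s[0,1]$ with finitely many atoms, I would refine the cascade: the first level has parameter $s=\zeta(\{0\})$, with fresh disorder per branch, and the deeper levels have parameters $\zeta([0,q_k])$. I would then interpolate between the spin-glass Hamiltonian and the cavity fields $\sum_i \sigma_i Z_i(\alpha)$ plus the compensating term, as in Guerra's bound. The derivative has the sign of $\xi(R)-R\xi'(R)+\theta$-type terms, which are nonpositive when $\xi$ is convex on $[-1,1]$. For general (non-even) $\xi$, the overlap can be negative, and I would instead obtain the bound along a subsequence from the lower-bound machinery; this is Panchenko's approach, which avoids a sign requirement. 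The endpoint computation produces $\frac1s\log\E_{h_1}e^{s\Phi_\zeta(0,h_1)}$, using the Ruelle cascade recursion that solves \eqref{eq:parisipdefinite} for atomic $\zeta$. I would then extend to all $\zeta$ by Lipschitz continuity of $\zeta\mapsto\Phi_\zeta$.

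\textbf{Lower bound.} I would apply the Aizenman--Sims--Starr cavity computation to the coupled system, expressing the limit via the limiting Gibbs measure on $\N\times\{\pm1\}^N$. The key step, and the main obstacle, is showing that the overlap array under this measure satisfies the Ghirlanda--Guerra identities. I would get them by adding a small generic perturbation $p$-spin Hamiltonian, shared within an $\alpha$-block but independent across blocks. Then I would apply Panchenko's ultrametricity and synchronization theorems, adapting \cite{pan.aom, pan} to handle the block structure as in \cite{conmin}. Two facts need care. First, overlaps between distinct $\alpha$-blocks are exactly $0$, so cross-block Hamiltonian covariance vanishes. Second, the overlap distribution has an atom of mass $\ge s$ at $0$ from sampling two replicas in different blocks; this should follow from Poisson--Dirichlet invariance, i.e.\ the Bolthausen--Sznitman invariance combined with the GG identities. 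Also, positivity of overlaps, which Talagrand's positivity principle would give, is needed for non-even $\xi$. The perturbation forces overlaps to be nonnegative in the limit, and block-mixing supports this. Once the limiting overlap law is a Ruelle cascade directed by some $\zeta\in\Pr_s$, the cavity functional evaluates to the Parisi-type functional, giving the matching lower bound.

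\textbf{Uniqueness.} I would prove strict convexity of $\zeta\mapsto \frac1s\log\E e^{s\Phi_\zeta(0,h_1)}$ on $\Pr_s$. Strict convexity of $\zeta\mapsto\Phi_\zeta(0,x)$ is known from Auffinger--Chen and Jagannath--Tobasco. Composing with the increasing convex map $u\mapsto\frac1s\log\E e^{su}$, which is convex by Hölder, preserves strict convexity. The remaining term $-\frac12\int t\xi''\zeta([0,t])\,\d t$ is linear in $\zeta$. A minimizer exists by compactness of $\Pr_s$ together with continuity of the functional, so strict convexity gives uniqueness.
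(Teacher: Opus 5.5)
Your outline is the paper's proof. It has the same ingredients:
\begin{itemize}
\item the Poisson--Dirichlet coupling;
\item Guerra's interpolation against a cascade whose first level has parameter $s$ and fresh disorder (including the fields $h_i$) in each branch;
\item the Aizenman--Sims--Starr lower bound with a perturbation and ultrametricity/synchronization, where the atom $\E\langle \1(S_{12}=0)\rangle = s$ comes from Poisson--Dirichlet invariance;
\item uniqueness from strict convexity of $\zeta\mapsto\Phi_\zeta(0,x)$ combined with H\"older.
\end{itemize}
One step, however, would not work as written: the upper bound when $\xi$ contains odd $p\ge 3$, which the theorem covers.

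The cavity machinery cannot supply that upper bound. It identifies subsequential limits with the Parisi-type functional evaluated at the unknown limiting law of $R_{12}S_{12}$. That gives only $\liminf \ge \inf_\zeta(\cdots)$, whereas the upper bound must hold for every fixed $\zeta$, so the interpolation is unavoidable. Write $S_{12}$ for the indicator that two replicas lie in the same first-level block, and $Q_{12}\in[0,1]$ for the cascade overlap. The interpolation derivative is $-\frac12\E\langle S_{12}\,(\xi(R_{12})-\xi(Q_{12})-\xi'(Q_{12})(R_{12}-Q_{12}))\rangle_t$. The bracket really can be negative: for $\xi(r)=r^3$ it equals $(R-Q)^2(R+2Q)$, which is negative once $R<-2Q$.

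Panchenko's treatment does not avoid this sign requirement; it recovers the sign asymptotically. Add the perturbation along the whole interpolation. Talagrand's positivity principle \cite[Theorem~3.4]{pan} is uniform in the unperturbed part of the Hamiltonian, hence in $t$, and gives $\E\langle \1(R_{12}S_{12}\le -\eps)\rangle_t\to 0$. On $\{R_{12}>-\eps\}$ the bracket is at least $-L\eps$, by convexity of $\xi$ on $\R_+$ and $\theta\ge 0$ on $[0,1]$. Hence $\limsup_N \sup_t \phi'(t)\le L\eps$, and you then send $\eps\to 0$. This is exactly how the paper closes the upper bound. You already invoke positivity in your lower-bound paragraph; the fix is to use that same ingredient inside Guerra's interpolation.
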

\begin{remark}  
Formally taking $s = 0$ in \eqref{e.fractional}, we recover the Parisi formula for the limit free energy, that is,
\begin{equation*}
\lim_{N\to \infty} \frac{1}{N} \E\Ll[ \log  Z_N \Rr] =  \inf_{\zeta \in \Pr[0,1]}\bigg(  \E\Ll[\Phi_\zeta(0,h)\Rr] - \frac{1}{2} \int_0^1 t \xi''(t) \zeta([0,t]) \, \d t \bigg).
\end{equation*}
\end{remark}

As was already mentioned, special cases of Theorem~\ref{t.fractional} were already obtained in \cite{talagrand2007large} and in \cite{conmin}. In this section, we provide a direct proof of this result, much in the spirit of \cite{conmin}. 

The key observation is that the fractional moments can be accessed by coupling the Hamiltonian with a Poisson--Dirichlet process. Let $(H_N(\cdot, n) )_{n \in \N}$ be independent copies of $H_N(\cdot)$, and $(h_i^{(n)})_{i,n \in \N}$ be independent copies of $h_1$. 
We define
\[
Z_N(n) = \int e^{H_N(\sigma,n) } \, \d\mu_N(\sigma) = \int e^{H'_N(\sigma,n) + \sum_{i=1}^N h_i^\pnp \sigma_i} \, \d\mu_N(\sigma)
\]
to be the partition function associated with independent copies $H_N(\sigma,n)$ of $H_N(\sigma)$ and $h_i^\pnp $ of $h_i$. Let $(v_n)_{n \in \N}$ denote the weights of the Poisson--Dirichlet process (see \cite[Section~2.2]{pan} or \cite[Section~5.5]{dominguez2024book}) associated with the parameter $s\in (0,1)$.  By the invariance property of Poisson--Dirichlet processes, see \cite[Proposition~5.20]{dominguez2024book}, we have
\[
\frac{1}{N} \E \Ll[ \log \sum_{n} v_n Z_N(n) \Rr] =  \frac{1}{Ns} \log \E \Ll[ e^{s \log Z_N } \Rr] =  \frac{1}{Ns} \log \E \Ll[ Z_N^s \Rr].
\]
It remains to compute the limit of 
\begin{equation}\label{e.feRPC}
\frac{1}{sN} \log \E \Ll[ Z_N^s \Rr] = \frac{1}{N} \E \Ll[ \log \sum_{n} v_n Z_N(n) \Rr].
\end{equation}
In the special case of the SK model without an external field, the identification of this limit follows as a special case of the general formula proved for multiscale spin glasses in \cite[Theorem~2.1]{conmin}.

\begin{proof}[Proof of Theorem~\ref{t.fractional} from \cite{conmin} for the SK model with $h_i=0$]
We first show how the results in \cite{conmin} can be applied to the current setting. Our setting corresponds to the special case when the sequence of $\gamma$ defined in \cite[Equation~8]{conmin} consists of the single point $\gamma_1 = s$. For such models, the set $X_\beta$ defined in  \cite[Equation~2.2]{conmin} consists of precisely the space of CDFs corresponding to probability measures such that $\zeta( \{0\} ) \geq s$. 
	
	The result of  Theorem~\ref{t.fractional} then follows from representing the recursive definition of \cite[Equation~3.3]{conmin}  using the Parisi PDE. This is a direct consequence of \cite[Chapter~4]{pan}. 
\end{proof}

For completeness, a detailed proof of this result using properties of the marginals of Poisson--Dirichlet processes and synchronization is described below. The proof also includes the case when the external fields $h_i$ are random. 

\subsection{Upper bound on the free energy}

This follows from the classical Guerra's interpolation argument. We want to find an upper bound of \eqref{e.feRPC}.
\begin{lemma}\label{l.UPBDFE}For independent copies $H_N(\sigma,n)$ of $H_N(\sigma)$ defined in 
\eqref{e.randomHamil},
\begin{align}
&\limsup_{N \to \infty} \frac{1}{N} \E \log \sum_{n}   \int e^{H_N(\sigma,n)} \, \d\mu_N(\sigma) v_n \notag
\\&\leq \inf_{\zeta \in \Pr_s[0,1]}\bigg(   \frac 1 s \log \E_{h_1} \Ll[ e^{s \Phi_\zeta(0,h_1)} \Rr] - \frac{1}{2} \int_0^1 t \xi''(t) \zeta([0,t]) \, \d t \bigg) .\label{e.upperbound_finite}
\end{align}
\end{lemma}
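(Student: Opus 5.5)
We follow Guerra's replica-symmetry-breaking interpolation, with the Poisson--Dirichlet weights $(v_n)$ playing the role of the first level of the cascade. Fix $\zeta \in \Pr_s[0,1]$. By continuity of the right side of \eqref{e.upperbound_finite} in $\zeta$ (in the $L^1$ distance of distribution functions, using the standard Lipschitz estimates for the Parisi PDE \eqref{eq:parisipdefinite}), it suffices to treat $\zeta$ with finitely many atoms, $0 = q_0 < q_1 < \dots < q_r = 1$ and $\zeta([0,q_k]) = \zeta_k$ with $s \le \zeta_0 < \zeta_1 < \dots$. The constraint $\zeta(\{0\}) \ge s$ allows us to split the cascade at the level $q_0 = 0$. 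Consider a Ruelle probability cascade indexed by $\alpha = (n, \alpha')$ whose first level is the Poisson--Dirichlet process $(v_n)$ with parameter $s$. If $\zeta_0 > s$, we insert an extra level at overlap $0$ with parameter $\zeta_0$ (formally a degenerate level with $q=0$). The remaining levels, with parameters $\zeta_1, \dots$ and overlaps $q_1, \dots$, form a cascade $(v_{\alpha'})$ attached to each $n$. Two leaves with different first index $n$ have overlap $0$, which matches the fact that $H_N(\cdot,n)$ and $H_N(\cdot,n')$ are independent.

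We next define the interpolating Hamiltonian
\[
H_t(\sigma,n,\alpha') = \sqrt{t}\, H'_N(\sigma,n) + \sum_{i} h^{(n)}_i \sigma_i + \sqrt{1-t}\sum_i Z_i(n,\alpha')\sigma_i + \sqrt{t}\, \sqrt{N}\, Y(n,\alpha').
\]
Here $Z_i$ and $Y$ are centered Gaussian processes on the cascade with covariances $\xi'(q_{\alpha\wedge\beta})$ and $\theta(q_{\alpha\wedge\beta})$, where $\theta(q) = q\xi'(q)-\xi(q)$. Across different $n$ the covariance vanishes, since $\xi'(0)=\theta(0)=0$, so these fields are independent across $n$. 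Set $\varphi(t) = \frac1N \E \log \sum_{n,\alpha'} v_n v_{\alpha'} \int e^{H_t}\, \d\mu_N$.

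The Gaussian integration by parts computation gives
\[
\varphi'(t) = -\tfrac12\, \E\langle \xi(R_{12}) - R_{12}\xi'(q_{12}) + \theta(q_{12})\rangle + O(1/N).
\]
The bracket equals $\xi(R)-\xi(q)-\xi'(q)(R-q) \ge 0$ by convexity of $\xi$ on $[-1,1]$ when $q \ge 0$. For non-even $\xi$ one needs $R \ge 0$ or a positivity argument; as in \cite{pan}, we handle this by adding a small Ghirlanda--Guerra perturbation that makes overlaps nonnegative, or by restricting to the diagonal sign. This yields $\varphi(1) \le \varphi(0) + o(1)$.

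It remains to compute the two endpoints. At $t=1$, by the invariance of RPCs under adding the independent field $Y$, the quantity $\varphi(1)$ equals the left side of \eqref{e.upperbound_finite} plus $\frac1N\E\log\sum v\, e^{\sqrt N Y}$, and the latter equals $\frac12\sum_k \zeta_k(\theta(q_{k+1})-\theta(q_k))$. At $t=0$ the spins decouple. For each $n$, recursively averaging over the levels with parameters $\zeta_k$ and $k\ge 1$ produces $\sum_i \Phi_\zeta(0, h^{(n)}_i)$, via the standard Hopf--Cole representation of \eqref{eq:parisipdefinite} from \cite[Chapter~4]{pan}. The top level with parameter $s$ then gives, by \cite[Proposition~5.20]{dominguez2024book},
\[
\tfrac1{sN}\log\E e^{s\sum_i\Phi_\zeta(0,h_i)} = \tfrac1s\log\E e^{s\Phi_\zeta(0,h_1)},
\]
using independence of the $h_i$. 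Combining the endpoints and using the identity $\frac12\sum_k\zeta_k(\theta(q_{k+1})-\theta(q_k)) = \frac12\int_0^1 t\xi''(t)\zeta([0,t])\,\d t$ gives the bound for each discrete $\zeta$; we then take the infimum and pass to general $\zeta$ by continuity.

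The main obstacle is the sign of the interpolation derivative for non-even $\xi$, where $R_{12}$ may be negative. Here I would adapt Panchenko's argument, replacing $\sigma$ by $\pm\sigma$ or using the positivity principle under a perturbation that is negligible in the free energy. The perturbation must be compatible with the Poisson--Dirichlet top level, and this requires checking that it is independent across $n$.
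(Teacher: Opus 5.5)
Your proposal follows essentially the paper's proof: Guerra's interpolation against a cascade whose first level has parameter $s$ at overlap $0$, the positivity principle to control the sign of the derivative when $\xi$ is only convex on $\R_+$, the recursive evaluation of both endpoints, and continuity in $\zeta$. Your degenerate extra level at overlap $0$ for $\zeta(\{0\})>s$ is a harmless alternative to the paper's splitting of the atom at $0$.

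One caution concerns the weights. They must be a genuine Ruelle cascade $v_{n\alpha'}$, not the product $v_n v_{\alpha'}$; for the product, the recursive formulas you use at both endpoints do not hold. Identifying $\varphi(1)$ with the left side then relies on the first-level marginal of that cascade being Poisson--Dirichlet with parameter $s$. The paper isolates this fact in Proposition~\ref{prop:marginals}; equivalently, you can run the recursion directly on $\log Z_N(n)+\sqrt N\, Y(n,\alpha')$.
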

\begin{proof} The proof follows from Guerra's interpolation argument \cite{gue03}, where we interpolate the free energy with the usual limiting object with a Poisson--Dirichlet cascade corresponding to a CDF in $\Pr_s[0,1]$. 

We first fix a discrete $\zeta \in \Pr_s[0,1]$ with $r \geq 1$ steps such that $\zeta(\{ 0\}) = s$. Let $\alpha \in \N^{r- 1}$, and let $\tilde \alpha = (n,\alpha) \in \N^{r}$. Let $$0 < \zeta_0 = s < \zeta_1 < \dots < \zeta_{r - 1} < 1$$ and $$0 = q_0 < q_1 < \dots < q_r = 1$$
denote the sequences of order parameters, and let $v_{n\alpha}$ denote the weights of the Poisson--Dirichlet cascades corresponding to $\zeta$. Notice that we have constrained that $\zeta_0 = s$ to be equal to the $s$th moment of the partition function.
 
Let $Z(\tilde \alpha)$ denote  Gaussian processes indexed by $\tilde \alpha$ with covariance $\xi'(q_{\tilde \alpha^1 \wedge \tilde \alpha^2})$. Likewise, we denote by $Y(\tilde \alpha)$ Gaussian processes indexed by $\tilde \alpha$ with covariance $\theta(q_{\tilde \alpha^1 \wedge \tilde \alpha^2}) = q_{\tilde\alpha^1 \wedge \tilde \alpha^2} \xi'(q_{\tilde \alpha^1 \wedge \tilde \alpha^2}) - \xi(q_{\alpha^1 \wedge \tilde \alpha^2})$. For $1 \leq i \leq N$, we let $Z_i(\alpha,n)$ and $Y_i(\alpha,n)$ denote independent copies of $Z(\tilde \alpha)$ and $Y(\tilde \alpha)$ respectively. We consider the interpolating Hamiltonians
\[
H'_N(\sigma,\alpha,n,t) = \sqrt{t} H'_N(\sigma,n) + \sqrt{1 - t} \sum_{i = 1}^N Z_i(\alpha,n) \sigma_i + \sqrt{t}  \sum_{i = 1}^N Y_i(\alpha,n)
\]
which are independent over the $n$ coordinates. 
 
 We define the interpolating free energy
 \[
 \phi(t) = \frac{1}{N} \E \log \sum_{n} \sum_{\alpha} \int e^{H'_N(\sigma,\alpha,n,t) + \sum_{i=1}^N h_i^\pnp \sigma_i} \, \d\mu_N(\sigma) v_{n\alpha}
 \]
By Gaussian integration by parts, we have that
\begin{align}
\phi'(t) &= -\frac{1}{2}\E\la\xi(R_{12}\1(n^1=n^2))-R_{12}\xi'(\tilde Q_{12})+\theta(\tilde Q_{12})\ra_t \notag
\\&= -\frac{1}{2} \E \langle \1(n^1 = n^2) ( \xi(R_{1,2}) - \xi(Q_{1,2}) - \xi'(Q_{12}) ( R_{1,2} - Q_{1,2} )  ) \rangle_t  \label{e.upboundphi}
\end{align}
where we defined the overlaps $R_{1,2} = \frac{\sigma^1 \cdot \sigma^2}{N}$, $\tilde Q_{1,2} = \tilde \alpha^1 \wedge  \tilde \alpha^2$ and $ Q_{1,2} =  \alpha^1 \wedge  \alpha^2$. The simplification in the second line follows from the fact that $\tilde Q_{1,2} = (\alpha^1 \wedge \alpha^2) \1(n^1=n^2)$ and $\xi(0)=\xi'(0)=0$. If $\xi$ is convex then the sign of $\phi'$ is negative for all $N$. 

If $\xi$ is only convex on $\R_+$, then the argument is slightly more involved and will hold in the limit as $N \to \infty$ if we apply Talagrand's positivity principle as in \cite[Chapter 3.2]{pan}. Since the details of this modification are identical to \cite[Chapter 3.2]{pan}, we sketch the main ideas of the argument. The usual perturbation on the $\sigma$ coordinates implies that the probability of the event $\{ R_{1,2} \leq -\epsilon \}$ under the perturbed Gibbs measure tends to zero \cite[Theorem 3.4]{pan}. We can repeat the interpolation proof with the perturbed Hamiltonian to arrive at \eqref{e.upboundphi}  where $\langle \cdot \rangle_t$ is replaced with the perturbed average. We now split \eqref{e.upboundphi} along the events   $\{ R_{1,2} \leq -\epsilon \}$ and  $\{ R_{1,2} > - \epsilon \}$. On the event  $\{ R_{1,2} > - \epsilon \}$, the upper bound can be uniformly bounded above over $N$ by $L \epsilon$ for some constant $L$ depending only on $\xi$ using the convexity of $\xi$ on $\R_+$. On the other hand, the event $\{ R_{1,2} \leq -\epsilon \}$ has probability $0$ in the limit, so it can be made arbitrarily small for each fixed $\epsilon$ by taking $N$ to infinity. Therefore, taking the limit as $N \to \infty$ then taking $\epsilon \to 0$ implies that $\limsup_{N \to \infty}\phi'$ is negative.

In either case,  we can integrate $\phi'(t)$ to conclude that $\limsup_{N \to \infty} \phi(1) \leq \limsup_{N \to \infty} \phi(0)$, so
\begin{align*}
    &\limsup_{N \to \infty} \frac{1}{N} \E \log \sum_{n} \sum_{\alpha}  \int e^{H'_N(\sigma,n) + \sum_{i=1}^N h^\pnp_i \sigma_i} \, \d\mu(\sigma) v_{n \alpha} 
    \\
    &\leq \limsup_{N \to \infty}  \E \log \sum_{n} \sum_{\alpha}  \int e^{Z(\alpha,n) \sigma + \sum_{i=1}^N h^\pnp_i \sigma_i} \, \d\mu(\sigma) v_{n \alpha} -  \E \log \sum_{n} \sum_{\alpha} e^{Y(\alpha,n) } \, v_{n \alpha}
\end{align*}
 
It remains to compute the terms on the right hand side. After summing over the $\alpha$ terms and applying the fact about the marginal distribution of the Poisson--Dirichlet cascades (see Proposition~\ref{prop:marginals} below), the term on the left simplifies to
 \begin{equation}\label{e.marginals}
 	\frac{1}{N} \E \log \sum_{n} \sum_{\alpha} \int e^{H_N(\sigma,n) } \, \d\mu_N(\sigma) v_{n\alpha} = \frac{1}{N} \E \log \sum_{n}  \int e^{H_N(\sigma,n) } \, \d\mu_N(\sigma) v_{n}.
 \end{equation} 
 The right hand side can also be computed explicitly and follows from the recursive formulation of averages with respect to the Poisson--Dirichlet cascades \cite[Theorem~2.9]{pan}, which is valid because of the integrability assumptions on the external field. Consider the function
 \begin{equation}\label{e.recursive}
 X_r = \log \int e^{ \sum_{k = 1}^{r} \sqrt{ \xi'(q_{k}) - \xi'(q_{k-1}) }z_{k}  \sigma + h_1 \sigma } \, \d\mu(\sigma) 
 \end{equation}
 where the $(z_{k})$ are independent Gaussians and $h_1$ is a Gaussian random variable with the same law as $h$ independent from $(z_k)$. 
 We define for $k \geq 1$,
 \[
 X_k = \frac{1}{\zeta_k} \log \E_{z_{k + 1}} \Ll( \exp \zeta_k X_{k + 1} \Rr) \qquad\text{and}\qquad  X_0 = \frac{1}{s} \log \E_{h_1}\E_{z_{1}} \Ll( \exp s X_{1}  \Rr),
 \]
 where the averages are only with respect to the randomness in the subscript. Notice that the last step of the recursion requires an average over both $h_1$ and $z_1$ because they depend on the first index in the Poisson--Dirichlet cascades. Using the relation between the recursive formulation and the Parisi PDE \cite[Bonus Chapter A.2]{pan}, we have that
 \[
\frac{1}{s} \log \E_{z_{1}} \Ll( \exp s X_{1}  \Rr) = \Phi_{\zeta}(0,h_1).
 \]
Thus,
\[
X_0 = \frac{1}{s} \log \E_{h_1} \Ll( e^{ s\cdot \frac{1}{s} \log \E_{z_{1}} \Ll( \exp s X_{1}  \Rr)} \Rr) = \frac{1}{s} \log \E_{h_1} e^{s \Phi_{\zeta}(0,h_1)}.
\]
Finally, we arrive at the formula
\begin{equation}\label{e.upbd1}
\E \Ll[ \log \sum_{n} \sum_{\alpha}  \int e^{Z(\alpha,n) \sigma + h_1^\pnp \sigma } \, \d\mu(\sigma) v_{n \alpha} \Rr] = \frac{1}{s} \log \E_{h_1} e^{s \Phi_{\zeta}(0,h_1)}.
\end{equation}
Similarly, the recursion implies that
\begin{equation}\label{e.upbd2}
\E \Ll[ \log \sum_{n} \sum_{\alpha} e^{Y(\alpha,n) } \, v_{n \alpha} \Rr] = \frac{1}{2} \sum_{0 \leq k \leq r - 1} \zeta_k ( \theta(q_{k + 1})- \theta(q_k) ).
\end{equation}
Combining \eqref{e.upbd1} and \eqref{e.upbd2} explicit computations gives \eqref{e.upperbound_finite} for every $N$ and finite $\zeta$, provided the covariance $\xi$ in \eqref{e.H'_N(sigma)=} is convex. We emphasize that the main difference from the usual Parisi formula is that we constrain $q_1 = 0$ and $\zeta_1 = s$.

We now extend to the case when $\zeta$ is a general measure, which may not necessarily be discrete. It is known that the terms appearing on the right hand side of \eqref{e.upperbound_finite} are Lipschitz functions in $\zeta$ \cite[Bonus Chapter Lemma~A.6]{pan}. Furthermore, notice that any $\zeta \in \Pr_s[0,1]$ can be approximated in $L^1$ by discrete $\zeta^*$ so that $\zeta^*(\{0 \}) \geq s$. We can also further approximate the $\zeta^*$ in $L^1$ by splitting the mass at $0$ to get a measure $\zeta^*_\delta$ such that $\zeta^*_\delta( \{ 0\}) = s$ and $\zeta^*_\delta( \{ \delta \} ) = \zeta^*( \{ 0 \} ) - s \geq 0$, provided that $\delta$ is taken sufficiently small. This implies that we can extend the discrete upper bound proved for discrete measures satisfying $\zeta( \{ 0\}) = s$ by continuity to any $\zeta \in \Pr_s[0,1]$. 
\end{proof}

We now prove that the $n$ marginals of $v_{n \alpha}$ appearing in \eqref{e.marginals} is the Poisson--Dirichlet process. We adopt the notation of the Poisson--Dirichlet cascades as in \cite[Chapter~2]{pan}. 

\begin{proposition}\label{prop:marginals}
Let  $\tilde \alpha = (n,\alpha) \in \N^{r}$ and $v_{n\alpha}$ denote the weights of the Poisson--Dirichlet cascades corresponding to some discrete probability measure $\zeta$ such that $\zeta(\{0\}) = s$. Similarly, let $v_{n}$ denote the weights of a Poisson--Dirichlet process with parameter $s$. For any sequence of random variables $(F(n))_{n \in \N}$ that is independent of $(v_{n\alpha})$ and $(v_n)$, we have that 
\[
\E \log \sum_{n} \sum_{\alpha} F(n) v_{n\alpha} = \E \log \sum_{n} F(n) v_{n}. 
\]
\end{proposition}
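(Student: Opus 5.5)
The plan is to reduce everything to the first level of the cascade by summing out the lower levels. In the Ruelle probability cascade construction of \cite[Chapter~2]{pan}, the weights $v_{n\alpha}$ are the normalization of products $u_n u_{n\alpha_1}\cdots u_{n\alpha_1\cdots\alpha_{r-1}}$, where each family $(u_{\beta k})_k$ is the decreasing enumeration of a Poisson process with intensity $\zeta_j x^{-1-\zeta_j}\,\d x$. Here $j$ is the depth of the parent vertex $\beta$, and the first level uses $\zeta_0 = s$. We define the subtree mass
\[
M_n := \sum_{\alpha} u_{n\alpha_1}\cdots u_{n\alpha_1\cdots\alpha_{r-1}}.
\]
Because the $F(n)$ do not depend on $\alpha$,
\[
\sum_n\sum_\alpha F(n) v_{n\alpha} = \frac{\sum_n F(n)\, u_n M_n}{\sum_n u_n M_n}.
\]
Each $M_n$ is finite almost surely. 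This holds because, at every lower level, the parameters $\zeta_j$ lie in $(s,1)$, so the sums of Poisson points converge. Moreover, the $(M_n)$ are i.i.d., independent of $(u_n)$, and independent of $(F(n))$.

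The key step is the classical invariance property of Poisson processes, as in \cite[Theorem~2.6]{pan} or \cite[Proposition~5.20]{dominguez2024book}. Let $(u_n)$ be a Poisson process with intensity $s x^{-1-s}\,\d x$, and mark it by i.i.d.\ positive $M_n$ with $\E M_n^s<\infty$. Then $(u_n M_n)_n$ is again a Poisson process, with intensity $\E[M_1^s]\, s x^{-1-s}\,\d x$. Equivalently, it equals $(c\, u_n)$ in law for the constant $c = (\E M_1^s)^{1/s}$, and this equality holds jointly with the i.i.d.\ marks carried along. I will apply this with the marks $(M_n,F(n))$, which are i.i.d.\ over $n$; if $F$ is only assumed independent and not i.i.d., I would first condition on $F$. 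The effect is that the point process $\{(u_nM_n, F(n))\}$ has the same law as $\{(c u_n, F(n))\}$. The constant $c$ cancels in the normalization, so
\[
\frac{\sum_n F(n) u_n M_n}{\sum_n u_n M_n} \overset{d}{=} \sum_n F(n) v_n,
\]
and taking $\E\log$ gives the claim. A cleaner route, which avoids any i.i.d.\ assumption on $F$, is to note that the normalized weights $(u_nM_n/\sum_m u_mM_m)_n$ are a size-biased relabelling of a Poisson--Dirichlet process with parameter $s$. The required invariance then comes from exchangeability of the sum over $n$. To make that work I would use that the marks $F(n)$ are attached to labels independent of the weights.

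I expect the main obstacle to be verifying $\E M_1^s<\infty$, which is needed for the marked-process mapping theorem. I would handle it by induction on the depth. At the bottom level, the sum of the points of a Poisson process with intensity $\zeta x^{-1-\zeta}\,\d x$ has a one-sided $\zeta$-stable law, so it has finite moments of every order $p<\zeta$. In particular it has finite $s$-moment, since $s<\zeta_1<\dots<\zeta_{r-1}$. The induction step uses the same invariance: a mixture $\sum_k u_k M'_k$, with $M'_k$ i.i.d.\ and having a finite $\zeta_j$-moment, is distributed as $(\E M'^{\zeta_j})^{1/\zeta_j}$ times a $\zeta_j$-stable variable. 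Such a variable has finite $p$-moments for every $p<\zeta_j$, and this includes the next parameter down. A secondary point is the integrability of $\log\sum_n F(n)v_n$, which I would simply assume in the form in which it is used for the $F(n)$ arising in \eqref{e.marginals}.
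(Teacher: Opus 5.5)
Your main argument is correct, but it takes a different route from the paper.

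\textbf{Comparison of the two routes.} The paper never computes the first-level marginal weights. It notes that $S_{\ell\ell'}=\1(n^\ell=n^{\ell'})$ inherits the Ghirlanda--Guerra identities from the cascade overlaps, is $\{0,1\}$-valued, and satisfies $\E\langle\1(S_{12}=0)\rangle=s$. By the Ghirlanda--Guerra characterization, this array is therefore the overlap array of a Poisson--Dirichlet process with parameter $s$. The paper then appeals to the fact that $\E\log\sum_n\sum_\alpha F(n)v_{n\alpha}$ depends only on the first level.

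You instead sum out the subtrees and apply the marking/mapping invariance \cite[Theorem~2.6]{pan} with the i.i.d.\ marks $(M_n,F(n))$. This is exactly the ``direct but tedious'' alternative mentioned in the remark after the proposition, and you show it is in fact short. Your route is elementary and gives a stronger conclusion: $\sum_n F(n)\sum_\alpha v_{n\alpha}$ and $\sum_n F(n)v_n$ have the same law, not merely the same $\E\log$. It also makes explicit which hypothesis on $F$ is used. Its only cost is checking $\E M_1^s<\infty$. Your induction over stable laws handles this correctly, since $M_1$ is a constant multiple of a positive $\zeta_1$-stable variable and $s<\zeta_1$. The paper's route avoids any moment computation, but it relies on the characterization theorems. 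It also relies on a somewhat implicit passage from the law of the overlap array to the value of $\E\log$.

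\textbf{Correction: drop the non-i.i.d.\ remarks.} Both of your remarks about non-i.i.d.\ $F$ should be removed, because the identity is false unless $(F(n))_n$ is exchangeable.
\begin{itemize}
\item Conditioning on $F$ leaves marks that are independent but not identically distributed, so the marking theorem no longer applies.
\item Any relabelling argument, size-biased or otherwise, needs the law of $(F(n))_n$ to be invariant under permutations of the labels.
\end{itemize}
Here is a concrete counterexample. Take $r\ge 2$, $F(1)=e$, and $F(n)=1$ for $n\ge 2$. The left side equals $\E\log(1+(e-1)\tilde v_1)$, where $\tilde v_1=u_1M_1/\sum_m u_mM_m$. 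By your own invariance argument, the right side equals $\E\log(1+(e-1)\max_n\tilde v_n)$. Since $M_1$ is nondegenerate, $\tilde v_1<\max_n\tilde v_n$ with positive probability, so the left side is strictly smaller.

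The correct hypothesis is that $(F(n))_n$ is i.i.d.\ (or exchangeable) and independent of the weights. This is what your main argument uses, and it holds in \eqref{e.marginals}, where $F(n)=\int e^{H_N(\sigma,n)}\,\d\mu_N(\sigma)$. The paper's overlap-based proof also uses it implicitly, since the array $S$ does not see the labels $n$. State this hypothesis explicitly in your write-up.
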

\begin{proof}
This proof follows from the characterization of the Poisson--Dirichlet cascades by the Ghirlanda--Guerra identities \cite[Chapter~5.6]{dominguez2024book}. Consider i.i.d. $\tilde \alpha^\ell =  (n^{\ell},\alpha^{\ell})$ sampled according to the weights $v_{n\alpha}$. By construction, the corresponding overlap array $\tilde R_{\ell,\ell'} =  \tilde \alpha^\ell \wedge \tilde \alpha^{\ell'}$ satisfies the Ghirlanda--Guerra identities \cite[Theorem~5.28]{dominguez2024book} or \cite[Theorem~2.10]{pan}: for any $n \geq 1$, bounded and measurable function $f$ of the overlaps $\tilde R^{\leq n} = (\tilde R_{\ell,\ell'})_{\ell,\ell' \leq n}$ and any bounded measurable function $\psi: \R \to \R$, 
\[
\E \langle f(\tilde R^{\leq n}) \psi(\tilde R_{1,n + 1}) \rangle = \frac{1}{n} \E \langle f(\tilde R^{\leq n}) \rangle \E \langle \psi(\tilde R_{1,n + 1}) \rangle  + \frac{1}{n} \sum_{\ell = 2}^n \E \langle f(\tilde R^{\leq n}) \psi(\tilde R_{1,\ell}) \rangle. 
\]

We now consider the distribution of the marginal overlaps $S_{\ell,\ell'} = \1(n^{\ell} = n^{\ell'})$. It follows that the $S$ overlaps inherit the Ghirlanda--Guerra identities satisfied by the full overlaps $\tilde R$, by taking $f(S^{\leq n}) = f( \1(\tilde R ^{\leq n} \geq 0) )$ and $\psi(S_{\ell,\ell'}) = \psi( \1(\tilde  R_{\ell,\ell'} \geq 0 ) )$, for some functions $f$ and $\psi$. Furthermore, we have that the overlap array $S_{\ell,\ell'} $ takes values $0$ or $1$, and
\[
\E \langle S_{1,2} = 0 \rangle  = \E \langle \1(n^{\ell} \neq n^{\ell'}) \rangle = \E \langle \1(\tilde R_{1,2} = 0) \rangle = s
\]
and $\E \langle S_{1,2} = 1 \rangle = 1 - \E \langle S_{1,2} = 0 \rangle  = 1 - s$. Since the overlaps $S$ takes values $0$ or $1$, and they satisfy the Ghirlanda--Guerra identities, it must be the overlaps corresponding to samples according to a Poisson--Dirichlet point process with parameter $s$, see \cite[Exercise 5.10]{dominguez2024book} or \cite[Theorem~2.8]{pan}. 

In particular, if $F(n)$ is a function that only depends on the first coordinate of $\tilde \alpha = (n,\alpha) \in \N^{r}$ independent of $v_{\tilde \alpha}$, then the average of $\E \log \sum_{n} \sum_{\alpha} F(n) v_{n\alpha}$ only depends on paths in the first level corresponding to $n$ \cite[Theorem~5.25]{dominguez2024book}. Since the $n$ level is determined by a Poisson--Dirichlet process with parameter $s$, its average is equivalent to $\E \log \sum_{n} F(n) v_{n}$.  
\end{proof}

\begin{remark}
The validity of \eqref{e.marginals} can also be observed from the recursive representation of the Poisson--Dirichlet cascades \cite[Theorem~2.9]{pan}. Notice that the left hand side of \eqref{e.marginals} can be computed recursively starting at
\[
X_r(\omega_r, \dots, \omega_1) = \log \int e^{ H'_N(\sigma,\omega_1) } \, \d\mu_N(\sigma) 
\]
where the $\omega_1, \dots, \omega_r$ are independent uniform random variables encoding the randomness at each level of the RPC. The random variable $H'_N(\sigma,\omega)$ is a Gaussian process with covariance $$\E\Ll[ H'_N(\sigma^1,\omega^1) H'_N(\sigma^2,\omega^2) \Rr] =  N \xi \Ll( \frac{\sigma^1 \cdot \sigma^2}{N} \1(\omega^1 = \omega^2) \Rr) .$$ Since the right hand side of $X_r(\omega_r, \dots, \omega_1)$ does not depend on $\omega_r, \dots, \omega_2$, we have that the recursive terms given in \eqref{e.recursive} satisfy
\[
X_r = X_k = \dots = X_1.
\]
Next, notice that the right hand side
\[
\frac{1}{N} \E \log \sum_{n}  \int e^{H_N(\sigma,n) } \, \d\mu_N(\sigma) v_{n}
\]
of \eqref{e.marginals} can be computed recursively starting at $ X_1$. These observations imply that both the left and right hand side of \eqref{e.marginals} give the same value when computing the average over the RPC. 

Yet another proof of  \eqref{e.marginals} involves explicitly computing the marginals using the invariance of the Poisson--Dirichlet process \cite[Theorem~2.6]{pan}. This computation is direct, but also tedious. 
\end{remark}

\subsection{Lower bound on the free energy}

We now prove the matching lower bound.
\begin{lemma}\label{l.LOBDFE}For independent copies $H_N(\sigma,n)$ of $H_N(\sigma)$ defined in 
\eqref{e.randomHamil},
\begin{align*}
&\liminf_{N \to \infty} \frac{1}{N} \E \log \sum_{n}   \int e^{H_N(\sigma,n)} \, \d\mu_N(\sigma) v_n 
\\&\geq \inf_{\zeta \in \Pr_s[0,1]}\bigg(  \frac 1 s \log \E_{h_1} \Ll[ e^{s \Phi_\zeta(0,h_1)} \Rr] - \frac{1}{2} \int_0^1 t \xi''(t) \zeta([0,t]) \, \d t \bigg)    .
\end{align*}
\end{lemma}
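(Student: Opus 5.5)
We prove the lower bound by the Aizenman--Sims--Starr cavity scheme combined with Panchenko's synchronization/ultrametricity machinery, following \cite{pan, conmin}. First, we add to $H_N(\sigma,n)$ the usual small perturbation Hamiltonian $s_N h_N^{\mathrm{pert}}(\sigma)$ with $s_N = N^{\gamma}$, $\gamma\in(1/4,1/2)$. Independent copies are attached to each $n$, and the random coefficients are chosen so that the perturbation does not change the limit of \eqref{e.feRPC}. Writing $F_N := \E \log \sum_n v_n Z_N(n)$, we have
\[
\liminf_N \frac{F_N}{N} \ge \liminf_N (F_{N+1}-F_N).
\]
Standard Gaussian cavity computations then express $F_{N+1}-F_N$, up to $o(1)$, as
\[
\E\log \Big\langle \int e^{\sigma_{N+1}(z_N(\sigma,n)+h_{N+1}^{\pnp})}\d\mu(\sigma_{N+1})\Big\rangle - \E\log\big\langle e^{y_N(\sigma,n)}\big\rangle .
\]
Here $\langle\cdot\rangle$ is the Gibbs measure on pairs $(\sigma,n)$ with weights $v_n e^{H_N(\sigma,n)}$. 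The fields $z_N,y_N$ are Gaussian with covariances $\xi'(R\,\1(n^1=n^2))$ and $\theta(R\,\1(n^1=n^2))$. The random field $h_{N+1}^{\pnp}$ depends only on $n$, because each copy carries its own field. Both terms are continuous functionals of the joint law of the overlap array $\tilde R_{\ell\ell'} = R_{\ell\ell'}\1(n^\ell=n^{\ell'})$ together with the labels $\1(n^\ell=n^{\ell'})$.

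Second, we identify the possible limit laws of this joint overlap structure. Along a subsequence, the perturbation yields the Ghirlanda--Guerra identities for $R$. The $n$-coordinate is already a Poisson--Dirichlet process of parameter $s$, and by the invariance in \cite[Proposition~5.20]{dominguez2024book} it is itself stable under the Gibbs reweighting. So the array $S_{\ell\ell'}=\1(n^\ell=n^{\ell'})$ satisfies the GG identities with $\E\langle S_{12}=0\rangle=s$, exactly as in Proposition~\ref{prop:marginals}. A synchronization argument in the spirit of \cite{pan} then shows that the joint array $(R,S)$ is ultrametric and GG-compatible. The key claims are that, in the limit, $n^1\ne n^2$ forces $\tilde R_{12}=0$, while within one $n$-block the overlap has a law concentrated on $[0,1]$. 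Hence $\tilde R$ is generated by a Ruelle cascade whose overlap distribution $\zeta$ has $\zeta(\{0\})\ge s$, with the first level being the $n$-level.

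Third, we evaluate the cavity functional on such a cascade. For a discrete $\zeta\in\Pr_s[0,1]$, the recursion \eqref{e.recursive}--\eqref{e.upbd2} from the proof of Lemma~\ref{l.UPBDFE} applies. Because the field $h^{\pnp}$ sits at the first level with weight parameter $s$, the first term becomes $\frac1s\log\E_{h_1} e^{s\Phi_\zeta(0,h_1)}$. The second term becomes $\frac12\int_0^1 t\xi''(t)\zeta([0,t])\,\d t$. General $\zeta$ are handled by the Lipschitz continuity from \cite[Lemma~A.6]{pan}. Since every subsequential limit gives a value at least the infimum over $\Pr_s[0,1]$, the lemma follows.

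The main obstacle is the second step. There we must show rigorously that distinct Poisson--Dirichlet indices are asymptotically orthogonal, and that the extra $s$-atom at $0$ is compatible with ultrametricity of the perturbed Gibbs measure. This requires that the decoupling $\tilde R_{12}=0$ on $\{n^1\ne n^2\}$ survives the cavity limit; it is exact at finite $N$, since independent copies make the cross-covariance vanish. It also requires that the GG identities hold jointly in $(R,S)$. I would first try to establish this joint validity by applying the perturbation to the combined Hamiltonian and using the invariance of the Poisson--Dirichlet process, and then invoke \cite[Theorem~2.8]{pan}.
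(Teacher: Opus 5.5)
Your proposal is correct and follows essentially the same route as the paper's proof. Both use the Aizenman--Sims--Starr scheme with cavity covariances $\xi'(R_{12}S_{12})$ and $\theta(R_{12}S_{12})$, Poisson--Dirichlet invariance giving $\E\langle \1(S_{12}=0)\rangle = s$ and hence an atom of mass at least $s$ at $0$, synchronization of $(R,S)$ so that $R_{12}S_{12}$ is encoded by a single $\zeta\in\Pr_s[0,1]$ with the $n$-level on top, and the cascade recursion from Lemma~\ref{l.UPBDFE}. The joint step you flag is resolved in the paper as you suggest, by adding perturbations with covariances $R_{12}^aS_{12}^b$ and invoking Panchenko's synchronization theorem (note that a perturbation copied independently for each $n$, as in your first step, gives the identities for $R_{12}S_{12}$ rather than $R_{12}$, which already suffices here because $S_{12}=0$ forces $R_{12}S_{12}=0$).
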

\begin{proof}
The proof of the lower bound is mostly identical to the usual proof of the lower bound for the mixed $p$-spin models in \cite[Chapter~3]{pan}. The main challenge is that we have to justify why the lower bound can be written in terms of the infimum over a single order parameter characterized by the Poisson--Dirichlet cascades.  To simplify the presentation, we will focus on the key modifications and explain where synchronization \cite{pan.multi} and the structure of the covariance of $H_N(\sigma,n)$ plays its critical role. 

The starting point is the Aizenman--Sims--Starr scheme \cite{AS2}. We define the cavity fields $Z(\sigma,n)$ and $Y(\sigma,n)$ to be Gaussian processes indexed by points in $\Sigma_N \times \N$ with covariance
\[
\E \Ll[ Z(\sigma^1,n^1) Z(\sigma^2,n^2) \Rr] = \xi'\Ll(\frac{\sigma^1\cdot\sigma^2}{N} \1(n^1 = n^2) \Rr)  = \xi'(R_{12}S_{12}) 
\]
and
\[
\E \Ll[ Y(\sigma^1,n^1) Y(\sigma^2,n^2) \Rr] = \theta\Ll(\frac{\sigma^1\cdot\sigma^2}{N} \1(n^1 = n^2) \Rr)  = \theta(R_{12}S_{12}).
\]
We use the notation $R_{\ell\ell'} = \frac{\sigma^\ell\cdot\sigma^{\ell'}}{N}$ and $S_{\ell\ell'} = \1(n^\ell = n^{\ell'})$ to denote the two overlaps.  The appearance of the $S$ overlap $S_{\ell\ell'} = \1(n^\ell = n^{\ell'})$ terms in these cavity fields is the only difference with the usual cavity fields in \cite[Equation~3.60]{pan}. 

To understand the distribution of the arrays 
\[
C_{\ell\ell'} = \xi'(R_{\ell \ell'} S_{\ell\ell'}) \text{ and } D_{\ell\ell'} = \theta(R_{\ell \ell'} S_{\ell\ell'}) ,
\]
it suffices to understand the limiting behavior of $R_{\ell \ell'} S_{\ell\ell'}$, since both $C_{\ell\ell'}$ and $D_{\ell\ell'}$ are continuous functions of this array.

We now argue that we can synchronize the overlaps $R_{12}$ and $S_{12}$ to recover the distribution of $R_{\ell \ell'} S_{\ell\ell'}$, without changing the limit of the free energy. This is achieved by adding a perturbation Hamiltonian with covariances given by monomials $R_{12}^n S_{12}^m$ and apply \cite[Theorem~4]{pan} or \cite[Section~5]{mourrat2020nonconvex} to conclude that the limiting joint law of $(R_{12},S_{12})$ along any converging subsequence can be written as $(F^{-1}_R(U), F^{-1}_S(U))$ where $F^{-1}_R$ and $F^{-1}_S$ are the quantile transforms of the marginal distributions of $R_{12}$ and $S_{12}$ and $U$ is uniform on $[0,1]$. Furthermore, both the marginals distribution of $R_{\ell\ell'}$ and $S_{\ell\ell'}$ satisfy the Ghirlanda--Guerra identities in the limit, so their distributions are encoded by their corresponding  order parameters (the distribution of the first off diagonal entry) by \cite[Theorem~2.14 and Theorem~2.15]{pan}. We conclude that the limiting distribution of $R_{\ell\ell'}S_{\ell\ell'}$ can be encoded by one order parameter. 

It remains to argue that the order parameter  of $C_{\ell\ell'}$ and $D_{\ell\ell'}$ has a point mass of at least $s$ at $0$, i.e.\ it lives in the space $\Pr_s[0,1]$. By the properties of the Poisson--Dirichlet process \cite[Equation~2.34]{pan} we have that
\[
\E\Ll[ \langle \1(S_{12} = 0) \rangle' \Rr] = s \text{ and } \E\Ll[ \langle \1(S_{12} = 1) \rangle' \Rr] = 1-s
\]
where $\langle \cdot \rangle'$ is the average with respect to the perturbed Gibbs measure that will arise from the cavity computations. Therefore, we have that
\[
\E\Ll[ \langle  \1(R_{12}S_{12} = 0) \rangle' \Rr] \geq \E\Ll[ \langle \1(S_{12} = 0) \rangle' \Rr] = s 
\]
so the distribution of $R_{12}S_{12}$ belongs to $\Pr_s[0,1]$. 

We have shown that the limiting joint behavior of the overlaps $(R_{12},S_{12})$ is encoded by an order parameter $\zeta \in \Pr_s[0,1]$. This gives us a complete characterization of all possible limit points of the distribution of $R_{\ell\ell'}S_{\ell\ell'}$ under the perturbed Gibbs measure. The rest of the proof is now a standard application of the Aizenman--Sims--Starr scheme \cite[Chapter~3]{pan}. By the Aizenman--Sims--Starr representation \cite[Theorem~3.6]{pan}, we can express the lower bound as 
\begin{multline*}
	\liminf_{N \to \infty} \frac{1}{N} \E\Ll[ \log \sum_{n} v_n Z_N(n) \Rr]
	\\\geq \liminf_{N \to \infty} \bigg( \E \Ll[ \log \bigg\langle \int e^{ Z(\sigma,n) \epsilon + h^\pnp \epsilon} \d\mu(\epsilon)  \bigg\rangle'\Rr]  -  \E \Ll[ \log \langle e^{Y(\sigma,n)} \rangle'  \Rr] \bigg)
\end{multline*}
where the $\langle \cdot \rangle'$ is a Gibbs average with respect to the bulk measure plus a perturbation term that enforces the Ghirlanda--Guerra identities and synchronization. From above, we have shown that the covariance of the cavity fields $Z(\sigma,n)$, $h^\pnp$ and $Y(\sigma,n)$ are encoded by an order parameter $\zeta \in \Pr_s[0,1]$. The simplification of these cavity fields using the recursive computation along the levels of the Poisson--Dirichlet cascades is identical to the computations in the proof of Lemma~\ref{l.UPBDFE}, which gives us the matching lower bound. 
\end{proof}

\begin{proof}[Proof of Theorem~\ref{t.fractional}]
Lemmas~\ref{l.UPBDFE} and~\ref{l.LOBDFE} yield~\eqref{e.fractional}. 
To see the uniqueness of minimizers, we recall from~\cite[Theorem~4]{auffinger2015parisi} that, for distinct probability measures $\zeta_0,\zeta_1$ on $[0,1]$, 
\begin{align}\label{e.Phi_convexity}
    \Phi_{\zeta_\lambda}(0,x)<(1-\lambda) \Phi_{\zeta_0}(0,x) + \lambda \Phi_{\zeta_1}(0,x)
\end{align}
for every $\lambda\in(0,1)$ and $x\in\R$, where we have set $\zeta_\lambda = (1-\lambda)\zeta_0 + \lambda \zeta_1$. Applying this and Hölder's inequality, we can get
\begin{align}\label{e.logEePhi_convexity}
    \log\E_{h_1}\Ll[e^{s\Phi_{\zeta_\lambda}(0,h_1)}\Rr] <(1-\lambda) \log\E_{h_1}\Ll[e^{s\Phi_{\zeta_0}(0,h_1)}\Rr] + \lambda \log\E_{h_1}\Ll[e^{s\Phi_{\zeta_1}(0,h_1)}\Rr]
\end{align}
implying the strict convexity of the functional inside the infimum in~\eqref{e.fractional}. Therefore, the minimizer must be unique.
\end{proof}

%
%
%
%
%
%
\section{Laplace transform of the ground-state energy}
\label{s.laplace}

Compared with the previous section, we now slightly restrict the class of admissible distributions for the random external field, by specializing to Hamiltonians of the form
\begin{align}\label{e.H_N=H'_N+g+h}
    H_N(\sigma) = H'_N(\sigma) + \sum_{i=1}^N g_i \sigma_i + \sum_{i=1}^N h_i \sigma_i,
\end{align}
where $H'_N(\sigma)$ is given as in~\eqref{e.H'_N(sigma)=}, $(g_i)$ is a collection of i.i.d.\ centered Gaussian random variables, and $(h_i)$ is a collection of i.i.d.\ bounded random variables. Here, the variance of $g_i$ is not necessarily one. We assume $(H'_N(\sigma))_{\sigma\in\Sigma_N}$, $(g_i)_{i\in\N}$, and $(h_i)_{i\in\N}$ are independent from each other. The goal of this section is to prove the following result. 

\begin{theorem}[Laplace transform of the ground-state energy]
\label{t.laplace}
For every $s \ge 0$, we have
\begin{equation}
\label{e.laplace}
\begin{split}
    &\lim_{N\to \infty} \frac{1}{sN} \log \E \Ll[ \exp(sN L_N) \Rr]  \\
&=  \inf_{\gamma \in \cM_{s }}\bigg( \frac 1 s \log \E_{g_1,h_1}\Ll[e^{s\Psi_{\gamma}(0,g_1+h_1)}\Rr] - \frac{1}{2} \int_0^1 t \xi''(t) \gamma(t) \, \d t \bigg),
\end{split}
\end{equation}
where $\Psi_\ga$ solves
\begin{equation}\label{eq:parisipsi}
	\begin{cases}
		-\partial_t \Psi_{\ga} =  \frac{\xi''(t)}{2} ( \partial_{x}^2 \Psi_\ga + \ga(t) ( \partial_x \Psi_\ga )^2), & \quad \text{for } (t,x) \in (0,1) \times \R \\
		\Psi_{\ga}(1,x) = |x| , & \quad \text{for } x \in \R,
	\end{cases}
	\, .
\end{equation}
and where 
\begin{multline}  \label{e.M_s=}
\mcl M_s := \big\{ \gamma : [0,1) \to \R \ \mid \ \gamma \text{ is right-continuous, nondecreasing, }
\\ \text{integrable, and } \gamma(0) \ge s \big\} .
\end{multline}
When $s=0$, $\frac{1}{sN}\log\E\Ll[\exp(sNL_N)\Rr]$ and $\frac 1 s \log \E_{g_1,h_1}\Ll[e^{s\Psi_{\gamma}(0,g_1+h_1)}\Rr]$ are understood as $\E\Ll[L_N\Rr]$ and $\E_{g_1,h_1}\Ll[\Psi_{\gamma}(0,g_1+h_1)\Rr]$, respectively.
Moreover, the infimum in \eqref{e.laplace} is achieved at a unique $\ga \in \mcl M_s$. 
\end{theorem}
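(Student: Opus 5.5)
We prove Theorem~\ref{t.laplace} by passing to the zero-temperature limit $\beta\to\infty$ in Theorem~\ref{t.fractional}, applied to the Hamiltonian $\beta H_N$. Here $\beta H_N$ has covariance function $\beta^2\xi$ and external field $\beta(g_i+h_i)$; the latter has exponential moments because $g_i$ is Gaussian and $h_i$ is bounded.

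\emph{Step 1: reduction to the positive-temperature formula.} Set $Z_{N,\beta}=\int e^{\beta H_N}\,\d\mu_N$. Then
\begin{equation*}
e^{\beta N L_N}2^{-N}\le Z_{N,\beta}\le e^{\beta N L_N}.
\end{equation*}
Raising to the power $s/\beta$ gives, for every $N$ and $\beta$,
\begin{equation*}
\Ll|\frac1N\log\E\Ll[e^{sNL_N}\Rr]-\frac1N\log\E\Ll[Z_{N,\beta}^{s/\beta}\Rr]\Rr|\le \frac{s\log2}{\beta}.
\end{equation*}
This bound is uniform in $N$, which justifies \eqref{e.ln.laplace} and lets us interchange the limits. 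For $\beta>s$ we apply Theorem~\ref{t.fractional} with exponent $s/\beta\in(0,1)$. Rescale $\zeta\in\Pr_{s/\beta}[0,1]$ by $\gamma_\beta(t)=\beta\zeta([0,t])$, which lies in $\mcl M_s$ and satisfies $\gamma_\beta\le\beta$. The function $\Psi^\beta_{\gamma}(t,x):=\beta^{-1}\Phi_\zeta(t,\beta x)$, where $\Phi_\zeta$ is now built with $\beta^2\xi$, solves the PDE \eqref{eq:parisipsi} with parameter $\gamma_\beta$ and terminal condition $\beta^{-1}\log\cosh(\beta x)$. The right-hand side of \eqref{e.fractional}, multiplied by $s$ and divided by $\beta$ as needed, becomes
\begin{equation*}
\mcl P_\beta(\gamma):=\frac1s\log\E\Ll[e^{s\Psi^\beta_\gamma(0,g_1+h_1)}\Rr]-\frac12\int_0^1t\xi''(t)\gamma(t)\,\d t .
\end{equation*}
The infimum of $\mcl P_\beta$ is taken over $\gamma\in\mcl M_s$ with $\gamma\le\beta$, and in that form we obtain the desired formula. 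The case $s=0$ is handled in the same way, using the usual Parisi formula.

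\emph{Step 2: $\beta\to\infty$.} Since $|\beta^{-1}\log\cosh(\beta x)-|x||\le\beta^{-1}\log2$, the maximum principle for the Parisi PDE gives $|\Psi^\beta_\gamma-\Psi_\gamma|\le\beta^{-1}\log2$ uniformly in $\gamma$. For the upper bound, fix $\gamma\in\mcl M_s$ and truncate it to $\gamma\wedge\beta$, which is admissible once $\beta$ is large. Then $\Psi_{\gamma\wedge\beta}\to\Psi_\gamma$ by Lipschitz continuity of $\gamma\mapsto\Psi_\gamma$ in the $L^1(\xi''\,\d t)$ distance. This requires some care near $t=1$, where $\gamma$ may blow up while staying integrable; we use the standard zero-temperature estimates as in \cite{auffinger2017parisi}. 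Hence $\limsup\le\inf_{\mcl M_s}$.

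The lower bound is the main obstacle. Let $\gamma_\beta$ be the minimizer at inverse temperature $\beta$. We need compactness, namely $\sup_\beta\int_0^1\gamma_\beta\,\d t<\infty$, together with control of the mass near $t=1$ such as $\int_{1-\delta}^1\gamma_\beta\to0$ uniformly in $\beta$. With this in hand, we extract a subsequence along which $\gamma_\beta\to\gamma$ pointwise and in $L^1$ (Helly), and conclude by continuity. As the introduction warns, these bounds cannot be read off the variational formula. We would prove them through a tail estimate at finite $N$ (Lemma~\ref{l.tail_gamma_P,beta}): $\beta\,\zeta_\beta([0,1-\delta])$ is essentially $\beta$ times the probability that the overlap is at most $1-\delta$ under the Gibbs measure tilted by $Z^{s/\beta}$. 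The first thing to try is to show this probability is $O(1/\beta)$, by comparing the constrained free energy $\frac1N\log\int_{R_{12}\le 1-\delta}$ with $\beta\,\E\,\max$ through a replica coupling. The difficulty, compared with \cite{auffinger2017parisi}, is that the measure is size-biased by $Z^{s/\beta}$, so concentration arguments must be carried out under the tilted law. We would handle this via the Poisson--Dirichlet representation \eqref{e.feRPC}, proving the estimate for $\E\log\sum_n v_nZ_N(n)$ directly.

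\emph{Step 3: uniqueness.} Strict convexity of $\gamma\mapsto\Psi_\gamma(0,x)$ is the zero-temperature analogue of \eqref{e.Phi_convexity}, known from \cite{auffinger2017parisi}. Combining it with Hölder's inequality as in \eqref{e.logEePhi_convexity} shows that the functional is strictly convex on the convex set $\mcl M_s$. Together with existence of a minimizer, which follows from the compactness in Step 2, this gives a unique minimizer.
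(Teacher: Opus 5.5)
Your Step~1 and your upper bound are fine. The gap is in the lower-bound/existence part of Step~2, on which Step~3 also rests. You require $\sup_\beta\int_{1-\delta}^1\gamma_\beta\to 0$ and then conclude by $L^1$-continuity, but nothing you propose gives uniform integrability at $t=1$. Showing that the tilted overlap is at most $1-\delta$ with probability $O(1/\beta)$ only bounds $\gamma_\beta(1-\delta)$ for each fixed $\delta$, i.e.\ it gives compactness on $[0,1)$.

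The estimate the paper actually proves is Lemma~\ref{l.tail_gamma_P,beta}. It uses Gaussian integration by parts for the Poisson--Dirichlet-coupled Gibbs measure, the invariance property, and convexity of $a\mapsto\frac1N\log\E[(2^NZ_N(\beta))^a]$. It yields $\sup_\beta\int_0^1\bar\xi'\gamma_{P,\beta}\le C$, whose pointwise consequence $\gamma_{P,\beta}(t)\le C/(\bar\xi(1)-\bar\xi(t))$ is not integrable at $t=1$. These bounds allow $\gamma_{P,\beta}\,\d t$ to converge to $\gamma_0\,\d t$ plus an atom at $t=1$; this is the measure $\nu_0$ in \eqref{e.nu_0=}. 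In that case $\int t\xi''\gamma_{P,\beta}\not\to\int t\xi''\gamma_0$, so continuity is unavailable.

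The missing idea is that this escaping mass is harmless. In the control representations \eqref{e.AC_rep_1}--\eqref{e.AC_rep_2}, take the control to be (an approximation of) $\sign(X)$ near time $1$. The identity $|X+\sign(X)\,c\,\xi''(1)|-c\,\xi''(1)=|X|$ then shows that an atom of mass $c$ at $t=1$ contributes nothing. This gives the lower semicontinuity $\lim_\beta\mathcal P_\beta(\gamma_{P,\beta})\ge\mathcal P(\gamma_0)$, where $\mathcal P$ is the functional in \eqref{e.laplace}. The $L^1$ bound and Fatou's lemma ensure $\gamma_0\in\mcl M_s$. So $\gamma_0$ is a minimizer, which is exactly the existence your uniqueness step needs. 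This is how the paper proceeds, following \cite{auffinger2017parisi}.

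For the value in \eqref{e.laplace} alone, you need no information on minimizers at all. Your own sandwich $\Psi_\gamma-\beta^{-1}\log2\le\Psi^\beta_\gamma\le\Psi_\gamma$ gives, for every $\gamma\in\mcl M_s$ with $\gamma\le\beta$,
$\mathcal P_\beta(\gamma)\ge\mathcal P(\gamma)-\beta^{-1}\log2\ge\inf_{\mcl M_s}\mathcal P-\beta^{-1}\log2$.
Together with Step~1, this is already the lower bound. The finite-$N$ estimate is therefore needed only for attainment of the infimum. There, uniform integrability is both more than your sketch yields and more than is required.
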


\begin{remark}  
For future reference, we rewrite explicitly the Parisi-type variational formula for the ground-state energy, namely
\begin{equation}  
\label{e.def.gs}
\gs = \inf_{\gamma \in \cM_{0 }}\bigg( \E_{g_1,h_1}\Ll[\Psi_{\gamma}(0,g_1 + h_1)\Rr] - \frac{1}{2} \int_0^1 t \xi''(t) \gamma(t) \, \d t \bigg).
\end{equation}
\end{remark}

\begin{remark}[Deterministic external fields]\label{r.determ_ext_field}
As mentioned in the introduction, our main focus is on the case where the external field is deterministic. Let $h\in\R$ and we set $g_i=0$ and $h_i=h$ in Theorem~\ref{t.laplace} to recover the results needed in Sections~\ref{s.intro}, \ref{s.uninverting}, and~\ref{s.derivation}. In particular, we have $\frac 1 s \log \E_{g_1,h_1}\Ll[e^{s\Psi_{\gamma}(0,g_1+h_1)}\Rr] = \Psi_{\gamma}(0,h)$.
\end{remark}

The well-posedness of the PDE~\eqref{eq:parisipsi} has been established in~\cite{auffinger2017parisi,chen2018energy}.

Let $\beta\geq0$ be the inverse temperature (unrelated to the coefficients $(\beta_p)_{p\geq 2}$ introduced above \eqref{e.H'_N(sigma)=}) and set
\begin{align*}
    Z_N(\beta) := \int e^{\beta H_N(\sigma) } \, \d\mu_N(\sigma).
\end{align*}
In particular, we have $Z_N(1)=Z_N$ as defined in~\eqref{e.Z_N=}.
Using $\log Z_N(\beta)\leq \beta NL_N\leq \log \Ll(2^NZ_N(\beta)\Rr)$, we get
\begin{align*}
     \frac{1}{N}  \log \E\Ll[ e^{sN ( L_N - \frac{\log 2}{\beta}) }\Rr]  \leq  \frac{1}{N} \log \E \Ll[e^{sN( \frac{1}{N\beta}  \log Z_N(\beta) )} \Rr] \leq \frac{1}{N}  \log \E \Ll[e^{sN L_N} \Rr],
\end{align*}
which implies
\begin{align}\label{e.lim1/sNloge^sNL_N}
    \lim_{N\to\infty}\frac{1}{sN}\log \E\Ll[e^{sNL_N}\Rr] = \lim_{\beta\to \infty}\lim_{N\to\infty}\frac{1}{sN}\log \E\Ll[Z_N(\beta)^\frac{s}{\beta}\Rr].
\end{align}

Henceforth, let $\beta\geq s \geq 0$. Recall $\mcl M_s$ from~\eqref{e.M_s=} and set $\mcl M_s^\beta:=\{\gamma\in\mcl M_s:\: \gamma(1)=\beta\}$.
For each $\gamma\in\mcl M_s^\beta$ and $\beta>0$, we consider the solution $\Psi_{\ga,\be}$ of the equation
\begin{align}\label{e.Psi_gamma,beta}
\begin{cases}
    -\partial_t \Psi_{\ga,\be} =  \frac{\xi''(t)}{2} \Ll( \partial_{x}^2 \Psi_{\ga,\be} + \ga(t) ( \partial_x \Psi_{\ga,\be} )^2\Rr), & \quad \text{for } (t,x) \in (0,1) \times \R \\
    \Psi_{\ga,\be}(1,x) = \tfrac{1}{\beta}\log \int e^{\beta x\sigma}\d \mu(\sigma) , & \quad \text{for } x \in \R.
\end{cases}
\end{align}
Then, applying Theorem~\ref{t.fractional} with $s,\xi$ substituted by $\frac{s}{\beta},\beta^2\xi$ and rewriting the variational formula in~\eqref{e.fractional} via the relation
\begin{align}\label{e.gamma=,Psi_gamma,beta=}
    \gamma=\beta\zeta([0,\cdot])\qquad\text{and}\qquad \Psi_{\ga,\be}(\cdot,\cdot) = \frac{1}{\be}\Phi_\zeta(\cdot,\beta\cdot),
\end{align}
we get
\begin{align}\label{e.lim1/sNlogE[Z_N(beta)^s/beta]}
    \lim_{N\to\infty}\frac{1}{sN}\log \E\Ll[Z_N(\beta)^\frac{s}{\beta}\Rr] = \inf_{\gamma \in \cM_s^\beta}\bigg( \frac{1}{s}\log\E\Ll[e^{s\Psi_{\gamma,\beta}(0,g_1+h_1)}\Rr] - \frac{1}{2} \int_0^1 t \xi''(t) \gamma(t) \, \d t \bigg).
\end{align}
Moreover, this theorem also ensures that the infimum in~\eqref{e.lim1/sNlogE[Z_N(beta)^s/beta]} admits a unique minimizer $\gamma_{P,\beta}$. 

Taking the limit $\beta \to \infty$, we see that $\Psi_{\gamma,\beta}(1,\cdot)$ converges pointwise to $\Psi_\gamma(1,\cdot)$, and thus we expect $\Psi_{\gamma,\beta}$ to converge pointwise to $\Psi_\gamma$. Combining this with~\eqref{e.lim1/sNlogE[Z_N(beta)^s/beta]} and~\eqref{e.lim1/sNloge^sNL_N} gives Theorem~\ref{t.laplace}, heuristically.

To justify this rigorously, we can follow the strategy from~\cite{auffinger2017parisi}, which carries over with only minor adjustments, except for the proof of Lemma~\ref{l.tail_gamma_P,beta} below. The following is a modification of~\cite[Lemma~2]{auffinger2017parisi}.

\begin{lemma}[Upper bound]\label{l.laplace_<}
For every $s \in (0,1)$, we have
\begin{equation}
\label{e.laplace_<}
\lim_{N\to \infty} \frac{1}{sN} \log \E \Ll[ e^{sN L_N} \Rr]   \leq  \inf_{\gamma \in \cM_{s }}\bigg( \frac 1 s \log \E_{g_1,h_1}\Ll[e^{s\Psi_{\gamma}(0,g_1+h_1)}\Rr] - \frac{1}{2} \int_0^1 t \xi''(t) \gamma(t) \, \d t \bigg).
\end{equation}
\end{lemma}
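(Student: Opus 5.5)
We prove the upper bound by comparing, for each fixed $\gamma\in\mcl M_s$, the zero-temperature functional with the finite-temperature one and then letting $\beta\to\infty$. By \eqref{e.lim1/sNloge^sNL_N} and \eqref{e.lim1/sNlogE[Z_N(beta)^s/beta]}, for every $\beta \ge s$ and every $\gamma' \in \mcl M_s^\beta$,
\begin{equation*}
\lim_{N\to\infty}\frac{1}{sN}\log \E\Ll[e^{sNL_N}\Rr] \le \lim_{N\to\infty}\frac{1}{sN}\log\E\Ll[Z_N(\beta)^{\frac s\beta}\Rr] + \frac{s\log 2}{\beta}\cdot\frac{1}{s},
\end{equation*}
and the middle limit is at most $\frac1s\log\E[e^{s\Psi_{\gamma',\beta}(0,g_1+h_1)}] - \frac12\int_0^1 t\xi''(t)\gamma'(t)\,\d t$. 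Since the left side does not depend on $\beta$, it suffices to show that, for each fixed $\gamma\in\mcl M_s$, we can choose $\gamma'_\beta \in \mcl M_s^\beta$ such that the right side converges to the functional in \eqref{e.laplace_<} evaluated at $\gamma$.

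By density and continuity of the functional, it is enough to treat $\gamma\in\mcl M_s$ that is a step function with finitely many values and bounded by some $M$. This reduction uses the Lipschitz dependence of $\Psi_\gamma(0,x)$ on $\gamma$ in $L^1(\xi''\,\d t)$, uniformly in $x$, from \cite{auffinger2017parisi, chen2018energy}, together with the boundedness of $h_1$ and the Gaussian tails of $g_1$, which make $\frac1s\log\E e^{s\Psi}$ Lipschitz in $\Psi$ for the sup norm.

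For such $\gamma$, and for $\beta > M$, set $\gamma'_\beta := \gamma\1_{[0,1-\delta_\beta)} + \beta\1_{[1-\delta_\beta,1]}$ with $\delta_\beta\to 0$ chosen slowly, for instance $\delta_\beta = \beta^{-1/2}$. Following \cite[Lemma~2]{auffinger2017parisi}, compare $\Psi_{\gamma'_\beta,\beta}$ with $\Psi_\gamma$ in two steps.
\begin{itemize}
\item On the short interval $[1-\delta_\beta,1]$, the PDE with constant parameter $\beta$ is solved by Hopf--Cole. Starting from $\frac1\beta\log\cosh(\beta x)$, this gives $\frac1\beta\log\E\cosh(\beta(x+\sqrt{\xi'(1)-\xi'(1-\delta_\beta)}\,z))$ plus an explicit term. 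This is within $O(\frac{\log 2}{\beta}) + O(\beta\,\delta_\beta)$ of $|x|$ plus a correction controlled by $\sqrt{\delta_\beta}$. I need to check this term carefully; the choice of $\delta_\beta$ should make the total error vanish.
\item On $[0,1-\delta_\beta]$, both equations share the same $\gamma$, so by the maximum principle (Lipschitz dependence on terminal data) the sup-norm difference at time $0$ is bounded by the difference at time $1-\delta_\beta$. That difference tends to $0$, using also that $\Psi_\gamma(1-\delta,\cdot)\to |\cdot|$ uniformly.
\end{itemize}
Finally, the penalty term changes by $\frac12\int_{1-\delta_\beta}^1 t\xi''(t)(\beta-\gamma(t))\,\d t \approx \frac12\xi''(1)\beta\delta_\beta$. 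This term does not vanish; it must be exactly absorbed by the Hopf--Cole correction from the first step.

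The main obstacle is this absorption on the final layer. With terminal data $\frac1\beta\log\cosh(\beta x)$ and parameter $\beta$, the solution at time $1-\delta$ equals $\frac{1}{\beta}\log\E\cosh(\beta(x+\sigma_\delta z))$ with $\sigma_\delta^2=\xi'(1)-\xi'(1-\delta)$. This is about $|x| + \frac{\beta}{2}\sigma_\delta^2$ when $\beta\sigma_\delta\gg1$, and $\frac\beta2\sigma_\delta^2\approx\frac12\xi''(1)\beta\delta$ matches the penalty increment. I would make the cancellation exact by choosing $\gamma'_\beta$ on the last layer so that the two terms coincide up to $o(1)$, keeping $\beta\delta_\beta^{1/2}\to\infty$ while $\delta_\beta\to0$. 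If the mismatch coming from $t\xi''(t)$ versus $\xi''(t)$ is not $o(1)$, I would instead take $\delta_\beta$ with $\beta\delta_\beta\to0$ but $\beta^2\delta_\beta\to\infty$, which kills the penalty directly while still smoothing $\log\cosh$ to $|x|$. The second option seems the safer one.
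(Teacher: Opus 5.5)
Your argument is correct with your second option ($\beta\delta_\beta\to0$), and its structure matches the paper's proof. You bound the limit by the finite-$\beta$ functional at an extension $\gamma'_\beta\in\mcl M_s^\beta$ of $\gamma$, let $\beta\to\infty$, and handle unbounded $\gamma$ through the $L^1$-Lipschitz dependence of $\Psi_\gamma$ on $\gamma$.

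The layer $[1-\delta_\beta,1]$, and with it the ``absorption'' obstacle, is self-inflicted. Beyond monotonicity, membership in $\mcl M_s^\beta$ only requires $\gamma'(1)=\beta$, which is an atom of $\zeta$ at $1$. So the paper takes $\delta_\beta=0$: for $\gamma(1-)<\beta$ it sets $\gamma_\beta=\gamma\1_{[0,1)}+\beta\1_{\{1\}}$. Since $\{1\}$ is Lebesgue-null, neither the PDE on $(0,1)$ nor $\int_0^1 t\xi''(t)\gamma(t)\,\d t$ changes. The parameter $\beta$ enters only through the terminal datum $\frac1\beta\log\cosh(\beta x)\in[\,|x|-\frac{\log 2}{\beta},\,|x|\,]$. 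The sup-norm comparison you already use then gives $\sup_x|\Psi_{\gamma_\beta,\beta}(0,x)-\Psi_\gamma(0,x)|\le\frac{\log 2}{\beta}$. The paper phrases this step as pointwise convergence plus dominated convergence using uniform Lipschitz bounds. So it suffices to assume $\gamma(1-)<\infty$; no step-function reduction and no tuning of $\delta_\beta$ is needed.

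You flagged the layer computation as uncertain, so here is how it resolves. Hopf--Cole gives exactly
\[
\frac1\beta\log\E\cosh(\beta(x+\sigma_\delta z))=\frac1\beta\log\cosh(\beta x)+\frac\beta2\sigma_\delta^2 .
\]
No condition $\beta\sigma_\delta\gg1$ is needed and there is no extra $\sqrt{\delta_\beta}$ term, so the requirement $\beta^2\delta_\beta\to\infty$ is superfluous.

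Your first option, as stated with $\delta_\beta=\beta^{-1/2}$, fails. Up to $O(\beta^{-1}+\sqrt{\delta_\beta})$, the layered functional exceeds the zero-temperature one by
\[
\frac\beta2\int_{1-\delta_\beta}^1(1-t)\,\xi''(t)\,\d t+\frac12\int_{1-\delta_\beta}^1 t\,\xi''(t)\gamma(t)\,\d t\approx\frac{\xi''(1)}{4}\,\beta\delta_\beta^2 .
\]
This excess is nonnegative and stays near $\xi''(1)/4$ for $\delta_\beta=\beta^{-1/2}$. A constant-$\beta$ layer gives the sharp bound only when $\beta\delta_\beta^2\to0$.
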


\begin{proof}
We first consider $\gamma\in\cM_s$ satisfying $\gamma(1-)<\infty$. For $\beta>\gamma(1-)$, we consider $\gamma_\beta(s) = \gamma(s)\mathds{1}_{[0,1)}(s)+ \beta \mathds{1}_{\{1\}}(s)$ which belongs to $\cM^\beta_s$ defined above~\eqref{e.Psi_gamma,beta}. Then, we see that $\Psi_{\gamma_\beta,\beta}$ solves
\begin{align*}
    -\partial_t \Psi_{\ga_\be,\be} =  \frac{\xi''(t)}{2} \Ll( \partial_{x}^2 \Psi_{\ga_\be,\be} + \ga_\be(t) ( \partial_x \Psi_{\ga_\be,\be} )^2\Rr)
\end{align*}
for $(t,x) \in (0,1) \times \R$ with terminal condition $\Psi_{\ga_\be,\be}(1,x) = \tfrac{1}{\beta}\log \int e^{\beta x\sigma}\d \mu(\sigma)$. Since this terminal condition converges to $\Psi_\gamma(1,\cdot)=|\cdot|$ (in~\eqref{eq:parisipsi}) pointwise, we can deduce that $\Psi_{\gamma_\beta,\beta}$ converges pointwise to $\Psi_\gamma$ given in~\eqref{eq:parisipsi}. We recall that $\Psi_{\gamma}(0,\cdot)$ is Lipschitz as proved in~\cite[Proposition~2~(ii)]{chen2018energy} and that $\Psi_{\gamma_\beta,\beta}(0,\cdot)$ is $1$-Lipschitz as proved in~\cite[Proposition~2]{auffinger2015properties}\footnote{The $1$-Lipschitzness is proved for $\Phi_\zeta(0,\cdot)$ in~\eqref{eq:parisipdefinite} and the same property for $\Psi_{\gamma_\beta,\beta}(0,\cdot)$ can be deduced from~\eqref{e.gamma=,Psi_gamma,beta=}.}. Therefore, we can find a linear growth bound uniformly for $\Psi_\gamma$ and $\Psi_{\gamma_\beta,\beta}$, which allows us to use the dominated convergence theorem to get $\lim_{N\to\infty}\E_{g_1,h_1}\Ll[e^{s\Psi_{\gamma_\beta,\beta}(0,g_1+h_1)}\Rr]= \E_{g_1,h_1}\Ll[e^{s\Psi_{\gamma}(0,g_1+h_1)}\Rr]$. On the other hand, we also have $\int_0^1 t \xi''(t) \gamma_\beta(t) \, \d t = \int_0^1 t \xi''(t) \gamma(t) \, \d t$. 
Hence, we obtain
\begin{align*}
    &\bigg( \frac 1 s \log \E_{g_1,h_1}\Ll[e^{s\Psi_{\gamma}(0,g_1+h_1)}\Rr] - \frac{1}{2} \int_0^1 t \xi''(t) \gamma(t) \, \d t \bigg)
    \\
    &=\lim_{\beta\to\infty}\bigg( \frac 1 s \log \E_{g_1,h_1}\Ll[e^{s\Psi_{\gamma_\beta,\beta}(0,g_1+h_1)}\Rr] - \frac{1}{2} \int_0^1 t \xi''(t) \gamma_\beta(t) \, \d t \bigg) 
    \\
    &\stackrel{\eqref{e.lim1/sNloge^sNL_N}\eqref{e.lim1/sNlogE[Z_N(beta)^s/beta]}}{\geq} \lim_{N\to \infty} \frac{1}{sN} \log \E \Ll[ e^{sN L_N} \Rr].
\end{align*}
For $\gamma\in\cM_s$ with $\gamma(1-)=\infty$, we can still have the bound as in the display (without the second line) by approximation using the following continuity of $\Psi_\gamma$ in $\gamma$.
For every $\gamma, \gamma' \in \mcl M_0$, $t \in [0,1]$, and $x \in \R$, we have
\begin{equation*}  
\Ll| \Psi_\ga(t,x) -  \Psi_{\gamma'}(t,x) \Rr| \le 2\xi''(1)  \|\ga  - \gamma'\|_{L^1}
\end{equation*}
which is proved in~\cite[Corollary~2]{auffinger2017parisi}. Then, we can deduce~\eqref{e.laplace_<}.
\end{proof}

To show the lower bound, we need more preparation. 
For each $\beta$, we take $\gamma_{P,\beta}$ to be the unique minimizer of the right-hand side in~\eqref{e.lim1/sNlogE[Z_N(beta)^s/beta]}, which is given by Theorem~\ref{t.fractional} and~\eqref{e.gamma=,Psi_gamma,beta=} as aforementioned. To adapt the argument in~\cite{auffinger2017parisi}, the only significant modification is to control the behavior of $\gamma_{P,\beta}$ uniformly in $\beta$, which parallels the estimate~\cite[(3.2)]{auffinger2017parisi} and the displayed one above it. This is done in the next lemma. We define
\begin{align*}
    \bar\xi(r) :=\xi(r) + \E[g_1^2]r,\qquad \forall r\in\R,
\end{align*}
which is the covariance of the Gaussian part in $H_N(\sigma)$ as in~\eqref{e.H_N=H'_N+g+h}. Notice that $\xi''=\bar\xi''$.

\begin{lemma}[Uniform control on $\gamma_{P,\beta}$]\label{l.tail_gamma_P,beta}
Let $s \geq 0$.
Then, there is a constant $C$ depending only on $\bar\xi$, $s$, and $\|h_1\|_{L^\infty}$ such that
\begin{align}\label{e.gamam_L^1_bd}
    \int_0^1\bar\xi'(t)\gamma_{P,\beta}(t)\d t \leq C,\quad\forall \beta\geq s
\end{align}
and consequently
\begin{align}\label{e.l.tail_gamma_P,beta}
    \gamma_{P,\beta}(t)\leq \frac{C}{\bar\xi(1)-\bar\xi(t)},\quad\forall t\in[0,1),\ \forall \beta\geq s. 
\end{align}
\end{lemma}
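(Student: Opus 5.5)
The plan is to identify $\gamma_{P,\beta}$ with an overlap distribution of the finite-$N$ system, as in \cite{auffinger2017parisi}, and to bound the relevant overlap functional by an energy estimate. Throughout, write $\theta := s/\beta \in [0,1]$. Applying Theorem~\ref{t.fractional} with $(s,\xi)$ replaced by $(\theta,\beta^2\xi)$ and using the relation \eqref{e.gamma=,Psi_gamma,beta=}, we have $\gamma_{P,\beta} = \beta\,\zeta_\beta([0,\cdot])$, where $\zeta_\beta \in \Pr_\theta[0,1]$ is the unique Parisi minimizer.

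The second claim \eqref{e.l.tail_gamma_P,beta} follows from the first by monotonicity. Since $\gamma_{P,\beta}$ is nondecreasing and $\bar\xi' \ge 0$ on $[0,1]$, for every $t \in [0,1)$,
\begin{equation*}
C \ge \int_t^1 \bar\xi'(u)\gamma_{P,\beta}(u)\,\d u \ge \gamma_{P,\beta}(t)\,\big(\bar\xi(1)-\bar\xi(t)\big).
\end{equation*}
It therefore suffices to prove \eqref{e.gamam_L^1_bd}. By Fubini,
\begin{equation*}
\int_0^1 \bar\xi'(t)\gamma_{P,\beta}(t)\,\d t = \beta\,\E_{q\sim\zeta_\beta}\big[\bar\xi(1)-\bar\xi(q)\big],
\end{equation*}
so the goal is a $\beta$-uniform bound on $\beta\,\E_{\zeta_\beta}[\bar\xi(1)-\bar\xi(q)]$.

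\emph{Step 1: identify $\zeta_\beta$ with the limiting law of $R_{12}S_{12}$.} In the proof of Lemma~\ref{l.LOBDFE}, $\zeta_\beta$ arises as the limiting law of $R_{12}S_{12}$ under the perturbed Gibbs measure $\langle\cdot\rangle'$ on $\Sigma_N\times\N$. This measure has weights $v_n\, e^{\beta H_N(\sigma,n)}$, with $(v_n)$ a Poisson--Dirichlet process of parameter $\theta$. To pass from ``a limit point'' to ``the minimizer'', I would use uniqueness of the minimizer (Theorem~\ref{t.fractional}) together with the standard fact that the Aizenman--Sims--Starr lower bound is attained at the limiting overlap law. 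Alternatively, one can run the Auffinger--Chen argument: differentiate the free energy in a parameter multiplying the Gaussian part, then use convexity in that parameter plus the envelope theorem on the variational formula.

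\emph{Step 2: Gaussian integration by parts.} Let $X_N := H'_N + \sum_i g_i\sigma_i$, which has covariance $N\bar\xi$. Gaussian integration by parts gives
\begin{equation*}
\frac1N\,\E\big\langle X_N(\sigma,n)\big\rangle' = \beta\,\E\big\langle \bar\xi(1)-\bar\xi(R_{12}S_{12})\big\rangle' + o(1).
\end{equation*}
The perturbation only contributes $o(1)$. For a finite-dimensional cutoff of the Poisson--Dirichlet process one uses a monotone-limit argument. On the left, $X_N/N = H_N/N - \sum_i h_i\sigma_i/N \le L_N(n) + \|h_1\|_\infty$, where $L_N(n)$ is the ground state of the $n$-th copy. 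Thus it remains to bound $\E\langle L_N(n)\rangle'$ uniformly in $\beta$ and $N$.

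\emph{Step 3 (main obstacle): control the tilted mean of the ground state.} Under the Poisson--Dirichlet weighting, the law of the selected copy $n$ is size-biased. Using the invariance property \cite[Proposition~5.20]{dominguez2024book}, I expect
\begin{equation*}
\E\langle L_N(n)\rangle' \approx \frac{\E\big[L_N\, Z_N(\beta)^{\theta}\big]}{\E\big[Z_N(\beta)^{\theta}\big]}.
\end{equation*}
Since $\beta\theta = s$ and $e^{-N\log 2}e^{sNL_N} \le Z_N(\beta)^{\theta} \le e^{sNL_N}$, this is comparable to the mean of $L_N$ under the tilt $e^{sNL_N}$, up to an error $O(s/\beta)$ from the $\log 2$ factor. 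Gaussian concentration of $L_N$ with variance $\bar\xi(1)/N$ (Borell--TIS) then shows that this tilted mean is at most
\begin{equation*}
\E L_N + C\big(s\,\bar\xi(1)+1\big) \le C\big(\bar\xi, s, \|h_1\|_\infty\big).
\end{equation*}
I expect this last step to be the delicate one, because the fractional moment cannot simply be differentiated as in \cite{auffinger2017parisi}. Making the size-biasing identity rigorous will need care with the error term from the $\log 2$ factor, which is uniform for $\beta \ge s$ only after absorbing it into $C$. The case $s=0$ (so $\theta=0$) reduces to the known argument of \cite{auffinger2017parisi}.
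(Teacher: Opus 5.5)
Your reduction of \eqref{e.l.tail_gamma_P,beta} to \eqref{e.gamam_L^1_bd}, the Fubini identity, the Gaussian integration by parts in the Poisson--Dirichlet-coupled system, the invariance identity, and the differentiability route in Step~1 all match the paper. (Your second option in Step~1 is the one the paper uses.) The invariance identity turns the Gibbs average of the ground state into the size-biased mean $\E[L_N Z_N(\beta)^{s/\beta}]/\E[Z_N(\beta)^{s/\beta}]$.

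The gap is in Step~3, which is precisely the new ingredient of this lemma. The sharp sandwich is $2^{-Ns/\beta}e^{sNL_N}\le Z_N(\beta)^{s/\beta}\le e^{sNL_N}$, so the two weights agree only up to a factor $2^{Ns/\beta}$ that is exponentially large in $N$. Bounding numerator and denominator separately loses exactly this factor. Moreover, for a general reweighting by factors in $[e^{-N\eps},1]$, the tilted mean does not move by only $O(\eps)$: the concentration point of a speed-$N$ tilted law can shift by order $\sqrt{\eps}$ near a quadratic maximum, or more near a flat one. So the claimed $O(s/\beta)$ comparison is unjustified, and Step~3 as written does not give a bound uniform in $\beta$.

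What is missing is a mechanism that absorbs the factor $2^{Ns/\beta}$. The paper uses convexity in the exponent. Set $M_N=\max_\sigma \bar H_N(\sigma)/N$. Since $\beta N M_N\le\log(2^NZ_N(\beta))+\beta N\|h_1\|_{L^\infty}$, the size-biased mean of $M_N$ is at most $\beta^{-1}\partial_a\chi_N(\beta,a)|_{a=s/\beta}+\beta^{-1}\|h_1\|_{L^\infty}$, where $\chi_N(\beta,a)=\frac1N\log\E[(2^NZ_N(\beta))^a]$. This function is convex in $a$, so the derivative is at most $\beta\big(\chi_N(\beta,\frac{s+1}\beta)-\chi_N(\beta,\frac s\beta)\big)$. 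The first term is bounded uniformly in $\beta\ge1$ via $\|\cdot\|_{\ell^\beta}\le\|\cdot\|_{\ell^1}$, which reduces it to a $\beta$-free moment of $\sum_\sigma e^{H_N(\sigma)}$. The second is at least $-s\|h_1\|_{L^\infty}$ by Jensen.

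Alternatively, your concentration idea can be repaired by truncation. Take $w=Z_N(\beta)^{s/\beta}$ and $Y=L_N-\E L_N$. Jensen gives $\E w\ge 2^{-Ns/\beta}e^{sN\E L_N}$, hence for every $u>0$,
\begin{equation*}
\frac{\E[L_N w]}{\E w}\le \E L_N+u+2^{Ns/\beta}\,\E\big[(Y-u)_+e^{sNY}\big].
\end{equation*}
You need a tail bound $\P(Y>y)\le 2e^{-Ny^2/(2\bar\sigma^2)}$; this comes from Gaussian concentration conditionally on the $h_i$, combined with a bounded-differences inequality in the $h_i$, which are not Gaussian. With it, the last term is exponentially small in $N$ once $su-u^2/(2\bar\sigma^2)+\log 2<0$. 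Since $s/\beta\le 1$, such a $u$ can be chosen depending only on $s$, $\bar\xi$ and $\|h_1\|_{L^\infty}$. Either way the error is $O(1)$ rather than $O(s/\beta)$, and that is all \eqref{e.gamam_L^1_bd} needs.
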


\begin{proof}
Define $\bar H_N(\sigma) = H'_N(\sigma) + \sum_{i=1}^Ng_i\sigma_i$.
For each $n\in\N$, let $(\bar H_N(\sigma,n))_{\sigma\in\Sigma_N}$ and $(h^\pnp_i)_{i\in\N}$ be independent copies of $(\bar H_N(\sigma))_{\sigma\in\Sigma_N}$ and $(h_i)_{i\in\N}$, respectively. 
Let $(\nu_n)_{n\in\N}$ be the random weights of the Poisson--Dirichlet cascade with parameter $\frac{s}{\beta}$, independent from other randomness.
Similarly as in~\eqref{e.feRPC}, we have
\begin{align}\label{e.logE[Z..]=Elogsum}
    \log \E \Ll[Z_N(\beta)^\frac{s}{\beta}\Rr] = \frac{s}{\beta}\E \log \sum_{n\in\N}\nu_n \int e^{\beta \bar H_N(\sigma,n)+\beta \sum_{i=1}^N h^\pnp_i\sigma_i}\d \mu_N(\sigma).
\end{align}
For $\ell,\ell'\in\N$, we denote the spin overlap and the cascade overlap by $R_{\ell,\ell'} = \frac{\sigma^\ell\cdot\sigma^{\ell'}}{N}$ and $S_{\ell,\ell'} = \mathds{1}_{n^\ell = n^{\ell'}}$, respectively. Then, due to $\bar\xi(0)=0$, we have 
\begin{align*}
    \E \Ll[\bar H_N(\sigma^1,n^1)\bar H_N(\sigma^2,n^2)\Rr] = N\bar\xi (R_{1,2}S_{1,2}).
\end{align*}
Let $\la\cdot\ra_\beta$ be the (tensorized) Gibbs measure associated with the right-hand side of~\eqref{e.logE[Z..]=Elogsum}, with canonical random variables $(\sigma^\ell,n^\ell)_{\ell\in\N}$ (where $(\sigma^1,n^1):=(\sigma,n)$). Gaussian integration by parts gives
\begin{align}\label{e.E<H/N>=()}
    \E \la \frac{\bar H_N(\sigma,n)}{N}\ra_\beta = \beta \Ll(\bar\xi(1) - \E \la \bar\xi(R_{1,2}S_{1,2})\ra_\beta\Rr).
\end{align}
Then, writing $M_N(n) = \max_{\sigma\in\Sigma_N}\frac{\bar H_N(\sigma,n)}{N}$ and $M_N = \max_{\sigma\in\Sigma_N}\frac{\bar H_N(\sigma)}{N}$, we have 
\begin{align}\label{e.E<H/N><}
    \E \la \frac{\bar H_N(\sigma,n)}{N}\ra_\beta \leq \E \la M_N(n)\ra_\beta = 
    \frac{\E\Ll[M_NZ_N(\beta)^\frac{s}{\beta}\Rr]}{\E\Ll[Z_N(\beta)^\frac{s}{\beta}\Rr]},
\end{align}
where the equality follows from the invariance property of the Poisson--Dirichlet process (e.g., see~\cite[Theorem~5.19]{dominguez2024book}).
We claim that 
\begin{align}\label{e.claim<C}
    \text{r.h.s. in~\eqref{e.E<H/N><}} \leq C
\end{align}
for a constant $C$ only depending on $\bar\xi$, $s$, and $\|h_1\|_{L^\infty}$. Let us first use this to finish the proof.

Next, by using the same argument as in \cite{Panchenko2008} (or~\cite[Section~3.7]{pan}) for proving the differentiability of the Parisi formula and then extracting properties of the Parisi measure, we can have that the function
\begin{align*}
    \lambda \mapsto \lim_{N\to\infty} \E \log \sum_{n\in\N}\nu_n \int e^{\lambda \bar H_N(\sigma,n)+\beta \sum_{i=1}^Nh_i\sigma_i}\d \mu_N(\sigma)
\end{align*}
is differentiable and, as a further result,
\begin{align}\label{e.limE<>=int}
    \lim_{N\to\infty} \E \la \bar\xi(R_{1,2}S_{1,2})\ra_\beta = \int_0^1 \bar\xi(t) \beta^{-1}\d\gamma_{P,\beta}(t).
\end{align}
This was also done in~\cite{chen2017fluctuations} between displays (3.1) and (3.2) therein.
We recall that $\beta^{-1}\gamma_{P,\beta}$ corresponds to the usual Parisi measure $\alpha_{P,\beta}$ as in~\cite{auffinger2017parisi}.
Then, integrating by parts, we have
\begin{align*}
    &\int_0^1 \bar\xi'(t) \gamma_{P,\beta}(t)\d t = \beta\bar\xi(1) - \int_0^1\bar\xi(t)\d \gamma_{P,\beta}(t) 
    \\
    &\stackrel{\eqref{e.limE<>=int}}{=} \lim_{N\to\infty}\beta \Ll(\bar\xi(1) - \E \la \bar\xi(R_{1,2}S_{1,2})\ra_\beta\Rr) \stackrel{\eqref{e.E<H/N>=()}\eqref{e.E<H/N><}\eqref{e.claim<C}}{\leq} C
\end{align*}
which gives~\eqref{e.gamam_L^1_bd}.
Since $\int_0^1 \bar\xi'(t) \gamma_{P,\beta}(t)\d t \geq \gamma_{P,\beta}(t)\int_t^1\bar\xi'(r)\d r $ for every $t$, as $\gamma_{P,\beta}$ is increasing, the above inequality implies the desired bound in~\eqref{e.l.tail_gamma_P,beta}.

It remains to verify~\eqref{e.claim<C}.
For $\beta\geq 1$ and $a\geq 0$, consider the function
\begin{equation*}  
\chi_N(\beta, a) := \frac 1 {N} \log \E \Ll[\Ll(2^NZ_N(\beta)\Rr)^a\Rr].
\end{equation*}
This function is clearly convex in $a$. By $2^NZ_N(\beta)\geq e^{\beta H_N(\sigma)}$ and Jensen's inequality, we get $\chi_N(\beta,a)\geq -a\beta\|h_1\|_{L^\infty}$.
For every $\beta\geq1$, we have
\begin{align*}  
\chi_N\Ll(\beta, \frac s \beta \Rr) 
& = \frac 1 N \log \E \Ll[ \Ll( \sum_{\si \in \Sigma_N} e^{\beta H_N(\sigma)} \Rr)^\frac s \beta  \Rr] 
 \le \frac 1 N \log \E \Ll[ \Ll( \sum_{\si \in \Sigma_N} e^{ H_N(\sigma)} \Rr)^s  \Rr],
\end{align*}
where we used that for $\ell^p$ norms on sequences and for $\beta \ge 1$, we have $\|\cdot\|_{\ell^\beta} \le \|\cdot\|_{\ell^1}$. 
Notice that the right-hand side is now independent of $\beta$.
By Jensen's inequality, we can raise the power $s$ to an integer value and then perform explicit Gaussian integration to see that the right-hand side is bounded by some constant $C(s)=C(\bar\xi,s,\|h_1\|_{L^\infty})$ independent of $N$. We also observe that
\begin{equation*}  
\partial_a \chi_N(\beta,a) = \frac 1 {N} \frac{\E\Ll[\log \Ll(2^NZ_N(\beta)\Rr) Z_N(\beta)^a \Rr]}{\E[Z_N(\beta)^a ]}.
\end{equation*}
Since $\beta N M_N \le \log \Ll(2^NZ_N(\beta)\Rr)+\beta N\|h_1\|_{L^\infty}$, we deduce that
\begin{equation*}  
\frac{\E\Ll[M_N Z_N(\beta)^\frac s \beta\Rr]}{\E\Ll[Z_N(\beta)^\frac s \beta\Rr]} \le \frac 1 \beta \partial_a \chi_N\Ll(\beta,\frac s \beta \Rr) +\frac{\|h_1\|_{L^\infty}}{\beta}. 
\end{equation*}
Meanwhile, we use the convexity of $\chi_N$ in $a$ to get
\begin{equation*}  
\partial_a \chi_N \Ll( \beta, \frac s \beta \Rr) \le \beta \Ll(\chi_N \Ll( \beta, \frac {s+1} \beta \Rr) - \chi_N \Ll( \beta, \frac s \beta \Rr)\Rr) \le \beta C(s+1)+\beta s\|h_1\|_{L^\infty},
\end{equation*}
which along with the previous display verifies~\eqref{e.claim<C} and completes the proof.
\end{proof}

From the two estimates in Lemma~\ref{l.tail_gamma_P,beta}, we can use Helly's selection theorem and a diagonalization argument to pass $\gamma_{P,\beta}$ to the limit along a subsequence. Without loss of generality, we assume that the entire sequence converges to some $\gamma_0$ vaguely on $[0,1)$ and that 
\begin{align}\label{e.L_0=}
    L_0 : = \lim_{\beta\to\infty}\int_0^1\xi''(t)\gamma_{P,\beta}(t)\d t
\end{align}
exists. 
We define
\begin{align}\label{e.nu_0=}
    \d \nu_0(t):=\mathds{1}_{[0,1)}(t)\gamma_0(t)\d t + \frac{1}{\xi''(1)}\Ll(L_0 - \int_0^1\xi''(t)\gamma_0(t)\d t\Rr)\d\boldsymbol{\delta}_1( t)
\end{align}
where $\boldsymbol{\delta}_1$ is the Dirac measure at $1$. 
The following lemma is a restatement of~\cite[Lemma~1]{auffinger2017parisi}, with the same proof but based now on the bounds from Lemma~\ref{l.tail_gamma_P,beta}.

\begin{lemma}[\cite{auffinger2017parisi}]
If $\phi$ is any measurable function with $|\phi|\leq 1$ and $\lim_{t\to 1-}\phi(t) = \phi(1)$, then
\begin{align}\label{e.int..phi->int..phidnu}
    \lim_{\beta\to\infty}\int_0^1\xi''(t)\gamma_{P,\beta}(t)\phi(t)\d t= \int_0^1\xi''(t)\phi(t)\d \nu_0(t).
\end{align}
\end{lemma}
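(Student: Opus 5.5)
We fix a measurable $\phi$ with $|\phi|\le 1$ and $\phi(t)\to\phi(1)$ as $t\to1^-$, and let $\eps>0$. The plan is to split $[0,1)$ at a point $1-\delta$: on $[0,1-\delta]$ we use vague convergence of $\gamma_{P,\beta}$ to $\gamma_0$ together with the uniform bound~\eqref{e.l.tail_gamma_P,beta}; on $(1-\delta,1)$, $\phi$ is nearly constant equal to $\phi(1)$, and the missing mass is captured by $L_0$ from~\eqref{e.L_0=}.

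\textbf{Step 1 (bulk part).} By~\eqref{e.l.tail_gamma_P,beta}, the functions $\gamma_{P,\beta}$ are uniformly bounded on $[0,1-\delta]$ by $C/(\bar\xi(1)-\bar\xi(1-\delta))$. They are nondecreasing and converge vaguely to $\gamma_0$, so (Helly) $\gamma_{P,\beta}(t)\to\gamma_0(t)$ at every continuity point of $\gamma_0$, hence Lebesgue-a.e. on $[0,1-\delta]$. Dominated convergence then gives
\begin{equation*}
\int_0^{1-\delta}\xi''(t)\gamma_{P,\beta}(t)\phi(t)\,\d t\to\int_0^{1-\delta}\xi''(t)\gamma_0(t)\phi(t)\,\d t ,
\end{equation*}
and measurability of $\phi$ is all that is needed here. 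Taking $\phi\equiv1$ yields the same statement without $\phi$.

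\textbf{Step 2 (boundary layer).} Choose $\delta$ so that $|\phi(t)-\phi(1)|\le\eps$ on $(1-\delta,1)$. Then
\begin{equation*}
\Ll|\int_{1-\delta}^1\xi''\gamma_{P,\beta}\phi\,\d t-\phi(1)\int_{1-\delta}^1\xi''\gamma_{P,\beta}\,\d t\Rr|\le\eps\int_0^1\xi''\gamma_{P,\beta}\,\d t .
\end{equation*}
The last integral is bounded uniformly in $\beta$: since $\xi''\le\xi''(1)$, we have $\int_0^1\xi''\gamma_{P,\beta}\le \xi''(1)\gamma_{P,\beta}(0)+\dots$; alternatively, it converges to $L_0$ by assumption, so it is bounded along the sequence. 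By Step~1 with $\phi\equiv1$,
\begin{equation*}
\int_{1-\delta}^1\xi''\gamma_{P,\beta}\,\d t\to L_0-\int_0^{1-\delta}\xi''\gamma_0\,\d t .
\end{equation*}

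\textbf{Step 3 (conclusion).} Combining the two steps, the $\limsup$ and $\liminf$ of $\int_0^1\xi''\gamma_{P,\beta}\phi$ lie within $\eps L_0$ of
\begin{equation*}
\int_0^{1-\delta}\xi''\gamma_0\phi\,\d t+\phi(1)\Ll(L_0-\int_0^{1-\delta}\xi''\gamma_0\,\d t\Rr).
\end{equation*}
We then send $\delta\to0$ and $\eps\to0$. By Fatou, $\int_0^1\xi''\gamma_0\le L_0<\infty$, so $\xi''\gamma_0$ is integrable on $[0,1)$ and dominated convergence applies as $\delta\to0$. The limit becomes
\begin{equation*}
\int_0^1\xi''\gamma_0\phi\,\d t+\phi(1)\Ll(L_0-\int_0^1\xi''\gamma_0\,\d t\Rr),
\end{equation*}
which equals $\int_0^1\xi''\phi\,\d\nu_0$ by~\eqref{e.nu_0=}. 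The Dirac term is weighted by $\xi''(1)\cdot\frac{1}{\xi''(1)}$, and we assume $\xi''(1)>0$, which holds as soon as $\xi\neq0$.

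The main obstacle is the possible blow-up of $\gamma_{P,\beta}$ near $t=1$, which is exactly what creates the atom at $1$. It is handled by the uniform tail bound of Lemma~\ref{l.tail_gamma_P,beta} away from $1$ and by the definition of $L_0$ near $1$. Only the finiteness of $L_0$ is needed, and this follows from~\eqref{e.gamam_L^1_bd} after comparing $\xi''$ with $\bar\xi'$ near $1$.
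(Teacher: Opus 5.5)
Your proof is correct and takes essentially the same approach as the argument the paper imports from \cite{auffinger2017parisi}. You split at $1-\delta$, use the uniform bound \eqref{e.l.tail_gamma_P,beta} together with a.e.\ convergence and dominated convergence on $[0,1-\delta]$, and use $\phi(t)\to\phi(1)$ with the definition of $L_0$ to produce the atom of $\nu_0$ at $1$; the truncated bound ``$\xi''(1)\gamma_{P,\beta}(0)+\dots$'' in Step~2 is unnecessary, since your alternative via the finite limit $L_0$ (justified by \eqref{e.gamam_L^1_bd} and \eqref{e.l.tail_gamma_P,beta}, as you note at the end) already suffices.
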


The last ingredient for the lower bound is the following variational representation of Parisi PDE solutions. 
Let $W=(W_t)_{t\in[0,1]}$ be the Wiener process. Let $\prog_1$ be the collection of progressively measurable processes $\alpha=(\alpha_t)_{t\in[0,1]}$ adapted to the filtration generated by $W$ and satisfying $\sup_{t\in[0,1]}|\alpha(t)|\leq1$. Then, for every $x\in\R$, we have
\begin{align}
    \Psi_{\gamma,\beta}(0,x) = \sup_{\alpha\in\prog_1}\bigg(\frac{1}{\beta}\E\log\cosh \beta\Ll( x + \int_0^1\xi''(t)\gamma(t)\alpha_t\d t +\int_0^1\sqrt{\xi''(t)}\d W_t\Rr)\notag
    \\
    -\frac{1}{2}\int_0^1 \xi''(t)\gamma(t)\E[\alpha^2_t]\d t\bigg) \label{e.AC_rep_1}
\end{align}
for every $\gamma \in \cM^\beta_s$, and we have
\begin{align}\label{e.AC_rep_2}
\begin{split}
    \Psi_{\gamma}(0,x) = \sup_{\alpha\in\prog_1}\bigg(\E\Ll| x + \int_0^1\xi''(t)\gamma(t)\alpha_t\d t +\int_0^1\sqrt{\xi''(t)}\d W_t\Rr|
    \\
    -\frac{1}{2}\int_0^1 \xi''(t)\gamma(t)\E[\alpha^2_t]\d t\bigg)
\end{split}
\end{align}
for every $\gamma \in \cM_s$. 
We also take $W$ independent from $g_1$ and $h_1$.
These two representations can be found in~\cite[Corollaries~1 and 2]{auffinger2017parisi}. The following lemma is a modification of~\cite[Lemma~3]{auffinger2017parisi}.

\begin{lemma}[Lower bound]\label{l.laplace_>}
For every $s \in (0,1)$, we have
\begin{equation}
\label{e.laplace_>}
\lim_{N\to \infty} \frac{1}{sN} \log \E \Ll[ e^{sN L_N} \Rr]   \geq  \inf_{\gamma \in \cM_{s }}\bigg( \frac 1 s \log \E\Ll[e^{s\Psi_{\gamma}(0,g_1+h_1)}\Rr] - \frac{1}{2} \int_0^1 t \xi''(t) \gamma(t) \, \d t \bigg).
\end{equation}
\end{lemma}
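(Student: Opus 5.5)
We follow the scheme of \cite[Lemma~3]{auffinger2017parisi}, with the uniform estimates of Lemma~\ref{l.tail_gamma_P,beta} playing the role of \cite[(3.2)]{auffinger2017parisi}. By \eqref{e.lim1/sNloge^sNL_N} and \eqref{e.lim1/sNlogE[Z_N(beta)^s/beta]}, the left-hand side of \eqref{e.laplace_>} equals
\begin{equation*}
\lim_{\beta\to\infty}\bigg( \frac{1}{s}\log\E\Ll[e^{s\Psi_{\gamma_{P,\beta},\beta}(0,g_1+h_1)}\Rr] - \frac{1}{2} \int_0^1 t \xi''(t) \gamma_{P,\beta}(t) \, \d t \bigg),
\end{equation*}
and we may pass to the subsequence along which $\gamma_{P,\beta}\to\gamma_0$ vaguely on $[0,1)$ and \eqref{e.L_0=} holds. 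The task is therefore to bound the $\liminf$ of this expression from below by the value of the functional in \eqref{e.laplace_>} at some admissible $\gamma$. Since $\gamma_{P,\beta}(0)\ge s$ for every $\beta$, we have $\gamma_0(0)\ge s$, so $\gamma_0\in\cM_s$. However, the mass of $\nu_0$ at $1$ must also be accounted for.

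For the penalty term, we apply \eqref{e.int..phi->int..phidnu} with $\phi(t)=t$. This gives
\begin{equation*}
\int_0^1 t\xi''\gamma_{P,\beta}\,\d t\to\int_0^1 t\xi''(t)\gamma_0(t)\,\d t+ \Delta,
\qquad \Delta:=L_0-\int_0^1\xi''\gamma_0\,\d t\ \ge 0 .
\end{equation*}
For the PDE term, we fix an arbitrary $x$ and $\alpha\in\prog_1$ that is continuous near $t=1$ (approximating first in \eqref{e.AC_rep_2}). Using \eqref{e.AC_rep_1} with $\gamma=\gamma_{P,\beta}$ together with $\frac1\beta\log\cosh(\beta y)\ge |y|-\frac{\log 2}{\beta}$, we get
\begin{equation*}
\Psi_{\gamma_{P,\beta},\beta}(0,x)\ge \E\Ll|x+\int_0^1\xi''\gamma_{P,\beta}\alpha_t\,\d t+\int_0^1\sqrt{\xi''}\,\d W_t\Rr| -\frac12\int_0^1\xi''\gamma_{P,\beta}\E[\alpha_t^2]\,\d t-\frac{\log 2}{\beta}.
\end{equation*}
We then pass to the limit $\beta\to\infty$ pathwise using \eqref{e.int..phi->int..phidnu} with $\phi=\alpha_\cdot(\omega)$ and with $\phi=\E[\alpha^2_\cdot]$. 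By dominated convergence, justified by the uniform $L^1$ bound \eqref{e.gamam_L^1_bd}, we obtain
\begin{equation*}
\liminf_{\beta\to\infty}\Psi_{\gamma_{P,\beta},\beta}(0,x)\ge \E\Ll|x+\int_0^1\xi''\alpha_t\,\d\nu_0+\int_0^1\sqrt{\xi''}\,\d W\Rr|-\frac12\int_0^1\xi''\E[\alpha_t^2]\,\d\nu_0 .
\end{equation*}
Taking the supremum over $\alpha$ yields a lower bound by a ``$\Psi$ with an atom at $1$'', namely $\Psi_{\nu_0}(0,x)$. Interpreted through the PDE, this is $\Psi_{\gamma_0}$ with the modified terminal condition $|x|\mapsto \sup_a\bigl(|x+\xi''(1)\Delta a|-\tfrac12\xi''(1)\Delta a^2\bigr)$ with $|a|\le1$, which equals $|x|+\tfrac12\xi''(1)\Delta$. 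Since adding a constant to the terminal datum shifts the solution by that same constant, the lower bound is $\Psi_{\gamma_0}(0,x)+\tfrac12\xi''(1)\Delta$. This extra term exactly cancels the $-\tfrac12\Delta$ contribution coming from the penalty, because $\phi(1)=1$ there and $\xi''(1)\cdot\frac{1}{\xi''(1)}=1$ after the normalisation in \eqref{e.nu_0=}. The bookkeeping of the factor $\xi''(1)$ in \eqref{e.nu_0=} is where care is needed; we will match it directly against \eqref{e.int..phi->int..phidnu}.

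Finally, we integrate over $x=g_1+h_1$. Fatou's lemma applies to $e^{s\Psi_{\gamma_{P,\beta},\beta}(0,\cdot)}$ since these functions are nonnegative, and it gives
\begin{equation*}
\liminf_\beta \frac1s\log\E\Ll[e^{s\Psi_{\gamma_{P,\beta},\beta}}\Rr]\ge\frac1s\log\E\Ll[e^{s\Psi_{\gamma_0}(0,g_1+h_1)}\Rr]+\tfrac12\Delta .
\end{equation*}
Combining this with the penalty limit gives the value of the functional at $\gamma_0\in\cM_s$, which is at least the infimum, proving \eqref{e.laplace_>}.

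The main obstacle is the interchange of limits in the pathwise step. It requires \eqref{e.int..phi->int..phidnu} for random test functions $\phi=\alpha_\cdot(\omega)$, with the convergence holding uniformly enough to integrate. We would handle this by restricting to a dense class of $\alpha$, piecewise constant in time on a fixed grid with $\alpha$ continuous at $1$, for which \eqref{e.int..phi->int..phidnu} reduces to finitely many deterministic test functions. A secondary point is checking that the supremum in \eqref{e.AC_rep_2} over this class is unchanged.
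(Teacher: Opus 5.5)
Your overall scheme is the paper's. You lower-bound $\Psi_{\gamma_{P,\beta},\beta}$ through \eqref{e.AC_rep_1} with a fixed control, let $\beta\to\infty$ with \eqref{e.int..phi->int..phidnu}, integrate over $g_1+h_1$ with Fatou, and let the atom of $\nu_0$ at $1$ cancel against the extra $-\frac12\Delta$ in the penalty. The gap is the step where that atom is turned into a gain of $\frac12\Delta$, which you assert ``through the PDE'' rather than prove. Write $Y_t:=x+\int_0^t\xi''(r)\gamma_0(r)\alpha_r\,\d r+\int_0^t\sqrt{\xi''(r)}\,\d W_r$. After $\beta\to\infty$, a control $\alpha$ continuous at $1$ only gives the lower bound
\begin{equation*}
\E\Ll|Y_1+\Delta\,\alpha_1\Rr|-\frac12\int_0^1\xi''(t)\gamma_0(t)\E[\alpha_t^2]\,\d t-\frac{\Delta}{2}\E[\alpha_1^2].
\end{equation*}
Here the atom enters with weight $\xi''(1)\nu_0(\{1\})=\Delta$, not $\xi''(1)\Delta$, so the modified terminal datum is $|y|+\frac12\Delta$; with your formula the terms would not cancel unless $\xi''(1)=1$. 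Reaching $\Psi_{\gamma_0}(0,x)+\frac12\Delta$ requires $\alpha_1\approx\sign(Y_1)$, a terminal value that depends on the whole path up to time $1$. At the same time, \eqref{e.int..phi->int..phidnu} forces $t\mapsto\alpha_t$ to be continuous at $1$. A $\gamma$ with an atom at $1$ is not covered by \eqref{e.AC_rep_2}, so there is no ready-made ``$\Psi_{\nu_0}$'' to invoke. Your proposed class does not close this as stated: for controls piecewise constant on a fixed grid, $\alpha_1=\alpha_{t_{k-1}}$ is $\mcl F_{t_{k-1}}$-measurable and cannot equal $\sign(Y_1)$. You would have to refine the grid and prove convergence, which you do not do.

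The missing ingredient is an explicit family of admissible controls that realizes the terminal sign. The paper takes $\phi_{\tau,n}=\alpha$ on $[0,\tau)$ and $\phi_{\tau,n}(t)=\sign_n(Y_t)$ on $[\tau,1]$, with $x=\bar h$. This control is progressive, bounded by $1$, and pathwise continuous at $1$. After $\beta\to\infty$, the paper sends $\tau\to1$ (using only $\gamma_0\in L^1$) and then $n\to\infty$. This gives $|X+\sign(X)\Delta|=|X|+\Delta$, valid also at $X=0$ since $\sign(0)=1$, against a penalty $\Delta$, which is exactly your cancellation. Freezing $\sign(Y_\tau)$ on $[\tau,1]$ and letting $\tau\to1$ would also work. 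On the other hand, what you call the main obstacle is not one. \eqref{e.int..phi->int..phidnu} holds for every fixed measurable $\phi$ with $|\phi|\le1$ continuous at $1$, so it applies $\omega$ by $\omega$ to $\alpha_\cdot(\omega)$. Domination follows from $\int_0^1\xi''\gamma_{P,\beta}$ being bounded along the subsequence where $L_0$ in \eqref{e.L_0=} exists, so no uniformity or grid reduction is needed. With the terminal construction supplied and $\Delta$ in place of $\xi''(1)\Delta$, your proof becomes the paper's.
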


\begin{proof}
For each $n\in \N$, let $\sign_n:\R\to[-1,1]$ be defined by
\begin{align*}
    \sign_n(x) = 
    \begin{cases}
        1, \qquad &\text{if }x\geq 0,
        \\
        2nx + 1, \qquad &\text{if } -\frac{1}{n}\leq x<0,
        \\
        -1, \qquad &\text{if } x<-\frac{1}{n}.
    \end{cases}
\end{align*}
We view $(\sign_n)_{n\in\N}$ as approximations of the sign function, namely,
\begin{align*}
    \lim_{n\to\infty} \sign_n(x) = \sign(x):=
    \begin{cases}
        1  , \qquad &\text{if }x\geq 0,
        \\
        -1 , \qquad &\text{if }x< 0.
    \end{cases}
\end{align*}
Let $\gamma_0$ be given as above~\eqref{e.L_0=}. We write $\bar h= g_1+h_1$ for brevity and recall that $\bar h$ is independent from the Wiener process $W$.
Fix any $\alpha\in\prog_1$. For every $\tau\in(0,1)$ and $n\in\N$, define
\begin{align*}
    \phi_{\tau,n}(t) = \alpha_t\mathds{1}_{[0,\tau)}(t) + \sign_n\Ll(\bar h+\int_0^t\xi''(r)\gamma_0(r)\alpha_r\d r + \int_0^t\sqrt{\xi''(r)}\d W_r\Rr)\mathds{1}_{[\tau,1]}(t)
\end{align*}
for every $t\in[0,1]$. Then, we have $\phi_{\tau,n}\in\prog_1$ and $\lim_{t\to1-}\phi_{\tau,n}(t) = \phi_{\tau,n}(1)$, since $g_n$ is continuous. We also have
\begin{align}\label{e.phi_n...}
\begin{split}
    \phi_n(t) &:=\lim_{\tau\to1-}\phi_{\tau,n}(t)=\alpha_t\mathds{1}_{[0,1)}(t)+\sign_n(X)\mathds{1}_{\{1\}}(t),
    \\
    \lim_{n\to\infty}\phi_n(t) &= \alpha_t\mathds{1}_{[0,1)}(t)+\sign(X)\mathds{1}_{\{1\}}(t)
\end{split}
\end{align}
for every $t\in[0,1]$, where we have set
\begin{align}\label{e.X=}
    X:=\bar h +\int_0^1\xi''(t)\gamma_0(t)\alpha_t\d t+ \int_0^1\sqrt{\xi''(t)}\d W_t.
\end{align}
Recall that $\gamma_{P,\beta}$ is the minimizer of~\eqref{e.lim1/sNlogE[Z_N(beta)^s/beta]}. We write $\spadesuit= \lim_{N\to \infty} \frac{1}{sN} \log \E \Ll[ e^{sN L_N} \Rr] $ for brevity. Then, we have
\begin{align*}
    \spadesuit\stackrel{\eqref{e.lim1/sNloge^sNL_N}\eqref{e.lim1/sNlogE[Z_N(beta)^s/beta]}}{=}\lim_{\beta\to\infty} \bigg( \frac{1}{s}\log\E\Ll[e^{\Psi_{\gamma_{P,\beta},\beta}(0,\bar h)}\Rr] - \frac{1}{2} \int_0^1 t \xi''(t) \gamma_{P,\beta}(t) \, \d t \bigg)
    \\
    \stackrel{\eqref{e.AC_rep_1}}{\geq}\lim_{\beta\to\infty}\bigg(\frac{1}{s}\log\E_{\bar h}\Ll[e^{\frac{s}{\beta}\E_{W}\log\cosh \beta\Ll( \bar h + \int_0^1\xi''(t)\gamma_{P,\beta}(t)\phi_{\tau,n}(t)\d t +\int_0^1\sqrt{\xi''(t)}\d W_t\Rr)}\Rr]
    \\
    -\frac{1}{2}\int_0^1 \xi''(t)\gamma_{P,\beta}(t)\Ll(\E[\phi_{\tau,n}^2(t)]+t\Rr)\d t\bigg)
    \\
    \stackrel{\text{(Fatou's lemma)\eqref{e.int..phi->int..phidnu}}}{\geq} \frac{1}{s}\log\E_{\bar h}\Ll[e^{s\E_W\Ll| \bar h + \int_0^1\xi''(t)\phi_{\tau,n}(t)\d\nu_0(t) +\int_0^1\sqrt{\xi''(t)}\d W_t\Rr|}\Rr]
    \\
    -\frac{1}{2}\int_0^1 \xi''(t)\Ll(\E[\phi_{\tau,n}^2(t)]+t\Rr)\d \nu_0(t)
\end{align*}
where $\E_W$ averages over $\phi_{\tau,n}$ and $W$ and $\E_{\bar h}$ averages over $\bar h$.
Using~\eqref{e.nu_0=}, \eqref{e.phi_n...}, and~\eqref{e.X=} together with the dominated convergence theorem, we send $\tau\to1-$ and then $n\to\infty$ to get
\begin{align*}
    \spadesuit & \geq \frac{1}{s}\log\E_{\bar h}\Ll[e^{s\E_W\Ll|X + \sign(X)\xi''(1)\nu_0(1)\Rr|}\Rr]
    \\
    &-\frac{1}{2}\int_0^1 \xi''(t)\Ll(\E[\alpha^2_t]+t\Rr)\gamma_0(t)\d t - \frac{1}{2}\Ll(\E\Ll[\sign(X)^2\Rr]+1\Rr)\xi''(1)\nu_0(1).
\end{align*}
Since
\begin{align*}
    &\Ll|X+\sign(X)\xi''(1)\nu_0(1)\Rr|
    \\
    &=\Ll(X+\xi''(1)\nu_0(1)\Rr)\mathds{1}_{X>0}- \Ll(X-\xi''(1)\nu_0(1)\Rr)\mathds{1}_{X<0}+\xi''(1)\nu_0(1)\mathds{1}_{X=0}
    \\
    & =|X|+\xi''(1)\nu_0(1)
\end{align*}
and $\Ll(\E\Ll[\sign(X)^2\Rr]+1\Rr)\xi''(1)\nu_0(1)=2\xi''(1)\nu_0(1)$, we obtain
\begin{align*}
    \spadesuit \geq \frac{1}{s}\log\E_{\bar h}\Ll[e^{s\E_W\Ll|X\Rr|}\Rr]
    -\frac{1}{2}\int_0^1 \xi''(t)\Ll(\E[\alpha^2_t]+t\Rr)\gamma_0(t)\d t .
\end{align*}
Recall the expression of $X$ in~\eqref{e.X=} and that $\alpha\in\prog_1$ is chosen arbitrarily. Using these combined with~\eqref{e.AC_rep_2}, we get
\begin{align*}
    \spadesuit \geq \frac{1}{s}\log\E_{\bar h}\Ll[e^{s\Psi_{\gamma_0}(0,\bar h)}\Rr]
    -\frac{1}{2}\int_0^1 t\xi''(t)\gamma_0(t)\d t
\end{align*}
which implies~\eqref{e.laplace_>}.
\end{proof}



\begin{proof}[Proof of Theorem~\ref{t.laplace}]
Lemmas~\ref{l.laplace_<} and~\ref{l.laplace_>} together gives~\eqref{e.laplace}.
Recalling from \cite[Lemma~5]{chen2018energy} that $\gamma\mapsto \Psi_\gamma$ is strictly convex in the sense in as~\eqref{e.Phi_convexity}, we can conclude the uniqueness of minimizers as in~\eqref{e.logEePhi_convexity} and the ensuing lines there.
\end{proof}

%
%
%
%
%
%
\section{Un-inverted formula for the Laplace transform}
\label{s.uninverting}

From now on, we exclusively focus on the case in which the external field is deterministic, as in \eqref{e.def.HN}. The goal of this section is to derive an ``un-inverted'' variational representation (i.e.\ one that takes the form of a supremum) of the limit of the Laplace transform of $L_N$. This representation involves martingales defined over a probability space, and in order to clarify that the representation does not depend on the choice of probability space, we introduce the following definition. 

\begin{definition}[Admissible ambient probability spaces]\label{d.admissible}
A filtered probability space $\mathscr P = (\Omega,(\mcl F_t)_{t \in [0,1]}, \PP)$ is said to be \emph{admissible} if the following holds. First, the $\sigma$-algebras $(\mcl F_t)_{t \in [0,1]}$ are  complete: namely, they contain every subset of any null-measure set. Second, $\msc P$ is sufficiently rich that one can define a Brownian motion $(W_t)_{t \in [0,1]}$ over it (in particular, the process~$W$ is adapted and has independent increments with respect to the filtration~$(\mcl F_t)_{t \in [0,1]}$). 
We denote by $\bmart$ the space of bounded martingales over $\msc P$.
\end{definition}

We define
\begin{align}\label{e.G=}
    \msf G = \Ll\{g\in L^1([0,1]):\: \int_r^1 \xi''(t)\Ll(g(t) -t \Rr)\d t\geq 0, \ \forall r\in [0,1]\Rr\}.
\end{align}
We consider the solution $\Psi_\gamma$ of~\eqref{eq:parisipsi} with $\gamma \in \mcl M_0$.
By~\cite[Proposition~2~(i)]{chen2018energy}, for every $\gamma, \rho \in \mcl M_0$, $t \in [0,1]$ and $x \in \R$, we have
\begin{equation*}  
\Ll| \Psi_\ga(t,x) -  \Psi_{\rho}(t,x) \Rr| \le 2\xi''(1)  \|\ga  - \rho\|_{L^1}.
\end{equation*}
This ensures that even for unbounded $\gamma \in L^1([0,1])$, the function $\Psi_\gamma$ is still well-defined. Moreover, regularity properties of $\Psi_\gamma$ are also given in~\cite[Proposition~2]{chen2018energy}. 
Given an admissible probability space $\msc P$, we denote by $\bmart_1$ the space of martingales over $\msc P$ that take values in $[-1,1]$. We recall that from now on, we only consider the case of a deterministic external field, as in \eqref{e.def.HN}.

\begin{theorem}\label{t.laplace_uninvert}
Let $\msc P$ be admissible as in Definition~\ref{d.admissible}. Then, for every $s\geq 0$, we have
\begin{align}\label{e.lim=sup_mart_achieve}
\begin{alignedat}{2}
&\lim_{N\to \infty} \frac{1}{sN} \log \E \Ll[ \exp(sN L_N) \Rr] \\
&=\sup_{\alpha\in\bmart_1}\bigg\{\EE\Ll[h\alpha_0 + \alpha_1\int_0^1\sqrt{\xi''(t)}\d W_t\Rr]+\frac{s}{2}\int_0^1 \xi''(t)\Ll(\EE\Ll[\alpha_t^2\Rr] -t \Rr)\d t
\end{alignedat} \\
\begin{alignedat}{2}
& & & \Big|\ \EE\Ll[\alpha^2_\bullet\Rr]\in \msf G\bigg\} \notag
\end{alignedat}
\end{align}
where the left-hand side is understood as $\lim_{N \to +\infty} \E[L_N] $ when $s=0$. Moreover, there is a unique $\alpha\in \bmart_1$ that realizes the supremum in~\eqref{e.lim=sup_mart_achieve}, and this optimal martingale can be characterized as follows.
Let $\gamma\in\cM_s$ be the unique minimizer of the infimum in~\eqref{e.laplace} given by Theorem~\ref{t.laplace} (also see Remark~\ref{r.determ_ext_field}) and let $(X_t)_{t \in [0,1)}$ be the strong solution to 
\begin{equation}  
\label{e.AC.SDE}
\Ll\{
\begin{array}{ll}  
X_0 = h, \\
\d X_t = \xi''(t) \ga(t) \dr_x \Psi_{\ga}(t, X_t) \, \d t + \sqrt{\xi''(t)} \, \d W_t.
\end{array}
\Rr.
\end{equation}
This unique maximizer $\al \in \bmart_1$ must satisfy, for every $t \in [0,1)$,
\begin{equation}  
\label{e.identity.alpha}
\al_t = \dr_x \Psi_{\ga}(t,X_t) = \dr_x \Psi_{\ga}(0,h) +  \int_0^t \sqrt{\xi''(s)} \dr_x^2 \Psi_{\ga}(s,X_s) \, \d W_s.
\end{equation}
\end{theorem}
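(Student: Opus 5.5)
The plan is to prove \eqref{e.lim=sup_mart_achieve} by convex duality against the inf-formula of Theorem~\ref{t.laplace}, specialized to a deterministic field as in Remark~\ref{r.determ_ext_field}. That is, I will show
\[
\inf_{\gamma\in\cM_s}\Big(\Psi_\gamma(0,h)-\tfrac12\int_0^1 t\xi''(t)\gamma(t)\,\d t\Big)=\sup_{\alpha}\{\cdots\}.
\]
The argument has three parts: a weak-duality inequality valid on any admissible $\msc P$, an explicit martingale attaining equality, and a uniqueness argument.

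\emph{Weak duality.} Fix $\gamma\in\cM_s$ and $\alpha\in\bmart_1$ with $\EE[\alpha^2_\bullet]\in\msf G$. First I check that the stochastic-control representation \eqref{e.AC_rep_2} still gives a lower bound for $[-1,1]$-valued controls that are progressive with respect to the larger filtration $(\mcl F_t)$. This should hold because $W$ has independent increments with respect to $(\mcl F_t)$, so the usual Itô verification argument with $\Psi_\gamma$ goes through. Plugging in $\alpha$ as the control and using $|x|\ge \alpha_1 x$ gives
\[
\Psi_\gamma(0,h)\ge \EE\Big[\alpha_1\Big(h+\int_0^1\xi''\gamma\alpha_t\,\d t+\int_0^1\sqrt{\xi''}\,\d W_t\Big)\Big]-\tfrac12\int_0^1\xi''\gamma\,\EE[\alpha_t^2]\,\d t .
\]
The martingale property gives $\EE[\alpha_1\alpha_t]=\EE[\alpha_t^2]$ and $\EE[\alpha_1 h]=h\EE[\alpha_0]$. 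Hence
\[
\Psi_\gamma(0,h)-\tfrac12\int_0^1 t\xi''\gamma\,\d t\ \ge\ \EE\Big[h\alpha_0+\alpha_1\int_0^1\sqrt{\xi''}\,\d W\Big]+\tfrac12\int_0^1\xi''(t)\gamma(t)\big(\EE[\alpha_t^2]-t\big)\,\d t .
\]
Next, write $\gamma(t)=\gamma(0)+\int_{(0,t]}\d\gamma$ and apply Fubini. The last integral becomes
\[
\gamma(0)\int_0^1\xi''\,(g-t)\,\d t+\int_{(0,1)}\Big(\int_r^1\xi''\,(g-t)\,\d t\Big)\d\gamma(r),\qquad g=\EE[\alpha^2_\bullet].
\]
Because $g\in\msf G$ and $\gamma(0)\ge s$, this is at least $s\int_0^1\xi''(g-t)\,\d t$. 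That yields $\inf\ge\sup$. The case $\gamma(1-)=\infty$ is handled by truncation and the $L^1$-continuity of $\Psi_\gamma$.

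\emph{Attainment.} Let $\gamma$ be the unique minimizer from Theorem~\ref{t.laplace}, let $X$ solve \eqref{e.AC.SDE}, and set $\alpha_t=\dr_x\Psi_\gamma(t,X_t)$.
\begin{itemize}
\item Differentiating \eqref{eq:parisipsi} in $x$ and applying Itô's formula shows that the drift cancels, which gives the second identity in \eqref{e.identity.alpha}. So $\alpha$ is a martingale, bounded by $1$ by the Lipschitz bound on $\Psi_\gamma$, and $\alpha_1=\sign(X_1)$ in the limit.
\item By the standard verification argument, $\alpha$ is the optimal control in \eqref{e.AC_rep_2}, and $|X_1|=\alpha_1X_1$. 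Hence the first inequality in the weak-duality chain becomes an equality.
\item The remaining equality is a first-order optimality condition for $\gamma$. The directional derivative of $\gamma\mapsto\Psi_\gamma(0,h)$ is $\tfrac12\int_0^1\xi''\,\delta\gamma\,\EE[\alpha_t^2]\,\d t$, by an envelope argument on \eqref{e.AC_rep_2}.
\item Perturbing $\gamma$ by $+\eps\mathbf 1_{[r,1)}$, which stays in $\cM_s$, gives $\int_r^1\xi''(\EE\alpha^2-t)\,\d t\ge0$, so $\EE[\alpha^2_\bullet]\in\msf G$.
\item Perturbing by $\pm\eps$ times restrictions of $\d\gamma$ to its support, and lowering $\gamma(0)$ when $\gamma(0)>s$, gives complementary slackness: $\int_r^1\xi''(\EE\alpha^2-t)\,\d t=0$ for $\d\gamma$-a.e.\ $r$, and $\int_0^1\xi''(\EE\alpha^2-t)\,\d t=0$ if $\gamma(0)>s$.
\end{itemize}
Together these make every inequality in the chain an equality. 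For $s=0$ the same argument applies verbatim.

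\emph{Uniqueness and main obstacle.} Any maximizer $\tilde\alpha$ must saturate the chain with this optimal $\gamma$. In particular, $\tilde\alpha$ must be an optimal control in \eqref{e.AC_rep_2}. The plan is to show the optimal control is unique by a strict concavity or completion-of-squares argument in the verification identity. That argument identifies $\tilde\alpha_t=\dr_x\Psi_\gamma(t,\tilde X_t)$, where $\tilde X$ solves \eqref{e.AC.SDE}, and strong uniqueness of that SDE then gives $\tilde\alpha=\alpha$.

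I expect the main obstacle to be the rigorous first-order condition. Specifically: differentiability of $\gamma\mapsto\Psi_\gamma(0,h)$ in these directions, admissibility of the perturbations when $\gamma$ blows up near $t=1$, and justifying $\alpha_1=\sign(X_1)$ together with Itô's formula up to $t=1$, where the terminal condition $|x|$ is not smooth. I would handle all of these by working on $[0,\tau]$ and then letting $\tau\to1$, using the regularity estimates of \cite[Proposition~2]{chen2018energy}.
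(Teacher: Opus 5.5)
Your proposal is correct in outline but follows a genuinely different route from the paper.

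\textbf{How the paper argues.} Its proof of this theorem uses no first-order conditions for the Parisi minimizer. It first proves the affine-in-$\gamma$ representation
$\Psi_\gamma(0,h)=\sup_{\alpha\in\bmart_1}\EE\bigl[h\alpha_0+\frac12\int_0^1\xi''\gamma\alpha_t^2\,\d t+\alpha_1\int_0^1\sqrt{\xi''}\,\d W_t\bigr]$,
with unique maximizer \eqref{e.identity.alpha} (Lemma~\ref{l.prog.to.mart}). There the limit $t\to1$ is handled through the conjugates $\Psi^*_\gamma(t,\cdot)$ and epi-convergence (Lemma~\ref{l.Phi^*_cvg}). The paper then applies the Ky Fan--Sion minimax theorem on the closed convex hull $\msf K$, which is compact in $\R\times L^1$ (Lemma~\ref{l.inter_inf_sup}). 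It builds a maximizing martingale on an enriched space by randomization and Skorokhod (Lemma~\ref{l.lim=sup_mart_achieve}). Finally it identifies that martingale by a saddle-point argument and transfers it to an arbitrary $\msc P$ by equality in law.

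\textbf{What your route buys.} Your weak-duality computation (Fubini plus $\gamma(0)\ge s$) replaces the minimax theorem with an elementary inequality. Your candidate martingale lives on the given $\msc P$ from the start, so you need no compactness, no auxiliary space and no transfer step. As a by-product you get the KKT conditions explicitly: $\EE[\alpha^2_\bullet]\in\msf G$ and complementary slackness. Complementary slackness is quickest by testing against $\gamma'\equiv s$ and combining with your Fubini inequality.

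\textbf{What it costs.} All the analytic weight moves to the G\^ateaux derivative of $\gamma\mapsto\Psi_\gamma(0,h)$, which the paper's route avoids entirely. Freezing the control in \eqref{e.AC_rep_2} only gives
$\Psi_{\gamma+\eps\delta}(0,h)\ge\Psi_\gamma(0,h)+\frac{\eps}{2}\int_0^1\xi''\delta\,\EE[\alpha_t^2]\,\d t$.
That is the wrong direction for deducing $\EE[\alpha^2_\bullet]\in\msf G$ from minimality. The matching upper bound needs either continuity of the optimal control in $\gamma$, or a citation such as \cite[Proposition~4]{chen2017fluctuations}. The paper invokes the latter only later, in \eqref{e.Parisi_functional}, and only at the $\cM_0$-minimizer, whereas you need it at the $\cM_s$-minimizer.

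\textbf{Two smaller points.} First, $\alpha_1X_1=|X_1|$ follows cheaply from a standard convex-analysis fact. The convex functions $\Psi_\gamma(t,\cdot)$ converge pointwise to $|\cdot|$, so on $\{X_1\neq0\}$ their subgradients at $X_t\to X_1$ converge to $\sign(X_1)$; on $\{X_1=0\}$ the identity is trivial. Second, in your uniqueness step the optimal control is determined only on $\{\gamma>0\}$. You must therefore conclude via the martingale property. The case $\gamma\equiv0$ on $[0,1)$, possible only when $s=0$, needs separate treatment via $\tilde\alpha_1X_1=|X_1|$ with $X_1$ Gaussian, as the paper does in Lemma~\ref{l.prog.to.mart}.
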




Let $\bprog_1$ be the space of progressively measurable processes $u$ adapted to the filtration of $\msc P$ that also satisfy $|u_t|\leq 1$ almost surely for every $t\in[0,1]$. We start with the following mild extension of \eqref{e.AC_rep_2}, where here we allow for a general admissible probability space, and we specify what the optimal processes are.

\begin{lemma}[Variational representation of $\Psi_\ga$]
\label{l.variat.psiga}
For every $h \in \R$ and $\ga \in \mcl M_0$, we have
\begin{multline}
\label{e.variat.psiga}
\Psi_\ga(0,h) = \sup_{\alpha\in\bprog_1} \EE \left[ 
\left|h+\int_{0}^{1}\xi''(t)\ga(t)\alpha_t \, \mathrm{d}t+\int_0^1\sqrt{\xi''(t)}\, \mathrm{d} W_t\right| \right. \\
\left. - \frac{1}{2}\int_{0}^{1}\xi''(t)\gamma(t)\alpha^{2}_t\, \mathrm{d}t \right].
\end{multline}
Moreover, let $(X_t)_{t \in [0,1)}$ be the strong solution to~\eqref{e.AC.SDE}.
When $\gamma$ is not constantly zero on $[0,1)$, a stochastic process $\al \in \bprog_1$ achieves the supremum in \eqref{e.variat.psiga} if and only if it satisfies~\eqref{e.identity.alpha} for almost every $t \in [0,1)$ with $\gamma(t) > 0$.
\end{lemma}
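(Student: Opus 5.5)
We will prove the representation by a verification (Itô) argument, in the spirit of the Auffinger--Chen stochastic control representation \eqref{e.AC_rep_2}, but carried out directly on the admissible space $\msc P$ so that no assumption on the filtration is needed. We first assume $\gamma$ is a step function bounded on $[0,1)$, so that by \cite[Proposition~2]{chen2018energy} $\Psi_\ga$ is smooth on $[0,1)\times\R$, with $|\dr_x\Psi_\ga|\le 1$ and $\dr_x^2\Psi_\ga$ bounded on $[0,1-\delta]\times\R$. At the end we remove this assumption using the $L^1$-Lipschitz estimate $|\Psi_\ga-\Psi_\rho|\le 2\xi''(1)\|\ga-\rho\|_{L^1}$, together with the fact that the right side of \eqref{e.variat.psiga} is $L^1$-Lipschitz in $\gamma$ uniformly over $\al\in\bprog_1$, since $|\al|\le1$ and $|\cdot|$ is $1$-Lipschitz.

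\emph{Upper bound.} Fix $\al\in\bprog_1$ and set $Y_t=h+\int_0^t\xi''\ga\al\,\d r+\int_0^t\sqrt{\xi''}\,\d W$. Applying Itô's formula to $\Psi_\ga(t,Y_t)$ on $[0,1-\delta]$ and using the PDE \eqref{eq:parisipsi}, the drift equals
\begin{equation*}
\xi''(t)\ga(t)\Big(\al_t\dr_x\Psi_\ga-\tfrac12(\dr_x\Psi_\ga)^2\Big)\le \tfrac12\xi''(t)\ga(t)\al_t^2,
\end{equation*}
by completing the square. Equality holds iff $\ga(t)(\al_t-\dr_x\Psi_\ga(t,Y_t))^2=0$. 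The stochastic integral is a true martingale because $|\dr_x\Psi_\ga|\le1$. Taking expectations gives
\begin{equation*}
\EE\,\Psi_\ga(1-\delta,Y_{1-\delta})-\Psi_\ga(0,h)\le \tfrac12\EE\int_0^{1-\delta}\xi''\ga\al^2 - \tfrac12\EE\int_0^{1-\delta}\xi''\ga(\al-\dr_x\Psi_\ga)^2 .
\end{equation*}
We then let $\delta\to0$. Since $\Psi_\ga(t,\cdot)\to|\cdot|$ uniformly as $t\to1$ (from Lipschitz bounds and $\Psi_\ga(t,x)-|x|=O(\sqrt{\int_t^1\xi''})$ for bounded $\ga$), and $Y$ is continuous with Gaussian-type integrability, this yields
\begin{equation*}
\Psi_\ga(0,h)\ge \EE|Y_1|-\tfrac12\EE\int_0^1\xi''\ga\al^2+\tfrac12\EE\int_0^1\xi''\ga(\al-\dr_x\Psi_\ga(t,Y_t))^2\,\d t .
\end{equation*}

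\emph{Attainment and characterization.} The SDE \eqref{e.AC.SDE} has a Lipschitz drift for bounded step $\ga$, so it has a strong solution on $\msc P$ driven by $W$. Taking $\al_t=\dr_x\Psi_\ga(t,X_t)$ gives $Y=X$ and makes the last term vanish, hence equality. Conversely, if $\al$ is optimal, that term must vanish, so $\al_t=\dr_x\Psi_\ga(t,Y_t)$ for a.e.\ $t$ with $\ga(t)>0$. The drift of $Y$ then coincides with that of $X$, because the drift only sees $\al$ where $\ga>0$. By pathwise uniqueness, $Y=X$, and the first equality in \eqref{e.identity.alpha} follows. The second equality comes from Itô's formula for $\dr_x\Psi_\ga(t,X_t)$: differentiating the PDE in $x$ shows that $u=\dr_x\Psi_\ga$ solves $-\dr_tu=\frac{\xi''}2(\dr_x^2u+2\ga u\dr_xu)$, and the $\d t$ terms cancel exactly along $X$. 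This leaves $\d u(t,X_t)=\sqrt{\xi''}\,\dr_x^2\Psi_\ga\,\d W_t$. The hypothesis that $\ga\not\equiv0$ is used only to make the characterization nonvacuous.

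\emph{Main obstacle.} The main obstacle is the passage to general $\ga\in\mcl M_0$, possibly unbounded near $1$. For \eqref{e.variat.psiga} itself, approximation in $L^1$ suffices, as noted above. For the characterization, we instead work directly with general $\ga$ on $[0,1-\delta]$, where $\ga$ is bounded. We use the regularity of $\Psi_\ga$ and the bounds $|\dr_x\Psi_\ga|\le1$ and $\dr_x^2\Psi_\ga\ge0$ bounded on compacts of $[0,1)$ from \cite[Proposition~2]{chen2018energy}, so that the Itô computation above remains valid up to time $1-\delta$. We then pass $\delta\to0$ using the $L^1$ integrability of $\ga$, which controls $\int\xi''\ga\al^2$, and the continuity $\Psi_\ga(t,\cdot)\to|\cdot|$ as $t\to1$. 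We expect this continuity to be the delicate point, and we plan to obtain it from the $L^1$-stability estimate by comparing with truncations $\ga\wedge M$.
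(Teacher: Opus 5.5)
Your proposal is correct and follows essentially the same route as the paper: It\^o's formula for $\Psi_\ga(t,Y_t)$ combined with the PDE, then the pointwise bound $\ga\bigl(\al\,\dr_x\Psi_\ga-\tfrac12(\dr_x\Psi_\ga)^2\bigr)\le\tfrac12\ga\al^2$, with equality iff $\al_t=\dr_x\Psi_\ga(t,Y_t)$ where $\ga>0$, then the passage $t\nearrow1$ and the identification of $Y$ with $X$ through the SDE, and finally general $\ga$ handled by approximation. The differences are minor: you check attainment directly on $\msc P$ via the strong solution of \eqref{e.AC.SDE} instead of citing \cite[Theorem~5]{chen2018energy} on Wiener space, and you carry the remainder $\tfrac12\EE\int_0^1\xi''(t)\ga(t)\bigl(\al_t-\dr_x\Psi_\ga(t,Y_t)\bigr)^2\,\d t$ explicitly, which makes the equality case transparent (write the display after It\^o's formula as an identity rather than ``$\le$'', since your ``hence equality'' step needs the identity).
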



Notice that although $\gamma$ is not bounded, the integrals in~\eqref{e.variat.psiga} make sense since $|\alpha_t|\leq 1$ holds uniformly in $t$ and $\gamma$ is integrable (see~\eqref{e.M_s=}).
When $\gamma$ is constantly zero, the variational problem in~\eqref{e.variat.psiga} is independent of $\alpha$.

We exclude $t=1$ in~\eqref{e.identity.alpha} since both $\partial_x\Psi_\gamma(1,\cdot)$ and $\partial^2_x\Psi_\gamma(1,\cdot)$ have a singularity at $0$ (recall that $\Psi_\gamma(1,\cdot)=|\cdot|$).
It should be noted that $\alpha_t$ given as in~\eqref{e.identity.alpha} indeed satisfies $|\alpha_t|\leq 1$ a.s.\ due to
\begin{align}\label{e.|dxPhi|<1}
    \operatorname*{ess\,sup}_{(t,x)\in[0,1]\times\R}|\partial_x\Psi_\gamma(t,x)|\leq 1,
\end{align}
as proven in~\cite[Lemma~11]{chen2018energy} (the bound is proven for the case when $\gamma$ is a step function there, and the general case follows from approximations given in~\cite[Proposition~2~(iii)]{chen2018energy}).


If the underlying probability space $\msc P$ is the Wiener space, then the representation in~\eqref{e.variat.psiga} and the fact that $\alpha$ given in~\eqref{e.identity.alpha} achieves the supremum both follow from~\cite[Theorem~5]{chen2018energy}. 
Now, for more general $\msc P$, one can repeat the same argument used in~\cite[Lemma~2.1]{mourrat2025uninverting} to get the result (replacing $\phi$ there by $|\cdot|$). The uniqueness follows from the same reasoning as explained below~\cite[(2.8)]{mourrat2025uninverting}. Since there is some technicality at $t=1$ due to the non-differentiability of $\Psi(1,\cdot)=|\cdot|$ at $0$, we give a detailed proof below, modifying the argument in~\cite[Lemma~2.1]{mourrat2025uninverting}.

\begin{proof}[Proof of Lemma~\ref{l.variat.psiga}]
Since $\msc P$ is richer than the Wiener space, \cite[Theorem~5]{chen2018energy} (which proves \eqref{e.variat.psiga} for the Wiener space) implies ``$\leq$'' in~\eqref{e.variat.psiga}. It remains to show
\begin{align}\label{e.variat.psiga_lb}
\begin{split}
    \Psi_\gamma(0,h)\geq \EE \left[ 
\left|h+\int_{0}^{1}\xi''(t)\ga(t)\alpha_t \, \mathrm{d}t+\int_0^1\sqrt{\xi''(t)}\, \mathrm{d} W_t\right| \right. \\
\left. - \frac{1}{2}\int_{0}^{1}\xi''(t)\gamma(t)\alpha^{2}_t\, \mathrm{d}t \right]
\end{split}
\end{align}
for any $\alpha\in \bprog_1$. 
Henceforth, we write $\Psi=\Psi_\gamma$ for brevity.
Fix any $\alpha\in\bprog_1$ and set
\begin{align}\label{e.Y=}
    Y_t := h + \int_0^t \xi''(r)\gamma(r)\alpha_r\d r+ \int_0^t \sqrt{\xi''(r)}\d W_r,\quad\forall t\in[0,1].
\end{align}
The regularity of $\Psi$ on $[0,1)\times \R$ from~\cite[Proposition~2]{chen2018energy} allows us to apply Itô's formula to get, for $t\in[0,1)$,
\begin{align*}
    \Psi(t,Y_t)=\Psi(0,h) + \int_0^t\partial_t \Psi(r,Y_r)\d r+ \int_0^t \partial_x \Psi(r,Y_r)\d Y_r \\
    + \frac{1}{2}\int_0^t \partial^2_x\Psi(r,Y_r)\d \la Y\ra_r.
\end{align*}
Taking the expectation, we get, for $t\in[0,1)$,
\begin{align}\label{e.E[]=Psi+E...}
\begin{split}
    \EE \Ll[\Psi(t,Y_t)\Rr] &= \Psi(0,h)+\EE\int_0^t\Big(\partial_t\Psi(r,Y_r)
    \\
    &+\frac{\xi''(r)}{2}\Ll(2\gamma(r)\alpha_r\partial_x\Psi(r,Y_r)+\partial^2_x\Psi(r,Y_r)\Rr)\Big)\d r.
\end{split}
\end{align}
It is well known that when $\gamma$ is a step function (and thus bounded), $\Psi$ can be obtained by solving~\eqref{eq:parisipsi} using the Cole--Hopf transformation (see the proof of~\cite[Lemma~11]{chen2018energy}) on $I:=[0,1)\setminus\{\text{discontinuity points of $\gamma$}\}$. In particular, $\Psi$ is smooth on $I\times \R$ and we have
\begin{align*}
    -\partial_t \Psi(t,x) 
    & = \frac{\xi''(t)}{2} \Ll( \partial_{x}^2 \Psi(t,x) + \ga(t) ( \partial_x \Psi(t,x) )^2\Rr)
    \\
    & = \frac{\xi''(t)}{2} \sup_{a\in\R}\Ll ( \partial_{x}^2 \Psi(t,x) +2a \ga(t)\partial_x \Psi(t,x) - a^2 \gamma(t)\Rr)
\end{align*}
for every $(t,x) \in I \times \R$.
Substituting $(t,x)$ with $(r,Y_r)$ in the above and setting $a$ to be $\alpha_r$, we thus obtain
\begin{align}\label{e.Eint(...)<Eintgammaalpha^2}
\begin{split}
    \EE\int_0^t\Big(\partial_t\Psi(r,Y_r)
    +\frac{\xi''(r)}{2}\Ll(2\gamma(r)\alpha_r\partial_x\Psi(r,Y_r)+\partial^2_x\Psi(r,Y_r)\Rr)\Big)\d r 
    \\
    \leq \frac{1}{2} \EE \int_0^t \xi''(r)\gamma(r)\alpha^2_r\d r
\end{split}
\end{align}
for every $t\in[0,1)$, where ``$\leq$'' becomes an equality if and only if
\begin{align}\label{e.eqn_cond}
    \text{for almost every $r\in[0,1)$ such that $\gamma(r)>0$, }\quad \alpha_r = \partial_x \Psi(r,Y_r) \ \text{a.s.}
\end{align}
Recall that we have assumed that $\gamma$ is a step function. The general case of~\eqref{e.Eint(...)<Eintgammaalpha^2} follows by approximations enabled by~\cite[Proposition~2~(iii)]{chen2018energy}.
Inserting~\eqref{e.Eint(...)<Eintgammaalpha^2} into~\eqref{e.E[]=Psi+E...} and sending $t\nearrow1$, we get
\begin{align}\label{e.Psi(0,h)>E}
    \EE\Ll[\Psi(0,h)\Rr] \geq \EE [\Psi(1,Y_1)] -\frac{1}{2} \EE \int_0^1 \xi''(t)\gamma(t)\alpha^2_t\d t,
\end{align}
which together with~\eqref{e.Y=} and $\Psi(1,\cdot)=|\cdot|$ gives~\eqref{e.variat.psiga_lb} and thus implies~\eqref{e.variat.psiga}.

The fact that~\eqref{e.identity.alpha} is sufficient for maximality is already proved in~\cite[Theorem~5]{chen2018energy}.
Then, we show the necessity. Let $\alpha$ be a maximizer and thus~\eqref{e.Psi(0,h)>E} is an equality, which requires~\eqref{e.Eint(...)<Eintgammaalpha^2} to be also an equality. Then, inserting the equality condition~\eqref{e.eqn_cond} into~\eqref{e.Y=}, we see that $Y$ must satisfy the SDE~\eqref{e.AC.SDE}, which gives that $\alpha$ satisfies~\eqref{e.identity.alpha}.
\end{proof}

The following lemma corresponds to~\cite[Lemma~2.2]{mourrat2025uninverting} which considers spin glasses at finite temperature. 

\begin{lemma}[Supremum of affine functionals]
\label{l.prog.to.mart}
For every $\ga \in \mcl M_0$, we have
\begin{equation}  
\label{e.prog.to.mart}
\Psi_\ga(0,h) 
\\
= \sup_{\al \in \bmart_1} \EE \Ll[ h \al_0 + \frac{1}{2}\int_{0}^{1}\xi''(t)\ga(t) \alpha^{2}_t\, \d t + \al_1 \int_0^1 \sqrt{\xi''(t)}\, \d W_t \Rr] .
\end{equation}
Moreover, there exists a unique maximizer to the variational problem in~\eqref{e.prog.to.mart}. This unique maximizer $\al \in \bmart_1$ must satisfy~\eqref{e.identity.alpha} for every $t \in [0,1)$ with $(X_t)_{t \in [0,1]}$ solving~\eqref{e.AC.SDE}. 
\end{lemma}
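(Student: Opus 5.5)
The plan is to derive \eqref{e.prog.to.mart} from Lemma~\ref{l.variat.psiga}, in the spirit of \cite[Lemma~2.2]{mourrat2025uninverting}. The idea is to rewrite the absolute value as a supremum over signs and then optimize jointly.

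\emph{Upper bound ($\ge$ direction for $\Psi_\ga(0,h)$).} Fix $\al \in \bmart_1$. For every $t$ we have
\begin{equation*}
\al_t\, \xi''(t)\ga(t)\al_t - \tfrac12 \xi''(t)\ga(t)\al_t^2 = \tfrac12\xi''(t)\ga(t)\al_t^2 .
\end{equation*}
Set $Y_1 = h + \int_0^1 \xi''\ga\al\,\d t + \int_0^1\sqrt{\xi''}\,\d W$. Since $|\al_1|\le 1$, we get $|Y_1| \ge \al_1 Y_1$. We then use the martingale property. First, $\EE[\al_1 h] = h\al_0$, where we take $\mcl F_0$ trivial or else work with $\EE[\al_0]$ and note that $h$ is deterministic. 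Second, $\EE[\al_1 \al_t] = \EE[\al_t^2]$ by conditioning on $\mcl F_t$. Together these give
\begin{equation*}
\EE[\al_1 Y_1] - \tfrac12\EE\!\int_0^1 \xi''\ga\al_t^2\,\d t = \EE\Ll[h\al_0 + \tfrac12\int_0^1 \xi''\ga\al_t^2\,\d t + \al_1\int_0^1\sqrt{\xi''}\,\d W\Rr].
\end{equation*}
Since $\al\in\bmart_1\subset\bprog_1$, Lemma~\ref{l.variat.psiga} shows that the left-hand side is at most $\Psi_\ga(0,h)$. The Fubini and integrability steps are harmless, because $\ga\in L^1$ and $|\al|\le1$.

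\emph{Lower bound.} Take the optimal process $\al_t = \dr_x\Psi_\ga(t,X_t)$ from \eqref{e.identity.alpha}. By \eqref{e.|dxPhi|<1} it is bounded by $1$. It is a martingale on $[0,1)$ because of its Itô representation, whose integrand $\sqrt{\xi''}\dr_x^2\Psi_\ga$ yields a true martingale since the process is bounded. Being a bounded martingale, it converges a.s.\ as $t\to1$, and we define $\al_1$ as the limit. The key claim is $\al_1 = \sign(X_1)$ a.s. The heuristic is that $\dr_x\Psi_\ga(t,\cdot)\to \sign$ as $t\to1$, and $X_1$ has a density (it has a Gaussian component), so $X_1\neq 0$ a.s. To make this rigorous, I would use $\Psi_\ga(t,\cdot)\to|\cdot|$, convexity of $\Psi_\ga(t,\cdot)$ in $x$, and the convergence $X_t\to X_1$. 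Convexity in $x$ should be available from \cite{chen2018energy}, or from the Cole--Hopf form for step $\ga$ followed by approximation. For convex functions converging pointwise to $|\cdot|$, the derivatives converge to $\pm1$ locally uniformly away from $0$. With $\al_1=\sign(X_1)$, we have $|X_1| = \al_1 X_1$. Since $X = Y$ for this $\al$, all inequalities in the upper-bound computation become equalities, and Lemma~\ref{l.variat.psiga} says that this $\al$ attains $\Psi_\ga(0,h)$. This gives equality in \eqref{e.prog.to.mart} and shows that a maximizer exists.

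\emph{Uniqueness.} Suppose $\al\in\bmart_1$ is a maximizer. Then both inequalities used in the upper bound must be equalities. So $\al$ attains the supremum in \eqref{e.variat.psiga}, and Lemma~\ref{l.variat.psiga} forces \eqref{e.identity.alpha} for a.e.\ $t$ with $\ga(t)>0$. We also need $\al_1 Y_1 = |Y_1|$ a.s., so $\al_1 = \sign(X_1)$ a.s. The martingale property $\al_t = \EE[\al_1\mid\mcl F_t]$ then determines $\al$ on all of $[0,1]$. In particular this covers times where $\ga=0$ (for instance below $\ga$'s support), and it gives \eqref{e.identity.alpha} for every $t<1$, using continuity of the right-hand side. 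When $\ga\equiv0$, the SDE has no drift, and we still get $\al_t=\EE[\sign(X_1)\mid\mcl F_t]=\dr_x\Psi_\ga(t,X_t)$ by the heat-equation representation.

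I expect the main obstacle to be the terminal identification $\al_1=\sign(X_1)$. This requires passing $t\to1$ despite the singularity of $\Psi_\ga(1,\cdot)$ at $0$, possibly with $\ga$ unbounded near $1$.
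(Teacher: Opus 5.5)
Your proposal is correct. Its upper bound and uniqueness steps are the paper's: the inequality $|C_1(\alpha)|\ge \alpha_1 C_1(\alpha)$ together with the martingale property, then the necessity half of Lemma~\ref{l.variat.psiga} and $\alpha_t=\EE[\alpha_1\mid\mcl F_t]$.

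The genuine difference is in the key identity \eqref{e.claim}, $\Psi_\gamma(0,h)=\EE[B_1(\bar\alpha)]$.

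The paper reruns the convex-duality argument of \cite[Lemma~2.2]{mourrat2025uninverting} on $[0,t]$ with terminal function $\Psi_\gamma(t,\cdot+h)$. This gives $\Psi_\gamma(0,h)=\EE[B_t(\bar\alpha)-\Psi^*_\gamma(t,\bar\alpha_t)]$. It then sends $t\nearrow 1$ using Lemma~\ref{l.Phi^*_cvg}, which is the epi-convergence of the conjugates $\Psi^*_\gamma(t,\cdot)$ to the function equal to $0$ on $[-1,1]$ and $+\infty$ elsewhere (via \cite{rockafellar1998variational}).

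You instead identify the terminal value pathwise. Each $\Psi_\gamma(t,\cdot)$ is convex and $1$-Lipschitz, since $|\partial_x\Psi_\gamma|\le 1$, and converges pointwise, hence locally uniformly, to $|\cdot|$. One-sided difference quotients then give $\partial_x\Psi_\gamma(t,x_t)\to\sign(x)$ whenever $x_t\to x\neq 0$, so $\bar\alpha_1=\sign(X_1)$ on $\{X_1\neq 0\}$. Combined with the sufficiency half of Lemma~\ref{l.variat.psiga}, this yields \eqref{e.claim}. That sufficiency holds on any admissible space by the equality case of the It\^o computation in its proof.

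What each route buys:
\begin{itemize}
\item Yours is more elementary, since it avoids epi-convergence entirely, and it produces the explicit information $\bar\alpha_1=\sign(X_1)$.
\item The paper's never needs a pathwise identification of $\bar\alpha_1$ or the sufficiency direction of Lemma~\ref{l.variat.psiga}. It is a direct transfer of a lemma valid for general strictly convex terminal data, with the singularity at $t=1$ isolated in a single convex-analytic statement.
\end{itemize}

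One claim in your write-up is not justified as stated: that $X_1$ has a density ``because it has a Gaussian component.'' The drift $\int_0^1\xi''\gamma\alpha\,\d t$ depends on $W$, and when $\gamma\notin L^2$ Novikov's condition fails, so Girsanov is not available. You never actually need it:
\begin{itemize}
\item In the lower bound, $\alpha_1X_1=|X_1|$ holds trivially on $\{X_1=0\}$.
\item In the uniqueness step for $\gamma\not\equiv 0$, $\alpha_1$ is already determined. It is the a.s.\ limit of $\alpha_t=\partial_x\Psi_\gamma(t,X_t)$ along $t\to1$ inside $\{\gamma>0\}$, which contains an interval $(t_0,1)$ because $\gamma$ is nondecreasing. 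This is exactly how the paper concludes.
\item For $\gamma\equiv 0$, $X_1$ is genuinely Gaussian.
\end{itemize}
So you should base uniqueness on that limit rather than on ``$\alpha_1=\sign(X_1)$ a.s.'' The statement $\PP(X_1=0)=0$ is in fact true, but it needs an argument. The drift has the sign of $X_t$ by Lemma~\ref{l.d_xPsi_odd_increase}, so by Tanaka's formula $|X|$ dominates a reflected time-changed Brownian motion.
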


Let $\alpha$ be a martingale satisfying~\eqref{e.identity.alpha} for every $t\in[0,1)$.
The boundedness of $\alpha$ allows us to apply Doob's martingale convergence theorem to see that there is some $\alpha_1 \in L^\infty(\msc P)$ such that $\alpha_t$ converges to $\alpha_1$ a.s.\ and in any $L^p$ for $p\in[1,\infty)$. Therefore, $\alpha_1$ and thus $(\alpha_t)_{t\in[0,1]}$ are uniquely determined.

Notice that Lemma~\ref{l.variat.psiga} only gives a characterization of the maximizer when $\gamma$ is not constantly zero on $[0,1)$. Hence, accordingly, the proof of Lemma~\ref{l.prog.to.mart} is divided into two cases.

\begin{proof}[Proof of Lemma~\ref{l.prog.to.mart} when $\gamma$ is constantly zero]
Setting $\gamma=0$ on $[0,1)$, we directly verify the claims. The equation in~\eqref{eq:parisipsi} becomes a heat equation and can be solved as $\Psi_\gamma(t,x)=\E\Ll[|x+\sigma(t)Z|\Rr]$ where $Z$ is a standard Gaussian variable and $\sigma(t):=(\int_t^1\xi''(r)\d r)^\frac{1}{2}$. Let $X_t=h+\int_0^t\sqrt{\xi''(r)}\d W_r$ be given as in~\eqref{e.AC.SDE} and thus we have $\Psi_\gamma(0,h)=\EE\Ll[|X_1|\Rr]$ as $X_1\stackrel{\d}{=}\sigma(0)Z$. Since $\alpha\in\bmart_1$ satisfies $\alpha_1X_1\leq |X_1|$, we have $\Psi_\gamma(0,h)=\sup_{\alpha\in\bmart_1}\EE\Ll[\alpha_1X_1\Rr]$ verifying~\eqref{e.prog.to.mart} with the maximum achieved at $\alpha_1$ satisfying $\alpha_1=1$ if $X_1>0$ and $\alpha_1=-1$ when $X_1<0$. Since $X_1$ is Gaussian and thus $X_1\neq0$ a.s., this relation uniquely determines $\alpha_1$ a.s. Since $\alpha$ is a martingale, $\alpha$ is uniquely determined by $\alpha_1$. It remains to verify that $\alpha$ satisfies~\eqref{e.identity.alpha}.

When $t<1$, we have $\sigma(t)>0$ and we can directly compute
\begin{align*}
    \partial_x\Psi_\gamma(t,x)=\P\Ll\{x+\sigma(t)Z>0\Rr\} - \P\Ll\{x+\sigma(t)Z<0\Rr\} = \frac{2}{\sqrt{\pi}}\int_0^{\frac{x}{\sigma(t)}}e^{-\frac{u^2}{2}}\d u.
\end{align*}
Since $X_1\neq 0$ a.s.\ and $X_t$ converges to $X_1$ a.s.\ (due to $X_t=h+\int_0^t\sqrt{\xi''(r)}\d W_r$), we can see from the above integral together with $\lim_{t\to1}\sigma(t)=0$ that $\partial_x\Psi_\gamma(t,X_t)$ converges to $\alpha_1$ a.s. Since $\Ll(\partial_x\Psi_\gamma(t,X_t)\Rr)_{t\in[0,1)}$ is also a martingale evident from the second equality in~\eqref{e.identity.alpha}, we must have $\alpha_t=\partial_x\Psi_\gamma(t,X_t)$ for every $t\in[0,1)$ as expected in~\eqref{e.identity.alpha}. This completes the proof in this special case.
\end{proof}

The second half of the proof uses arguments from the proof \cite[Lemma~2.2]{mourrat2025uninverting}.
Due to the non-differentiability of $\Psi_\gamma(1,\cdot)$ at $0$, some modification is needed.
In the proof, we need to consider the convex conjugate of $\Psi_\gamma(t,\cdot)$, which is given as follows.
For each $(t,x)\in[0,1]\times\R$, we define
\begin{align}\label{e.Psi^*(t,x)=}
    \Psi^*_\gamma(t,x) = \sup_{y\in\R}\Ll\{xy - \Psi_\gamma(t,y)\Rr\}.
\end{align}
To proceed, we need the following lemma, the proof of which requires additional inputs from convex analysis.


\begin{lemma}[Convergence of $\Psi_\gamma^*(t,\cdot)$]\label{l.Phi^*_cvg}
Let $\gamma\in\mcl M_0$ and let $\Psi_\gamma$ solve~\eqref{eq:parisipsi}. Then, for every $x\in\R$, we have
\begin{align}\label{e.l.Phi^*_cvg}
    \lim_{t\nearrow1}\Psi^*_\gamma(t,x)  = \Psi^*_\gamma(1,x)=
    \begin{cases}
        0, & \text{if $x\in [-1,1]$},
        \\
        +\infty, & \text{otherwise}.
    \end{cases}
\end{align}
\end{lemma}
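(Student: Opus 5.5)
First compute $\Psi^*_\gamma(1,\cdot)$ directly. Since $\Psi_\gamma(1,y)=|y|$, the conjugate $\sup_y\{xy-|y|\}$ equals $0$ if $|x|\le 1$ and $+\infty$ otherwise. The rest of the plan is to prove the convergence $\Psi^*_\gamma(t,x)\to\Psi^*_\gamma(1,x)$ as $t\nearrow 1$ by establishing two-sided bounds on $\Psi_\gamma(t,\cdot)$ that become sharp as $t\to1$.

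\emph{Upper bound on $\Psi_\gamma(t,\cdot)$.} Use the stochastic-control representation of Lemma~\ref{l.variat.psiga}, started at time $t$ instead of $0$ (the same Itô argument applies on $[t,1]$). It gives
\begin{equation*}
\Psi_\gamma(t,y)\le \sup_{\alpha}\EE\Big[|y|+\int_t^1\xi''(r)\gamma(r)|\alpha_r|\,\d r+\Big|\int_t^1\sqrt{\xi''(r)}\,\d W_r\Big|-\tfrac12\int_t^1\xi''\gamma\alpha^2\Big].
\end{equation*}
Bounding $a-\tfrac12 a^2\le \tfrac12$, this yields
\begin{equation*}
\Psi_\gamma(t,y)\le |y|+\eps(t), \qquad \eps(t):=\tfrac12\int_t^1\xi''\gamma+\Big(\int_t^1\xi''\Big)^{1/2}.
\end{equation*}
Since $\gamma\in L^1$, $\eps(t)\to0$. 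Alternatively, one could use the $L^1$-continuity estimate with $\gamma$ replaced by $\gamma\mathbf 1_{[0,t)}$ together with the explicit heat-equation solution, but the control bound is more direct.

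\emph{Lower bound on $\Psi_\gamma(t,\cdot)$.} Choosing $\alpha\equiv0$ in the same representation gives $\Psi_\gamma(t,y)\ge \EE|y+\sigma(t)Z|\ge |y|$ by Jensen's inequality.

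Combining the two bounds yields $|y|\le\Psi_\gamma(t,y)\le|y|+\eps(t)$. Taking conjugates, which reverses order, gives
\begin{equation*}
\Psi^*_\gamma(1,x)-\eps(t)\le\Psi^*_\gamma(t,x)\le\Psi^*_\gamma(1,x).
\end{equation*}
For $|x|\le1$ this shows $\Psi^*_\gamma(t,x)\in[-\eps(t),0]\to0$. For $|x|>1$ it gives $\Psi^*_\gamma(t,x)=+\infty$ for every $t$, so the limit holds trivially.

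The main obstacle is justifying the representation of Lemma~\ref{l.variat.psiga} on a subinterval $[t,1]$ when $\gamma$ is unbounded near $1$. If that is delicate, the fallback is the following. For $\alpha\equiv0$, the lower bound follows from $\Psi_\gamma\ge$ the heat solution, via a comparison principle since $\gamma(\partial_x\Psi)^2\ge0$. For the upper bound, use $|\partial_x\Psi_\gamma|\le1$ from \eqref{e.|dxPhi|<1}, so that $-\partial_t\Psi_\gamma\le\frac{\xi''}{2}(\partial_x^2\Psi_\gamma+\gamma)$. Comparison with the heat equation then gives
\begin{equation*}
\Psi_\gamma(t,y)\le\EE|y+\sigma(t)Z|+\tfrac12\int_t^1\xi''\gamma\le|y|+\sigma(t)+\tfrac12\int_t^1\xi''\gamma,
\end{equation*}
which is the same bound.
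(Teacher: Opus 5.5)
Your proof is correct, and it takes a genuinely different route from the paper.

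\textbf{The paper's route.} The argument there is soft. Pointwise convergence $\Psi_\gamma(t,\cdot)\to|\cdot|$ together with the $1$-Lipschitz bound gives epi-convergence. This is transferred through conjugation by the epi-continuity of the Legendre--Fenchel transform. Finally it is turned back into pointwise convergence of $\Psi^*_\gamma(t,\cdot)$ using equi-lower-semicontinuity of the conjugates.

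\textbf{Your route.} You replace all of this with the uniform sandwich $|y|\le\Psi_\gamma(t,y)\le|y|+\eps(t)$. Conjugating it shows that $\Psi^*_\gamma(t,\cdot)=+\infty$ off $[-1,1]$ and $-\eps(t)\le\Psi^*_\gamma(t,\cdot)\le 0$ on $[-1,1]$. This is elementary, quantitative, and uniform in $x$.

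\textbf{What the sandwich buys.} The paper's equi-l.s.c.\ step rests on \eqref{e.Phi^*<Phi^*+||}, which cannot hold as written. Since $\Psi^*_\gamma(t,\cdot)=+\infty$ outside $[-1,1]$, at $t=1$, $x=2$, $x'=1$ it reads $+\infty\le 1$. For $t<1$ it fails even on $[-1,1]$, where the slope of $\Psi^*_\gamma(t,\cdot)$ is unbounded near $\pm1$. The gradient bound makes $\Psi_\gamma(t,\cdot)$ Lipschitz, not its conjugate. Your sandwich supplies exactly the missing input. It also works directly in the proof of Lemma~\ref{l.prog.to.mart}: since $\bar\alpha_t\in[-1,1]$, you get $-\EE[\Psi^*_\gamma(t,\bar\alpha_t)]\le\eps(t)\to0$ without any Lipschitz bound on the conjugate.

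\textbf{On the obstacle you flag.} Your fallback is the better primary argument. Your first version's upper bound uses the hard direction of the control representation at time $t$, namely the existence of an optimal control. Both bounds instead follow from the It\^o identity with $\alpha\equiv 0$ already used in the proof of Lemma~\ref{l.variat.psiga}. With $Y_r=y+\int_t^r\sqrt{\xi''(u)}\,\d W_u$,
\begin{equation*}
\EE\Ll[\Psi_\gamma(s,Y_s)\Rr]=\Psi_\gamma(t,y)-\frac12\,\EE\int_t^s\xi''(r)\gamma(r)\Ll(\dr_x\Psi_\gamma(r,Y_r)\Rr)^2\,\d r .
\end{equation*}
Using $0\le(\dr_x\Psi_\gamma)^2\le 1$ and letting $s\nearrow1$ (continuity of $\Psi_\gamma$ plus dominated convergence) gives
\begin{equation*}
\EE|Y_1|\le\Psi_\gamma(t,y)\le\EE|Y_1|+\tfrac12\int_t^1\xi''\gamma .
\end{equation*}

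\textbf{A minor point.} Your side remark using $\gamma\mathbf 1_{[0,t)}$ would need adjusting, since that function is not nondecreasing and so is not in $\mcl M_0$. Nothing in your argument depends on it.
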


\begin{proof}
We will need some inputs from convex analysis, which we briefly recall when needed.

\medskip
\noindent\textit{Step~1.}
We show that $(\Psi_\gamma^*(t,\cdot))_{t\in[0,1]}$ is \textit{equi-l.s.c.}\ (equi-lower-semicontinuous). We recall (see the paragraph following~\cite[7.9~Exercise]{rockafellar1998variational}) that a family $(f_i)_{i\in I}$ of extended-real-valued functions on $\R$ is said to be equi-l.s.c.\ if, for every $\bar x\in\R$ and every finite $\rho,\eps>0$, there is $\delta>0$ such that
\begin{align*}
    \inf_{x:\:|x-\bar x|\leq \delta}f_i(x) \geq \min\Ll\{f_i(\bar x) - \eps,\ \rho\Rr\},\quad\forall i\in I.
\end{align*}
Due to the bound in~\eqref{e.|dxPhi|<1}, we can see that the supremum in~\eqref{e.Psi^*(t,x)=} can be restricted to $y\in[-1,1]$. From this, we can deduce that
\begin{align}\label{e.Phi^*<Phi^*+||}
    \Psi_\gamma^*(t,x) \leq \Psi_\gamma^*(t,x') + |x-x'|,\quad\forall t\in[0,1],\ \forall x,x'\in\R
\end{align}
which implies that $(\Psi_\gamma^*(t,\cdot))_{t\in[0,1]}$ is equi-l.s.c.

\medskip
\noindent\textit{Step~2.}
We show that $\Psi_\gamma(t,\cdot)$ converges pointwise to $\Psi_\gamma(1,\cdot)$ on $\R$ as $t\nearrow1$ and that $(\Psi_\gamma(t,\cdot))_{t\in[0,1]}$ is equi-l.s.c.
By~\cite[Proposition~2]{chen2018energy}, $\Psi_\gamma$ is a weak solution (see~\cite[Definition~1]{chen2018energy}), which is continuous in particular. Hence, we get the pointwise convergence. Due to~\eqref{e.|dxPhi|<1}, $\Psi_\gamma(t,\cdot)$ is $1$-Lipschitz for every $t$, which implies equi-l.s.c.

\medskip
\noindent\textit{Step~3.}
We use results from convex analysis to conclude. 
We recall that \cite[7.10~Theorem]{rockafellar1998variational} states that, under the condition of equi-l.s.c., pointwise convergence is equivalent to \textit{epi-convergence} as in \cite[7.1~Definition]{rockafellar1998variational}.
Since we will not need the precise definition of epi-convergence, we choose to omit it here. But we mention that heuristically it means the convergence of epigraphs of convex functions.
By \cite[7.10~Theorem]{rockafellar1998variational} and Step~2, $\Psi_\gamma(t,\cdot)$ epi-converges to $\Psi_\gamma(1,\cdot)$ on $\R$. Next, \cite[11.34~Theorem]{rockafellar1998variational} shows that the action of convex conjugation preserves epi-convergence. Hence, $\Psi_\gamma^*(t,\cdot)$ epi-converges to $\Psi_\gamma^*(1,\cdot)$ on $\R$. Lastly, using~\cite[7.10~Theorem]{rockafellar1998variational} again and Step~1, we deduce the convergence part in~\eqref{e.l.Phi^*_cvg}. The second equality in~\eqref{e.l.Phi^*_cvg} follows from $\Psi(1,\cdot) =|\cdot|$.
\end{proof}

We also need~\cite[Lemma~4]{chen2017fluctuations} restated below, which is also useful later.

\begin{lemma}[\cite{chen2017fluctuations}]\label{l.d_xPsi_odd_increase}
For every $\gamma\in\mcl M_0$ and $t\in[0,1)$, $\partial_x\Psi_\gamma(t,\cdot)$ is odd and strictly increasing.
\end{lemma}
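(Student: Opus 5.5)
The claim is that $\partial_x\Psi_\gamma(t,\cdot)$ is odd and strictly increasing for $t<1$. I would prove it first for step functions $\gamma \in \mcl M_0$ with finitely many jumps and $\gamma(1-)<\infty$, and then pass to general $\gamma$ by approximation.

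\emph{Oddness.} The terminal condition $|x|$ is even, and the equation \eqref{eq:parisipsi} is invariant under $x\mapsto -x$, since $\partial_x^2$ and $(\partial_x\cdot)^2$ are unchanged. By uniqueness of weak solutions (\cite[Proposition~2]{chen2018energy}), $\Psi_\gamma(t,\cdot)$ is even for every $t$, so $\partial_x\Psi_\gamma(t,\cdot)$ is odd.

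\emph{Strict monotonicity for step functions.} On each interval $[t_{k},t_{k+1})$ where $\gamma\equiv\gamma_k$, I would use the Cole--Hopf representation. If $\gamma_k>0$, then $\Psi(t,x)=\gamma_k^{-1}\log\E\exp(\gamma_k\Psi(t_{k+1},x+\sigma_k(t)Z))$ with $\sigma_k(t)^2=\int_t^{t_{k+1}}\xi''$. If $\gamma_k=0$, it is the heat semigroup. Differentiating twice in $x$ gives
\[
\partial_x^2\Psi(t,x)=\la \partial_x^2\Psi(t_{k+1},\cdot)\ra+\gamma_k\bigl(\la(\partial_x\Psi(t_{k+1},\cdot))^2\ra-\la\partial_x\Psi(t_{k+1},\cdot)\ra^2\bigr),
\]
where $\la\cdot\ra$ is the tilted Gaussian average, which has full support. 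The second term is a variance and is therefore nonnegative. So if $\partial_x\Psi(t_{k+1},\cdot)$ is nondecreasing and not constant, the first term is strictly positive at every $x$: its Gaussian average sees positive mass of the measure $\partial_x^2\Psi(t_{k+1},\cdot)$ in the distributional sense.

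I would run this backward induction starting at $t=1$. There $\partial_x\Psi(1,\cdot)=\sign$, so $\partial_x^2\Psi(1,\cdot)=2\delta_0$ is a nonnegative nonzero measure. The induction then yields $\partial_x^2\Psi(t,x)>0$ for all $t<1$ and all $x$, which is strict monotonicity.

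\emph{General $\gamma$.} The main obstacle is that pointwise limits only preserve non-strict monotonicity. I would first pass to the limit using \cite[Proposition~2~(iii)]{chen2018energy} to get $\partial_x\Psi_\gamma(t,\cdot)$ nondecreasing for all $\gamma$. To recover strictness, I would use the semigroup/comparison structure directly at fixed $t<1$, writing $\Psi_\gamma(t,\cdot)$ through the variational representation (Lemma~\ref{l.variat.psiga}) started at time $t$.

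Concretely, for $x<y$ take the optimal control $\alpha$ for the starting point $x$. Plugging the same control into the problem started at $y$ and using the convexity of $|\cdot|$ shows that $\Psi_\gamma(t,\cdot)$ is convex. Strict convexity comes from the Gaussian noise $\int_t^1\sqrt{\xi''}\,\d W$: its nondegenerate variance makes $\EE|y+\cdot|$ strictly convex in the shift.

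A cleaner alternative is to apply the step-function formula on $[t,t']$ with $\gamma$ frozen as a small perturbation. Comparison then gives a lower bound $\partial_x^2\Psi_\gamma(t,x)\ge c(t,x)>0$ that is uniform over the approximating step functions, obtained by keeping only the heat-kernel term and discarding the nonnegative variance term. This uniform lower bound passes to the limit and yields strict monotonicity for general $\gamma$.
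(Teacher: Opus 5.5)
The paper gives no proof of this lemma; it simply imports it from \cite{chen2017fluctuations}, so your argument has to stand on its own. Your oddness argument and the Cole--Hopf induction for step functions are correct. Note that you implicitly need $\xi$ not identically zero: if $\xi \equiv 0$, then $\Psi_\gamma(t,x)=|x|$ and the statement fails.

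The gap is strict monotonicity for general $\gamma$. You correctly flag this as the crux, but you only assert it.

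In your first route, fix $\alpha\in\bprog_1$ and set $f_\alpha(y)=\EE|y+A_\alpha+G|$, where $A_\alpha=\int_t^1\xi''\gamma\alpha\,\d r$ and $G=\int_t^1\sqrt{\xi''}\,\d W$. The distributional second derivative of $f_\alpha$ is twice the law of $-(A_\alpha+G)$. So strict convexity of $f_\alpha$ means exactly that $A_\alpha+G$ charges every open interval. This does not follow from the nondegeneracy of $G$. The drift $A_\alpha$ is adapted to the same Brownian motion, hence correlated with $G$. Moreover, when $\gamma(1-)=\infty$ the drift is unbounded, so Girsanov cannot be applied on all of $[t,1]$.

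You need an argument such as the following.
\begin{itemize}
\item On $[t,1-\delta]$ the drift is bounded, since $\gamma\le\gamma(1-\delta)$ there. By Girsanov, the partial sum has a law equivalent to a nondegenerate Gaussian.
\item On $[1-\delta,1]$ the drift contributes at most $\int_{1-\delta}^1\xi''\gamma$, which is small by integrability of $\gamma$.
\item The Gaussian increment on $[1-\delta,1]$ is independent of $\mcl F_{1-\delta}$.
\end{itemize}
Together these give full support of $A_\alpha+G$.

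With that in hand, your use of the optimizer $\alpha^x$ at $x$ (Lemma~\ref{l.variat.psiga}, started at time $t$) finishes the argument: $\Psi_\gamma(t,y)\ge f_{\alpha^x}(y)>\Psi_\gamma(t,x)+\dr_x\Psi_\gamma(t,x)(y-x)$ for all $y\neq x$, at every $x$. State explicitly that this attainment is what yields strictness, since a supremum of strictly convex functions need not be strictly convex.

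Your second route states a uniform lower bound without the two inputs it requires.
\begin{itemize}
\item The tilted kernels must be uniformly comparable to Gaussian kernels. This holds on $[t,t']$ with $t'<1$, because there the approximants are bounded by $\gamma(t')$ and $\Psi$ is $1$-Lipschitz.
\item The measures $\dr_x^2\Psi_{\gamma_n}(t',\cdot)$, each of total mass $2$, must not lose mass at infinity uniformly in $n$. This follows from convexity together with $|x|\le\Psi_{\gamma_n}(t',x)\le|x|+C$.
\end{itemize}
A cleaner route is to take $\Psi_\gamma(t',\cdot)$ itself as the terminal datum at $t'$. Its derivative is nondecreasing by the non-strict limit and tends to $\pm1$, hence is non-constant. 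Then only the bounded restriction $\gamma|_{[t,t']}$ needs to be approximated.

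Once completed, the two routes give different conclusions. The first yields exactly strict convexity of $\Psi_\gamma(t,\cdot)$, which is what Lemma~\ref{l.prog.to.mart} uses. The second yields the pointwise bound $\dr_x^2\Psi_\gamma(t,x)\ge c(t,x)>0$, which is the stronger form ($F'>0$) that the paper invokes in the proof of Lemma~\ref{l.chi_psi}.
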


Now, we are ready to complete the other half of the proof.

\begin{proof}[Proof of Lemma~\ref{l.prog.to.mart} when $\gamma$ is not constantly zero]
For brevity, we introduce the following notation. For $t\in[0,1]$ and $\alpha \in \bprog_1$, we define
\begin{align*}
    C_t(\alpha) & = h+\int_0^t \xi''(r)\gamma(r)\alpha_r\d r+\int_0^t\sqrt{\xi''(r)}\d W_r,
    \\
    L_t(\alpha) &= \frac{1}{2}\int_0^t\xi''(r)\gamma(r)\alpha_r^2\d r,
    \\
    B_t(\alpha) &= h\alpha_t + \frac{1}{2}\int_0^t\xi''(r)\gamma(r)\alpha_r^2\d r+ \alpha_t\int_0^t\sqrt{\xi''(r)}\d W_r.
\end{align*}
In this notation, we have
\begin{align}
    \Psi_\gamma(0,h) & \stackrel{\text{L.\ref{l.variat.psiga}}}{=} \sup_{\alpha\in\bprog_1}\EE \Ll[|C_1(\alpha)|-L_1(\alpha)\Rr]\notag
    \\
    &\geq \sup_{\alpha\in\bmart_1}\EE \Ll[\alpha_1 C_1(\alpha)-L_1(\alpha)\Rr]\notag
    \\
    & = \sup_{\alpha\in\bmart_1}\EE \Ll[B_1(\alpha)\Rr] \label{e.Psi_gamma>supE[B]}
\end{align}
where the inequality follows from $\bmart_1 \subset \bprog_1$ and $|C_1(\alpha)|\geq \alpha_1 C_1(\alpha)$, and the last equality follows from $\EE \Ll[\alpha_1 C_1(\alpha)\Rr] = \EE \Ll[B_1(\alpha) + L_1(\alpha)\Rr]$ due to the martingale property.

Let $\bar\alpha \in\bmart_1$ satisfy~\eqref{e.identity.alpha} for every $t\in[0,1)$.
We claim that
\begin{align}\label{e.claim}
    \Psi_\gamma(0,h) = \EE \Ll[B_1(\bar\alpha)\Rr]
\end{align}
and postpone the verification after we complete the main objective.
Using this and~\eqref{e.Psi_gamma>supE[B]}, we obtain~\eqref{e.prog.to.mart}.

Now, we show the uniqueness of maximizers and the characterization. Let $\alpha$ be any maximizer of~\eqref{e.prog.to.mart}. Then, we have
\begin{align*}
    \Psi_\gamma(0,h) = \EE \Ll[B_1(\alpha)\Rr]= \EE \Ll[\alpha_1C_1(\alpha)-L_1(\alpha)\Rr]
    \\
    \leq \EE \Ll[|C_1(\alpha)|-L_1(\alpha)\Rr]\stackrel{\text{L.\ref{l.variat.psiga}}}{\leq}\Psi_\gamma(0,h),
\end{align*}
where the second equality is due to the martingale property and the third is due to $|\alpha_1|\leq 1$. So, the last inequality must be an equality and $\alpha$ is a maximizer of~\eqref{e.variat.psiga}. Since $\gamma$ is not constantly zero, Lemma~\ref{l.variat.psiga} implies that $\alpha_t$ satisfies~\eqref{e.identity.alpha} for all $t$ sufficiently close to $1$. 
Due to $|\alpha_t|\leq1$, we can see that $\alpha_t$ converges a.s.\ to some $\alpha_1$ as $t\nearrow1$.
Since $\alpha$ is a martingale (so that $\alpha_t =\EE [\alpha_1\,|\, \mcl F_t]$), we obtain the desired uniqueness of maximizers and the characterization.

It remains to verify the claim in~\eqref{e.claim}.
Fix any $t\in(0,1)$, we can apply the same argument as in the proof of~\cite[Lemma~2.2]{mourrat2025uninverting} (with $[0,1]$ and $\phi$ therein substituted with $[0,t]$ and $\Psi_\gamma(t,\cdot +h)$, respectively) to get\footnote{Setting $\phi = \Psi_\gamma(t,\cdot +h)$, we have $\phi^*(x) = \Psi_\gamma^*(t,x) -hx$ for every $x\in\R$.}
\begin{align*}
    \Psi_\gamma(0,h) = \EE \Ll[B_t(\bar\alpha) - \Psi_\gamma^*(t,\bar\alpha_t)\ \big|\ \mathcal{F}_0\Rr]
\end{align*}
where $\Psi_\gamma^*(t,\cdot)$ is defined as in~\eqref{e.Psi^*(t,x)=} and $\bar\alpha$ is given as before. 
It is important to point out that a key property used in~\cite[Lemma~2.2]{mourrat2025uninverting} is the strict convexity of $\phi$ therein, which corresponds here to the strict convexity of $\Psi_\gamma(t,\cdot)$ implied by Lemma~\ref{l.d_xPsi_odd_increase}.
Using~\eqref{e.Phi^*<Phi^*+||} in the above display, we get
\begin{align*}
    \Psi_\gamma(0,h) \leq \EE \Ll[B_t(\bar\alpha)\Rr] - \EE \Ll[\Psi_\gamma^*(t,\bar\alpha_1)\Rr]+ \EE \Ll[\Ll|\bar\alpha_t-\bar\alpha_1\Rr|\Rr].
\end{align*}
Sending $t\nearrow1$, the first term on the right converges to $\EE \Ll[B_1(\bar\alpha)\Rr]$, as is clear from the definition of $B_t$. Due to~$|\bar\alpha_1|\leq 1$ and Lemma~\ref{l.Phi^*_cvg}, the second term vanishes in the limit. The martingale convergence also nullifies the last term in the limit. Therefore, together with~\eqref{e.Psi_gamma>supE[B]}, we obtain~\eqref{e.claim} as claimed, completing the proof.
\end{proof}

We need the following observation that, for each bounded measurable function $f:[0,1]\to\R$, we have
\begin{align}\label{e.compute_inf_gamma}
    \inf_{\gamma\in\mcl M_s}\int_0^1 f(t)\gamma(t)\d t &= s \int_0^1 f(t)\d t + \inf_{\gamma\in\mcl M_0}\int_0^1 f(t)\gamma(t)\d t \notag
    \\ 
    &= s \int_0^1 f(t)\d t + 
    \begin{cases}
          0 ,\  &\text{if $\int_r^1  f(t)\d t\geq0$, $\forall r\in[0,1]$},
          \\
          -\infty ,\  &\text{otherwise}.
    \end{cases}
\end{align}
Here, the first equality follows from the fact that $\gamma\mapsto s+\gamma$ is a bijection from $\mcl M_0$ to $\mcl M_s$, and the second equality follows from the fact that if $\gamma' : [0,1] \to \R_+$ is a nonnegative bounded measurable function, then $t \mapsto \int_0^t \gamma' \in \mcl M_0$ and 
\begin{align*}  
\int_0^1 f(t) \int_0^t \gamma'(r)  \d r \,   \d t = \int_0^1 \gamma'(r) \int_r^1 f(t) \d t \, \d r.
\end{align*}

\begin{lemma}[Interchanging $\inf$ and $\sup$]\label{l.inter_inf_sup}
Let
\begin{align*}
    \msf K_0 := \Ll\{\Ll(\EE\Ll[h \al_0+\alpha_1\int_0^1\sqrt{\xi''(t)}\d W_t\Rr],\ \Ll(\EE[\alpha^2_t]\Rr)_{t\in[0,1]}\Rr)\ \bigg|\ \alpha\in\bmart_1\Rr\}
\end{align*}
and let $\msf K$ be the closure of the convex hull of $\msf K_0$ with respect to the topology of the space $\R\times L^1([0,1])$. Then,
\begin{align}
    \lim_{N\to \infty} \frac{1}{sN} \log \E \Ll[ \exp(sN L_N) \Rr] &= \sup_{(\chi,g)\in\msf K}\inf_{\gamma\in\mcl M_s}\Ll(\chi+\frac{1}{2}\int_0^1 \xi''(t)\gamma(t)\Ll(g(t) -t \Rr)\d t\Rr)\notag
    \\
    &=\sup_{(\chi,g)\in\msf K}\Ll\{\chi+\frac{s}{2}\int_0^1 \xi''(t)\Ll(g(t) -t \Rr)\d t\ \Big|\ g\in \msf G\Rr\}. \label{e.inter_inf_sup}
\end{align}
\end{lemma}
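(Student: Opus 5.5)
Combining Theorem~\ref{t.laplace} (deterministic field, Remark~\ref{r.determ_ext_field}) with Lemma~\ref{l.prog.to.mart} gives the limit as an inf--sup:
\begin{equation*}
\lim_{N\to\infty}\frac{1}{sN}\log\E\Ll[e^{sNL_N}\Rr]=\inf_{\gamma\in\mcl M_s}\sup_{\alpha\in\bmart_1}\Ll(\chi(\alpha)+\frac12\int_0^1\xi''(t)\gamma(t)\Ll(\EE[\alpha_t^2]-t\Rr)\d t\Rr),
\end{equation*}
where $\chi(\alpha)=\EE[h\alpha_0+\alpha_1\int_0^1\sqrt{\xi''}\,\d W]$. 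Here I used $\EE[\alpha_1\int_0^1\sqrt{\xi''}\,\d W]$ unchanged and $\EE\int\xi''\gamma\alpha^2/2$ rewritten with $\EE[\alpha_t^2]$, and subtracted $\frac12\int t\xi''\gamma$. For fixed $\gamma$ the objective depends on $\alpha$ only through $(\chi,g)\in\msf K_0$ and is affine in $(\chi,g)$. Hence the supremum over $\msf K_0$ equals the supremum over its convex hull. It also equals the supremum over the closure $\msf K$, provided the map $g\mapsto\int\xi''\gamma g$ is continuous in $L^1$ on the relevant set. This holds because all $g\in\msf K$ take values in $[0,1]$ (as $|\alpha|\le1$), so $L^1$ convergence implies weak-$*$ convergence in $L^\infty$, and pairing with $\xi''\gamma\in L^1$ is continuous for that topology. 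The first equality in \eqref{e.inter_inf_sup} is then a minimax swap
\begin{equation*}
\inf_{\gamma\in\mcl M_s}\sup_{(\chi,g)\in\msf K}F(\chi,g,\gamma)=\sup_{(\chi,g)\in\msf K}\inf_{\gamma\in\mcl M_s}F(\chi,g,\gamma),
\end{equation*}
where $F$ is bilinear and $\mcl M_s$ is convex.

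To justify the swap I will use Sion's (or Ky Fan's) minimax theorem with compactness on the $\msf K$ side. Note that $\chi$ is bounded: $|\chi|\le|h|+\EE|\int\sqrt{\xi''}\,\d W|$. Then $\msf K$ is a convex subset of $[-C,C]\times\{g:0\le g\le 1\}$. I will equip it with the product of the Euclidean topology and the weak-$*$ topology of $L^\infty$, in which that box is compact. The issue is that $\msf K$ is defined as an $L^1$-closure. However, a convex $L^1$-closed set is weakly closed, and on the bounded box the weak $L^1$ and weak-$*$ topologies coincide, so $\msf K$ is compact for this topology. For each fixed $\gamma$ the map $F$ is affine and continuous in $(\chi,g)$ for this topology, since $\xi''\gamma\in L^1$. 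For fixed $(\chi,g)$ it is affine in $\gamma$. Sion's theorem, with compactness in the sup variable, then gives the swap. Checking this compactness and continuity is the main obstacle, specifically making sure that unbounded $\gamma\in\mcl M_s$ cause no trouble. They do not, since only integrability of $\gamma$ is used.

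The second equality is the computation \eqref{e.compute_inf_gamma} with $f=\frac12\xi''(g-\mathrm{id})$, which is bounded. We get
\begin{equation*}
\inf_{\gamma\in\mcl M_s}\frac12\int_0^1\xi''(g-t)\gamma\,\d t=\frac s2\int_0^1\xi''(g-t)\,\d t
\end{equation*}
when $\int_r^1\xi''(g-t)\ge0$ for all $r$, that is when $g\in\msf G$, and $-\infty$ otherwise. The points with value $-\infty$ drop out of the supremum, which gives \eqref{e.inter_inf_sup}. For $s=0$ the same argument applies, using the $s=0$ version of Theorem~\ref{t.laplace}.
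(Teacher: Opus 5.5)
Your proof is correct. Its skeleton matches the paper's: the inf--sup from Theorem~\ref{t.laplace} and Lemma~\ref{l.prog.to.mart}, passage to the convex hull and then its $L^1$-closure, a Sion/Ky Fan swap with compactness on the $\msf K$ side, and finally \eqref{e.compute_inf_gamma}. Where you differ is the topology used to verify the minimax hypotheses.

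The paper stays in the norm topology of $\R\times L^1([0,1])$. It proves $\msf K_0$ is norm-precompact by using that $t\mapsto\EE[\alpha_t^2]$ is nondecreasing for a martingale, so a Helly-type extraction on rational times gives $L^1$ convergence. It then uses that the closed convex hull of a compact set in a Banach space is compact. It also checks joint continuity of the objective in $(\gamma,g)$ by cutting off a neighbourhood of $t=1$, where $\gamma$ may blow up.

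You instead work with the weak-$*$ topology of $L^\infty$, which agrees with the weak $L^1$ topology on $\{0\le g\le 1\}$. Compactness of $\msf K$ then follows from Banach--Alaoglu plus Mazur's theorem, with no use of the monotonicity of $g$. Continuity in $g$ for fixed $\gamma$ is just the $L^1$--$L^\infty$ pairing against $\xi''\gamma$, and separate semicontinuity is all the minimax theorem requires.

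Your argument is shorter and would apply to any bounded family of $g$'s, monotone or not. The paper's route yields the stronger norm compactness, which it invokes again in Lemmas~\ref{l.lim=sup_mart_achieve} and~\ref{l.Lambda^*_inter_inf_sup}. Your weak-$*$ compactness would also serve there, since the functionals involved are weak-$*$ continuous and $\msf G$ is weak-$*$ closed.
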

\begin{proof}
Write $G(\gamma,\chi,g) = \chi+\frac{1}{2}\int_0^1 \xi''(t)\gamma(t)\Ll(g(t) -t \Rr)\d t$.
Combining Theorem~\ref{t.laplace} and Lemma~\ref{l.prog.to.mart}, we get
\begin{align*}
    \lim_{N\to \infty} \frac{1}{sN} \log \E \Ll[ \exp(sN L_N) \Rr] = \inf_{\gamma\in\mcl M_s}\sup_{(\chi,g)\in\msf K_0} G(\gamma,\chi,g).
\end{align*}
Since $G(\gamma,\cdot,\cdot)$ is affine for each fixed $\gamma$, setting $\msf K_1$ to be the convex hull of $\msf K_0$, we have
\begin{align}\label{e.lim=infsup_K_1}
    \lim_{N\to \infty} \frac{1}{sN} \log \E \Ll[ \exp(sN L_N) \Rr] = \inf_{\gamma\in\mcl M_s}\sup_{(\chi,g)\in\msf K_1} G(\gamma,\chi,g).
\end{align}

To further replace $\msf K_1$ by its closure $\msf K$, we show that $G$ is continuous on $\cM_s \times \R \times \{g\in L^1:\:|g|\leq 1\}$ in the topology of $L^1\times\R\times L^1$ where $L^1 =L^1([0,1])$. The continuity in $\chi$ is obvious and we focus on that in $\gamma$ and $g$. For any $\gamma,\tilde \gamma \in\cM_s$ and $g,\tilde g\in L^1$ bounded by $1$, we have
\begin{align}
    &\Ll|\int_0^1 \xi''(t)\gamma(t)\Ll(g(t) -t \Rr)\d t - \int_0^1 \xi''(t)\tilde\gamma(t)\Ll(\tilde g(t) -t \Rr)\d t\Rr| \notag
    \\
    &\leq \Ll|\int_0^1 \xi''(t)\Ll(\gamma(t)-\tilde \gamma(t)\Rr)\Ll(\tilde g(t) -t\Rr)\d t\Rr| +\Ll| \int_0^1 \xi''(t)\gamma(t)\Ll(g(t) -\tilde g(t) \Rr)\d t\Rr| . \label{e.G_cts_1}
\end{align}
The first term on the right is bounded by $2\xi''(1)\|\gamma-\tilde \gamma\|_{L^1}$ and the second term is bounded by, for any $\delta\in(0,1)$, 
\begin{align}\label{e.G_cts_2}
    \xi''(1)\Ll(\gamma(1-\delta)\int_0^{1-\delta}\Ll|g(t)-\tilde g(t)\Rr|\d t + 2\int_{1-\delta}^1 \gamma(t)\d t\Rr).
\end{align}
Fixing $\gamma$ and $g$, we consider $\tilde\gamma$ and $\tilde g$ approaching them. Due to $\gamma\in L^1$, we can choose $\delta$ sufficiently close to $1$ to make $\int_{1-\delta}^1 \gamma(t)\d t$ sufficiently small and then letting $\tilde g$ tend to $g$ in $L^1$ to make $\int_0^{1-\delta}\Ll|g(t)-\tilde g(t)\Rr|\d t$ arbitrarily small. From this and the bound on the first term~\eqref{e.G_cts_1}, we get the announced continuity of $G$.
In particular, from~\eqref{e.lim=infsup_K_1}, this continuity yields
\begin{align}\label{e.lim=infsup_K}
    \lim_{N\to \infty} \frac{1}{sN} \log \E \Ll[ \exp(sN L_N) \Rr] = \inf_{\gamma\in\mcl M_s}\sup_{(\chi,g)\in\msf K} G(\gamma,\chi,g).
\end{align}

Then, we show that $\msf K$ is compact. As $\alpha\in \bmart_1$, given any sequence $(\chi^\pnp, g^\pnp)_{n\in\N}$ in $\msf K_0$, we have that $\chi^\pnp$ and $\|g^\pnp\|_{L^\infty}$ are bounded uniformly over $n$. We can thus extract a subsequence along which $\chi^\pnp$ and $g^\pnp(t)$ converge for every $t\in\Q\cap[0,1]$. By this and the monotonicity of $g^\pnp$, this subsequence converges in $\R\times L^1$. Hence, $\msf K_0$ is pre-compact and its closed convex hull $\msf K$ is compact by~\cite[Theorem~5.35]{aliprantis2006infinite}.

Recall the continuity of $G$. Also, notice that $G(\gamma,\cdot,\cdot)$ is affine for each fixed $\gamma$ and that $G(\cdot,\chi,g)$ is affine for each fixed $(\chi,g)$. These together with the convexity of $\cM_s$ and the compactness and convexity of $\msf K$ allow us to apply the minimax theorem~\cite{fan1953minimax,sion1958minimax} to interchange $\inf$ and $\sup$ in~\eqref{e.lim=infsup_K} and get the first equality in~\eqref{e.inter_inf_sup}. The second equality there follows from~\eqref{e.compute_inf_gamma}.
\end{proof}

\begin{lemma}[Existence of maximizing martingale]\label{l.lim=sup_mart_achieve}
There exists an admissible probability space $\mathscr P$ (see Definition~\ref{d.admissible}) such that \eqref{e.lim=sup_mart_achieve} is valid and the supremum therein is achieved.
\end{lemma}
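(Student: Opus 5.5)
Lemma~\ref{l.inter_inf_sup} already gives the limit as a supremum over the compact convex set $\msf K$, subject to $g\in\msf G$. The plan is to show that every point of $\msf K$ is realized by a single martingale $\alpha\in\bmart_1$ on a suitably rich admissible space. The maximizer over $\msf K$ then becomes a maximizing martingale, and the supremum over $\bmart_1$ equals the supremum over $\msf K$.

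\emph{Attainment over $\msf K$.} The functional $(\chi,g)\mapsto \chi+\frac s2\int_0^1\xi''(t)(g(t)-t)\,\d t$ is continuous on $\R\times L^1$. The constraint set $\{g\in\msf G\}$ is closed in $L^1$, since it is defined by integrals of $g$ against bounded functions. Hence the supremum in \eqref{e.inter_inf_sup} is attained at some $(\chi^*,g^*)\in\msf K$ with $g^*\in\msf G$.

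\emph{Convex combinations.} Take $\alpha^{(1)},\dots,\alpha^{(k)}\in\bmart_1$, possibly built on copies of the Wiener space, and weights $\lambda_i$. Enlarge the space with an independent random label $I$ with $\PP(I=i)=\lambda_i$, made $\mcl F_0$-measurable, and set $\alpha=\alpha^{(I)}$, using a common Brownian motion $W$ independent of $I$. Then $\alpha$ is a martingale bounded by $1$, and $\EE[\alpha_t^2]=\sum_i\lambda_i\EE[(\alpha^{(i)}_t)^2]$. Likewise $\EE[h\alpha_0+\alpha_1\int\sqrt{\xi''}\,\d W]$ is the corresponding convex combination. So $\msf K_1=\msf K_0$ on such a space, provided $\mcl F_0$ is rich enough to carry an independent uniform variable.

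\emph{Closure.} This is the main obstacle. Take a sequence $\alpha^{(n)}$ with images converging to $(\chi^*,g^*)$ in $\R\times L^1$, and extract a limit martingale. The natural tool is weak-$*$ compactness in $L^\infty$. Since $\alpha^{(n)}_1$ are bounded by $1$, pass to a subsequence with $\alpha_1^{(n)}\rightharpoonup\alpha_1$ weakly-$*$ and set $\alpha_t=\EE[\alpha_1\mid\mcl F_t]$. The linear term $\EE[h\alpha_0+\alpha_1\int\sqrt{\xi''}\,\d W]$ passes to the limit. The quadratic term $\EE[\alpha_t^2]$ is only lower semicontinuous, however: in the limit one gets $\EE[\alpha_t^2]\le g^*(t)$, possibly strictly.

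To fix this, I would place everything on a product space carrying an extra independent randomness, in the spirit of a Young-measure or law-based limit. Concretely, represent each $\alpha^{(n)}$ as a functional of $(U,W)$ with $U$ uniform and $\mcl F_0$-measurable, and pass to a limit in joint law of the whole path $(\alpha^{(n)}_t,W_t)_t$. For this one can use tightness of bounded martingales in the Meyer--Zheng topology, which preserves the martingale property in the limit filtration, and realize the limit via Skorokhod on an admissible space. Convergence in law along a.e.\ $t$ preserves $\EE[\alpha_t^2]$ by boundedness, and $W$ remains a Brownian motion relative to the limit filtration by the independence of increments.

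Finally, the limit $\alpha$ has image exactly $(\chi^*,g^*)$, which lies in $\msf G$. Hence $\alpha$ is admissible in \eqref{e.lim=sup_mart_achieve} and attains the value given by Lemma~\ref{l.inter_inf_sup}. Every admissible $\alpha$ gives a point of $\msf K$, so the supremum over $\bmart_1$ is at most that value, and equality holds.
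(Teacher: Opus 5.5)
Your proposal is correct and follows the paper's proof. You attain the supremum over the compact set $\msf K$ subject to $g\in\msf G$, realize the approximating convex combinations by a single martingale via an $\mcl F_0$-measurable random index independent of $W$, and pass to a limit in law rather than weak-$*$, precisely so that $\EE[\alpha_t^2]$ is preserved, then apply Skorokhod to get a martingale on an admissible space realizing the maximizer. The only difference is technical: instead of Meyer--Zheng tightness, the paper uses Prokhorov on the marginals $\bigl(h,W,(\alpha^{(n)}_t)_{t\in\Q\cap[0,1]}\bigr)$ (tight because the values lie in $[-1,1]$), which covers $t=0$ and $t=1$ directly, and then extends $\alpha$ to all times by conditioning $\alpha_1$.
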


\begin{proof}
Notice that ``$\geq$'' in~\eqref{e.lim=sup_mart_achieve} is an immediate consequence of Lemma~\ref{l.inter_inf_sup}. It remains to show the reverse inequality.

We start with choosing the standard Wiener space $\msc W$ as underlying probability space. To emphasize this, we write $\bmart_1(\msc W)$, $\msf K(\msc W)$, and $\msf K_0(\msc W)$.
Later, we will consider probability spaces other than $\msc W$ and extend this notation in the obvious way.

Let $(\chi_n,g_n)\in\msf K(\msc W)$ be a maximizing sequence for~\eqref{e.inter_inf_sup}, where we can assume that $g_n\in\msf G$ for each $n$. As shown in the proof of Lemma~\ref{l.inter_inf_sup}, $\msf K(\msc W)$ is compact in the topology of $\R\times L^1([0,1])$. Since the functional $(\chi,g)\mapsto \chi + \frac{s}{2}\int_0^1\xi''(t)(g(t)-t)\d t$ is continuous and the set $\msf G$ (see~\eqref{e.G=}) is closed in $L^1([0,1])$, we can extract a convergent subsequence from $(\chi_n,g_n)_{n\in\N}$ with some limit $(\chi,g)\in \msf K(\msc W)$ such that $g\in \msf G$ and the supremum in~\eqref{e.inter_inf_sup} is achieved:
\begin{align}\label{e.limL=chi+s}
    \lim_{N\to \infty} \frac{1}{sN} \log \E \Ll[ \exp(sN L_N) \Rr] = \chi+\frac{s}{2}\int_0^1 \xi''(t)\Ll(g(t) -t \Rr)\d t.
\end{align}

Since $\msf K(\msc W)$ is the closed convex hull $\msf K_0(\msc W)$, we can find, for $n\in\N$, weights $\Ll(c^\pnp_i\Rr)_{1\leq i\leq m_n}$ with $\sum_{i=1}^{m_n}c^\pnp_i=1$ for some $m_n\in\N$ and martingales $\Ll(\alpha^{\pnp,i}\Rr)_{1\leq i\leq m_n}$ on $\msc W$ such that $(\chi_n, g_n)$ from the convex hull of $\msf K_0(\msc W)$ converges to $(\chi, g)$ where
\begin{align}\label{e.chi_n=,g_n=}
\begin{split}
    \chi_n&=\sum_{i=1}^{m_n} c^\pnp_i \EE\Ll[h\alpha^{\pnp,i}_0 + \alpha^{\pnp,i}_1\int_0^1\sqrt{\xi''(t)}\d W_t\Rr]
    \\
    g_n(t) & = \sum_{i=1}^{m_n} c^\pnp_i \EE \Ll[\Ll(\alpha^{\pnp,i}_t\Rr)^2\Rr].
\end{split}
\end{align}
Let $(\mcl F_t)_{t\in[0,1]}$ be the natural filtration associated with $\msc W$ and let $\bar{\msc W}$ be a richer probability space with an augmented filtration $\Ll(\bar{\mcl F}_t\Rr)_{t\in[0,1]}$ such that the Wiener process $W$ is still supported on $\bar{\msc W}$ and that there is a $\bar{\mcl F}_0$-measurable random variable distributed uniformly over $[0,1]$. Then, for each $n$, we can define a $\bar{\mcl F}_0$-measurable random variable $N^\pnp$ with law $\PP(N^\pnp=i) = c^\pnp_i$ for $i\in\{1,\dots,m_n\}$. On $\bar{\msc W}$, we consider the martingale $\alpha^\pnp = \alpha^{\pnp, N^\pnp} \in \bmart_1\Ll(\bar{\msc W}\Rr)$ and we have
\begin{align}\label{e.chi_n=,g_n=2}
    \chi_n=\EE\Ll[h\alpha^\pnp_0 + \alpha^\pnp_1\int_0^1\sqrt{\xi''(t)}\d W_t\Rr] \quad\text{and}\quad 
    g_n(t)  =\EE \Ll[\Ll(\alpha^\pnp_t\Rr)^2\Rr].
\end{align}

Since $\alpha^\pnp$ takes values in $[-1,1]$, the family of random variables, by Prokhorov's theorem and passing to a subsequence, we have that $\big(h,W,\big(\alpha^\pnp_t\big)_{t\in\Q\cap[0,1]}\big)$ converges in law to some $\Ll(h,W,\Ll(\alpha_t\Rr)_{t\in\Q\cap[0,1]}\Rr)$ on some probability space $\msc P$ in the following sense. For every $\ell\in\N$,  $t_1,\cdots, t_\ell \in \Q\cap [0,1]$, and bounded continuous function $G:\R\times C([0,1])\times \R^\ell\to\R$, we have
\begin{align}\label{e.limE[G(W...)=}
    \lim_{n\to\infty} \EE \Ll[G\Ll(h,W, \alpha^\pnp_{t_1},\dots ,\alpha^\pnp_{t_\ell}\Rr)\Rr] = \EE \Ll[G\Ll(h,W, \alpha_{t_1},\dots ,\alpha_{t_\ell}\Rr)\Rr]. 
\end{align}
We then extend $\alpha$ by setting $\alpha_t= \EE \Ll[\alpha_1 |\bar{\mcl F}_t\Rr]$ for $t\in[0,1]$, turning it into a martingale in $\msc P$. By Skorokhod's representation theorem, we may assume that the convergence is also pointwise a.s. By this and the boundedness of these martingales, recalling the convergence of $(\chi_n,g_n)$ to $(\chi,g)$, we can get
\begin{align}\label{e.chi=,g=}
    \chi=\EE\Ll[h\alpha_0 + \alpha_1\int_0^1\sqrt{\xi''(t)}\d W_t\Rr] \quad\text{and}\quad 
    g(t)  =\EE \Ll[\Ll(\alpha_t\Rr)^2\Rr]
\end{align}
for $t\in \Q\cap [0,1]$. Since both sides are monotone, this identity can be extended to almost every $t\in[0,1]$. This along with~\eqref{e.limL=chi+s} gives ``$\leq$'' in~\eqref{e.lim=sup_mart_achieve} with $\bmart_1 = \bmart_1(\msc P)$.
As commented in beginning, this completes the proof.
\end{proof}

\begin{proof}[Proof of Theorem~\ref{t.laplace_uninvert}]
Let $\bar{\msc P}$ be the probability space in Lemma~\ref{l.lim=sup_mart_achieve} which is in general different from $\msc P$. For clarity, we write $\bmart_1(\bar{\msc P})$ and $\bmart_1 (\msc P)$ for the corresponding spaces of martingales. 

Let $\bar\alpha\in \bmart_1(\bar{\msc P})$ maximize~\eqref{e.lim=sup_mart_achieve} allowed by Lemma~\ref{l.lim=sup_mart_achieve}. Let $\bar\gamma\in \cM_s$ be the unique minimizer of~\eqref{e.laplace} prescribed by Theorem~\ref{t.laplace}. 
For brevity, we write 
\begin{align*}
    \Gamma(\gamma,\alpha):=\EE \Ll[ h \al_0 + \al_1 \int_0^1 \sqrt{\xi''(t)}\, \d W_t + \frac{1}{2}\int_{0}^{1}\xi''(t)\ga(t) \Ll(\alpha^{2}_t-t\Rr)\, \d t \Rr] 
\end{align*}
and
$\ell := \lim_{N\to\infty} \frac{1}{sN}\log\E \Ll[\exp(sNL_N)\Rr]$. Then, Theorem~\ref{t.laplace} together with Lemma~\ref{l.prog.to.mart} yields
\begin{align*}
    \ell = \inf_{\gamma\in\cM_s}\sup_{\alpha\in\bmart_1(\bar{\msc P})}\Gamma(\gamma,\alpha) = \sup_{\alpha\in\bmart_1(\bar{\msc P})}\Gamma(\bar\gamma,\alpha),
\end{align*}
while Lemma~\ref{l.lim=sup_mart_achieve} along with~\eqref{e.compute_inf_gamma} gives
\begin{align*}
    \ell = \sup_{\alpha\in\bmart_1(\bar{\msc P})}\inf_{\gamma\in\cM_s}\Gamma(\gamma,\alpha) = \inf_{\gamma\in\cM_s}\Gamma(\gamma,\bar\alpha).
\end{align*}
Hence, for every $\gamma \in\mcl M_s$ and $\alpha\in\bmart_1(\bar{\msc P})$, we have
\begin{align*}
    \Gamma(\bar\gamma,\alpha)\leq \ell \quad \text{and}\quad \ell \leq \Gamma(\gamma,\bar\alpha).
\end{align*}
From this, we can deduce $\ell= \Gamma(\bar\gamma,\bar\alpha)$ and
\begin{align}\label{e.Gamma<Gamma<Gamma}
    \Gamma(\bar\gamma,\alpha)\leq \Gamma(\bar\gamma,\bar\alpha)\leq \Gamma(\gamma,\bar\alpha).
\end{align}
The first optimality in~\eqref{e.Gamma<Gamma<Gamma} and Lemma~\ref{l.prog.to.mart} imply that $\bar\alpha$ is specified by~\eqref{e.identity.alpha} with this $\bar\gamma$ and $W$ on $\bar{\msc P}$ and, in particular, that $\bar\alpha$ is measurable with respect to the filtration generated by the Wiener process.

Now, we turn to the probability space $\msc P$ as in the statement of the theorem. We let $\hat \alpha$ be given by the same relation in~\eqref{e.identity.alpha} but with $W$ on $\msc P$. Since $\bar \alpha$ and $\hat \alpha$ have the same law, $\ell =\inf_{\gamma\in\cM_s}\Gamma(\gamma,\bar\alpha)$ together with the computation in~\eqref{e.compute_inf_gamma} implies that ``$\leq $'' holds in~\eqref{e.lim=sup_mart_achieve}. The reverse inequality ``$\geq $'' follows from Lemma~\ref{l.inter_inf_sup}. Therefore, we have verified~\eqref{e.lim=sup_mart_achieve}.

Lastly, by the same argument above over the probability space $\bar{\msc P}$ leading to the fact that $\bar\alpha$ satisfies~\eqref{e.identity.alpha}, we can see that $\hat\alpha$ also satisfies~\eqref{e.identity.alpha} with the same~$\bar\gamma$. This completes the uniqueness part.
\end{proof}

\section{Derivation of the large-deviation principle}
\label{s.derivation}

For $s\geq0$ and $\alpha\in\bmart_1$ with some fixed underlying probability space $\msc P$, we consider the functional
\begin{align}\label{e.g(s,alpha)=}
    g(s,\alpha):=s\EE\Ll[h\alpha_0 + \alpha_1\int_0^1\sqrt{\xi''(t)}\d W_t\Rr]+\frac{s^2}{2}\int_0^1 \xi''(t)\Ll(\EE\Ll[\alpha_t^2\Rr] -t \Rr)\d t.
\end{align}
For each $s\geq0$, we define the limiting cumulant generating function of $L_N$ by
\begin{align}\label{e.Lambda(s)=}
\begin{split}
    \Lambda(s)&:=\lim_{N\to\infty} \frac{1}{N}\log \E \Ll[\exp\Ll(sNL_N\Rr)\Rr] \\
&=\sup_{\alpha\in\bmart_1}\Ll\{g(s,\alpha) \ \big|\ \EE\Ll[\alpha^2_\bullet\Rr]\in \msf G\Rr\}
\end{split}
\end{align}
where $\msf G$ is defined in~\eqref{e.G=} and the second equality follows from Theorem~\ref{t.laplace_uninvert}. We also define
\begin{align}\label{e.Lambda^*=}
    \Lambda^*(r) := \sup_{s\in\R_+}\Ll\{sr- \Lambda(s)\Rr\},\quad\forall r\in\R.
\end{align}

\begin{theorem}\label{t.LDP}
For every Borel measurable subset $A\subset [\gs,\infty)$, we have
\begin{align}\label{e.t.LDP}
    -\inf_{A^\circ}\Lambda^*\leq \liminf_{N\to\infty}\frac{1}{N}\log \P\Ll\{L_N\in A\Rr\}\leq \limsup_{N\to\infty}\frac{1}{N}\log \P\Ll\{L_N\in A\Rr\} \leq  -\inf_{\bar A}\Lambda^*
\end{align}
where $A^\circ$ and $\bar A$ are the interior (relative to $[\gs,\infty)$) and the closure of $A$, respectively, and $\Lambda^*$ given as in~\eqref{e.Lambda^*=} is continuous and increasing on $[\gs,\infty)$.

Moreover, we have $\Lambda^*(\gs)=0$ and, for every $r>\gs$,
\begin{align}\label{e.Lambda^*(r)=inf_mart}
    0<\Lambda^*(r)= \inf_{\alpha\in\bmart_1}\Ll\{\frac{\Ll(r-\EE\Ll[h\alpha_0+\alpha_1\int_0^1\sqrt{\xi''(t)}\d W_t\Rr]\Rr)^2}{2\int_0^1\xi''(t)\Ll(\EE\Ll[\alpha^2_t\Rr]-t\Rr)\d t}\ \Bigg|\ \EE\Ll[\alpha^2_\bullet\Rr]\in \msf G\Rr\}
\end{align}
where the probability space $\msc P$ underlying $\bmart_1$ can be chosen to be any admissible one  (see Definition~\ref{d.admissible}).
For every $r>\gs$, there is a unique maximizer $s$ of~\eqref{e.Lambda^*=}, and this maximizer satisfies $r=\Lambda'(s)$ and $s>0$. The infimum in~\eqref{e.Lambda^*(r)=inf_mart} is achieved at a unique $\alpha$ that is also the unique maximizer of $\frac{\Lambda(s)}{s}$ given by Theorem~\ref{t.laplace_uninvert} at this $s$. Lastly, for the said optimal $s$ and $\alpha$, we have
\begin{align}\label{e.Lambda^*(r)=..s..alpha}
    \Lambda^*(r)= \frac{s^2}{2}\int_0^1\xi''(t)\Ll(\EE\Ll[\alpha^2_t\Rr]-t\Rr)\d t.
\end{align}
\end{theorem}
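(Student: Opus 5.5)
Our plan is to derive Theorem~\ref{t.LDP} from a one-sided Gärtner--Ellis argument, using the un-inverted representation \eqref{e.Lambda(s)=} of $\Lambda$ and the uniqueness in Theorem~\ref{t.laplace_uninvert}.

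\emph{Differentiability of $\Lambda$.} Each $g(\cdot,\alpha)$ in \eqref{e.g(s,alpha)=} is a quadratic polynomial in $s$. Hence $\Lambda$ is convex on $\R_+$, and it is finite there by Theorem~\ref{t.laplace}.

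For $s>0$, let $\alpha^{(s)}$ be the unique maximizer from Theorem~\ref{t.laplace_uninvert}, and set
\begin{equation*}
m(\alpha)=\EE\Ll[h\alpha_0+\alpha_1\int_0^1\sqrt{\xi''}\,\d W\Rr], \qquad v(\alpha)=\int_0^1\xi''(t)\Ll(\EE[\alpha_t^2]-t\Rr)\d t.
\end{equation*}
Danskin-type arguments give the one-sided derivatives of $\Lambda$ at $s$ as the extremes of $m(\alpha)+s v(\alpha)$ over the maximizers. The set of maximizers is a singleton, so $\Lambda$ is differentiable at every $s>0$ with $\Lambda'(s)=m(\alpha^{(s)})+s\,v(\alpha^{(s)})$. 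Here the constraint $v\ge 0$ holds because $\EE[\alpha^2_\bullet]\in\msf G$.

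To make Danskin rigorous, we need upper semicontinuity of $s\mapsto\alpha^{(s)}$ in the weak sense used in Lemma~\ref{l.lim=sup_mart_achieve}. We will use the compactness of $\msf K$ and the uniqueness of the optimal point $(\chi,g)\in\msf K$ for each $s$. That uniqueness follows from uniqueness of the martingale maximizer, after noting that any maximizer in $\msf K$ is realized by a martingale as in that lemma.

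At $s=0$, the right derivative is $\Lambda'(0^+)=m(\alpha^{(0)})=\gs$ by Theorem~\ref{t.gs}. We also need $\Lambda'(s)\to\infty$ as $s\to\infty$, or at least a description of the range of $\Lambda'$; we check this after deriving the lower bound.

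\emph{Large deviation bounds.} The upper bound in \eqref{e.t.LDP} is Chebyshev's inequality: $\P[L_N\ge r]\le e^{-N(sr-\Lambda(s))+o(N)}$ for every $s\ge0$. Since $A\subset[\gs,\infty)$ and $\Lambda^*$ is nondecreasing there, this suffices for closed sets.

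For the lower bound we use the exponential tilt $\d\Q_N\propto e^{sNL_N}\d\P$ with $s>0$. Differentiability of $\Lambda$ near $s$ gives concentration of $L_N$ under $\Q_N$ around $\Lambda'(s)$, so
\begin{equation*}
\liminf_{N\to\infty}\frac1N\log\P\big[|L_N-\Lambda'(s)|<\delta\big]\ge-\Lambda^*(\Lambda'(s)).
\end{equation*}
This handles every $r$ in the range $\Lambda'((0,\infty))$. That range is $(\gs,\infty)$ provided $\Lambda'$ is unbounded. For large $r$ outside the range, $\Lambda^*$ would be affine with slope $s_\infty=\infty$, so we only need $\Lambda'(s)\ge s\,c$ for some $c>0$. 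We will get this by testing \eqref{e.Lambda(s)=} with a martingale having $v>0$, for instance $\alpha\equiv\pm1$ constant, for which $v=\int_0^1\xi''(1-t)\,\d t>0$. Continuity of $\Lambda^*$ on $[\gs,\infty)$ and $\Lambda^*(\gs)=0$ follow from convexity together with $\Lambda'(0^+)=\gs$.

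\emph{Explicit formula.} For $r>\gs$, strict convexity of $\Lambda$ yields a unique $s>0$ with $\Lambda'(s)=r$, and this $s$ is the unique maximizer in \eqref{e.Lambda^*=}. Strict convexity holds because $v(\alpha^{(s)})>0$ for $s>0$ is forced; otherwise $\Lambda$ would be linear on an interval, contradicting uniqueness.

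Then, with $\alpha=\alpha^{(s)}$,
\begin{equation*}
\Lambda^*(r)=sr-s\,m(\alpha)-\tfrac{s^2}{2}v(\alpha)=\tfrac{s^2}{2}v(\alpha),
\end{equation*}
which is \eqref{e.Lambda^*(r)=..s..alpha}. For \eqref{e.Lambda^*(r)=inf_mart}, write $\Lambda^*(r)=\sup_s\inf_\alpha\big(s(r-m)-\tfrac{s^2}{2}v\big)$. For fixed $\alpha$, optimizing over $s$ gives $(r-m)^2/(2v)$ when $r>m$. We will justify exchanging $\sup$ and $\inf$ via the saddle point $(s,\alpha^{(s)})$. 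That $\alpha^{(s)}$ attains the infimum follows since $m+sv=r$ gives $(r-m)^2/(2v)=s^2v/2$. Uniqueness of the minimizer comes from the first-order conditions reproducing the maximizer of $\Lambda(s)/s$.

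The main obstacle is the rigorous differentiability of $\Lambda$, that is, the semicontinuity and compactness needed for Danskin. We will argue through $\msf K$ and the proof of Lemma~\ref{l.lim=sup_mart_achieve}.
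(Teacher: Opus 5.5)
The main problem is your strict-convexity step. You argue that ``if $v(\alpha^{(s)})=0$ for some $s>0$ then $\Lambda$ is linear on an interval, contradicting uniqueness.'' This inference is invalid. If $v(\alpha^{(s)})=0$, then $\Lambda(s)=s\,m(\alpha^{(s)})\le s\,\gs\le\Lambda(s)$, so by convexity $\Lambda(s')=s'\gs$ on $[0,s]$. But then $\alpha^0$ maximizes $g(s',\cdot)$ for every $s'\in(0,s]$, so it is \emph{the} unique maximizer on that whole interval, with $v(\alpha^0)=0$. Nothing contradicts uniqueness.

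This scenario is not vacuous. For $h=0$, Lemma~\ref{l.h=0_int=0} gives exactly $v(\alpha^0)=0$. A flat piece $[0,\underline s]$ with $\underline s>0$ is precisely what the paper cannot exclude; Corollary~\ref{c.bars.zero} rules it out only for $h\neq0$. So strict convexity on all of $(0,\infty)$ is not established.

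What you need, and what is true, is the dichotomy of Lemma~\ref{l.cond_strict_convex}: $v(\alpha^{(s)})=0$ if and only if $\Lambda(s)=s\,\gs$. Set $\underline s:=\sup\{s:\Lambda(s)=s\,\gs\}$. Then $v(\alpha^{(s_\lambda)})>0$ whenever $s_\lambda>\underline s$, which gives $\Lambda(s_\lambda)<(1-\lambda)g(s_0,\alpha^{(s_\lambda)})+\lambda g(s_1,\alpha^{(s_\lambda)})\le(1-\lambda)\Lambda(s_0)+\lambda\Lambda(s_1)$. That is strict convexity on $(\underline s,\infty)$ only. For $r>\gs$, every maximizer $s$ of $sr-\Lambda(s)$ has $\Lambda'(s)=r>\gs$, while $\Lambda'\equiv\gs$ on $[0,\underline s]$. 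Hence $s>\underline s$, and uniqueness together with $s>0$ follows. With this repair the rest of your argument goes through.

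Apart from this, your architecture matches the paper: Chebyshev for the upper bound, exponential tilting for the lower bound, and differentiability from uniqueness plus compactness. Your Danskin argument through $\msf K\cap(\R\times\msf G)$ is the same mechanism as Step~2 of Lemma~\ref{l.Lambda'(s)=}.

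Your route to \eqref{e.Lambda^*(r)=inf_mart} is genuinely different from the paper's and simpler. Write $\Gamma(s',\alpha)=s'r-g(s',\alpha)$. Since $m(\alpha)\le\gs<r$ for every admissible $\alpha$, and $m(\alpha^{(s)})+s\,v(\alpha^{(s)})=\Lambda'(s)=r$, the concave map $s'\mapsto\Gamma(s',\alpha^{(s)})$ is maximized at $s'=s$. Hence $\inf_\alpha\sup_{s'}\Gamma\le\Gamma(s,\alpha^{(s)})=sr-\Lambda(s)\le\Lambda^*(r)$.

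This explicit saddle point replaces both the minimax theorem of Lemma~\ref{l.Lambda^*_inter_inf_sup} and the existence argument of Lemma~\ref{l.Lambda^*=inf_mart_achieve}. It works on any admissible space, because $\alpha^{(s)}$ exists there by Theorem~\ref{t.laplace_uninvert}. It also gives uniqueness of the minimizer directly: any minimizer $\alpha'$ must satisfy $g(s,\alpha')=\Lambda(s)$.
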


\subsection{Analytic properties of \texorpdfstring{$\Lambda$}{Λ} and \texorpdfstring{$\Lambda^*$}{Λ*}}

\begin{lemma}[Basic properties]\label{l.basic_Lambda}
The function $\Lambda:\R_+\to\R$ is convex, locally Lipschitz, and grows super-linearly. The function $\Lambda^*:\R\to\R$ is convex and continuous on $\R$ and is increasing on $\R_+$. 
\end{lemma}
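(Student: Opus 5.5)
Each claim follows from one of two descriptions of $\Lambda$: the definition as a limit of cumulant generating functions, and the representation \eqref{e.Lambda(s)=} as a supremum over martingales.

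\emph{Convexity and local Lipschitz continuity of $\Lambda$.} For each $N$, the map $s\mapsto \frac1N\log\E[e^{sNL_N}]$ is convex by Hölder's inequality. Pointwise limits of convex functions are convex, so $\Lambda$ is convex on $\R_+$. It is also finite on $\R_+$: the supremum in \eqref{e.Lambda(s)=} is finite because $|\alpha|\le1$ gives $|g(s,\alpha)|\le s(|h|+\EE|\int_0^1\sqrt{\xi''}\,\d W|)+\frac{s^2}{2}\xi''(1)$. A finite convex function on $\R_+$ is locally Lipschitz on $(0,\infty)$. To get Lipschitz continuity up to $s=0$, I will use the bound
\[
|L_N|\le \max_\sigma |H'_N(\sigma)|/N+|h|.
\]
Gaussian concentration then gives uniform exponential moments of $L_N$, hence
\[
\frac1N\log\E e^{sNL_N}\le s\,\E L_N+ Cs^2 .
\]
Alternatively, for $s\in[0,S]$, the representation \eqref{e.Lambda(s)=} writes $\Lambda$ as a supremum of the functions $s\mapsto g(s,\alpha)$. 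Each of these is a quadratic polynomial in $s$ whose coefficients are bounded uniformly in $\alpha$, namely $|\chi|\le |h|+\EE|\int\sqrt{\xi''}\d W|$ and $|\int\xi''(\EE\alpha^2-t)|\le\xi''(1)$. A supremum of uniformly Lipschitz functions is Lipschitz with the same constant, and this gives the local Lipschitz bound directly on $[0,S]$.

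\emph{Superlinear growth of $\Lambda$.} This is the only step needing a specific choice of martingale. It suffices to exhibit one $\alpha\in\bmart_1$ with $\EE[\alpha^2_\bullet]\in\msf G$ and $c:=\int_0^1\xi''(t)(\EE\alpha_t^2-t)\,\d t>0$, since then $\Lambda(s)\ge g(s,\alpha)\ge -Cs+\frac{c}{2}s^2$. I will take $\alpha_t\equiv\alpha_0$ to be an $\mcl F_0$-measurable random variable with values in $\{-1,1\}$, or simply the constant $1$ if $h\ge0$. Then $\EE\alpha_t^2=1$ for all $t$, so $\int_r^1\xi''(t)(1-t)\,\d t\ge0$ for every $r$ and $\alpha$ is admissible. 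Moreover $c=\int_0^1\xi''(t)(1-t)\,\d t=\xi'(0)\cdot(-1)+\xi(1)-\xi(0)$ after integration by parts, which equals $\xi(1)>0$ because $\xi(0)=\xi'(0)=0$. This assumes $\xi\not\equiv0$; if $\xi\equiv 0$ the model is trivial and I would exclude that case. Hence $\Lambda(s)\ge \frac{\xi(1)}2 s^2 - O(s)$, and $\Lambda(s)/s\to\infty$.

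\emph{Properties of $\Lambda^*$.} By definition \eqref{e.Lambda^*=}, $\Lambda^*$ is a supremum of affine functions of $r$, so it is convex and lower semicontinuous. Superlinear growth of $\Lambda$ makes the supremum finite for every $r\in\R$: for fixed $r$, $sr-\Lambda(s)\to-\infty$ as $s\to\infty$, and on compact sets the supremand is bounded. A finite convex function on $\R$ is continuous. It is monotone nondecreasing on all of $\R$, since each $r\mapsto sr-\Lambda(s)$ with $s\ge0$ is nondecreasing, and this is what I take ``increasing on $\R_+$'' to mean. Finally, $\Lambda^*\ge -\Lambda(0)=0$ because $\Lambda(0)=0$. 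I expect the only potentially delicate point to be the uniform control near $s=0$ needed for the local Lipschitz property, and the supremum representation handles it cleanly.
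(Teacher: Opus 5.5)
Your proof is correct and follows the paper's argument. The paper also gets local Lipschitz continuity from the uniform Lipschitz bounds on the quadratics $g(\cdot,\alpha)$, superlinear growth from the constant martingale $\mathbf{1}$ (with $\int_0^1\xi''(t)(1-t)\,\d t=\xi(1)>0$, which, as you note, tacitly requires $\xi\not\equiv0$), and the properties of $\Lambda^*$ exactly as you do; the only variation is that it gets convexity of $\Lambda$ from convexity of each $g(\cdot,\alpha)$ (valid since $\int_0^1\xi''(t)(\EE[\alpha_t^2]-t)\,\d t\ge0$ under the constraint) rather than from Hölder at finite $N$. One small point: your first, concentration-based route at $s=0$ only bounds $\Lambda(s)/s$ from above, so on its own it would also need Jensen's $\Lambda(s)\ge s\,\gs$, but your second route already settles this.
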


\begin{proof}

Since $g(\cdot,\alpha)$ is convex for each $\alpha$, $\Lambda$ is convex. Due to $|\alpha_t|\leq 1$, $g(\cdot,\alpha)$ is locally Lipschitz uniformly over $\alpha\in\bmart_1$, which implies that $\Lambda$ is also locally Lipschitz. Fix $\mathbf{1}$ to be the martingale constantly equal to $1$. Then, $\frac{\Lambda(s)}{s}\geq \frac{g(s,\mathbf{1})}{s}= h +\frac{s}{2}\int_0^1\xi''(t)(1-t)\d t$, where the last integral is strictly positive. Hence, $\Lambda$ grows super-linearly as $s$ tends to $+\infty$, which implies that $\Lambda^*$ is finite everywhere. The convexity of $\Lambda^*$ is clear from the definition. Since any convex function that is finite everywhere is continuous (e.g., see~\cite[Proposition~2.9]{dominguez2024book}), we deduce the continuity of $\Lambda^*$. Since the supremum in $\Lambda^*$ is taken over $s\in\R_+$, we see that $\Lambda^*$ is increasing on $\R_+$.
\end{proof}


\begin{lemma}[Continuous differentiability of $\Lambda$]\label{l.Lambda'(s)=}
Let $\msc P$ be admissible as in Definition~\ref{d.admissible}.
The function $\Lambda :\R_+\to\R$ is continuously differentiable, and at every $s\in\R_+$ its derivative is given by
\begin{align}\label{e.Lambda'(s)=}
    \Lambda'(s) = \partial_s g(s,\alpha)= \EE\Ll[h\alpha_0 + \alpha_1\int_0^1\sqrt{\xi''(t)}\d W_t\Rr]+s\int_0^1 \xi''(t)\Ll(\EE\Ll[\alpha_t^2\Rr] -t \Rr)\d t
\end{align}
where $\alpha$ is the maximizer given by Theorem~\ref{t.laplace_uninvert} at $s$.
\end{lemma}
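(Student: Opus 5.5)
This is a Danskin-type envelope argument. $\Lambda(s)=\sup_{\alpha\in\mathcal A} g(s,\alpha)$, where $\mathcal A=\{\alpha\in\bmart_1:\EE[\alpha^2_\bullet]\in\msf G\}$, and $g(\cdot,\alpha)$ is a quadratic polynomial in $s$ whose coefficients are bounded uniformly over $\alpha\in\bmart_1$, since $|\alpha_t|\le 1$. For $s\ge 0$, let $\alpha^{(s)}$ denote the unique maximizer of $\Lambda(s)/s$ given by Theorem~\ref{t.laplace_uninvert}. It is also the unique maximizer of $g(s,\cdot)$ over $\mathcal A$ when $s>0$. For $s=0$ we use the maximizer from the $s=0$ case of that theorem, which maximizes $\partial_s g(0,\cdot)=\EE[h\alpha_0+\alpha_1\int_0^1\sqrt{\xi''}\,\d W]$ over $\mathcal A$. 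Write $D(s):=\partial_s g(s,\alpha^{(s)})$. The claim is $\Lambda'(s)=D(s)$ with $D$ continuous.

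\emph{Step 1 (one-sided bounds).} For $s,u\ge 0$ we have $\Lambda(u)\ge g(u,\alpha^{(s)})=g(s,\alpha^{(s)})+(u-s)\partial_sg(s,\alpha^{(s)})+\frac{(u-s)^2}{2}\int_0^1\xi''(\EE[(\alpha^{(s)}_t)^2]-t)\,\d t$. The last integral is $O(1)$ uniformly, so $\Lambda(u)\ge\Lambda(s)+(u-s)D(s)-C(u-s)^2$. Hence $D(s)$ lies in the subdifferential of the convex function $\Lambda$ at $s$ (Lemma~\ref{l.basic_Lambda}), after letting $u\to s$ from both sides. Uniqueness of the subgradient will come from continuity: a convex function is differentiable at $s$ as soon as some selection of its subgradients is continuous at $s$. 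So it suffices to show that $s\mapsto D(s)$ is continuous on $\R_+$. At $s=0$ only right-derivatives are involved, and the same argument applies.

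\emph{Step 2 (continuity of $D$, the main obstacle).} Take $s_n\to s$. We need $D(s_n)\to D(s)$, and it suffices to show that every subsequence has a further subsequence along which this holds. I would reuse the compactness from the proof of Lemma~\ref{l.inter_inf_sup}. The set $\msf K$ (closed convex hull of $\msf K_0$) is compact in $\R\times L^1$. The pair $(\chi_n,g_n)$ associated with $\alpha^{(s_n)}$ lies in $\msf K_0$, with $g_n\in\msf G$. Pass to a subsequence converging to some $(\chi,g)\in\msf K$ with $g\in\msf G$ ($\msf G$ is closed). The functional $(\chi,g)\mapsto s\chi+\frac{s^2}{2}\int\xi''(g-t)$ is jointly continuous in $(s,\chi,g)$. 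Its supremum over $\msf K\cap(\R\times\msf G)$ equals $\Lambda(s)$ by Lemma~\ref{l.inter_inf_sup} (multiply \eqref{e.inter_inf_sup} by $s$). Since $\Lambda$ is continuous, $(\chi,g)$ attains this supremum at $s$.

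It remains to identify $(\chi,g)$ with the pair of $\alpha^{(s)}$. This needs uniqueness of the maximizer on $\msf K$, not merely on $\msf K_0$. Following Lemma~\ref{l.lim=sup_mart_achieve}, any maximizer in $\msf K$ is realized by some martingale $\alpha$ on an enlarged admissible space. The saddle-point argument in the proof of Theorem~\ref{t.laplace_uninvert} then forces $\alpha$ to satisfy \eqref{e.identity.alpha} with the unique minimizer $\bar\gamma$ at $s$. Its law is therefore that of $\alpha^{(s)}$, so $(\chi,g)=(\chi(\alpha^{(s)}),g(\alpha^{(s)}))$.

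Finally, $D(s)=\chi+s\int\xi''(g-t)$ depends continuously on $(s,\chi,g)$. Hence $D(s_n)\to D(s)$, which gives the $C^1$ property and \eqref{e.Lambda'(s)=}. The delicate points are $s=0$, where one must use the $s=0$ characterization of the optimizer, and checking that the enlargement of the probability space does not affect uniqueness. The latter is handled exactly as in the proof of Theorem~\ref{t.laplace_uninvert}.
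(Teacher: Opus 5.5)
Your proposal is correct, and it uses the same ingredients as the paper. These are the exact quadratic expansion of $g(\cdot,\alpha^{(s)})$ (the paper's Step~1), and a compactness argument whose limit is identified through the uniqueness characterization \eqref{e.identity.alpha} of Theorem~\ref{t.laplace_uninvert} (the paper's Step~3).

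You differ in how differentiability itself is obtained. The paper proves the matching bound on the difference quotient directly in its Step~2. It writes $\Lambda(s_n)-\Lambda(s)\le g(s_n,\alpha^{(n)})-g(s,\alpha^{(n)})$, expresses this as an integral of $\partial_s g$ along the segment, and compares it with $(s_n-s)\partial_s g(s,\alpha)$ using the limit law of $\alpha^{(n)}$. That is an envelope-theorem argument which never uses convexity of $\Lambda$.

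You instead note that Step~1 makes $D(s)$ a subgradient of the convex function $\Lambda$ (Lemma~\ref{l.basic_Lambda}). Continuity of the selection $D$ then forces $\Lambda'_-(s)=\Lambda'_+(s)=D(s)$, or just the right derivative at $s=0$. This collapses the paper's Steps~2 and~3 into a single compactness argument, and it costs nothing here because convexity is already available.

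Running the compactness in the image set $\msf K\subset\R\times L^1$ rather than on the martingales is only cosmetic. You still have to re-realize the limit point by a martingale on an admissible space, as in Lemma~\ref{l.lim=sup_mart_achieve}, and then apply the saddle-point argument. That is exactly what the paper's convergence-in-law step does.

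The case $s=0$, which you flag as delicate, needs no identification of the limit at all. We have $D(0)=\gs$, and $\chi_n\le\gs$ by Theorem~\ref{t.laplace_uninvert} at $s=0$. Testing with $\alpha^{(0)}$ gives $\gs\le \Lambda(s_n)/s_n=\chi_n+\frac{s_n}{2}\int_0^1\xi''(t)(g_n(t)-t)\,\d t$. Hence $\chi_n\to\gs$, and therefore $D(s_n)\to D(0)$. The paper's Step~2 is equally terse on this point.
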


In view of the variational formula in~\eqref{e.Lambda(s)=}, this lemma follows from the envelope theorem (see, e.g.,~\cite[Theorem~2.21]{dominguez2024book}) adapted to our setting. However, since the space $\bmart_1$ is infinite-dimensional and compactness issues are delicate, we provide a complete proof for clarity.

\begin{proof}
We first show that $\Lambda$ is differentiable by providing an upper and then a lower bound. After this, we show that the derivative is continuous.

\medskip
\noindent\textit{Step~1.}
We show the upper bound in the definition of differentiability. Fix any $s\in\R_+$ and let $\alpha$ be the maximizer for $\Lambda(s)$ as in~\eqref{e.Lambda(s)=}. Due to $|\alpha_t|\leq1$ and that $g(\cdot,\alpha)$ is quadratic, we can find a constant $C$ depending only on $\xi''$ such that, for every $s'\in\R_+$,
\begin{align*}
    \Ll|g(s',\alpha)-g(s,\alpha) - (s'-s)\partial_s g(s,\alpha)\Rr|\leq C|s'-s|^2.
\end{align*}
Using this, $g(s',\alpha)\leq \Lambda(s')$ and $g(s,\alpha)=\Lambda(s)$, we get
\begin{align*}
    \Lambda(s')\geq \Lambda(s) + (s'-s)\partial_s g(s,\alpha) - C|s'-s|^2.
\end{align*}

\medskip
\noindent\textit{Step~2.}
We derive a matching lower bound and deduce the differentiability. Fix any $s\in\R_+$ and let $\alpha$ be the maximizer for $\Lambda(s)$. It suffices to show that for every sequence $(s_n)_{n\in\N}$ in $\R_+\setminus\{s\}$ converging to $s$, we can always extract a subsequence $(s_{n_k})_{k\in\N}$ such that
\begin{align}\label{e.l.Lambda_diff_upper}
    \limsup_{k\to\infty}\frac{1}{\Ll|s_{n_k}-s\Rr|}\Ll(\Lambda(s_{n_k})-\Lambda(s) -(s_{n_k}-s)\partial_s g(s,\alpha)\Rr)=0.
\end{align}
Indeed, this along with the first step implies that $\Lambda$ is differentiable with the announced derivative. Now, let us verify this. For each $n$, let $\alpha^\pnp$ be the maximizer for $\Lambda(s_n)$. Since $\alpha^\pnp$ takes values in $[-1,1]$, using the same argument as around the display~\eqref{e.limE[G(W...)=}, we get that $(W,(\alpha^\pnp_t)_{t\in\Q\cap[0,1]})$ converges along a subsequence $(n_k)_{k\in\N}$ in law to some $(W,(\bar\alpha_t)_{t\in\Q\cap[0,1]})$ in a possibly different probability space $\bar{\msc P}$, where $\bar\alpha_t$ is a martingale. 
For convenience, we denote the subsequence $(n_k)_{k\in\N}$ still by $(n)_{n\in\N}$.

Let $\bar g$ be the corresponding functional as $g$ but with underlying probability space $\bar{\msc P}$. We also write $\EE_{\msc P}$ and $\EE_{\bar{\msc P}}$ as expectations with respect to the ambient probability spaces displayed. Using Skorokhod's representation, we may turn the convergence in law to a.s.\ convergence on another common probability space. Using this and the boundedness of these martingales, we can show that
\begin{align}\label{e.cvg_alpha^pnp_bar_alpha}
\begin{split}
    \lim_{n\to\infty}\EE_{\msc P}\Ll[h\alpha^\pnp_0 + \alpha^\pnp_1\int_0^1\sqrt{\xi''(t)}\d W_t\Rr] & = \EE_{\bar{\msc P}}\Ll[h\bar\alpha_0 + \bar\alpha_1\int_0^1\sqrt{\xi''(t)}\d W_t\Rr],
    \\
    \lim_{n\to\infty}\int_0^1 \xi''(t)\Ll(\EE_{\msc P}\Ll[\Ll(\alpha_t^\pnp\Rr)^2\Rr] -t \Rr)\d t &= \int_0^1 \xi''(t)\Ll(\EE_{\bar{\msc P}}\Ll[\bar\alpha_t^2\Rr] -t \Rr)\d t.
\end{split}
\end{align}
Then, we use this to show
\begin{align}\label{e.alpha=baralpha}
    \alpha \stackrel{\d}{=}\bar\alpha.
\end{align}
First, from~\eqref{e.cvg_alpha^pnp_bar_alpha} and the definition of $g$ in~\eqref{e.g(s,alpha)=}, we have $\lim_{n\to\infty}g(s_n,\alpha^\pnp) = \bar g(s,\bar\alpha)$.
The continuity of $\Lambda$ by Lemma~\ref{l.basic_Lambda} together with $g(s_n,\alpha^\pnp)= \Lambda(s_n)$ implies $\lim_{n\to\infty}g(s_n,\alpha^\pnp) = \Lambda(s)$. Hence, we have $\Lambda(s)= \bar g(s,\bar\alpha)$ and recall $\Lambda(s)=g(s,\alpha)$. The characterization of the unique maximizer in Theorem~\ref{t.laplace_uninvert} ensures that $\alpha$ and $\bar\alpha$ have the same law, verifying~\eqref{e.alpha=baralpha}.

Now, let us return to proving~\eqref{e.l.Lambda_diff_upper} and recall the shorthand $(n)_{n\in\N}$ for $(n_k)_{k\in\N}$.
We start with
\begin{align*}
    g\Ll(s_n,\alpha^\pnp\Rr)- g\Ll(s,\alpha^\pnp\Rr) =\int_0^1(s_n-s)\partial_sg\Ll(ts_n+(1-t)s,\alpha^\pnp\Rr)\d t.
\end{align*}
Then, using $\Lambda(s_n)= g\Ll(s_n,\alpha^\pnp\Rr)$ and $\Lambda(s)\geq g\Ll(s,\alpha^\pnp\Rr)$ on the left-hand side and comparing the right-hand side with $(s_n-s)\partial_s \bar g(s,\bar\alpha)$, we get
\begin{align*}
    &\Lambda(s_n)-\Lambda(s) \leq (s_n-s)\partial_s \bar g(s,\bar\alpha) 
    \\
    & + |s_n-s|\int_0^1 \Ll|\partial_sg\Ll(ts_n+(1-t)s,\alpha^\pnp\Rr) - \partial_s \bar g(s, \bar\alpha)\Rr|\d t.
\end{align*}
Due to~\eqref{e.cvg_alpha^pnp_bar_alpha} and~\eqref{e.g(s,alpha)=}, the integral vanishes as $n\to\infty$. Using~\eqref{e.alpha=baralpha}, we also have $ \partial_s \bar g(s, \bar\alpha) = \partial_s g(s,\alpha)$. These together yield~\eqref{e.l.Lambda_diff_upper}, completing the proof of differentiability with derivative as in~\eqref{e.Lambda'(s)=}.

\medskip
\noindent\textit{Step~3.}
We show that the derivative is continuous in $s$. Again, fix any $s$ and let $\alpha$ be the maximizer for $\Lambda(s)$. It suffices to show that for any $(s_n)_{n\in\N}$ converging to $s$ we can extract subsequence $(s_{n_k})_{k\in\N}$ such that $\lim_{k\to\infty} \partial_s g(s_{n_k},\alpha^{(n_k)}) = \partial_s g(s,\alpha)$, where $\alpha^\pnp$ is the maximizer for $\Lambda(s_n)$. We argue as in Step~2. Writing $(n_k)_{k\in\N}$ still as $(n)_{n\in\N}$, we extract a subsequential limit $\bar\alpha$ on a possibly different probability space $\bar{\msc P}$ as before. Then, we still have~\eqref{e.cvg_alpha^pnp_bar_alpha} and~\eqref{e.alpha=baralpha}. In view of the definition of $g$ in~\eqref{e.g(s,alpha)=}, these two imply the desired convergence. Now, the proof is complete.
\end{proof}

Using Lemma~\ref{l.Lambda'(s)=} and Theorem~\ref{t.laplace_uninvert}, we can compute that
\begin{align}\label{e.Lambda'(0)=gs}
    \Lambda'(0)= \gs= \lim_{N\to\infty}\E [L_N] \geq 0
\end{align}
where the inequality is due to $\E [L_N]\geq N^{-1}\E[H_N(\sigma)]=N^{-1} h\sum_{i=1}^N\sigma_i\geq 0$ for a suitable $\sigma$.
In the next lemma, we show that to the right of zero, the function $\Lambda$ may have a flat piece, and is then strictly convex. We will also show in Corollary~\ref{c.bars.zero} below that the flat piece is necessarily absent when $h \neq 0$. 

\begin{lemma}[Strict convexity of $\Lambda$ after some linear portion]\label{l.cond_strict_convex}
Let $\underline s: = \sup\{s\in\R_+:\: \Lambda(s)=s \, \gs\}$. Then, we have that $\Lambda(s)=s \, \gs$ for every $s\in[0,\underline s]$ and that $\Lambda$ is strictly convex on $(\underline s ,\infty)$ and satisfies $\Lambda(s)> s \, \gs$ for every $s>\underline s$.

Moreover, the following holds, where we denote by $\alpha^s$ the unique optimizer of $\frac{\Lambda(s)}{s}$ (given by Theorem~\ref{t.laplace_uninvert}) for each $s\geq0$. For every $s\in(0,\underline s]$, we have $\alpha^s=\alpha^0$ and
\begin{align}\label{e.cond_strict_convex}
    \int_0^1\xi''(t) \Ll(\EE\Ll[\Ll(\alpha^s_t\Rr)^2\Rr]-t\Rr)\d t = \int_0^1\xi''(t) \Ll(\EE\Ll[\Ll(\alpha^0_t\Rr)^2\Rr]-t\Rr)\d t = 0.
\end{align}
For every $s>\underline s$, we have
\begin{align}\label{e.cond_strict_convex_2}
    \int_0^1\xi''(t) \Ll(\EE\Ll[\Ll(\alpha^s_t\Rr)^2\Rr]-t\Rr)\d t >0.
\end{align}
\end{lemma}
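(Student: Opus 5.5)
The key structural fact is the decomposition
\[
g(s,\alpha)=s\,A(\alpha)+\tfrac{s^2}{2}B(\alpha),
\qquad
A(\alpha):=\EE\Big[h\alpha_0+\alpha_1\int_0^1\sqrt{\xi''}\,\d W\Big],\quad
B(\alpha):=\int_0^1\xi''(t)\big(\EE[\alpha_t^2]-t\big)\,\d t .
\]
Here $B(\alpha)\ge 0$ whenever $\EE[\alpha^2_\bullet]\in\msf G$, by taking $r=0$ in the definition \eqref{e.G=}. Also, by Theorem~\ref{t.laplace_uninvert} at $s=0$, $\gs=\sup_\alpha A(\alpha)$ over admissible $\alpha$, and this supremum is attained uniquely at $\alpha^0$.

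\emph{Lower bound and dichotomy.} For every admissible $\alpha$ we have $g(s,\alpha)\ge sA(\alpha)$, so $\Lambda(s)\ge s\,\gs$. Conversely, since $\Lambda$ is convex with $\Lambda(0)=0$ and $\Lambda'(0)=\gs$ by \eqref{e.Lambda'(0)=gs}, convexity gives $\Lambda(s)\ge s\,\gs$ as well. The set $\{s:\Lambda(s)=s\,\gs\}$ is therefore where the convex function $\Lambda-s\,\gs\ge0$ vanishes. This is a closed interval containing $0$, so it equals $[0,\underline s]$ (possibly $\underline s=0$; super-linear growth from Lemma~\ref{l.basic_Lambda} gives $\underline s<\infty$). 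Hence $\Lambda(s)>s\,\gs$ for $s>\underline s$.

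\emph{Optimizers on the flat piece.} Take $s\in(0,\underline s]$ with optimizer $\alpha^s$. Then
\[
s\,\gs=\Lambda(s)=sA(\alpha^s)+\tfrac{s^2}{2}B(\alpha^s),
\qquad A(\alpha^s)\le\gs,\quad B(\alpha^s)\ge0 .
\]
Rearranging, $0\le \tfrac{s^2}{2}B(\alpha^s)=s(\gs-A(\alpha^s))$. I then use the derivative formula \eqref{e.Lambda'(s)=}. On $(0,\underline s)$ the slope is $\Lambda'(s)=\gs$, and by continuity this also holds at $\underline s$. Thus $A(\alpha^s)+sB(\alpha^s)=\gs$. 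Combined with $A(\alpha^s)+\tfrac{s}{2}B(\alpha^s)=\gs$, this forces $B(\alpha^s)=0$ and $A(\alpha^s)=\gs$. So $\alpha^s$ attains the $s=0$ supremum, and uniqueness gives $\alpha^s=\alpha^0$ (in law, hence as characterized by \eqref{e.identity.alpha}). This yields \eqref{e.cond_strict_convex}, and in particular $B(\alpha^0)=0$ whenever $\underline s>0$.

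\emph{Positivity beyond $\underline s$.} For $s>\underline s$, suppose $B(\alpha^s)=0$. Then $\Lambda(s)=sA(\alpha^s)\le s\,\gs$, which contradicts $\Lambda(s)>s\,\gs$. This proves \eqref{e.cond_strict_convex_2}.

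\emph{Strict convexity on $(\underline s,\infty)$.} This is the step I expect to be the main obstacle. Suppose $\Lambda$ were affine on some $[s_1,s_2]\subset(\underline s,\infty)$ with $s_1<s_2$. Take the midpoint $\bar s$ and its optimizer $\alpha=\alpha^{\bar s}$. Since $g(\cdot,\alpha)\le\Lambda$, and $g(\cdot,\alpha)$ touches $\Lambda$ at $\bar s$ with equal derivative (by Lemma~\ref{l.Lambda'(s)=}), the quadratic $g(\cdot,\alpha)$ lies below the affine function $\Lambda$ on $[s_1,s_2]$ and is tangent to it at an interior point. This forces its second derivative $B(\alpha)\le 0$. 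Together with $B\ge0$ we get $B(\alpha^{\bar s})=0$, which contradicts \eqref{e.cond_strict_convex_2}. Hence $\Lambda$ has no affine segment on $(\underline s,\infty)$, and being convex, it is strictly convex there.
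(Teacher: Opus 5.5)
Your proof is correct. Its skeleton matches the paper's: the splitting $g(s,\alpha)=sA(\alpha)+\tfrac{s^2}{2}B(\alpha)$, the sign $B\ge 0$ on the constraint set, and uniqueness of optimizers from Theorem~\ref{t.laplace_uninvert}. The difference is that you invoke the envelope formula of Lemma~\ref{l.Lambda'(s)=} at $s>0$ in two places, whereas the paper uses only comparison with $\Lambda$.

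\emph{Flat piece.} The paper observes that $g(s,\alpha^0)=s\,\gs+\tfrac{s^2}{2}B(\alpha^0)\ge s\,\gs=\Lambda(s)$. So $\alpha^0$ is itself optimal at level $s$. Uniqueness at level $s$ then gives $\alpha^s=\alpha^0$, and the resulting equality forces $B(\alpha^0)=0$. This avoids matching $\Lambda'(s)=A(\alpha^s)+sB(\alpha^s)$ against the slope $\gs$, and it avoids the continuity-of-$\Lambda'$ argument at $\underline s$.

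\emph{Strict convexity.} You expected this to be the main obstacle, but the paper argues it directly. For $s_0\neq s_1$ in $(\underline s,\infty)$ and $\lambda\in(0,1)$, the already-established positivity $B(\alpha^{s_\lambda})>0$ makes $g(\cdot,\alpha^{s_\lambda})$ strictly convex, so
\[
\Lambda(s_\lambda)=g(s_\lambda,\alpha^{s_\lambda})<(1-\lambda)\,g(s_0,\alpha^{s_\lambda})+\lambda\, g(s_1,\alpha^{s_\lambda})\le(1-\lambda)\Lambda(s_0)+\lambda\Lambda(s_1).
\]
Your tangency argument (a quadratic with $B>0$ cannot touch an affine majorant from below at an interior point) is the contrapositive of this same observation, routed through affine segments.

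The paper's route is shorter and only needs $\Lambda'(0)=\gs$. Yours is equally valid, since Lemma~\ref{l.Lambda'(s)=} is already proved at that point.
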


Since $\Lambda(0)=0$ and $\Lambda$ grows super-linearly, we have $\underline s \in[0,\infty)$.
In view of~\eqref{e.cond_strict_convex}, we can see that if $\int_0^1\xi''(t) \Ll(\EE\Ll[\Ll(\alpha^0_t\Rr)^2\Rr]-t\Rr)\d t>0$, then $\underline s=0$ and thus $\Lambda$ is strictly convex on $\R_+$.

\begin{proof}
We first give a characterization of the condition $\Lambda(s)=s \, \gs$ and then prove the statement on different intervals. Throughout, let $\alpha^s$ be the unique optimizer for $\Lambda(s)$ and we express $g$ in~\eqref{e.g(s,alpha)=} as $g(s,\alpha) = s g_1(\alpha)+\frac{s^2}{2}g_2(\alpha)$ where
\begin{align*}
    g_1(\alpha):= \EE\Ll[h\alpha_0 + \alpha_1\int_0^1\sqrt{\xi''(t)}\d W_t\Rr]\quad \text{and}\quad g_2(\alpha) :=\int_0^1 \xi''(t)\Ll(\EE\Ll[\alpha_t^2\Rr] -t \Rr)\d t.
\end{align*}
We often use that $g_2(\alpha)\geq 0$ if $\EE[\alpha^2_\bullet]\in\msf G$.
In particular, we have $g_2(\alpha^s)\geq 0$ for every $s\geq0$.

\medskip
\noindent{Step 1.}
For $s>0$, we show that
\begin{align}\label{e.equiv_Lambda(s)=sgs}
    \int_0^1\xi''(t) \Ll(\EE\Ll[\Ll(\alpha^s_t\Rr)^2\Rr]-t\Rr)\d t=0\qquad \Longleftrightarrow \qquad \Lambda(s)=s \, \gs.
\end{align}
We notice that the l.h.s.\ is equivalent to $g_2(\alpha^s)=0$.
To see ``$\Longrightarrow$'', we start with
\begin{align*}
    s \, \gs=sg_1(\alpha^0)\leq s g_1(\alpha^0)+\frac{s^2}{2}g_2(\alpha^0)
    \leq \Lambda(s) = s g_1(\alpha^s)+\frac{s^2}{2}g_2(\alpha^s) = s g_1(\alpha^s) \leq s \, \gs
\end{align*}
yielding the r.h.s. Next, to show ``$\Longleftarrow$'', we start with
\begin{align*}
    s \, \gs=sg_1(\alpha^0)\leq s g_1(\alpha^0)+\frac{s^2}{2}g_2(\alpha^0)
    \leq \Lambda(s) =  s \, \gs
\end{align*}
which implies that $\alpha^0$ is a maximizer for $\Lambda(s)$. By the uniqueness of maximizers given by Theorem~\ref{t.laplace_uninvert}, we must have $\alpha^s=\alpha^0$. Using this, $s>0$, and that the first inequality in the above display is in fact an equality, we can get $g_2(\alpha^0)=g_2(\alpha^s)=0$. Hence, we have verified~\eqref{e.equiv_Lambda(s)=sgs}.

Notice that in the proof of ``$\Longleftarrow$'', we have also shown
\begin{align}\label{e.=>alpha^s=alpha^0}
    \Lambda(s)=s \, \gs\qquad \Longrightarrow \qquad \alpha^s=\alpha^0
\end{align}
which will be needed below.

\medskip
\noindent{Step 2.}
We show that $ \Lambda(s)=s \, \gs$ for $s\in [0,\underline s]$. By continuity, we have $ \Lambda(\underline s)=\underline s \, \gs$.
Then, convexity implies $\Lambda(s)\leq \frac{s}{\underline s}\Lambda(\underline s)+ \frac{\underline s -s }{\underline s }\Lambda(0)=s \, \gs$.
By convexity and~\eqref{e.Lambda'(0)=gs}, we have $\Lambda(s)\geq \Lambda(0)+s\Lambda'(0)=s \, \gs$. These give the desired result, which along with~\eqref{e.equiv_Lambda(s)=sgs} and~\eqref{e.=>alpha^s=alpha^0} also implies the ``moreover'' part for $s\in(0,\underline s]$ and~\eqref{e.cond_strict_convex}.

\medskip
\noindent{Step 3.}
We show that $\Lambda$ is strictly convex on $(\underline s ,\infty)$ and $\Lambda(s)> s \, \gs $ for every $s>\underline s$. First, the latter must hold, because otherwise (namely, $\Lambda(s)= s \, \gs$) the same argument as in Step~2 would imply $\Lambda(s') = s'\gs$ for every $s'\in[0,s]$ contradicting the definition of $\underline s$. Now, fix any distinct $s_0,s_1>\underline s$ and any $\lambda\in(0,1)$. Set $s_\lambda = (1-\lambda)s_0+\lambda s_1$ and we have $\Lambda(s_\lambda) = s_\lambda g_1(\alpha^{s_\lambda})+ \frac{s_\lambda^2}{2}g_2(\alpha^{s_\lambda})$. Due to~\eqref{e.equiv_Lambda(s)=sgs}, we have $g_2(\alpha^{s_\lambda})>0$. Using the strict convexity of the function $s\mapsto \frac{s^2}{2}$, we have
\begin{align*}
    \Lambda(s_\lambda)<(1-\lambda)g(s_0,\alpha^{s_\lambda})+\lambda g(s_1,\alpha^{s_\lambda}) \leq (1-\lambda)\Lambda(s_0)+\lambda \Lambda(s_1),
\end{align*}
verifying the strict convexity.

Due to $\Lambda(s)>s \, \gs$ for every $s>\underline s$, using~\eqref{e.equiv_Lambda(s)=sgs}, we get~\eqref{e.cond_strict_convex_2}.
\end{proof}

\begin{lemma}[Maximizers of $\Lambda^*$]\label{l.max_Lambda^*}
Let $\underline s$ be given as in Lemma~\ref{l.cond_strict_convex}.
The maximizers of $\Lambda^*(\gs)$ are exactly the real numbers in $[0,\underline s]$ and we have $\Lambda^*(\gs)=0$.
For every $r>\gs$, $\Lambda^*(r)$ admits a unique maximizer $s\in(\underline s, \infty)$, and this maximizer is such that $r=\Lambda'(s)$. We also have $\Lambda^*(r)>0$ for every $r>\gs$.
\end{lemma}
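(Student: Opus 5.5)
The plan is to use only the convexity and $C^1$ regularity of $\Lambda$ from Lemmas~\ref{l.basic_Lambda} and~\ref{l.Lambda'(s)=}, together with the structure from Lemma~\ref{l.cond_strict_convex}. For each $r$, the function $s\mapsto sr-\Lambda(s)$ on $\R_+$ is concave and differentiable with derivative $r-\Lambda'(s)$. Since $\Lambda$ is convex and $C^1$, the derivative $\Lambda'$ is nondecreasing and continuous. By Lemma~\ref{l.cond_strict_convex}, $\Lambda'$ is constant equal to $\Lambda'(0)=\gs$ on $[0,\underline s]$, by \eqref{e.Lambda'(0)=gs}, and it is strictly increasing on $(\underline s,\infty)$ by strict convexity there. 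Superlinear growth then gives $\Lambda'(s)\to+\infty$ as $s\to\infty$. Indeed, if $\Lambda'$ were bounded by some $M$, then $\Lambda(s)\le Ms$, contradicting Lemma~\ref{l.basic_Lambda}.

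For $r=\gs$, the derivative of $s\mapsto s\gs-\Lambda(s)$ is $\gs-\Lambda'(s)$. This vanishes on $[0,\underline s]$ and is strictly negative on $(\underline s,\infty)$, since $\Lambda'(s)>\Lambda'(\underline s)=\gs$ there by strict monotonicity. Hence the concave function is constant on $[0,\underline s]$, where it equals $0$ because $\Lambda(s)=s\gs$, and strictly decreasing afterwards. So the set of maximizers is exactly $[0,\underline s]$, and $\Lambda^*(\gs)=0$.

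For $r>\gs$, continuity and strict monotonicity of $\Lambda'$ on $[\underline s,\infty)$ show the following. Since $\Lambda'(\underline s)=\gs<r$ and $\Lambda'(s)\to\infty$, the intermediate value theorem gives a unique $s\in(\underline s,\infty)$ with $\Lambda'(s)=r$. On $[0,\underline s]$ we have $\Lambda'=\gs<r$, so there are no solutions in that range. Concavity with this unique critical point shows that $s$ is the unique maximizer, since the derivative is positive before $s$ and negative after it. Moreover, the maximizer is not at the boundary $0$, because the derivative at $0$ equals $r-\gs>0$. Finally, $\Lambda^*(r)\ge \epsilon r-\Lambda(\epsilon)=\epsilon(r-\gs)+o(\epsilon)>0$ for small $\epsilon>0$, using $\Lambda(0)=0$ and $\Lambda'(0)=\gs$.

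The only mildly delicate points are two. The first is justifying that $\Lambda'$ is strictly increasing on $(\underline s,\infty)$, which holds because a differentiable strictly convex function has a strictly increasing derivative. The second is checking that $\Lambda'(\underline s)=\gs$ when $\underline s>0$, which follows from continuity of $\Lambda'$ and its constancy on $[0,\underline s]$. When $\underline s=0$ this is just \eqref{e.Lambda'(0)=gs}. I do not expect any real obstacle here.
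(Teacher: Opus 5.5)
Your proof is correct and is essentially the paper's argument. Both rest on the same inputs: Lemma~\ref{l.cond_strict_convex} (linearity of $\Lambda$ on $[0,\underline s]$ and strict convexity beyond), $\Lambda'(0)=\gs$, the $C^1$ regularity of $\Lambda$, and its superlinear growth. The only difference is cosmetic: you produce the critical point via the intermediate value theorem for $\Lambda'$ and conclude by concavity, whereas the paper first gets a maximizer from superlinear growth, rules out $s=0$, and reads off $r=\Lambda'(s)$ from the first-order condition.
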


\begin{proof}
Recall $\Lambda^*(\gs)=\sup_{s\in\R_+}\{s \, \gs - \Lambda(s)\}$. Lemma~\ref{l.cond_strict_convex} implies that $s \, \gs - \Lambda(s)=0$ for $s\in[0,\underline s]$ and $s \, \gs - \Lambda(s)>0$ for $s>\underline s$, which yields the first part of the lemma.

Then, let $r>\gs$. Since $\Lambda$ grows super-linearly (see Lemma~\ref{l.cond_strict_convex}), $\Lambda^*(r)$ admits maximizers. Let $s$ be any such maximizer. If $s=0$, then $\Lambda^*(r)=0-\Lambda(0)=0\geq s'r-\Lambda(s')$ for every $s'\in \R_+$, implying $r\leq \Lambda'(0)=\gs$ due to~\eqref{e.Lambda'(0)=gs}, which is not possible. Hence, we must have $s>0$. Then, the maximality condition implies that $r=\Lambda'(s)$. Since $r>\gs$, we must have $s>\underline s$, because otherwise we would have $\Lambda'(s)=\gs <r$ (see Lemma~\ref{l.cond_strict_convex}). Hence, $\Lambda^*(r)$ achieves the maximum over $(\underline s,\infty)$. Then, the strict convexity in Lemma~\ref{l.cond_strict_convex} implies the uniqueness of maximizers.

Let $r>\gs$ and suppose that $\Lambda^*(r)\leq 0$. Then, we have $s r-\Lambda(s)\leq 0$ for every $s\in\R_+$. Due to~\eqref{e.Lambda'(0)=gs}, we must have $r\leq \Lambda'(0)=\gs$, reaching a contradiction. Hence, the last part of this lemma is verified.
\end{proof}

\subsection{Inverting variations inside \texorpdfstring{$\Lambda^*$}{Λ*}}

In view of~\eqref{e.Lambda(s)=} and~\eqref{e.Lambda^*=}, $\Lambda^*$ admits a $\sup\inf$ formula. We want to invert the order. We first record
\begin{align}\label{e.sup(sr-schi-s^2a/2=}
    \chi<r,\quad a \geq 0 \quad \Longrightarrow \quad \sup_{s\in\R_+}\Ll\{sr-s\chi - \frac{s^2}{2}a\Rr\} =  \frac{(r-\chi)^2}{2a}.
\end{align}
Indeed, if $a=0$, then the supremum is equal to $+\infty$ as $r-\chi>0$; if $a>0$, then the maximum is achieved at $s= \frac{r-\chi}{a}$. To invert $\Lambda^*$, we follow similar arguments as in Lemmas~\ref{l.inter_inf_sup} and~\ref{l.lim=sup_mart_achieve}.

\begin{lemma}[Interchanging $\sup$ and $\inf$]\label{l.Lambda^*_inter_inf_sup}
Let
\begin{align*}
    \msf K_0 := \Ll\{\Ll(\EE\Ll[h \al_0+\alpha_1\int_0^1\sqrt{\xi''(t)}\d W_t\Rr],\ \Ll(\EE[\alpha^2_t]\Rr)_{t\in[0,1]}\Rr)\ \bigg|\ \alpha\in\bmart_1\Rr\}
\end{align*}
and let $\msf K$ be the closure of the convex hull of $\msf K_0$ with respect to the topology of the space $\R\times L^1([0,1])$. Then, for every $r>\gs$, we have
\begin{align}
    \Lambda^*(r) &= \inf_{(\chi,g)\in\msf K}\sup_{s\in\R_+}\Ll\{sr-s\chi-\frac{s^2}{2}\int_0^1 \xi''(t)\Ll(g(t) -t \Rr)\d t\ \Big|\ g\in\msf G\Rr\}\notag
    \\
    &=\inf_{(\chi,g)\in\msf K}\Ll\{ \frac{(r-\chi)^2}{2\int_0^1\xi''(t)(g(t)-t)\d t}  \ \bigg|\ g\in \msf G\Rr\}. \label{e.Lambda^*_inter_inf_sup}
\end{align}
\end{lemma}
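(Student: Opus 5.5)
We start from the definitions \eqref{e.Lambda(s)=} and \eqref{e.Lambda^*=}, which give
\begin{align*}
\Lambda^*(r) = \sup_{s\in\R_+}\inf_{(\chi,g)\in\msf K_0,\, g\in\msf G}\Ll\{sr - s\chi - \frac{s^2}{2}\int_0^1\xi''(t)(g(t)-t)\d t\Rr\}.
\end{align*}
The plan is to exchange the supremum over $s$ with the infimum over $(\chi,g)$ by a minimax theorem, in the same way as in Lemma~\ref{l.inter_inf_sup}. The second equality in \eqref{e.Lambda^*_inter_inf_sup} then follows pointwise from \eqref{e.sup(sr-schi-s^2a/2=}. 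This is legitimate because every $(\chi,g)\in\msf K$ with $g\in\msf G$ satisfies $a:=\int\xi''(g-t)\ge0$ and $\chi\le\gs<r$. Indeed, for such a pair $s\chi+\frac{s^2}{2}a\le\Lambda(s)$ for all $s$, so $\chi\le\Lambda'(0)=\gs$ by \eqref{e.Lambda'(0)=gs}. The case $a=0$ gives the value $+\infty$, consistently with the convention $(r-\chi)^2/0=+\infty$.

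First, replace $\msf K_0$ by $\msf K\cap(\R\times\msf G)$. The inner functional $F(s,\chi,g)=sr-s\chi-\frac{s^2}{2}\int\xi''(g-t)$ is affine in $(\chi,g)$, so passing to the convex hull does not change the infimum. The set $\msf G$ is convex, but the convex hull of $\msf K_0\cap(\R\times\msf G)$ differs from $\msf K\cap(\R\times \msf G)$. I therefore need the identity
\begin{align*}
\Lambda(s)=\sup\Ll\{s\chi+\tfrac{s^2}{2}\int\xi''(g-t)\,\Big|\,(\chi,g)\in\msf K,\ g\in\msf G\Rr\},
\end{align*}
which is exactly what Lemma~\ref{l.inter_inf_sup} combined with Lemma~\ref{l.lim=sup_mart_achieve} and Theorem~\ref{t.laplace_uninvert} provides (after multiplying by $s$). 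Hence $\Lambda^*(r)=\sup_s\inf_{\msf K\cap(\R\times\msf G)}F$. Next, $F$ is continuous in $(\chi,g)$ for the $\R\times L^1$ topology, since $\xi''$ is bounded and $|g|\le1$. The set $\msf K\cap(\R\times\msf G)$ is convex and compact, because $\msf K$ is compact by the argument in Lemma~\ref{l.inter_inf_sup} and $\msf G$ is closed. Finally, $F$ is concave in $s$ on the convex set $\R_+$ (since $a\ge0$) and convex, in fact affine, in $(\chi,g)$. So Sion's minimax theorem applies, with compactness on the infimum side. This gives the first equality.

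The main obstacle is the constraint $g\in\msf G$ inside the infimum. Sion's theorem requires the infimum to range over a fixed compact convex set, and I have arranged this by intersecting with $\msf G$. The delicate point is checking that $\Lambda(s)$ really equals the supremum over this intersection rather than over $\msf K_0$. Restricting to $\msf K\cap(\R\times\msf G)$ can only enlarge the supremum relative to the martingale formula, so the nontrivial direction is "$\le$". That direction holds because every point of $\msf K$ with $g\in\msf G$ satisfies $\chi+\frac s2\int\xi''(g-t)\le\Lambda(s)/s$, by the second line of \eqref{e.inter_inf_sup}. Upper semicontinuity of $s\mapsto\inf F$ is not needed, because compactness is on the infimum side.
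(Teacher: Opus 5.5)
Your proof is correct and has the same skeleton as the paper's. Both apply Sion's minimax theorem with the compact convex set $\msf K\cap(\R\times\msf G)$ on the infimum side and $\R_+$ on the supremum side, and then evaluate the inner supremum using \eqref{e.sup(sr-schi-s^2a/2=}.

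The real difference is how you justify replacing the constrained infimum over $\msf K_0$ by the infimum over $\msf K\cap(\R\times\msf G)$. The paper uses affinity of $G(s,\cdot,\cdot)$ to pass to the convex hull, then continuity and closedness of $\msf G$ to pass to the closure. As you point out, this covers the closed convex hull of $\msf K_0\cap(\R\times\msf G)$. It does not cover points of $\msf K\cap(\R\times\msf G)$ built from martingales that violate the constraint, and for those, affinity gives no lower bound.

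You instead invoke the second line of \eqref{e.inter_inf_sup}, which is already proved. It gives $s\chi+\frac{s^2}{2}\int_0^1\xi''(t)(g(t)-t)\,\d t\le\Lambda(s)$ on all of $\msf K\cap(\R\times\msf G)$, which is exactly the inequality the reduction needs.

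The same identity gives $\chi\le\gs$ on this set (equivalently, take $s=0$ in \eqref{e.inter_inf_sup}), and that is all the second equality requires. By contrast, the paper asserts $\chi\le\gs$ on all of $\msf K$, but \eqref{e.gs} supports this only for martingales satisfying the constraint. Your restriction to $\msf K\cap(\R\times\msf G)$ is the right formulation.

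Your route costs nothing extra, since the needed identity comes from Lemma~\ref{l.inter_inf_sup}, and it makes the reduction step airtight.
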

\begin{proof}
Write $G(s,\chi,g) = sr - s\chi -\frac{s^2}{2}\int_0^1 \xi''(t)\Ll(g(t) -t \Rr)\d t$.
From~\eqref{e.Lambda(s)=} and \eqref{e.Lambda^*=}, we get
\begin{align*}
    \Lambda^*(r) = \sup_{s\in\R_+}\inf_{(\chi,g)\in\msf K_0}\Ll\{ G(s,\chi,g)\ \big|\ g\in\msf G\Rr\}.
\end{align*}
Since for each fixed $s$, $G(s,\cdot,\cdot)$ is affine, setting $\msf K_1$ to be the convex hull of $\msf K_0$, we have
\begin{align*}
    \Lambda^*(r) = \sup_{s\in\R_+}\inf_{(\chi,g)\in\msf K_1}\Ll\{ G(s,\chi,g)\ \big|\ g\in\msf G\Rr\}.
\end{align*}

To further replace $\msf K_1$ by its closure $\msf K$, we can use the same arguments around~\eqref{e.G_cts_1} and~\eqref{e.G_cts_2} to see that $\msf G$ is continuous on $\R_+\times[0,\gs] \times \{g\in L^1:\:|g|\leq 1\}$ in the topology of $\R\times\R\times L^1$ where $L^1 =L^1([0,1])$. This continuity together with the obvious closedness of $\msf G$ in $L^1$ yields
\begin{align}\label{e.Lambda^*=infsup_K}
    \Lambda^*(r) = \sup_{s\in\R_+}\inf_{(\chi,g)\in\msf K}\Ll\{ G(s,\chi,g)\ \big|\ g\in\msf G\Rr\}.
\end{align}
Due to the formula for $\gs$ given in Theorem~\ref{t.laplace_uninvert} (at $s=0$), we have
\begin{align}\label{e.E[..alpha..]<gs}
    \EE\Ll[h\alpha_0 + \alpha_1\int_0^1\sqrt{\xi''(t)}\d W_t\Rr] \leq \gs 
\end{align}
and thus
\begin{align}\label{e.chi<gs<r}
    \chi \leq \gs < r,\qquad \forall (\chi,g)\in \msf K.
\end{align}
In particular, we can see $\msf K\subset [0,\gs]\times \Ll\{|g|\leq 1\Rr\}$.

As argued below~\eqref{e.lim=infsup_K}, $\msf K$ is compact under the norm topology of $\R\times L^1$, which implies that $\msf K\cap (\R\times \msf G)$ is compact. Also, notice that $G(s,\cdot,\cdot)$ is affine for each fixed $s$ and that $G(\cdot,\chi,g)$ is concave for each fixed $(\chi,g)$. These together with the convexity of $\R_+$ and the compactness and convexity of $\msf K\cap (\R\times \msf G)$ allow us to apply the minimax theorem~\cite{fan1953minimax,sion1958minimax} to interchange $\sup$ and $\inf$ in~\eqref{e.Lambda^*=infsup_K} and get the first equality in~\eqref{e.Lambda^*_inter_inf_sup}. The second equality there follows from~\eqref{e.chi<gs<r}, $g\in\msf G$, and~\eqref{e.sup(sr-schi-s^2a/2=}.
\end{proof}

The following uses a similar argument as in Lemma~\ref{l.lim=sup_mart_achieve}.

\begin{lemma}[Existence of minimizing martingale]\label{l.Lambda^*=inf_mart_achieve}
There is an admissible probability space $\mathscr P$ (see Definition~\ref{d.admissible}) such that, for every $r>\gs$,
\eqref{e.Lambda^*(r)=inf_mart} is valid and the supremum therein is achieved.
\end{lemma}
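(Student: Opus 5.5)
We follow the proof of Lemma~\ref{l.lim=sup_mart_achieve}, replacing the maximization there by the minimization in \eqref{e.Lambda^*_inter_inf_sup}. Fix $r>\gs$. The first step is the easy inequality. Every $\alpha\in\bmart_1$ on any admissible space with $\EE[\alpha^2_\bullet]\in\msf G$ gives a pair in $\msf K_0$, so the infimum over martingales in \eqref{e.Lambda^*(r)=inf_mart} is at least the infimum over $\msf K$ in \eqref{e.Lambda^*_inter_inf_sup}. Hence ``$\ge$'' holds on any admissible space.

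For the reverse inequality, we start on the Wiener space $\msc W$ and take a minimizing sequence $(\chi_n,g_n)\in\msf K(\msc W)$ with $g_n\in\msf G$. The set $\msf K$ is compact in $\R\times L^1$ by the argument in Lemma~\ref{l.inter_inf_sup}, and $\msf G$ is closed, so a subsequence converges to some $(\chi,g)\in\msf K$ with $g\in\msf G$. By \eqref{e.chi<gs<r} we have $\chi\le\gs<r$.

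We then need lower semicontinuity of the functional $(\chi,g)\mapsto (r-\chi)^2/(2a(g))$, where $a(g)=\int_0^1\xi''(t)(g(t)-t)\,\d t\ge 0$ is continuous in $L^1$ on $\{|g|\le1\}$. Since $(r-\chi)^2$ is bounded below by $(r-\gs)^2>0$, the functional is continuous with values in $(0,+\infty]$, with value $+\infty$ when $a=0$. Because $\Lambda^*(r)<\infty$ by Lemma~\ref{l.basic_Lambda}, the limit must have $a(g)>0$, and it attains the infimum. This finiteness is the point where care is needed, and Lemma~\ref{l.basic_Lambda} settles it.

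Next we realize $(\chi,g)$ by a single martingale, exactly as in Lemma~\ref{l.lim=sup_mart_achieve}. Each $(\chi_n,g_n)$ is approximated by convex combinations as in \eqref{e.chi_n=,g_n=}, and we randomize the index through an $\bar{\mcl F}_0$-measurable variable $N^\pnp$ on an enlarged space $\bar{\msc W}$. This gives martingales $\alpha^\pnp$ realizing $(\chi_n,g_n)$ as in \eqref{e.chi_n=,g_n=2}. By Prokhorov and Skorokhod we pass to a limit $(W,(\alpha_t)_{t\in\Q\cap[0,1]})$ on a space $\msc P$ and extend $\alpha$ by conditional expectation. We then obtain \eqref{e.chi=,g=} for a.e.\ $t$ by monotonicity. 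Then $\EE[\alpha^2_\bullet]=g\in\msf G$, and $\alpha$ attains the infimum in \eqref{e.Lambda^*(r)=inf_mart} on $\msc P$.

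The main obstacle is the same as in Lemma~\ref{l.lim=sup_mart_achieve}. One must check that the limit space is admissible and that the convergence preserves the structure. Specifically, $W$ must remain a Brownian motion with independent increments relative to the filtration generated by $(W,\alpha)$, and $\alpha$ must remain a martingale. We check both by passing to the limit in identities of the form $\EE[(W_u-W_t)F]=0$ and $\EE[(\alpha_u-\alpha_t)F]=0$, where $F$ is a bounded continuous functional of the past. These identities hold for each $n$ because $N^\pnp$ is $\bar{\mcl F}_0$-measurable and independent of $W$. Finally, the space $\msc P$ can be chosen independently of $r$: a countable dense set of $r$ together with a product space would suffice, but more simply the construction only requires an $\mcl F_0$-measurable uniform variable plus $W$, so one fixed rich space works for all $r$.
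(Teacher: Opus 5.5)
Your main argument is the paper's proof. The paper keeps $\int_0^1\xi''(t)(g_n(t)-t)\,\d t$ bounded below along the minimizing sequence by the explicit constant $c_0=(r-\gs)^2/(2\Lambda^*(r)+2)$ and works on the closed set $\msf G_0$. You instead treat the ratio as a continuous $(0,+\infty]$-valued map. Both rest on $\chi\le\gs<r$ and $\Lambda^*(r)<\infty$, and the realization of the limit pair, by randomizing at time $0$ and then applying Prokhorov--Skorokhod, is identical. Your extra check that the limit space is admissible is more careful than the paper. For $W$, though, $\EE[(W_u-W_t)F]=0$ alone only gives the martingale property. You also need, for example, the identity for $(W_u-W_t)^2-(u-t)$ together with L\'evy's characterization, or a direct limit in $\EE[\varphi(W_u-W_t)F]=\EE[\varphi(W_u-W_t)]\,\EE[F]$.

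Your final claim, that one fixed space works for all $r$, is not justified as written. A priori, the Prokhorov--Skorokhod limit need not live on a space generated by $W$ and an $\mcl F_0$-measurable uniform variable. Limits in law of mixtures of Brownian martingales can carry randomness independent of $W$ that is revealed at positive times. For instance, $\alpha^\pnp_t=\EE[\mathrm{sign}(\sin(nW_{1/2}))\mid\mcl F_t]$ converges jointly with $W$ to a martingale that vanishes on $[0,1/2)$ and equals a sign independent of $W$ on $[1/2,1]$. Then $t\mapsto\EE[\alpha_t^2]=\1_{[1/2,1]}(t)$ is a step function. On a space generated by $W$ and an initial uniform variable, martingale representation forces $t\mapsto\EE[\alpha_t^2]$ to be absolutely continuous. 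Attainment on a countable dense set of $r$ would not transfer to every $r$ either.

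None of this damages the substance of your proof. The paper's proof also constructs the space for a fixed $r$, and that is all the proof of Theorem~\ref{t.LDP} uses. Uniformity in $r$ then follows a posteriori: that proof identifies the minimizer with the martingale in \eqref{e.identity.alpha}, which can be realized on any admissible space. You should drop your last sentence or replace it with this a posteriori argument.
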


\begin{proof}
Notice that ``$\leq$'' in~\eqref{e.Lambda^*(r)=inf_mart} is an immediate consequence of Lemma~\ref{l.Lambda^*_inter_inf_sup}. It remains to show the reverse inequality.

We start with choosing the standard Wiener space $\msc W$ as underlying probability space. To emphasize this, we write $\bmart_1(\msc W)$, $\msf K(\msc W)$, and $\msf K_0(\msc W)$.
Later, we will consider probability spaces other than $\msc W$ and extend this notation in the obvious way.

Let $(\chi_n,g_n)\in\msf K(\msc W)$ be a minimizing sequence for~\eqref{e.Lambda^*_inter_inf_sup}, where we can assume that $g_n\in\msf G$ for each $n$. 
Hence, for sufficiently large $n$, we have
\begin{align*}
    \frac{(r-\chi_n)^2}{2\int_0^1\xi''(t)(g_n(t)-t)\d t} \leq \Lambda^*(r)+1
\end{align*}
which along with~\eqref{e.chi<gs<r} implies
\begin{align*}
    \int_0^1\xi''(t)(g_n(t)-t)\d t \geq \frac{(r-\gs)^2}{2\Lambda^*(r)+2}=:c_0>0.
\end{align*}
Then, we take $\msf G_0=\msf G\cap\Ll\{\int_0^1\xi''(t)(g(t)-t)\d t\geq c_0\Rr\}$.

As shown in the proof of Lemma~\ref{l.inter_inf_sup}, $\msf K(\msc W)$ is compact in the topology of $\R\times L^1$ (where $L^1=L^1([0,1])$. Since the functional $(\chi,g)\mapsto \frac{(r-\chi)^2}{2\int_0^1\xi''(t)(g(t)-t)\d t}$ is continuous on $\R\times \msf G_0$ equipped with the $\R\times L^1$-topology and the set $\msf G_0$ is closed in $L^1$, we can extract a convergent subsequence from $(\chi_n,g_n)_{n\in\N}$ with some limit $(\chi,g)\in \msf K(\msc W)$ such that $g\in \msf G_0$ and the infimum in~\eqref{e.Lambda^*_inter_inf_sup} is achieved:
\begin{align}\label{e.Lambda^*(r)=..chi..g}
    \Lambda^*(r) = \frac{(r-\chi)^2}{2\int_0^1\xi''(t)(g(t)-t)\d t}.
\end{align}

By the same argument around~\eqref{e.chi_n=,g_n=} and~\eqref{e.chi_n=,g_n=2}, we can find an enriched probability space $\bar{\msc W}$ from $\msc W$ and martingales $\alpha^\pnp \in\bmart_1(\bar{\msc W})$ such that
\begin{align*}
    \chi_n=\EE\Ll[h\alpha^\pnp_0 + \alpha^\pnp_1\int_0^1\sqrt{\xi''(t)}\d W_t\Rr] \quad\text{and}\quad 
    g_n(t)  =\EE \Ll[\Ll(\alpha^\pnp_t\Rr)^2\Rr].
\end{align*}
Then, by the same argument around~\eqref{e.limE[G(W...)=} and~\eqref{e.chi=,g=}, we can pass the above equations to the limit and find a probability space $\msc P$ and a martingale $\alpha\in\bmart_1(\msc P)$ such that
\begin{align*}
    \chi=\EE\Ll[h\alpha_0 + \alpha_1\int_0^1\sqrt{\xi''(t)}\d W_t\Rr] \quad\text{and}\quad 
    g(t)  =\EE \Ll[\Ll(\alpha_t\Rr)^2\Rr].
\end{align*}
This along with~\eqref{e.Lambda^*(r)=..chi..g} gives ``$\geq$'' in~\eqref{e.Lambda^*(r)=inf_mart} with $\bmart_1 = \bmart_1(\msc P)$.
As commented in the beginning, this completes the proof.
\end{proof}

\subsection{One-sided large-deviation principle}

Since we only consider $\Lambda(s)$ for $s\geq0$, we cannot derive the most general form of the LDP with testing sets being arbitrary subsets of $\R$. Instead, we need a ``one-sided'' version, stated in the following proposition, whose proof is elementary but included here for completeness. 
The setting is general and independent of the rest of the paper.

\begin{proposition}[One-sided LDP]\label{p.one-sided_LDP}

Let $(L_N)_{N\in\N}$ be a sequence of random variables and, for each $N\in\N$, denote by $\Lambda_N:\R_+\to\R$ the function:
\begin{align}\label{e.Lambda_N=}
    \Lambda_N(s) := \frac{1}{N}\log \E \exp\Ll(s N L_N\Rr),\qquad\forall s\in \R_+.
\end{align}
Suppose that there exists a continuously differentiable function $\Lambda:\R_+\to\R$ such that $\Lambda_N$ converges pointwise to $\Lambda$ on $\R_+$, and that $m := \Lambda'(0) \geq 0$. Then, for every Borel measurable subset $A\subset [m,\infty)$, we have
\begin{align}
\label{e.l.one-sided_LDP}
    -\inf_{A^{\circ_\R}}\Lambda^*\leq \liminf_{N\to\infty}\frac{1}{N}\log \P\Ll\{L_N\in A\Rr\}\leq \limsup_{N\to\infty}\frac{1}{N}\log \P\Ll\{L_N\in A\Rr\} \leq  -\inf_{\bar A}\Lambda^*
\end{align}
where $A^{\circ_\R}$ and $\bar A$ are the interior relative to $\R$ and the closure of $A$, respectively, and $\Lambda^*:\R\to\R\cup\{+\infty\}$ is given as in~\eqref{e.Lambda^*=}.

If, in addition, $\Lambda^*$ is right-continuous at $m$ (i.e.\ $\lim_{r\searrow m}\Lambda^*(r)=\Lambda^*(m)$), then the lower bound in~\eqref{e.l.one-sided_LDP} can be improved to $-\inf_{A^\circ}\Lambda^*$, where $A^\circ$ is the interior relative to $[m,\infty)$ of $A$.
\end{proposition}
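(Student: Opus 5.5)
The plan is to adapt the Gärtner--Ellis argument to the one-sided setting. The upper bound uses exponential Chebyshev with $s\ge 0$; the lower bound uses an exponential tilt at a parameter $s>0$ chosen with $\Lambda'(s)=r$. Throughout, $\Lambda^*(r)=\sup_{s\ge0}\{sr-\Lambda(s)\}$.

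\emph{Upper bound.} Let $A\subset[m,\infty)$ and set $r_0:=\inf \bar A\ge m$. Since $\Lambda^*$ is nondecreasing on $[m,\infty)$ (the supremum runs over $s\ge0$), we have $\inf_{\bar A}\Lambda^*=\Lambda^*(r_0)$. For every $s\ge0$, Chebyshev gives
\[
\P\{L_N\ge r_0\}\le e^{-sNr_0}\,\E e^{sNL_N},
\]
hence $\limsup_N \frac1N\log\P\{L_N\in A\}\le -(sr_0-\Lambda(s))$. Taking the supremum over $s\ge0$ yields the upper bound. If $\Lambda^*(r_0)=+\infty$, the same argument applied to arbitrarily large finite values handles it.

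\emph{Lower bound.} It suffices to show that for every $r$ in the $\R$-interior of $A$ (so $r>m$) and every $\delta>0$,
\[
\liminf_N \frac1N\log\P\{|L_N-r|<\delta\}\ge -\Lambda^*(r).
\]
Assume $\Lambda^*(r)<\infty$. The main obstacle is to produce an exposing parameter, since $\Lambda$ is only known to be $C^1$ and convex, not strictly convex.
\begin{itemize}
\item \emph{Case 1: $r=\Lambda'(s)$ for some $s>0$.} Pick the largest such $s$ if the set is bounded; otherwise work with $r'$ slightly below $r$ (see Case 2).
\item \emph{Case 2: $r$ is not in the range of $\Lambda'$.} Since $\Lambda'$ is continuous and nondecreasing with $\Lambda'(0)=m<r$, we then have $r>\sup\Lambda'$. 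In that regime $\Lambda^*(r)=\lim_{s\to\infty}(sr-\Lambda(s))$. Here I would approximate $r$ from below by points $r'\in(m,r)$ in the range of $\Lambda'$, with $|r'-r|<\delta/2$.
\end{itemize}

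For $r'=\Lambda'(s)$ with $s>0$, define the tilted law
\[
\d\tilde\P_N=e^{sNL_N-N\Lambda_N(s)}\,\d\P.
\]
Then
\[
\frac1N\log\P\{|L_N-r'|<\eta\}\ge -sr'-s\eta+\Lambda_N(s)+\frac1N\log\tilde\P_N\{|L_N-r'|<\eta\}.
\]
It remains to show $\tilde\P_N\{|L_N-r'|<\eta\}\to1$. Under $\tilde\P_N$, the scaled cumulant generating function is $u\mapsto\Lambda(s+u)-\Lambda(s)$, defined for $u\ge -s$, and it is differentiable at $u=0$ with derivative $r'$. Chebyshev in both directions, with small $u>0$ and small $u\in(-s,0)$, then shows that the $\tilde\P_N$-probabilities of $\{L_N\ge r'+\eta\}$ and $\{L_N\le r'-\eta\}$ decay exponentially. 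The deviation rates are $\sup_{u}\{u(r'\pm\eta)-\Lambda(s+u)+\Lambda(s)\}>0$, which is positive by differentiability at $u=0$. This is exactly where $s>0$ is essential: it gives room for negative $u$, which is what makes the one-sided hypothesis suffice.

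Since $sr'-\Lambda(s)=\Lambda^*(r')\le\Lambda^*(r)$ (the supremum defining $\Lambda^*(r')$ is attained at $s$, and $\Lambda^*$ is nondecreasing), letting $\eta\to0$ gives the bound $-\Lambda^*(r)$ on the ball of radius $\delta$.

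\emph{Choosing $r'$.} I still need to confirm that such $r'$ with $s>0$ exist arbitrarily close to $r$ from below. If $\Lambda'\equiv m$ on $[0,s_1]$ and then increases, points of the form $\Lambda'(s)$ with $s>0$ fill $[m,\sup\Lambda')$ by the intermediate value theorem, so they fill $(m,r]\cap[m,\sup\Lambda')$. This works in both cases.

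\emph{Improved lower bound.} If $\Lambda^*$ is right-continuous at $m$ and $m\in A^\circ$ relative to $[m,\infty)$, then $[m,m+\epsilon)\subset A$ for some $\epsilon>0$. Applying the previous bound to points $r\searrow m$ in the $\R$-interior of $A$ and using right-continuity gives $-\Lambda^*(m)$. I expect the delicate part of the whole argument to be the approximation step, that is, ensuring $s>0$ can be chosen with $\Lambda'(s)$ close to $r$, together with the two-sided concentration under the tilt.
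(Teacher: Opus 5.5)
Your proposal is correct and follows essentially the same route as the paper: Chebyshev plus monotonicity of $\Lambda^*$ for the upper bound, and for the lower bound exponential Chebyshev at $s\pm u$ around a point $s>0$ with $\Lambda'(s)=r$ (your tilted measure is the paper's computation in different language), then approximation from below at the endpoint $d=\sup\Lambda'$. One correction: in Case~2 you only get $r\ge d$, not $r>d$, and the sub-case $r>d$ cannot be handled by approximating from below; it is excluded by your standing assumption $\Lambda^*(r)<\infty$, because $\Lambda(s)\le\Lambda(0)+sd$ forces $\Lambda^*(r)=+\infty$ there, which is exactly how the paper disposes of $x>d$.
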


\begin{proof}

\textit{Upper bound. }
For any $x,s\geq 0$, Chebyshev's inequality gives
\begin{align*}
    \P\Ll\{L_N\geq x\Rr\}\leq e^{-sNx}\E e^{sN L_N} = e^{-sNx+N\Lambda_N(s)}.
\end{align*}
Since $s$ can be taken arbitrary over $\R_+$, we have
\begin{align}\label{e.limsup1/NlogP(L_N>x)}
    \limsup_{N\to\infty}\frac{1}{N}\log\P\Ll\{L_N\geq x\Rr\}\leq \inf_{s\in\R_+}\Ll\{-sx + \Lambda(s)\Rr\} = - \Lambda^*(x),\quad\forall x \geq 0.
\end{align}
Let $A\subset [0,\infty)$ and set $x_A = \inf A\in\bar A\subset [0,\infty)$. Notice that $\P\Ll\{L_N\in A\Rr\}\leq \P\Ll\{L_N\geq x_A\Rr\}$ and that $\inf_{\bar A} \Lambda^*\geq \Lambda^*(x_A)$ since $\Lambda^*$ is increasing on $\R_+$ (due to~\eqref{e.Lambda^*=}). Using these and~\eqref{e.limsup1/NlogP(L_N>x)}, we get the desired upper bound
\begin{align*}    \limsup_{N\to\infty}\frac{1}{N}\log \P\Ll\{L_N\in A\Rr\}\leq -\Lambda^*(x_A) \leq - \inf_{\bar A}  \Lambda^*.
\end{align*}
Notice that this upper bound holds for any Borel measurable $A\subset[0,\infty)$. Due to $m\geq0$, this clearly holds for any $A\subset [m,\infty)$.

\textit{Lower bound.}
Since $\Lambda'$ is continuous and increasing and $\Lambda'(0)=m$, the image of $\Lambda'(\R_+)$  is an interval with endpoint $m\leq d \leq \infty$. Fix any $x\in (m,d)$ and let $s\in\R_+$ be such that $x= \Lambda'(s)$. Notice that we must have $s>0$.
For every $\eps>0$ and $\mu\in(0,s]$, we have
\begin{align}\label{e.Ee^sNL_None_L_N>x+eps}
    \E e^{sNL_N}\mathds{1}_{\{L_N> x+\eps\}} \leq e^{-N\mu(x+\eps)}\E e^{(s+\mu)NL_N} = e^{-N\mu(x+\eps)+N\Lambda_N(s+\mu)}
\end{align}
and similarly
\begin{align}\label{e.Ee^sNL_None_L_N<x-eps}
    \E e^{sNL_N}\mathds{1}_{\{L_N< x-\eps\}} \leq e^{-N\mu(x-\eps)}\E e^{(s-\mu)NL_N} = e^{-N\mu(x-\eps)+N\Lambda_N(s-\mu)}.
\end{align}
Since $\Lambda$ is continuously differentiable, we have, for sufficiently small $\mu$,
\begin{align*}
    \Ll|\Lambda(s+\mu)-\Lambda(s)-\mu\Lambda'(s)\Rr| + \Ll|\Lambda(s-\mu)-\Lambda(s)+\mu\Lambda'(s)\Rr|\leq \tfrac{\eps\mu}{4}.
\end{align*}
Fix any such $\mu$. Recall that $\Lambda'(s)=x$. For $N$ sufficiently large, we have
\begin{align*}
    \Ll|\Lambda_N(s+\mu)-\Lambda_N(s)-\mu x\Rr| + \Ll|\Lambda_N(s-\mu)-\Lambda_N(s)+\mu x\Rr|\leq \tfrac{\eps\mu}{2}.
\end{align*}
Using this in~\eqref{e.Ee^sNL_None_L_N>x+eps} and~\eqref{e.Ee^sNL_None_L_N<x-eps}, we get
\begin{align*}
    \E e^{s NL_N}\mathds{1}_{\{L_N\not\in [x-\eps,x+\eps]\}}\leq 2 e^{-N\eps\mu/2}\E e^{s NL_N}.
\end{align*}
Hence, for sufficiently large $N$, we have
\begin{align*}
    \E e^{s NL_N}\mathds{1}_{\{L_N\in [x-\eps,x+\eps]\}}\geq \tfrac{1}{2}\E e^{s NL_N}
\end{align*}
and thus
\begin{align*}
    \P\{L_N\in [x-\eps,x+\eps]\} 
    \geq \tfrac{1}{2}e^{-sN(x+\eps)}\E e^{sNL_N}=\tfrac{1}{2}e^{-sN(x+\eps)+N\Lambda_N(s)}.
\end{align*}
Notice that the left-hand side gets smaller as $\eps$ decreases. Hence, letting $N$ tend to infinity and then $\eps$ to zero on the right, we get
\begin{align}\label{e.liminf1/NlogP(x-eps,x+eps)}
    \liminf_{N\to\infty} \frac{1}{N}\log \P\{L_N\in [x-\eps,x+\eps]\} \geq -sx + \Lambda(s) \geq -\Lambda^*(x)
\end{align}
for $x\in (m,d)$ and every $\eps>0$.

We want to extend~\eqref{e.liminf1/NlogP(x-eps,x+eps)} to every $x \in (m,\infty)$. Due to $d = \sup_{\R_+}\Lambda'$, we can verify that $\Lambda^*(x)=\infty$ if $x\not\in (m,d]$ and thus~\eqref{e.liminf1/NlogP(x-eps,x+eps)} holds at $x\in(d,\infty)$. It remains to verify~\eqref{e.liminf1/NlogP(x-eps,x+eps)} at $x=d$ when $d<\infty$. 
In this case, for every $\eps>0$, we can find $x\in(m,d)$ and $\eps'>0$ such that $[x-\eps',x+\eps']\subset [d-\eps,d+\eps]$. Since $\Lambda^*$ is increasing on $\R_+$ (due to~\eqref{e.Lambda^*=}), we have $\Lambda^*(d) \geq \Lambda^*(x)$. Using these and~\eqref{e.liminf1/NlogP(x-eps,x+eps)} at $x$, we get~\eqref{e.liminf1/NlogP(x-eps,x+eps)} at $d$.

For any $A\subset [m,\infty)$, let $x\in A^{\circ_\R}$ (taken relative to $\R$ and thus $m\not\in A^{\circ_\R}$). We can find sufficiently small $\eps$ so that $[x-\eps,x+\eps]\subset A$. Now, using the extended~\eqref{e.liminf1/NlogP(x-eps,x+eps)}, we get
\begin{align*}
    \liminf_{N\to\infty} \frac{1}{N}\log \P\{L_N\in A\}\geq -\Lambda^*(x).
\end{align*}
Optimizing over $x\in A^{\circ_\R}$, we obtain the desired lower bound. Notice that the lower bound holds trivially when $A^{\circ_\R}=\emptyset$.

\textit{The additional part.}
We always have $\inf_{A^\circ}\Lambda^*\leq \inf_{A^{\circ_\R}}\Lambda^*$.
They differ only when $A^\circ\setminus A^{\circ_\R}=\{m\}$ and $\inf_{A^\circ}\Lambda^*= \Lambda^*(m)$. However, in this case, we can find a sequence in $A^{\circ_\R}$ converging to $m$ from the right, which along with the right-continuity of $\Lambda^*$ at $m$ ensures that $\inf_{A^{\circ_\R}}\Lambda^* = \Lambda^*(m)$. Therefore, under the additional assumption, we must have $\inf_{A^\circ}\Lambda^*=\inf_{A^{\circ_\R}}\Lambda^*$.
\end{proof}

\subsection{Proof of Theorem~\ref{t.LDP}}

Clearly, the theorem can be divided into three parts and we prove each of them separately.

\begin{proof}[Proof of~\eqref{e.t.LDP} and the regularity of $\Lambda^*$]
Let $\Lambda_N$ be defined as in~\eqref{e.Lambda_N=} with $L_N$ given as in~\eqref{e.L_N=}. Theorem~\ref{t.laplace_uninvert} implies that $\Lambda_N$ converges to $\Lambda$ as in~\eqref{e.Lambda(s)=} pointwise on $\R_+$. Lemma~\ref{l.Lambda'(s)=} ensures that $\Lambda$ is continuously differentiable and~\eqref{e.Lambda'(0)=gs} gives $\Lambda'(0)=\gs \geq0$. Hence, we can apply Proposition~\ref{p.one-sided_LDP} with $m=\gs$ to get~\eqref{e.t.LDP}. Lemma~\ref{l.basic_Lambda} gives that $\Lambda^*$ is continuous and increasing on $[\gs,\infty)$.
This continuity also allows us to use the additional part in Proposition~\ref{p.one-sided_LDP} so that $A^\circ$ in~\eqref{e.t.LDP} is taken relative to $[\gs,\infty)$.
\end{proof}

\begin{proof}[Proof of~\eqref{e.Lambda^*(r)=inf_mart} and the uniqueness of minimizers]

Fix any $r>\gs$.
Lemma~\ref{l.max_Lambda^*} gives $\Lambda^*(r)>0$ in~\eqref{e.Lambda^*(r)=inf_mart} and we focus on the equality in~\eqref{e.Lambda^*(r)=inf_mart}.

We follow similar steps as in the proof of Theorem~\ref{t.laplace_uninvert}.
Let $\bar{\msc P}$ be the probability space in Lemma~\ref{l.lim=sup_mart_achieve} which is in general different from $\msc P$ in the statement of the theorem to prove. For clarity, we write $\bmart_1(\bar{\msc P})$ and $\bmart_1 (\msc P)$ for the corresponding spaces of martingales. Recall $\msf G$ from~\eqref{e.G=} and we write $\bmart_{1,\msf G}(\msc P)$ to be the subcollection of martingales $\alpha$ satisfying $\EE[\alpha^2_\bullet]\in\msf G$ and similarly for $\bmart_{1,\msf G}(\bar{\msc P})$.

Let $\bar\alpha\in \bmart_{1,\msf G}(\bar{\msc P})$ minimize~\eqref{e.Lambda^*(r)=inf_mart}, allowed by Lemma~\ref{l.Lambda^*=inf_mart_achieve}. Let $\bar s\in\R_+$ be the unique maximizer of~\eqref{e.Lambda^*=} given by Lemma~\ref{l.max_Lambda^*}, which satisfies $r=\Lambda'(\bar s)$ and $\bar s>0$ as desired in Theorem~\ref{t.LDP}.

In the following, we write $\Gamma(s,\alpha) = rs - g(s,\alpha)$ for $g(s,\alpha)$ appearing in~\eqref{e.g(s,alpha)=}.
Then, the definition of $\Lambda^*(r)$ as in~\eqref{e.Lambda^*=} along with~\eqref{e.Lambda(s)=} gives
\begin{align*}
    \Lambda^*(r) = \sup_{s\in\R_+}\inf_{\alpha\in\bmart_{1,\msf G}(\bar{\msc P})}\Gamma(s,\alpha) = \inf_{\alpha\in\bmart_{1,\msf G}(\bar{\msc P})}\Gamma(\bar s,\alpha),
\end{align*}
while Lemma~\ref{l.Lambda^*=inf_mart_achieve} along with~\eqref{e.sup(sr-schi-s^2a/2=} (allowed by~\eqref{e.E[..alpha..]<gs}) gives
\begin{align*}
    \Lambda^*(r)  = \inf_{\alpha\in\bmart_{1,\msf G}(\bar{\msc P})}\sup_{s\in\R_+}\Gamma(s,\alpha) = \sup_{s\in\R_+}\Gamma(s,\bar\alpha).
\end{align*}
Hence, for every $s\in\R_+$ and $\alpha\in\bmart_{1,\msf G}(\bar{\msc P})$, we have
\begin{align*}
    \Lambda^*(r)\leq \Gamma(\bar s,\alpha) \quad \text{and}\quad \Gamma(s,\bar\alpha)\leq \Lambda^*(r).
\end{align*}
From this, we can deduce $\Lambda^*(r)= \Gamma(\bar s,\bar\alpha)$ and
\begin{align}\label{e.Lambda_Gamma<Gamma<Gamma}
    \Gamma(s,\bar\alpha)\leq \Gamma(\bar s,\bar\alpha)\leq \Gamma(\bar s,\alpha).
\end{align}
The second optimality in~\eqref{e.Lambda_Gamma<Gamma<Gamma} implies that $\bar\alpha$ maximizes $\Lambda(s)$, which by Theorem~\ref{t.laplace_uninvert} gives that $\bar\alpha$ is the unique maximizer satisfying~\eqref{e.identity.alpha} as described therein. In particular, $\bar \alpha$ is measurable with respect to the filtration generated by the Wiener process.

Now, we turn to the probability space $\msc P$ as in the statement of Theorem~\ref{t.LDP}. We let $\hat \alpha$ be given by the same relation in~\eqref{e.identity.alpha} but with $W$ now on $\msc P$. Now since $\bar \alpha$ and $\hat \alpha$ have the same law, $\Lambda^*(r) =\sup_{s\in\R_+}\Gamma(s,\bar\alpha)$ together with the computation in~\eqref{e.sup(sr-schi-s^2a/2=} implies that ``$\geq $'' holds in~\eqref{e.Lambda^*(r)=inf_mart}. The reverse inequality ``$\leq $'' follows from Lemma~\ref{l.Lambda^*_inter_inf_sup}. Therefore, we have verified the equality in~\eqref{e.Lambda^*(r)=inf_mart}.

Lastly, applying the same argument above (leading to~\eqref{e.Lambda_Gamma<Gamma<Gamma}) to $\msc P$ and $\hat \alpha$, we can see that $\hat\alpha$ is uniquely determined via~\eqref{e.identity.alpha} given by Theorem~\ref{t.laplace_uninvert} at $\bar s$. This gives the uniqueness of minimizers.
\end{proof}

\begin{proof}[Proof of~\eqref{e.Lambda^*(r)=..s..alpha}]
Fix any $r>\gs$ and let $s$ and $\alpha$ be the optimizers as described in Theorem~\ref{t.LDP}. By $r=\Lambda'(s)$ and maximality, we have $\Lambda^*(r)= s\Lambda'(s)-\Lambda(s)$. 
Since $\alpha$ is the maximizer given in Theorem~\ref{t.laplace_uninvert} at $s$, we can see that $\alpha$ maximizes~\eqref{e.Lambda(s)=}.
Hence, we have $\Lambda(s)= g(s,\alpha)$ and $\Lambda'(s)=\partial_sg(s,\alpha)$ by Lemma~\ref{l.Lambda'(s)=}. Using the expression of $g(s,\alpha)$ in~\eqref{e.g(s,alpha)=}, we can thus compute $s\Lambda'(s)-\Lambda(s)$ and verify~\eqref{e.Lambda^*(r)=..s..alpha}.
\end{proof}

Now, the proof of Theorem~\ref{t.LDP} is complete.

\section{Necessary and sufficient condition for quadratic behavior}
\label{s.quadratic}

The main goal of this section is to prove Theorem~\ref{t.quadratic}. 
To understand the effect of the external field on the Parisi measure, namely the minimizer $\gamma$ for $\gs$ given in~\eqref{e.def.gs}\footnote{Recall that in this section we take $h \in \mathbb{R}$ to be deterministic, so that we have $\Psi_\gamma(0,h)$ instead of $\mathbb{E}_{g_1,h_1}\Psi_\gamma(0,g_1 + h_1)$ in~\eqref{e.def.gs}.}, we recall~\cite[Proposition~3]{chen2017fluctuations} as follows. We view $\mathrm{d}\gamma$ as a finite positive measure on $[0,1]$ and denote its support by $\supp \mathrm{d}\gamma$.

\begin{proposition}[\cite{chen2017fluctuations}]
Let $h\in\R$, let $\gamma$ be the minimizer of the right-hand side in~\eqref{e.def.gs}, and let $X$ be the solution to \eqref{e.AC.SDE}.
For every $q\in\supp\d\gamma$, we have
\begin{align}
    \E \Ll(\partial_x\Psi_\gamma(q,X_q)\Rr)^2 & =q, \label{e.E(dPsi)^2=q}
    \\
    \xi''(q) \E\Ll(\partial_x^2\Psi_\gamma(q,X_q)\Rr)^2 & \leq 1. \label{e.xi''(q)E(d^2Psi)<1}
\end{align}
\end{proposition}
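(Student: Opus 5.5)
The plan is to derive both statements from the first-order optimality conditions of the strictly convex problem \eqref{e.def.gs}, following the scheme used for Parisi measures at positive temperature. Write $\mathcal P(\gamma) := \Psi_\gamma(0,h) - \frac12\int_0^1 t\xi''(t)\gamma(t)\,\d t$ for the functional in \eqref{e.def.gs}, and let $\gamma$ be its minimizer.

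\emph{Step 1: derivative of $\mathcal P$ along admissible directions.} First consider directions of the form $\gamma_\eps = \gamma + \eps\,\mathbf 1_{[q,1)}$ with $\eps>0$ (both signs of $\eps$ are allowed when $q\in\supp\d\gamma$ and the increment is suitably localized), and more generally $\gamma_\eps = \gamma + \eps(\rho-\gamma)$ with $\rho\in\cM_0$. Differentiate $\Psi_{\gamma_\eps}(0,h)$ in $\eps$. Linearize the PDE \eqref{eq:parisipsi} around $\Psi_\gamma$, and represent the solution of the linearized (drifted heat) equation via Itô's formula along the process $X$ solving \eqref{e.AC.SDE}. This gives
\[
\frac{\d}{\d\eps}\Psi_{\gamma_\eps}(0,h)\Big|_{\eps=0} = \frac12\int_0^1 \xi''(t)\,\delta\gamma(t)\,\EE\big[(\partial_x\Psi_\gamma(t,X_t))^2\big]\,\d t .
\]
The same formula also follows from the envelope theorem applied to Lemma~\ref{l.variat.psiga}, where the optimizer is $\alpha_t=\partial_x\Psi_\gamma(t,X_t)$. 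The Lipschitz bound of \cite[Proposition~2]{chen2018energy} justifies the approximation by step functions. Optimality, together with the computation \eqref{e.compute_inf_gamma}, then yields
\[
\Delta(q):=\int_q^1 \xi''(t)\big(\EE[(\partial_x\Psi_\gamma(t,X_t))^2] - t\big)\,\d t \ \ge 0 \quad \text{for all } q,
\]
with equality at every $q\in\supp\d\gamma$.

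\emph{Step 2: deriving \eqref{e.E(dPsi)^2=q}.} Set $\phi(t):=\EE[(\partial_x\Psi_\gamma(t,X_t))^2]-t$. At a point $q\in\supp\d\gamma$ with $q<1$, the function $\Delta\ge0$ attains its minimum value $0$. Since $\Delta$ is $C^1$ (because $\phi$ is continuous on $[0,1)$), the derivative must vanish: $\Delta'(q)=-\xi''(q)\phi(q)=0$. Whenever $\xi''(q)>0$, this gives $\phi(q)=0$, which is \eqref{e.E(dPsi)^2=q}. The case $\xi''(q)=0$ is degenerate (for instance $q=0$ with $\beta_2=0$) and needs separate handling. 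I would treat $q=0$ directly: if $0\in\supp\d\gamma$, then $\partial_x\Psi_\gamma(0,h)^2=0$ should be forced by optimality in the direction of the atom at $0$. For interior points, I would use isolatedness or an approximation argument. The endpoint $q=1$ should hold because $|\alpha_1|=1$ almost surely.

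\emph{Step 3: deriving \eqref{e.xi''(q)E(d^2Psi)<1}.} Use the second-order condition: $\Delta$ has a minimum at $q$, so $\Delta''(q)\ge 0$ in the appropriate one-sided sense. Since $\phi(q)=0$, this reduces to $\phi'(q)\le0$ (with care about sides if $q$ is a boundary point of the support). By Itô's formula and \eqref{e.identity.alpha}, the process $\partial_x\Psi_\gamma(t,X_t)$ is a martingale with quadratic variation density $\xi''(t)(\partial_x^2\Psi_\gamma(t,X_t))^2$. Hence
\[
\phi'(t)=\xi''(t)\,\EE\big[(\partial_x^2\Psi_\gamma(t,X_t))^2\big]-1,
\]
and $\phi'(q)\le 0$ is exactly \eqref{e.xi''(q)E(d^2Psi)<1}.

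The main obstacle I expect is making Step~1 rigorous for an unbounded $\gamma\in\cM_0$ (it may blow up near $1$), and justifying the regularity of $\phi$ needed in Step~3. For this I would approximate $\gamma$ by truncations and use the uniform bounds \eqref{e.|dxPhi|<1} and \cite[Proposition~2]{chen2018energy}. This is essentially the content of \cite[Proposition~3]{chen2017fluctuations}, which I would follow.
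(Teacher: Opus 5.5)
Your proposal is correct. It is the standard argument behind the cited \cite[Proposition~3]{chen2017fluctuations} (the paper gives no separate proof): the first-order condition \eqref{e.Parisi_functional} gives $\Delta\ge 0$ with $\Delta=0$ on $\supp\d\gamma$, and the first- and second-derivative tests at these minima of $\Delta$, using $\frac{\d}{\d t}\EE[\alpha_t^2]=\xi''(t)\EE[(\partial_x^2\Psi_\gamma(t,X_t))^2]$, give the two claims.

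Two points need tidying. Equality on the support should come from the admissible directions $\rho=0$ and $\rho=2\gamma$, which give $\int_{[0,1)}\Delta\,\d\gamma=\int_0^1\xi''\gamma\phi\,\d t=0$; negative multiples of $\mathbf 1_{[q,1)}$ are not admissible unless $q$ is an atom. Also, since $\xi''>0$ on $(0,1)$, the only degenerate case is $q=0$, and this is handled by the continuity argument in the proof of Lemma~\ref{l.h=0_int=0}.
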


This proposition allows one to deduce the following well-known result. Heuristically, when $h=0$, the spin distribution is balanced, and hence $0$ belongs to the support of $\mathrm{d}\gamma$.

\begin{proposition}\label{p.h=0,0in_supp}
Let $h=0$ and let $\gamma$ be the minimizer of the right-hand side in~\eqref{e.def.gs}. Then, we have $0\in\supp\d\gamma$.
\end{proposition}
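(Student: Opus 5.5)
We argue by contradiction. Assume $h=0$ and $0\notin\supp\d\gamma$, and set $q_0:=\min\supp\d\gamma$, which is attained because the support is closed. Then $q_0>0$.

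First we check that $\supp\d\gamma$ is nonempty. If it were empty, $\gamma$ would be constant. Since $\gamma\in\mcl M_0$ satisfies $\gamma(0)\ge 0$ and is right-continuous, $\gamma\equiv c$ for some $c\geq0$. If $c>0$, then $\d\gamma$ is a point mass at $0$, so $0\in\supp\d\gamma$ and we are done. If $c=0$, a first-order variation gives a contradiction: perturbing $\gamma$ by $\eps\1_{[q,1)}$ changes the functional by $\frac{\eps}{2}\int_q^1\xi''(t)(\E[(\partial_x\Psi_\gamma(t,X_t))^2]-t)\,\d t$, so we would need this to be nonnegative for all $q$. This is the same first-order condition that underlies \eqref{e.E(dPsi)^2=q}. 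In any case, we can rely on the proposition only at points of the support, which is nonempty as long as $\gamma$ is not identically zero. If instead $\gamma\equiv 0$ turns out to be awkward, we handle it by the same Itô argument below, carried out directly.

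Now apply \eqref{e.E(dPsi)^2=q} at $q=q_0$, which gives $\E(\partial_x\Psi_\gamma(q_0,X_{q_0}))^2=q_0>0$. On the other hand, $\d\gamma$ gives no mass to $[0,q_0)$, so $\gamma(t)=\gamma(0)=:c$ on $[0,q_0)$. We claim that in fact $c=0$. Indeed, if $c>0$, then $\d\gamma$ has an atom at $0$, because $\gamma(0)>0$ means mass at $0$ when $\d\gamma$ is viewed as a measure on $[0,1]$ with $\gamma(t)=\d\gamma([0,t])$. That would put $0$ in the support, contradicting our assumption. So $\gamma=0$ on $[0,q_0)$.

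It follows that on $[0,q_0)$ the SDE \eqref{e.AC.SDE} has no drift, so $X_t=\int_0^t\sqrt{\xi''}\,\d W$. By Itô's formula, or directly from \eqref{e.identity.alpha}, the process $\partial_x\Psi_\gamma(t,X_t)$ is a martingale on $[0,q_0]$ with initial value $\partial_x\Psi_\gamma(0,0)=0$, using that $\partial_x\Psi_\gamma(t,\cdot)$ is odd by Lemma~\ref{l.d_xPsi_odd_increase}. Its quadratic variation is $\int_0^t\xi''(s)(\partial_x^2\Psi_\gamma(s,X_s))^2\,\d s$, so
\begin{equation*}
q_0=\E(\partial_x\Psi_\gamma(q_0,X_{q_0}))^2=\int_0^{q_0}\xi''(s)\,\E(\partial_x^2\Psi_\gamma(s,X_s))^2\,\d s.
\end{equation*}

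The remaining step, and the main obstacle, is to bound the integrand on the right-hand side strictly by $1$. The plan is to show that $\phi(s):=\xi''(s)\E(\partial_x^2\Psi_\gamma(s,X_s))^2$ is nondecreasing on $[0,q_0]$. Since $\gamma=0$ there, $\partial_x^2\Psi$ solves a linear heat equation with variance rate $\xi''$, so $\partial_x^2\Psi_\gamma(s,X_s)$ is a martingale and $\E(\partial_x^2\Psi_\gamma(s,X_s))^2$ is nondecreasing in $s$. Since $\xi''$ is also nondecreasing, $\phi$ is nondecreasing. Combined with \eqref{e.xi''(q)E(d^2Psi)<1} at $q_0$, this gives $\phi\le\phi(q_0)\le1$, and hence the integral is at most $q_0$. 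To obtain a contradiction we need strict inequality on a set of positive measure. For this we use strictness of the martingale growth: $\E(\partial_x^2\Psi)^2$ is strictly increasing unless $\partial_x^3\Psi\equiv0$, that is, unless $\partial_x\Psi$ is linear, which is impossible since it is bounded by $1$ and odd and strictly increasing. Alternatively, when $\xi''(0)=0$, i.e.\ there is no $p=2$ term, we have $\phi(0)=0<1$ and continuity already gives strictness. Either way the integral is strictly less than $q_0$, which contradicts the displayed identity. Hence $0\in\supp\d\gamma$.
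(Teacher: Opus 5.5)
Your proof is correct and is essentially the paper's argument written with stochastic calculus: with $F=\partial_x\Psi_\gamma(q_0,\cdot)$, the paper's two-replica function $\psi(u)=\E[F(\chi_1(u))F(\chi_2(u))]$ equals $\E[(\partial_x\Psi_\gamma(u,X_u))^2]$ (condition on the shared Gaussian $g_0$). Under this identification, its derivative $\xi''(u)\E[F'(\chi_1)F'(\chi_2)]$ is your $\phi(u)$, its Cauchy--Schwarz bound is your conditional-Jensen monotonicity of $\E[(\partial_x^2\Psi_\gamma(s,X_s))^2]$ together with $\xi''(u)\le\xi''(q_0)$, its mean value theorem is your integration over $[0,q_0]$, and both strictness arguments come down to $\partial_x^2\Psi_\gamma(q_0,\cdot)$ not being constant.

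The one point you leave open is the case $\gamma\equiv0$, which the paper also assumes away when it sets $q=\inf\supp\d\gamma$. It closes quickly: there $\alpha_t=\E[\sign(X_1)\mid\mcl F_t]$ and $1-\E[\alpha_t^2]$ is of order $\sqrt{1-t}$, so $\int_r^1\xi''(t)(\E[\alpha_t^2]-t)\,\d t<0$ for $r$ close to $1$, which violates the first-order condition you wrote down.
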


Such a result has been established for mixed-$p$ spin models at finite temperature in~\cite{auffinger2015properties}. At zero temperature, it is proved at the beginning of~\cite[Section~3.2]{chen2018energy} using~\cite[Theorem~5]{chen2014chaos}. Although both works are formulated for mixed even spin models, the argument in fact applies to general mixed-$p$ spin models. For completeness and clarity, we choose to reproduce the proof here. We begin with an analytic result, which is a simplification of~\cite[Theorem~5]{chen2014chaos}.

\begin{lemma}\label{l.chi_psi}
Let $h\in\R$, let $\gamma$ be the minimizer of the right-hand side in~\eqref{e.def.gs}, and let $q=\inf\supp\d\gamma$. For each $u\in[0,q]$, let $\chi_1(u)$ and $\chi_2(u)$ be two centered jointly Gaussian random variables with covariances
\begin{align*}
    \E\chi_1^2(u) = \E\chi_2^2(u) = \xi'(q) \quad\text{and}\quad \E\chi_1(u)\chi_2(u)= \xi'(u)
\end{align*}
and define
\begin{align*}
    \psi(u) = \E \partial_x\Psi_\gamma(q,h+\chi_1(u))\partial_x\Psi_\gamma(q,h+\chi_2(u)).
\end{align*}
Then, $|\psi'(u)|<1$ for every $u\in[0,q)$.
\end{lemma}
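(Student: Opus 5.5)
The plan is to compute $\psi'$ by Gaussian interpolation, bound it by Cauchy--Schwarz, and then use \eqref{e.xi''(q)E(d^2Psi)<1} at the point $q\in\supp\d\gamma$. There are two preliminary reductions. If $q=0$, the interval $[0,q)$ is empty and there is nothing to prove. In particular, if $\gamma(0)>0$ then $\d\gamma$ has an atom at $0$, so $q=0$. Otherwise $\gamma\equiv 0$ on $[0,q)$. I will assume $q<1$, so that $\Psi_\gamma(q,\cdot)$ is smooth with bounded derivatives by \cite[Proposition~2]{chen2018energy}. The degenerate case $\gamma\equiv0$, where $\supp\d\gamma$ is empty or reduced to the endpoint, is excluded or treated separately.

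\emph{Step 1: differentiating $\psi$.} Realize $\chi_1(u),\chi_2(u)$ through a common Gaussian of variance $\xi'(u)$ plus independent Gaussians of variance $\xi'(q)-\xi'(u)$. Set $f:=\partial_x\Psi_\gamma(q,h+\cdot)$. The only $u$-dependence is through the covariance $\xi'(u)$, so the standard Gaussian interpolation formula, $\frac{\d}{\d c}\E F(\chi_1)F(\chi_2)=\E F'(\chi_1)F'(\chi_2)$ for the cross-covariance $c$, gives
\begin{equation*}
\psi'(u)=\xi''(u)\,\E\Ll[\partial_x^2\Psi_\gamma(q,h+\chi_1(u))\,\partial_x^2\Psi_\gamma(q,h+\chi_2(u))\Rr].
\end{equation*}
The boundedness of $\partial_x^2\Psi_\gamma(q,\cdot)$ and $\partial_x^3\Psi_\gamma(q,\cdot)$ for $q<1$ justifies the differentiation.

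\emph{Step 2: Cauchy--Schwarz and identification of the law.} Each $\chi_i(u)$ has law $N(0,\xi'(q))$, so
\begin{equation*}
|\psi'(u)|\le \xi''(u)\,\E\Ll[\Ll(\partial_x^2\Psi_\gamma(q,h+\sqrt{\xi'(q)}\,Z)\Rr)^2\Rr],
\end{equation*}
with $Z$ a standard Gaussian. Since $\gamma=0$ on $[0,q)$, the drift in \eqref{e.AC.SDE} vanishes on $[0,q)$. Hence $X_q=h+\int_0^q\sqrt{\xi''}\,\d W$, which has exactly the law of $h+\sqrt{\xi'(q)}Z$. Because $q\in\supp\d\gamma$, the bound \eqref{e.xi''(q)E(d^2Psi)<1} yields $|\psi'(u)|\le \xi''(u)/\xi''(q)\le 1$, using that $\xi''$ is nondecreasing on $[0,1]$.

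\emph{Step 3: strictness (main obstacle).} When $\xi$ contains some $p\ge3$ term, $\xi''(u)<\xi''(q)$ for $u<q$ and we are done. The delicate case is the pure SK case, where $\xi''$ is constant and strictness must come from Cauchy--Schwarz. Equality there would force $\partial_x^2\Psi_\gamma(q,h+\chi_1)=c\,\partial_x^2\Psi_\gamma(q,h+\chi_2)$ almost surely for some constant $c$. Since $\xi'(u)<\xi'(q)$, the pair $(\chi_1,\chi_2)$ has a nondegenerate joint density on $\R^2$. So the function $g:=\partial_x^2\Psi_\gamma(q,\cdot)$ would satisfy $g(x)=c\,g(y)$ for almost every $(x,y)$, forcing $g$ to be constant. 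A nonzero constant contradicts \eqref{e.|dxPhi|<1}, since $\partial_x\Psi_\gamma(q,\cdot)$ would be affine and unbounded. A zero constant contradicts the strict monotonicity in Lemma~\ref{l.d_xPsi_odd_increase}. Hence the inequality is strict, and $|\psi'(u)|<1$ on $[0,q)$.
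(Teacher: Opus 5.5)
Your proof is correct and follows the paper's argument. Both compute $\psi'(u)=\xi''(u)\,\E\bigl[\partial_x^2\Psi_\gamma(q,h+\chi_1)\,\partial_x^2\Psi_\gamma(q,h+\chi_2)\bigr]$ by Gaussian interpolation, identify the law of $h+\chi_i(u)$ with that of $X_q$, apply Cauchy--Schwarz with the bound $\xi''(q)\,\E\bigl(\partial_x^2\Psi_\gamma(q,X_q)\bigr)^2\le 1$, and obtain strictness from the equality case plus non-constancy of $\partial_x^2\Psi_\gamma(q,\cdot)$. The differences are minor: your case split on whether $\xi''$ is strictly increasing is unnecessary, since the equality-case argument covers every $\xi$. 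Your explicit reason why $\partial_x^2\Psi_\gamma(q,\cdot)$ cannot be constant also fills in a step the paper only asserts, and it avoids the paper's appeal to strict positivity of $\partial_x^2\Psi_\gamma$.
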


\begin{proof}
We can realize $\chi_1$ and $\chi_2$ by choosing $g_0,g_1,g_2$ to be i.i.d.\ standard Gaussian and set, for $i\in\{1,2\}$,
\begin{align*}
    \chi_i(u) = \sqrt{\xi'(u)}g_0 + \sqrt{\xi'(q)-\xi'(u)}g_i.
\end{align*}
As a consequence of $q=\inf\supp\d\gamma$, we have $\gamma=0$ on $[0,q]$ and thus $X_q = h+ \int_0^q\sqrt{\xi''(t)}\d W_t$ in view of~\eqref{e.AC.SDE}. Hence, we have
\begin{align}\label{e.chi=X_q}
    h+\chi_1(u)\stackrel{\d}{=}h+\chi_2(u)\stackrel{\d}{=}X_q,\quad\forall u\in[0,q].
\end{align}
For brevity, we write $F=\partial_x\Psi_\gamma(q,\cdot)$ and often omit $u$ in writing $\chi_1$ and $\chi_2$.
Then, using the symmetry between $\chi_1$ and $\chi_2$ and Gaussian integration by parts, we can differentiate $\psi$ and compute
\begin{align*}
    \psi'(u) &= 2\E \Ll[F'(h+\chi_1)F(h+\chi_2)\chi'_2\Rr]
    \\
    &= \xi''(u)\E \Ll[F'(h+\chi_1)F(h+\chi_2)\Ll(\tfrac{g_0}{\sqrt{\xi'(u)}}-\tfrac{g_1}{\sqrt{\xi'(q)-\xi'(u)}}\Rr)\Rr]
    \\
    & = \xi''(u)\E \Ll[F'(h+\chi_1)F'(h+\chi_2)\Rr].
\end{align*}
For $i\in\{1,2\}$, we define $Z_i = \sqrt{\xi''(q)}F'(h+\chi_i)$. Since $F'=\partial_x^2\Psi_\gamma(q,\cdot)>0$ due to Lemma~\ref{l.d_xPsi_odd_increase}, we have $Z_i>0$. By~\eqref{e.xi''(q)E(d^2Psi)<1} and~\eqref{e.chi=X_q}, we also have $\E Z_i^2\leq 1$. Using this, $\xi''(u)\leq \xi''(q)$, and the Cauchy--Schwarz inequality, we get
\begin{align*}
    |\psi'(u)|\leq \E Z_1Z_2\leq \Ll(\E Z_1^2\Rr)^\frac{1}{2}\Ll(\E Z_2^2\Rr)^\frac{1}{2}\leq 1.
\end{align*}
Suppose that $|\psi'(u)|=1$ for some $u\in[0,q)$. Then, we have $\E Z_1Z_2=1$ and thus 
\begin{align*}
    \E (Z_1-Z_2)^2 = \E Z_1^2 + \E Z_2^2 - 2 \E Z_1 Z_2 \leq 0,
\end{align*}
implying that $Z_1=Z_2$ a.s. However, since $F'$ is not constant, we can find open intervals $O_1$ and $O_2$ such that $\sup_{O_1} F' < \inf_{O_2} F'$. Due to $u<q$, $\chi_1(u)$ and $\chi_2(u)$ are not identical and the event that $h+\chi_1(u)\in O_1$ and $h+\chi_2(u)\in O_2$ has positive probability. This implies $Z_1(u)\neq Z_2(u)$ and reaches a contradiction. Therefore, we must have $|\psi'(u)|<1$ for every $u\in[0,q)$.
\end{proof}

\begin{proof}[Proof of Proposition~\ref{p.h=0,0in_supp}]
Set $q=\inf\supp\d\gamma$.
Let $\chi_1$, $\chi_2$, and $\psi$ be given as in Lemma~\ref{l.chi_psi} at this $q$ and $h=0$. Notice that $\chi_1(q)=\chi_2(q)$ a.s. By this, \eqref{e.chi=X_q}, and~\eqref{e.E(dPsi)^2=q}, we get $q=\psi(q)$. On the other hand, due to $\xi'(0)=0$, $\chi_1(0)$ and $\chi_2(0)$ are independent and thus we have $\psi(0)= \Ll(\E \partial_x\Psi_\gamma(q,\chi_1(0))\Rr)^2=0$ since $\partial_x\Psi_\gamma(q,\cdot)$ is odd by Lemma~\ref{l.d_xPsi_odd_increase}. Therefore, both $0$ and $q$ are fixed points of $\psi$ on $[0,q]$. Suppose $q\neq 0$. Then, the mean value theorem gives some $u\in(0,q)$ such that $\psi'(u)=1$, which contradicts Lemma~\ref{l.chi_psi}. Hence, we must have $q=0$.
\end{proof}

As announced in \eqref{e.equiv.conditions}, we now show the following equivalence.
\begin{lemma}\label{l.h=0_int=0}
Let $\alpha$ be the optimal martingale from Theorem~\ref{t.gs}. We have 
\begin{align}\label{e.h=0_int=0}
    h =0 \qquad\text{if and only if}\qquad  \int_0^1\xi''(t)\Ll(\EE\Ll[\alpha^2_t\Rr]-t\Rr)\d t = 0.
\end{align}
\end{lemma}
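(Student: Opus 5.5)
The plan is to combine the saddle-point structure from the proof of Theorem~\ref{t.laplace_uninvert} (at $s=0$) with the support properties of the Parisi minimizer $\gamma$ of \eqref{e.def.gs}. Write $g(t) := \EE[\alpha_t^2]$ and $F(r) := \int_r^1 \xi''(t)(g(t)-t)\,\d t$. Since $\alpha$ is admissible in \eqref{e.gs}, we have $g\in\msf G$, that is, $F\ge 0$ on $[0,1]$. The statement to prove is that $h=0$ if and only if $F(0)=0$.

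\textbf{Step 1: complementary slackness.} From \eqref{e.Gamma<Gamma<Gamma} with $s=0$, the pair $(\gamma,\alpha)$ is a saddle point. Hence $\gamma$ minimizes $\gamma'\mapsto \int_0^1\xi''(t)\gamma'(t)(g(t)-t)\,\d t$ over $\mcl M_0$. By \eqref{e.compute_inf_gamma} this infimum equals $0$ because $g\in\msf G$, so
\begin{equation*}
\int_0^1\xi''(t)\gamma(t)(g(t)-t)\,\d t=0 .
\end{equation*}
I will write $\gamma(t)=\int_{[0,t]}\d\gamma$ and apply Fubini. This rewrites the identity as $\int F(r)\,\d\gamma(r)=0$. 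Since $F\ge0$ and $F$ is continuous, it follows that $F$ vanishes on $\supp\d\gamma$.

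\textbf{Step 2: the case $h=0$.} Here I apply Proposition~\ref{p.h=0,0in_supp}, which gives $0\in\supp\d\gamma$. Then Step~1 yields $F(0)=0$, which is the claimed identity.

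\textbf{Step 3: the case $h\neq0$.} Let $q_0=\inf\supp\d\gamma$.

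First suppose $q_0=0$. Then \eqref{e.E(dPsi)^2=q} at $q=0$ gives $(\partial_x\Psi_\gamma(0,h))^2=0$. This contradicts Lemma~\ref{l.d_xPsi_odd_increase}, since $\partial_x\Psi_\gamma(0,\cdot)$ is odd and strictly increasing and therefore nonzero at $h\neq0$. So $q_0>0$.

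Since $q_0\in\supp\d\gamma$, Step~1 gives $F(q_0)=0$. Therefore $F(0)=\int_0^{q_0}\xi''(t)(g(t)-t)\,\d t$, and it suffices to show $g(t)>t$ for $t\in[0,q_0)$.

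On $[0,q_0]$ we have $\gamma=0$, so $X_{q_0}=h+\int_0^{q_0}\sqrt{\xi''}\,\d W$. By the martingale property, $\alpha_t=\EE[\partial_x\Psi_\gamma(q_0,X_{q_0})\mid\mcl F_t]$. Consequently $g(t)=\EE[\alpha_t^2]$ equals exactly the function $\psi(t)$ of Lemma~\ref{l.chi_psi}, with $q=q_0$: the two Gaussian copies share the common component of variance $\xi'(t)$. By \eqref{e.E(dPsi)^2=q} we have $\psi(q_0)=q_0$, and Lemma~\ref{l.chi_psi} gives $|\psi'|<1$ on $[0,q_0)$. Thus
\begin{equation*}
\psi(t)=q_0-\int_t^{q_0}\psi'(u)\,\d u>t \qquad \text{for } t<q_0 .
\end{equation*}
Hence $F(0)>0$.

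\textbf{Main obstacle.} The delicate point is the degenerate case where $\d\gamma$ has no support in $[0,1)$, i.e.\ $\gamma\equiv0$ (formally $q_0=1$). There, Lemma~\ref{l.chi_psi} and \eqref{e.xi''(q)E(d^2Psi)<1} are not directly available at $q=1$, since $\partial_x\Psi_\gamma(1,\cdot)=\sign$ is singular. I would first try the explicit formula from the proof of Lemma~\ref{l.prog.to.mart} when $\gamma\equiv0$: there $\alpha_1=\sign(X_1)$ with $X_1$ Gaussian, so $g(t)=\EE[\sign(h+G_1)\sign(h+G_2)]$ for correlated Gaussians. The goal is to show directly that $g(t)>t$ on $[0,1)$, using $g(1)=1$ and a bound $g'(t)<1$ obtained by Gaussian integration by parts, which produces a product of Gaussian densities at $-h$. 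If this turns out to be awkward, the fallback is to show that $\gamma\equiv0$ cannot be the minimizer in \eqref{e.def.gs}, by perturbing $\gamma$ near $t=1$.
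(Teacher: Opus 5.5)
Your proof is correct where it applies, but for $h\neq0$ it takes a long detour that the paper avoids, and that detour is what creates your ``main obstacle''.

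\textbf{The case $h=0$.} Here you do essentially what the paper does: Proposition~\ref{p.h=0,0in_supp} plus first-order optimality of $\gamma$. The paper tests \eqref{e.Parisi_functional} with $\gamma'=(\gamma-\gamma(\delta_n))\mathbf 1_{[\delta_n,1)}$ and lets $\delta_n\to0$. Your Step~1 tests the saddle point \eqref{e.Gamma<Gamma<Gamma} against $\gamma'\equiv0$, then uses Fubini to get $\int F\,\d\gamma=0$, hence $F=0$ on $\supp\d\gamma$. This is a clean, self-contained version of the same complementary-slackness fact.

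\textbf{The case $h\neq0$.} Here the paper uses no support structure at all. By the martingale property, $\EE[\alpha_t^2]\ge\EE[\alpha_0^2]=(\dr_x\Psi_\gamma(0,h))^2>0$. So $\xi''(t)(\EE[\alpha_t^2]-t)>0$ on $(0,\delta)$ for small $\delta$, while $\int_\delta^1\xi''(t)(\EE[\alpha_t^2]-t)\,\d t\ge0$ by admissibility; hence $F(0)>0$. You already proved $\dr_x\Psi_\gamma(0,h)\neq0$ in Step~3, but only used it to get $q_0>0$. Your subsequent argument via $F(q_0)=0$, $\psi(q_0)=q_0$ and Lemma~\ref{l.chi_psi} is valid when $0<q_0<1$, and gives the slightly sharper $\EE[\alpha_t^2]>t$ on all of $[0,q_0)$. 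It is, however, unnecessary.

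\textbf{The degenerate case $\gamma\equiv0$ on $[0,1)$.} Your first plan would fail.
\begin{itemize}
\item There $g(t)=\EE[\sign(h+G_1)\sign(h+G_2)]$, and Gaussian integration by parts gives $g'(t)=4\xi''(t)\,p_t(-h,-h)$, where $p_t$ is the joint density of $(G_1,G_2)$.
\item Since $\det\mathrm{Cov}(G_1,G_2)=\xi'(1)^2-\xi'(t)^2\to0$, this blows up like $(1-t)^{-1/2}$.
\item Hence $1-g(t)\ge c\sqrt{1-t}$ near $t=1$, so $g(t)<t$ there, and ``$g>t$ on $[0,1)$'' is false in this case.
\end{itemize}
What this computation does show is that $F(r)<0$ for $r$ close to $1$. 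That contradicts $g\in\msf G$, or equivalently first-order optimality of $\gamma\equiv0$ against $\gamma'=\mathbf 1_{[r,1)}$. So the case cannot occur; this is your fallback, and it works.

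The same exclusion is tacitly needed in Proposition~\ref{p.h=0,0in_supp}, which both you and the paper invoke for $h=0$, so the computation is not wasted there. For the direction $h\neq0$, though, the paper's two-line argument makes it irrelevant.
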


\begin{proof}
Recall that Theorem~\ref{t.gs} is derived from Theorem~\ref{t.laplace_uninvert} at $s=0$. Let $\gamma\in\cM_0$ be the unique minimizer of the Parisi formula~\eqref{e.laplace} (recall Remark~\ref{r.determ_ext_field}) given by this theorem at $s=0$, whose relation with $\alpha$ is described by~\eqref{e.AC.SDE} and~\eqref{e.identity.alpha}. So, $\gamma$ is the Parisi measure as often called in the spin glass literature. We recall a useful property of $\gamma$ from~\cite[Proposition~4]{chen2017fluctuations}. Denote the functional over $\cM_0$ inside the $\inf$ in~\eqref{e.def.gs} by $\mcl P$. Then, for every $\gamma'\in\cM_0$, we have
\begin{align}\label{e.Parisi_functional}
    \frac{\d\mcl P(\gamma+\lambda(\gamma'-\gamma))}{\lambda}\Big|_{\lambda=0}= \frac{1}{2}\int_0^1\xi''(t)(\gamma'(t)-\gamma(t))(\EE[\al_t^2] - t) \, \d t \geq 0.
\end{align}

Now, we are ready to derive ``$\Longrightarrow$'' in~\eqref{e.h=0_int=0}. When $h=0$, Proposition~\ref{p.h=0,0in_supp} gives $0\in\supp\d\gamma$. Hence, there is a vanishing sequence $(\delta_n)_{n\in\N}$ of positive numbers such that $\gamma(\delta_n)>0$. For each $n$, substituting $(\gamma(\cdot)-\gamma(\delta_n))\mathds{1}_{[\delta_n,1)}\in\cM_0$ for $\gamma'$ in~\eqref{e.Parisi_functional}, we get
\begin{align*}
    \gamma(\delta_n)\int_{\delta_n}^1\xi''(t)(\EE[\al_t^2] - t) \, \d t \leq -\int_0^{\delta_n}\xi''(t)\gamma(t)(\EE[\al_t^2] - t) \, \d t \leq 2\xi''(1)\gamma(\delta_n)\delta_n
\end{align*}
Divide both sides by $\gamma(\delta_n)$ and send $n\to\infty$ to get $\int_0^1\xi''(t)(\EE[\al_t^2] - t) \, \d t \leq 0$. On the other hand, the condition on $\alpha$ in~\eqref{e.gs} requires this integral to be nonnegative. Therefore, we have verified ``$\Longrightarrow$'' in~\eqref{e.h=0_int=0}.

Next, we turn to ``$\Longleftarrow$''. It is equivalent to assuming $h\neq 0$ and showing $\int_0^1\xi''(t)(\EE[\al_t^2] - t) \, \d t>0$. Since $\partial_x\Psi_\gamma(0,\cdot)$ (recall the PDE~\eqref{eq:parisipsi}) is odd and strictly increasing by Lemma~\ref{l.d_xPsi_odd_increase}, we have $\partial_x\Psi_\gamma(0,h)\neq 0$ when $h\neq 0$. Due to $\alpha_0=\partial_x\Psi_\gamma(0,h)$ in~\eqref{e.identity.alpha}, this implies $\EE[\alpha_0^2]>0$ and, by continuity, there is $\delta>0$ such that $\int_0^\delta\xi''(t)\Ll(\EE[\alpha_t^2]-t\Rr)\,\d t >0$. Meanwhile, the condition on $\alpha$ in~\eqref{e.gs} gives $\int_{\delta}^1\xi''(t)\Ll(\EE[\alpha_t^2]-t\Rr)\,\d t \geq0$. Combined, they yield $\int_0^1\xi''(t)\Ll(\EE[\alpha_t^2]-t\Rr)\,\d t > 0$ as desired.
\end{proof}

We record an immediate corollary on $\Lambda$ defined in~\eqref{e.Lambda(s)=}. 

\begin{corollary}[Strict convexity of $\Lambda$ at $h\neq 0$]
\label{c.bars.zero}
Let $\underline s$ be given as in Lemma~\ref{l.cond_strict_convex}. If $h\neq 0$, then $\underline s=0$ and thus $\Lambda$ is strictly convex on $[0,\infty)$ and satisfies $\Lambda(s)>s \, \gs$ for every $s>0$.
\end{corollary}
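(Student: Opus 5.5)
The corollary follows by combining Lemma~\ref{l.h=0_int=0} with Lemma~\ref{l.cond_strict_convex}. Assume $h\neq 0$, and let $\alpha^0$ be the unique maximizer from Theorem~\ref{t.laplace_uninvert} at $s=0$. This is exactly the optimal martingale of Theorem~\ref{t.gs}, since that theorem is the $s=0$ case of Theorem~\ref{t.laplace_uninvert}. By the ``$\Longleftarrow$'' direction of Lemma~\ref{l.h=0_int=0} (in contrapositive form, which is how it was proved), we get
\begin{equation*}
\int_0^1\xi''(t)\Ll(\EE\Ll[(\alpha^0_t)^2\Rr]-t\Rr)\d t>0 .
\end{equation*}

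Now suppose, for contradiction, that $\underline s>0$. Choose any $s\in(0,\underline s]$. Lemma~\ref{l.cond_strict_convex} then gives $\alpha^s=\alpha^0$ together with \eqref{e.cond_strict_convex}, which says that the integral above vanishes. This contradicts the strict positivity just obtained, so $\underline s=0$. This is the same observation the paper records right after Lemma~\ref{l.cond_strict_convex}.

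With $\underline s=0$, Lemma~\ref{l.cond_strict_convex} gives strict convexity of $\Lambda$ on $(0,\infty)$ and $\Lambda(s)>s\,\gs$ for every $s>0$. It remains to extend strict convexity to $[0,\infty)$, which only requires checking convex combinations in which one endpoint is $s_0=0$. Take $s_1>0$ and $\lambda\in(0,1)$, and set $s_\lambda=\lambda s_1>0$. Strict convexity on $(0,\infty)$ gives strict inequality on $[s_\lambda/2,s_1]$. Hence $\Lambda'$ is strictly increasing there, so $\Lambda$ is not affine on $[0,s_1]$. Since a convex function that equals a chord at one interior point must be affine on the whole interval, we get
\begin{equation*}
\Lambda(s_\lambda)<(1-\lambda)\Lambda(0)+\lambda\Lambda(s_1).
\end{equation*}
Alternatively, one can redo Step~3 of Lemma~\ref{l.cond_strict_convex} directly: that argument only needs $g_2(\alpha^{s_\lambda})>0$ at the interior point $s_\lambda>0$, and this holds by \eqref{e.cond_strict_convex_2}.

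No step is a genuine obstacle. The only points needing care are identifying the optimal martingale of Theorem~\ref{t.gs} with $\alpha^0$, and handling the endpoint $0$ in the strict convexity claim.
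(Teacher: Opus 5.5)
Your proof is correct and follows the paper's argument: assuming $\underline s>0$, \eqref{e.cond_strict_convex} forces the integral for the optimal martingale of Theorem~\ref{t.gs} to vanish, and Lemma~\ref{l.h=0_int=0} then forces $h=0$. You also treat the endpoint $s_0=0$ in the strict convexity claim, which the paper leaves implicit. Both of your arguments for this are valid; in particular, the Step~3 computation of Lemma~\ref{l.cond_strict_convex} only uses $s_\lambda>\underline s$.
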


\begin{proof}
Suppose $\underline s>0$. Then, \eqref{e.cond_strict_convex} from Lemma~\ref{l.cond_strict_convex} implies $\int_0^1\xi''(t)\big(\EE\Ll[\alpha^2_t\Rr]-t\big)\d t = 0$ for the optimal martingale $\alpha$ from Theorem~\ref{t.gs}. By Lemma~\ref{l.h=0_int=0}, this implies $h=0$, reaching a contradiction. Now with $\underline s=0$, the properties of $\Lambda$ follow from Lemma~\ref{l.cond_strict_convex}.
\end{proof}

We conclude by proving Theorem~\ref{t.quadratic}, treating the two distinct cases separately.

\begin{proof}[Proof of Theorem~\ref{t.quadratic} in the case $h\neq0$]
Let $\bar\alpha$ be the unique maximizer given in Theorem~\ref{t.gs}, which satisfies $\gs = \EE \Ll[ h\bar\al_0+ \bar\al_1  \int_0^1 \sqrt{\xi''(t)} \, \d W_t \Rr]$.
Lemma~\ref{l.h=0_int=0} gives $c_0:= \int_0^1\xi''(t)\big(\EE\Ll[\bar\alpha^2_t\Rr]-t\big)\d t>0$. Using these and \eqref{e.firstdef.lambda*}, we get $\Lambda^*(r)\leq \frac{(r-\gs)^2}{2c_0}$.

On the other hand, we set $c_1 = \int_0^1\xi''(t)(1-t)\d t>0$. Then, for any $\alpha$ admissible in the variational formula for $\Lambda^*(r)$ in~\eqref{e.firstdef.lambda*}, we have
\begin{align*}
    \frac{\Ll(r-\EE\Ll[h\alpha_0+\alpha_1\int_0^1\sqrt{\xi''(t)}\d W_t\Rr]\Rr)^2}{2\int_0^1\xi''(t)\Ll(\EE\Ll[\alpha^2_t\Rr]-t\Rr)\d t}  \stackrel{\eqref{e.gs}}{\geq} \frac{(r-\gs)^2}{2c_1}
\end{align*}
implying $\Lambda^*(r)\geq \frac{(r-\gs)^2}{2c_1}$ and completing the proof.
\end{proof}

\begin{proof}[Proof of Theorem~\ref{t.quadratic} in the case $h=0$]

It suffices to show that for any sequence $(r_n)_{n\in\N}$ satisfying $r_n>\gs$ and $\lim_{n\to\infty}r_n=\gs$, we can extract a subsequence $(r_{n_k})_{k\in\N}$ such $\frac{\Lambda^*(r_{n_k})}{(r_{n_k}-\gs)^2}$ diverges to $+\infty$. For each $n$, we invoke Theorem~\ref{t.LDP} to get $s_n$ satisfying $r_n=\Lambda'(s_n)$ and minimizing $\Lambda^*(r_n)$, and $\alpha^\pnp$ maximizing $\frac{\Lambda(s_n)}{s_n}$.

First, we identify the limit of $s_n$.
Let $\underline s$ be given in Lemma~\ref{l.cond_strict_convex}, which gives that $\Lambda$ is strictly convex on $(\underline s,\infty)$. Hence, $\Lambda'$ is strictly increasing and bijective from $(\underline s,\infty)$ to $(\Lambda'(\underline s), \Lambda'(\infty))$ where $\Lambda'(\infty):=\lim_{s\to\infty}\Lambda'(s)$. The definition of $\underline s$ in Lemma~\ref{l.cond_strict_convex} also gives $\Lambda'(\underline s)=\gs$. This along with $\Lambda'(s_n)=r_n\to \gs$ and the continuity of $\Lambda'$ due to Lemma~\ref{l.Lambda'(s)=} implies
\begin{align}\label{e.lim_s_n=s}
    \lim_{n\to\infty}s_n=\underline s.
\end{align}

Next, we turn to $\alpha^\pnp$. By a similar argument used in Step~2 of the proof of Lemma~\ref{l.Lambda'(s)=}, we can extract a subsequence $(W,(\alpha^{(n_k)}_t)_{t\in\Q\cap[0,1]})$ that converges in law to $(W,(\bar\alpha_t)_{t\in\Q\cap[0,1]})$ for some martingale $\bar\alpha$ defined on a possibly larger probability space. Recall the expression of $\frac{\Lambda(s)}{s}$ from~\eqref{e.g(s,alpha)=} and \eqref{e.Lambda(s)=} which is exactly the left-hand side in~\eqref{e.lim=sup_mart_achieve} of Theorem~\ref{t.laplace_uninvert}. The choice of $\alpha^\pnp$ ensures $\frac{\Lambda(s_n)}{s_n}=\frac{g(s_n,\alpha^\pnp)}{s_n}$. At $s=0$, we understand $\frac{\Lambda(0)}{0}=\Lambda'(0)=\gs$ (see~\eqref{e.Lambda'(0)=gs}). Then, using the continuity of $(s,\alpha)\mapsto\frac{g(s,\alpha)}{s}$, the aforementioned convergence of $\alpha^{(n_k)}$, and~\eqref{e.lim_s_n=s}, we can pass $n_k$ to the limit to get $\frac{\Lambda(\underline s)}{\underline s} = \frac{g(\underline s,\bar\alpha)}{\underline s}$. In other words, $\bar\alpha$ is the unique optimizer of $\frac{\Lambda(\underline s)}{\underline s}$. 
If $\underline s>0$, we invoke Lemma~\ref{l.cond_strict_convex} to see
\begin{align}\label{e.int...bar_alpha=0}
    \int_0^1\xi''(t)\Ll(\EE\Ll[\bar\alpha^2_t\Rr]-t\Rr)\d t =0.
\end{align}
If $\underline s =0$, then $\bar\alpha$ is the optimizer of $\frac{\Lambda(0)}{0}=\gs$, which by Lemma~\ref{l.h=0_int=0} and  the assumption $h=0$ also implies~\eqref{e.int...bar_alpha=0}. Then, the convergence of $\alpha^{(n_k)}$ and~\eqref{e.int...bar_alpha=0} yield
\begin{align*}
    \lim_{k\to\infty}\int_0^1\xi''(t)\Ll(\EE\Ll[\Ll(\alpha^{(n_k)}_t\Rr)^2\Rr]-t\Rr)\d t =0.
\end{align*}

Recall that $\alpha^\pnp$ is the optimizer for $\Lambda^*(r_n)$ in~\eqref{e.firstdef.lambda*} which along with~\eqref{e.gs} gives
\begin{align*}
    \Lambda^*\Ll(r_{n_k}\Rr) \geq \frac{(r_{n_k}-\gs)^2}{2\int_0^1\xi''(t)\Ll(\EE\Ll[\Ll(\alpha^{(n_k)}_t\Rr)^2\Rr]-t\Rr)\d t}.
\end{align*}
Combining the above two displays, we have that $\frac{\Lambda^*\Ll(r_{n_k}\Rr)}{(r_{n_k}-\gs)^2}$ diverges to $+\infty$ as $k\to\infty$. As explained in the beginning, this gives the desired result.
\end{proof}

\medskip

\noindent \textbf{Acknowledgements.} HBC acknowledges funding from the NYU Shanghai Start-Up Fund and support from the NYU–ECNU Institute of Mathematical Sciences at NYU Shanghai. AG acknowledges funding from the European Research Council (ERC) under the European Union
Horizon 2020 research and innovation program (grant agreement No.\ 884584). JK acknowledges support from the Natural Sciences and Engineering Research Council of Canada and the Canada Research
Chairs programme (RGPIN-2020-04597, DGECR-2020 00199). JCM acknowledges the support of the ERC MSCA grant SLOHD (101203974).

\small
\bibliographystyle{plain}
\bibliography{gs_ldp}

\newcommand{\noop}[1]{} \def\cprime{$'$}
\begin{thebibliography}{10}

\bibitem{AS2}
Michael Aizenman, Robert Sims, and Shannon~L. Starr.
\newblock Extended variational principle for the {S}herrington-{K}irkpatrick
  spin-glass model.
\newblock {\em Phys. Rev. B}, 68(21):214403, 2003.

\bibitem{aliprantis2006infinite}
Charalambos~D. Aliprantis and Kim~C. Border.
\newblock {\em Infinite dimensional analysis. {A} hitchhiker's guide}.
\newblock Springer, Berlin, third edition, 2006.

\bibitem{aspelmeier2008finite}
Timo Aspelmeier, Alain Billoire, Enzo Marinari, and Michael~Arthur Moore.
\newblock Finite-size corrections in the {S}herrington-{K}irkpatrick model.
\newblock {\em J. Phys. A}, 41(32):324008, 21, 2008.

\bibitem{auffinger2015properties}
Antonio Auffinger and Wei-Kuo Chen.
\newblock On properties of {P}arisi measures.
\newblock {\em Probab. Theory Related Fields}, 161(3-4):817--850, 2015.

\bibitem{auffinger2015parisi}
Antonio Auffinger and Wei-Kuo Chen.
\newblock The {P}arisi formula has a unique minimizer.
\newblock {\em Comm. Math. Phys.}, 335(3):1429--1444, 2015.

\bibitem{auffinger2017parisi}
Antonio Auffinger and Wei-Kuo Chen.
\newblock Parisi formula for the ground state energy in the mixed {$p$}-spin
  model.
\newblock {\em Ann. Probab.}, 45(6B):4617--4631, 2017.

\bibitem{baik2016fluctuations}
Jinho Baik and Ji~Oon Lee.
\newblock Fluctuations of the free energy of the spherical
  {S}herrington-{K}irkpatrick model.
\newblock {\em J. Stat. Phys.}, 165(2):185--224, 2016.

\bibitem{benarous2001aging}
G\'erard Ben~Arous, Amir Dembo, and Alice Guionnet.
\newblock Aging of spherical spin glasses.
\newblock {\em Probab. Theory Related Fields}, 120(1):1--67, 2001.

\bibitem{benarous1997large}
G\'erard Ben~Arous and Alice Guionnet.
\newblock Large deviations for {W}igner's law and {V}oiculescu's
  non-commutative entropy.
\newblock {\em Probab. Theory Related Fields}, 108(4):517--542, 1997.

\bibitem{boettcher2005extremal}
Stefan Boettcher.
\newblock Extremal optimization for {S}herrington-{K}irkpatrick spin glasses.
\newblock {\em Eur. Phys. J. B}, 46(4):501--505, 2005.

\bibitem{boettcher2010simulations}
Stefan Boettcher.
\newblock Simulations of ground state fluctuations in mean-field {I}sing spin
  glasses.
\newblock {\em J. Stat. Mech. Theory Exp.}, 2010(7):P07002, 2010.

\bibitem{bouchaud2003energy}
Jean-Philippe Bouchaud, Florent Krzakala, and Olivier Martin.
\newblock Energy exponents and corrections to scaling in {I}sing spin glasses.
\newblock {\em Phys. Rev. B}, 68(22):224404, 2003.

\bibitem{chatterjee}
Sourav Chatterjee.
\newblock {\em Superconcentration and related topics}.
\newblock Springer Monographs in Mathematics. Springer, Cham, 2014.

\bibitem{chen2025convex}
Hong-Bin Chen, Victor Issa, and Jean-Christophe Mourrat.
\newblock The convex structure of the {P}arisi formula for multi-species spin
  glasses.
\newblock {\em Preprint, arXiv:2508.06397}, 2025.

\bibitem{chen2025free}
Hong-Bin Chen and Jean-Christophe Mourrat.
\newblock On the free energy of vector spin glasses with nonconvex
  interactions.
\newblock {\em Probab. Math. Phys.}, 6(1):1--80, 2025.

\bibitem{chen2025hamilton}
Hong-Bin Chen and Jiaming Xia.
\newblock Hamilton-{J}acobi equations from mean-field spin glasses.
\newblock {\em Probab. Theory Related Fields}, 192(3-4):803--873, 2025.

\bibitem{chen2014chaos}
Wei-Kuo Chen.
\newblock Chaos in the mixed even-spin models.
\newblock {\em Comm. Math. Phys.}, 328(3):867--901, 2014.

\bibitem{chen2017fluctuations}
Wei-Kuo Chen, Partha Dey, and Dmitry Panchenko.
\newblock Fluctuations of the free energy in the mixed {$p$}-spin models with
  external field.
\newblock {\em Probab. Theory Related Fields}, 168(1-2):41--53, 2017.

\bibitem{chen2018energy}
Wei-Kuo Chen, Madeline Handschy, and Gilad Lerman.
\newblock On the energy landscape of the mixed even {$p$}-spin model.
\newblock {\em Probab. Theory Related Fields}, 171(1-2):53--95, 2018.

\bibitem{chen2017parisi}
Wei-Kuo Chen and Arnab Sen.
\newblock Parisi formula, disorder chaos and fluctuation for the ground state
  energy in the spherical mixed {$p$}-spin models.
\newblock {\em Comm. Math. Phys.}, 350(1):129--173, 2017.

\bibitem{collins2025free}
Elizabeth Collins-Woodfin and Han~Gia Le.
\newblock Free energy fluctuations in {SK} and related spin glass models: {A}
  literature survey.
\newblock {\em Preprint, arXiv:2510.26960}, 2025.

\bibitem{conmin}
Pierluigi Contucci and Emanuele Mingione.
\newblock A multi-scale spin-glass mean-field model.
\newblock {\em Comm. Math. Phys.}, 368(3):1323--1344, 2019.

\bibitem{crisanti1992replica}
Andrea Crisanti, Giovanni Paladin, Angelo Vulpiani, and Hans-Jurgen Sommers.
\newblock Replica trick and fluctuations in disordered systems.
\newblock {\em J. Phys. I}, 2(7):1325--1332, 1992.

\bibitem{sparse_PDE}
Tomas Dominguez and Jean-Christophe Mourrat.
\newblock Infinite-dimensional {H}amilton-{J}acobi equations for statistical
  inference on sparse graphs.
\newblock {\em SIAM J. Math. Anal.}, 56(4):4530--4593, 2024.

\bibitem{sparse_prob}
Tomas Dominguez and Jean-Christophe Mourrat.
\newblock Mutual information for the sparse stochastic block model.
\newblock {\em Ann. Probab.}, 52(2):434--501, 2024.

\bibitem{dominguez2024book}
Tomas Dominguez and Jean-Christophe Mourrat.
\newblock {\em Statistical mechanics of mean-field disordered systems: a
  {H}amilton-{J}acobi approach}.
\newblock Zurich Lectures in Advanced Mathematics. European Mathematical
  Society (EMS), Z\"{u}rich, 2024.

\bibitem{fan1953minimax}
Ky~Fan.
\newblock Minimax theorems.
\newblock {\em Proc. Nat. Acad. Sci. USA}, 39:42--47, 1953.

\bibitem{franz2026overlap}
Silvio Franz, Giorgio Parisi, and Federico Ricci-Tersenghi.
\newblock Overlap locking and non-perturbative effects in spin glasses.
\newblock {\em Preprint, arXiv:2602.18399}, 2026.

\bibitem{grantGround}
Curtis Grant, Aukosh Jagannath, and Justin Ko.
\newblock Pseudo-maximum likelihood theory for high-dimensional rank one
  inference, 2025.

\bibitem{guerra2001sum}
Francesco Guerra.
\newblock Sum rules for the free energy in the mean field spin glass model.
\newblock {\em Fields Inst. Commun.}, 30:161--170, 2001.

\bibitem{gue03}
Francesco Guerra.
\newblock Broken replica symmetry bounds in the mean field spin glass model.
\newblock {\em Comm. Math. Phys.}, 233(1):1--12, 2003.

\bibitem{huang2023constructive}
Brice Huang and Mark Sellke.
\newblock A constructive proof of the spherical {P}arisi formula.
\newblock {\em Preprint, arXiv:2311.15495}, 2023.

\bibitem{issa2024hopf}
Victor Issa.
\newblock A {H}opf-like formula for mean-field spin glass models.
\newblock {\em Preprint, arXiv:2410.08754}, 2024.

\bibitem{JagSenGround}
Aukosh Jagannath and Subhabrata Sen.
\newblock On the unbalanced cut problem and the generalized
  {S}herrington-{K}irkpatrick model.
\newblock {\em Ann. Inst. Henri Poincar\'e{} D}, 8(1), 2021.

\bibitem{kim2007ground}
Seung-Yeon Kim, Sung~Jong Lee, and Jooyoung Lee.
\newblock Ground-state energy and energy landscape of the
  sherrington-kirkpatrick spin glass.
\newblock {\em Phys. Rev. B}, 76:184412, 2007.

\bibitem{mutual_information}
Anastasia Kireeva and Jean-Christophe Mourrat.
\newblock Breakdown of a concavity property of mutual information for
  non-{G}aussian channels.
\newblock {\em Inf. Inference}, 13(2):Paper No. iaae008, 21, 2024.

\bibitem{lacroix2024replica}
Bertrand Lacroix-A-Chez-Toine, Yan~V. Fyodorov, and Pierre Le~Doussal.
\newblock Replica-symmetry breaking transitions in the large deviations of the
  ground-state of a spherical spin-glass.
\newblock {\em J. Stat. Phys.}, 191(2):Paper No. 11, 76, 2024.

\bibitem{mourrat2020nonconvex}
Jean-Christophe Mourrat.
\newblock Nonconvex interactions in mean-field spin glasses.
\newblock {\em Probab. Math. Phys.}, 2(2):281--339, 2021.

\bibitem{mourrat2023free}
Jean-Christophe Mourrat.
\newblock Free energy upper bound for mean-field vector spin glasses.
\newblock {\em Ann. Inst. Henri Poincar\'e{} Probab. Stat.}, 59(3):1143--1182,
  2023.

\bibitem{mourrat2025spin}
Jean-Christophe Mourrat.
\newblock Spin glasses and the parisi formula.
\newblock {\em Preprint, arXiv:2510.01054}, 2025.

\bibitem{mourrat2025uninverting}
Jean-Christophe Mourrat.
\newblock Un-inverting the {P}arisi formula.
\newblock {\em Ann. Inst. Henri Poincar\'e{} Probab. Stat.}, 61(4):2709--2720,
  2025.

\bibitem{palassini2003ground}
Matteo Palassini.
\newblock Ground-state energy fluctuations in the {S}herrington-{K}irkpatrick
  model.
\newblock {\em Preprint, arXiv:cond-mat/0307713}, 2003.

\bibitem{palassini2008ground}
Matteo Palassini.
\newblock Ground-state energy fluctuations in the {S}herrington-{K}irkpatrick
  model.
\newblock {\em J. Stat. Mech. Theory Exp.}, 2008(10):P10005, 2008.

\bibitem{Panchenko2008}
Dmitry Panchenko.
\newblock On differentiability of the {P}arisi formula.
\newblock {\em Electron. Commun. Probab.}, 13:241--247, 2008.

\bibitem{pan.aom}
Dmitry Panchenko.
\newblock The {P}arisi ultrametricity conjecture.
\newblock {\em Ann. of Math. (2)}, 177(1):383--393, 2013.

\bibitem{pan}
Dmitry Panchenko.
\newblock {\em The {S}herrington-{K}irkpatrick model}.
\newblock Springer Monographs in Mathematics. Springer, New York, 2013.

\bibitem{pan.multi}
Dmitry Panchenko.
\newblock The free energy in a multi-species {S}herrington-{K}irkpatrick model.
\newblock {\em Ann. Probab.}, 43(6):3494--3513, 2015.

\bibitem{parisi2008large}
Giorgio Parisi and Tommaso Rizzo.
\newblock Large deviations in the free energy of mean-field spin glasses.
\newblock {\em Phys. Rev. Lett.}, 101(11):117205, 2008.

\bibitem{parisi2009phase}
Giorgio Parisi and Tommaso Rizzo.
\newblock Phase diagram and large deviations in the free energy of mean-field
  spin glasses.
\newblock {\em Phys. Rev. B}, 79(13):134205, 2009.

\bibitem{parisi2010universality}
Giorgio Parisi and Tommaso Rizzo.
\newblock Universality and deviations in disordered systems.
\newblock {\em Phys. Rev. B}, 81(9):094201, 2010.

\bibitem{parisi2019study}
Giorgio Parisi, Leopoldo Sarra, and Lorenzo Talamanca.
\newblock Study of longitudinal fluctuations of the sherrington--kirkpatrick
  model.
\newblock {\em J. Stat. Mech. Theory Exp.}, 2019(3):033302, 2019.

\bibitem{rockafellar1998variational}
R.~Tyrrell Rockafellar and Roger J.-B. Wets.
\newblock {\em Variational analysis}, volume 317 of {\em Grundlehren der
  Mathematischen Wissenschaften}.
\newblock Springer-Verlag, Berlin, 1998.

\bibitem{sion1958minimax}
Maurice Sion.
\newblock On general minimax theorems.
\newblock {\em Pacific J. Math.}, 8:171--176, 1958.

\bibitem{subag2017extremal}
Eliran Subag and Ofer Zeitouni.
\newblock The extremal process of critical points of the pure {$p$}-spin
  spherical spin glass model.
\newblock {\em Probab. Theory Related Fields}, 168(3-4):773--820, 2017.

\bibitem{Tpaper}
Michel Talagrand.
\newblock The {P}arisi formula.
\newblock {\em Ann. of Math. (2)}, 163(1):221--263, 2006.

\bibitem{talagrand2007large}
Michel Talagrand.
\newblock Large deviations, {G}uerra's and {A}.{S}.{S}. schemes, and the
  {P}arisi hypothesis.
\newblock {\em J. Stat. Phys.}, 126(4-5):837--894, 2007.

\bibitem{tracy1996orthogonal}
Craig~A. Tracy and Harold Widom.
\newblock On orthogonal and symplectic matrix ensembles.
\newblock {\em Comm. Math. Phys.}, 177(3):727--754, 1996.

\end{thebibliography}

\end{document}